\theoremstyle{definition}
\newtheorem{theorem}{Theorem}[section]
\newtheorem{lemma}[theorem]{Lemma}
\newtheorem{prop}[theorem]{Proposition}
\newtheorem{definition}[theorem]{Definition}
\newtheorem{conjecture}[theorem]{Conjecture}
\title{A proof of the Teichm\"{u}ller TQFT volume conjecture for $7_3$ knot}
\author{Soichiro Uemura}
\address{Graduate School of Mathematical Sciences\\
  the University of Tokyo, Tokyo, Japan}
\email{soichiro.uemura@ipmu.jp}
\date{\today}
\begin{document}
\begin{abstract}
In the generalized topological quantum field theory constructed by Andersen and Kashaev, invariants of 3-manifolds are defined given the combinatorial structure of a tetrahedral decomposition. Furthermore, a variant of the volume conjecture has been proposed in which the hyperbolic volume can be extracted from this invariant of the complementary space of the hyperbolic knot in an oriented $3$-dimensional closed manifold. We prove that the reformulated volume conjecture holds for the complementary space of the hyperbolic knot $7_3$ in $S^3$, given a specific tetrahedral decomposition.
\end{abstract}
\maketitle
\tableofcontents
\section{Introduction}
A Topological Quantum Field Theory (TQFT) is a theory that satisfies the axiom of Atiyah, Segal, and Witten \cite{MR1001453,MR0981378,MR0953828}.
It is defined as a functor from the cobordism category to the module category. The Jones-Witten theory is an example of constructing topological invariants of 3-manifolds based on the TQFT. This invariant is given for the principal $SU(2)$ bundle on an oriented compact 3-manifold $M$ by the path integral using the Chern-Simons functional described in the following equation \eqref{CS}.  

Let $P$ be a principal $G$ bundle on $M$ for a simple, connected, simply-connected compact Lie group $G$, and denote by $\mathcal{A}$ the space of connections on $P$, then $\mathcal{A}$ is identified with the space $\Omega^1(M,\mathfrak{g})$ of first-order differential forms on $M$ with values in the Lie ring $\mathfrak{g}$ of $G$.
In this case, the Chern-Simons functional $CS:\mathcal{A}\rightarrow\mathbb{R}$ is defined by the following equation, \begin{equation}
CS(A)=\int_M {\rm{Tr}}\left(A\wedge dA+\frac{2}{3}A\wedge A\wedge A\right),\quad A\in\mathcal{A}.\label{CS}
\end{equation}

The TQFT based on the path integral or the partition function using the Chern-Simons functional is called the Chern-Simons theory. 
The Jones-Witten theory is for $G=SU(2)$, and the path integral is formally 
expressed as 
\begin{equation*}
Z_k(M)=\int_{\mathcal{A}/\mathcal{G}}\exp\left({\frac{\sqrt{-1}k}{4\pi}CS(A)}\right)\mathcal{D}A, 
\end{equation*}
where $k$ is an integer called the level of the theory, and $\mathcal{G}$ denotes the gauge transformation group \cite{MR0990772}. 

Such topological invariants of 3-manifolds based on the path integral are called quantum invariants. In particular, the first mathematically rigorously defined quantum invariant called the WRT invariant is obtained from the TQFT constructed by Reshetikhin and Turaev \cite{MR1036112,MR1091619,MR1292673}. 

Also related to these quantum invariants, a polynomial knot invariant called the colored Jones polynomial is defined as the path integral on the Wilson loop. In particular, for hyperbolic knots in $S^3$, the volume conjecture, that the hyperbolic volume of the knot complement space appears in the asymptotic expansion of the colored Jones polynomial at some radical root of unity has been proposed by Kashaev, Murakami and Murakami \cite{MR1434238, MR1828373} and proved rigorously for several hyperbolic knots, but the conjecture is not completely solved. The volume conjecture suggests a close relationship between topology and hyperbolic geometry. Chen and Yang have also proposed similar volume conjectures for quantum invariants such as the WRT invariant and the Turaev-Viro invariant obtained by TQFT as well \cite{MR3827806}.

 In contrast, the Chern-Simons theory corresponding to the case where the structure group $G$ is noncompact, compared to the case where $G$ is compact, is challenging to analyze and poorly understood. Recently, Andersen and Kashaev have constructed a generalized TQFT that is expected to have a correspondence with the Chern-Simons theory of $G=SL(2,\mathbb{C})$ by using the quantum Teichm\"{u}ller theory, which has the feature of generating unitary representations of the centrally extended mapping class group of a punctured surface into the infinite-dimensional Hilbert space \cite{MR3227503}. 
 A series of conjectures (Conjecture \ref{Andersen-Kashaev}) analogous to the volume conjecture have been proposed for the invariant defined by this TQFT. This conjecture suggests a new relationship between hyperbolic geometry and topology. Theorems similar to this conjecture have been proved for $4_1$ and $5_2$ knots in $S^3$ \cite{MR3227503} and all twist knots in $S^3$ \cite{MR3945172} for a certain tetrahedrad decomposition.
 
In this paper, we compute and explicitly show the invariant based on this TQFT (hereafter referred to as the Teichm\"{u}ller TQFT) for the hyperbolic knot $7_3$ in $S^3$, which has not been investigated in previous studies. By rigorously evaluating the integral using the saddle point method, we prove a conjecture analogous to the volume conjecture for $7_3$ knot in $S^3$ in a reformulated form for a certain tetrahedral decomposition, and obtain the main result, Theorem \ref{uemura}.

This paper is organized as follows. Section \ref{A-K-TQFT} introduces the Teichm\"{u}ller TQFT proposed by Andersen and Kashaev. Section \ref{triangulation} obtains an ideal tetrahedral decomposition of the complementary space $S^3 \backslash 7_3$ and a tetrahedral decomposition of $S^3$ called one vertex H-triangulation such that a knot $7_3$ in $S^3$ is an edge of a single tetrahedron. 
Section \ref{ideal-triangulation} obtains the partition function defined in the Teichm\"{u}ller TQFT for the ideal tetrahedral decomposition of $S^3\backslash 7_3$. Section \ref{one-vertex_Htriangulation} obtains the partition function for one vertex H-triangulation. 
In Section \ref{volume-conjecture-strict-proof}, we prove Theorem \ref{uemura} analogous to the volume conjecture by evaluating the integral rigorously. In Appendix \ref{volume-conjecture}, an approximate numerical analysis also verifies Theorem \ref{uemura} (3).
\section*{Acknowledgments}
The author thanks Takahiro Kitayama and Masahito Yamazaki for valuable comments and discussions.

\section{Andersen-Kashaev's Teichm\"{u}ller TQFT}\label{A-K-TQFT}
This section summarizes the results of the Teichm\"{u}ller TQFT \cite{MR3227503} of Andersen and Kashaev.
\subsection{Oriented triangulated pseudo 3-manifolds}
Consider the disjoint union of a finite number of copies of a standard 3-simplex in $\mathbb{R}^3$. Each simplex is assumed to have totally ordered vertices. The order of the vertices induces the orientation of the edges. Some pairs of faces of codimension 1 of this disjoint union are glued and identified by an affine homeomorphism, called gluing homeomorphism, which preserves the order of the vertices and reverses the faces' orientations. The quotient space $X$, by this identification, is a CW-complex with oriented edges called an oriented triangulated pseudo 3-manifold. 
For $i\in\left\{0,1,2,3\right\}$, $\Delta_i(X)$ denotes the i-dimensional cell of $X$. For any $i>j$, we define 
\begin{equation*}
\Delta_i^j(X)=\left\{(a,b)\mid a\in\Delta_i(X),b\in\Delta_j(a)\right\}.
\end{equation*} 

We also define the natural map as 
\begin{equation*}
\phi_{i,j}:\Delta_i^j(X)\rightarrow\Delta_i(X),\ \ \phi^{i,j}:\Delta_i^j(X)\rightarrow\Delta_j(X).
\end{equation*} 

Also, the standard boundary map 
\begin{equation*}
\partial_i:\Delta_j(X)\rightarrow \Delta_{j-1}(X),\ \ 0\le i\le j
\end{equation*}
is defined such that for a $j$ dimensional simplex $S=[v_0,v_1,\ldots,v_j]$ in $\mathbb{R}^3$ with ordered vertices $v_0,v_1,\ldots,v_j$,   
\begin{equation*}
\partial_iS=[v_0,\ldots,v_{i-1},v_{i+1},\ldots,v_j],\ \ i\in\{0,\ldots,j\}.
\end{equation*} 
\subsection{Shaped 3-manifolds}
Let $X$ be an oriented triangulated pseudo 3-manifold. 
\begin{definition}
The shape structure on $X$ is an assignment of a positive real number 
called the dihedral angle to each edge of each tetrahedron  
\begin{equation*}
\alpha_X:\Delta_3^1(X)\rightarrow\mathbb{R}_+
\end{equation*}
such that the sum of the dihedral angles at the three edges extending from each vertex of each tetrahedron is $\pi$. 
An oriented triangulated pseudo 3-manifold with a shape structure is called a shaped pseudo 3-manifold. Let $S(X)$ be the set of all shape structures on $X$. \label{shape}
\end{definition}
By Definition \ref{shape}, for any tetrahedron, the dihedral angles at opposite edges are equal. Thus, for each tetrahedron, three dihedral angles are associated with three pairs of opposite edges whose sum is $\pi$. 
\begin{definition}
For each shape structure on X, the weight function
\begin{equation*}
\omega_X:\Delta_1(X)\rightarrow\mathbb{R}_+
\end{equation*}
is defined by mapping each edge $e$ of $X$ to the sum of the dihedral angles around it.  
That is, for any $e\in\Delta_1(X)$, we define 
\begin{equation*}
\omega_X(e)=\sum_{a\in(\phi^{3,1})^{-1}(e)} \alpha_X(a).
\end{equation*}
\end{definition}
\begin{definition}
An edge $e$ of shaped pseudo 3-manifold $X$ is called balanced if it is inside $X$ and $\omega_X(e)=2\pi$. 
An edge that is not balanced is called unbalanced. 
We say that a shaped pseudo 3-manifold $X$ is fully balanced if every edge of $X$ is balanced. 
\end{definition}
\subsection{$\mathbb{Z}/3\mathbb{Z}$ action on the set of opposite edge pairs of a tetrahedron}
Let $X$ be an oriented triangulated pseudo 3-manifold. The orientation on $X$ gives a cyclic ordering to the three edges extending from each tetrahedron vertex. Moreover, this cyclic order induces a cyclic order on the set of opposite pairs of tetrahedron edges.
Let $\Delta_3^{1/p}(X)$ be the set consisting of the opposite edge pairs of all tetrahedra. 
$\Delta_3^{1/p}(X)$ is the quotient set for $\Delta_3^1(X)$ by the equivalence relation generated by the opposite edge pairs of all tetrahedra, and let the corresponding quotient map be
\begin{equation*}
    p:\Delta_3^1(X)\rightarrow\Delta_3^{1/p}(X).
\end{equation*}

Skew-symmetric function
\begin{equation*}
\epsilon_{a,b}\in\{0,\pm 1\},\quad \epsilon_{a,b}=-\epsilon_{b,a},\quad a,b\in\Delta_3^{1/p}(X)
\end{equation*}
is set $\epsilon_{a,b}=0$ if the tetrahedra in which $a$ and $b$ exist are different, and $\epsilon_{a,b}=1$ if the tetrahedra in which $a$ and $b$ exist coincide and $a$ is ahead of $b$ under the cyclic order.
\subsection{Leveled shaped 3-manifolds}
\begin{definition}
A leveled shaped 3-manifold is a pair $(X,l_X)$ consisting of a shaped pseudo 3-manifold $X$ and a real number $l_X\in\mathbb{R}$ called the level. The leveled shaped structure on an oriented triangulated pseudo 3-manifold $X$ is a pair $(\alpha_X,l_X)$ of a shape structure on $X$ and a level. Let $LS(X)$ denote the set of all leveled shaped structures on $X$. 
\end{definition}
\begin{definition}\label{gauge-equivalent}
Two leveled shaped pseudo 3-manifolds $(X,l_X)$ and $(Y,l_Y)$ are said to be gauge equivalent if there exists an isomorphism of cellular structures of $X$ and $Y$ 
\begin{equation*}
h:X\rightarrow Y    
\end{equation*}
and a function
\begin{equation*}
g:\Delta_1(X)\rightarrow \mathbb{R}    
\end{equation*}
which satisfies the following conditions. 
\begin{equation*}
\Delta_1(\partial X)\subset g^{-1}(0)
\end{equation*}
\begin{equation*}
\alpha_Y(h(a))=\alpha_X(a)+\pi \sum_{b\in\Delta_3^1(X)}\epsilon_{p(a),p(b)}g(\phi^{3,1}(b)),\quad\forall a\in\Delta_3^1(X)    
\end{equation*}
\begin{equation*}
l_Y=l_X+\sum_{e\in\Delta_1(X)}g(e)\sum_{a\in (\phi^{3,1})^{-1}(e)}\left(\frac{1}{3}-\frac{\alpha_X(a)}{\pi}\right)
\end{equation*}
\end{definition}
If $X$ and $Y$ are gauge equivalent, weights on edges are gauge invariant in the sense that 
\begin{equation*}
\omega_X=\omega_Y\circ h
\end{equation*}
holds.

\begin{definition}
Two leveled shape structures $(\alpha_X, l_X)$ and $(\alpha_X^\prime, l_X^\prime)$ on a pseudo 3-manifold $X$ are called based gauge equivalent if they are gauge equivalent with $h:X\rightarrow X$ as the identity map in Definition \ref{gauge-equivalent}. 
\end{definition}
The (based) gauge equivalence relation on leveled shaped pseudo 3-manifolds induces a (based) gauge equivalence relation on shaped pseudo 3-manifolds under the map which forgets the level. Let $LS_r(X)$ denote the set of gauge equivalence classes of based leveled shape structures on $X$ and $S_r(X)$ denote the set of gauge equivalence classes of based shape structures on $X$. 
\begin{definition}
A generalized shape structure on $X$ is an assignment of real numbers to each edge of each tetrahedron such that the sum of the real numbers assigned to the three edges extending from each vertex of each tetrahedron is $\pi$. Let $\widetilde{S}(X)$ denote the set of all generalized shape structures on $X$ and $\widetilde{LS}(X)$ denote the set of all generalized leveled shape structures on $X$. The leveled generalized shape structure and its gauge equivalence are defined in the same way as the leveled shaped structure and its gauge equivalence. We denote the space of based gauge equivalence classes of generalized shape structures by $\widetilde{S}_r(X)$ and the space of based gauge equivalence classes of leveled generalized shape structures by $\widetilde{LS}_r(X)$.
\end{definition}

$S_r(X)$ is an open convex subset of $\widetilde{S}_r(X)$.

Let the following map denote the map which assigns the weight function $\omega_X:\Delta_1(X)\rightarrow \mathbb{R}$ to the generalized shape structure $\alpha_X\in \widetilde{S}(X)$,  
\begin{equation*}
\widetilde{\Omega}_X:\widetilde{S}(X)\rightarrow\mathbb{R}^{\Delta_1(X)}.
\end{equation*}

Since this map is gauge equivalent, it induces the following unique map.
\begin{equation*}
\widetilde{\Omega}_{X,r}:\widetilde{S}_r(X)\rightarrow\mathbb{R}^{\Delta_1(X)}.
\end{equation*}

Let $N_0(X)$ be a sufficiently small tubular neighborhood of $\Delta_0(X)$. $\partial N_0(X)$ is a two-dimensional surface. It can be disconnected and with a boundary if $\partial X\neq\emptyset$.
\subsection{3-2 Pachner move}
Let $X$ be a shaped pseudo 3-manifold.
Let $e$ be a balanced edge of $X$ shared by three different tetrahedra $t_1,t_2$, and $t_3$.
Let $S$ be a shaped pseudo 3-submanifold of $X$ consisting of tetrahedra $t_1,t_2$, and $t_3$. There exists another triangulation $S_e$ of $S$ consisting of two tetrahedra $t_4,t_5$ so that the triangulations of $\partial S$ and $\partial S_e$ can be made to coincide. 
This is obtained by removing the edge $e$, then $\Delta_1(S_e)=\Delta_1(S)\backslash{e}$. 
Furthermore, there is a unique shape structure on $S_e$ that induces the same weight as that induced by the shape structure on $S$. 
Let $(\alpha_i,\beta_i,\gamma_i)$ be the dihedral angles of $t_i$ ( for $i=1,2,3$, let $\alpha_i$ be the dihedral angle on $e$). The following relation holds. 
\begin{equation*}
\alpha_4=\beta_2+\gamma_1,\quad\alpha_5=\beta_1+\gamma_2
\end{equation*}
\begin{equation*}
\beta_4=\beta_1+\gamma_3,\quad\beta_5=\beta_3+\gamma_1\\
\end{equation*}
\begin{equation*}
\gamma_4=\beta_3+\gamma_2,\quad\gamma_5=\beta_2+\gamma_3    
\end{equation*}

Since the edge $e$ is balanced, i.e.,  $\alpha_1+\alpha_2+\alpha_3=2\pi$, we can see that the sum of the dihedral angles of $t_4$ and $t_5$ is $\pi$, respectively, taking into consideration the above relation. Furthermore, since the dihedral angles of $t_1,t_2$, and $t_3$ are positive, the dihedral angles of $t_4$ and $t_5$ are also positive by the above equations. Conversely, given the dihedral angles of $t_4$ and $t_5$, it does not necessarily follow that the solutions for the positive dihedral angles of $t_1,t_2$ and $t_3$ are obtained. However, under common dihedral angles of $t_4,t_5$, if multiple solutions of positive dihedral angles of $t_1,t_2,t_3$ are obtained, they are gauge equivalent and  $\alpha_1+\alpha_2+\alpha_3=2\pi$. (Figure \ref{fig:3-2_Pachner}) . 
\begin{figure}[htbp]
\centering
\includegraphics[width=0.7\textwidth]{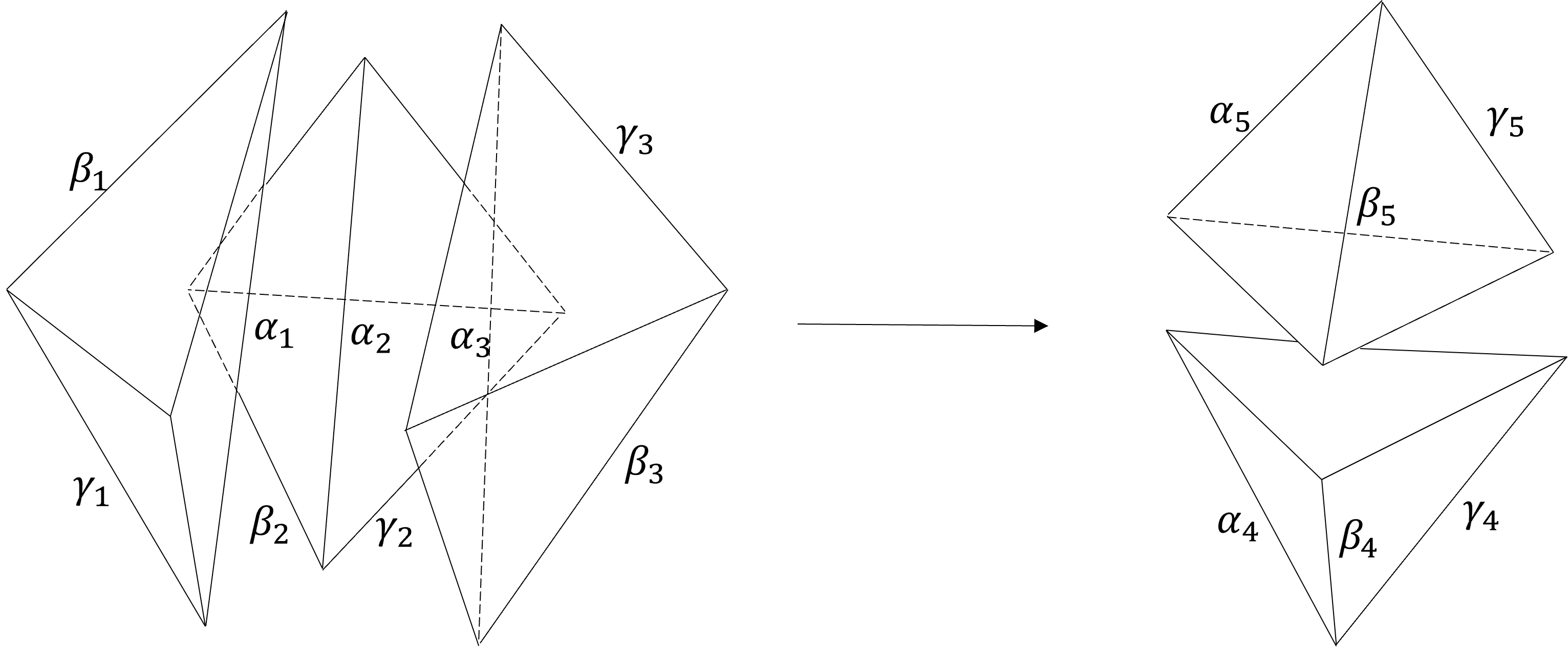}
\caption{3-2 Pachner move.}
\label{fig:3-2_Pachner}
\end{figure}
\begin{definition}
If some shaped pseudo 3-manifold $Y$ is obtained from $X$ by replacing $S$ by $S_e$, we say that it is obtained from $X$ by a shaped 3-2 Pachner move along $e$. 
\end{definition}
A leveled shaped pseudo 3-manifold $(Y,l_Y)$ is said to be obtained by a leveled shaped 3-2 Pachner move if it is obtained from a leveled shaped pseudo 3-manifold $(X,l_X)$ such that the following condition is satisfied.
The condition is that there exists $e\in\Delta_1(X)$ such that $Y=X_e$, and 
\begin{equation*}
l_Y=l_X+\frac{1}{12}\sum_{a\in(\phi^{3,1})^{-1}(e)}\sum_{b\in\Delta_3^1(X)}\epsilon_{p(a),p(b)}\alpha_X(b).
\end{equation*}
This action is defined such that the level-dependent invariant is invariant to leveled shaped 3-2 Pachner moves.
\begin{definition}
A (leveled) shaped pseudo 3-manifold $X$ is called a Pachner refinement of a (leveled) shaped pseudo 3-manifold $Y$ if it satisfies the following condition.
The condition is that there exists a finite sequence of (leveled) shaped pseudo 3-manifolds 
\begin{equation*}
X=X_1,X_2,\ldots,X_n=Y
\end{equation*}
such that for any $i\in\{1,\ldots,n-1\}$, $X_{i+1}$ is obtained from $X_i$ by a (leveled) shaped 3-2 Pachner move.

Two (leveled) shaped pseudo 3-manifolds, $X$ and $Y$, are said to be equivalent if there exist Pachner refinements $X^\prime$, $Y^\prime$ of $X$ and $Y$ such that $X^\prime$ and $Y^\prime$ are gauge equivalent. 
\end{definition}

\begin{definition}
An oriented triangulated pseudo 3-manifold $X$ is said to 
be admissible if  
\begin{equation*}
    S_{r}(X)\neq\emptyset
\end{equation*}
and
\begin{equation*}
H_2(X-\Delta_0(X),\mathbb{Z})=0
\end{equation*} 
\end{definition}

The Andersen-Kashaev TQFT is not defined on all leveled shaped pseudo 3-manifolds, and the invariant of the TQFT is guaranteed to be well-defined only on admissible (leveled) shaped psudo 3-manifolds.
 This is because the positivity of the dihedral angles is necessary to regularize the distribution corresponding to the tetrahedra described below, and $H_2(X-\Delta_0(X),\mathbb{Z})=0$ guarantees that the product of the distributions for all tetrahedra constituting $X$ is determined, and we can push forward the product necessary to define the TQFT invariant.
\begin{definition}
Two admissible (leveled) shaped pseudo 3-manifolds $X$ and $Y$ are said 
to be admissibly equivalent if there exists a gauge equivalence $h^\prime: X^\prime\rightarrow Y^\prime$ for some Pachner refinements $X^\prime$, $Y^\prime$ of $X$ and $Y$ such that the following conditions are satisfied.
\begin{equation*}
\Delta_1(X^\prime)=\Delta_1(X)\cup D_X, \quad \Delta_1(Y^\prime)=\Delta_1(Y)\cup D_Y
\end{equation*}
and
\begin{equation*}
h^\prime(S_r(X^\prime)\cap\widetilde{\Omega}_{X^\prime,r}^{-1}(P_{D_X}))\cap\widetilde{\Omega}_{Y^\prime,r}^{-1}(P_{D_Y})\neq\emptyset,    
\end{equation*}
where
\begin{equation*}
P_{D_X}\coloneqq \left\{\omega:\Delta_1(X^\prime)\rightarrow\mathbb{R}\mid\forall e\in D_X, \omega(e)=2\pi \right\},
\end{equation*}
and $P_{D_Y}$ is defined similarly.
\end{definition}
\subsection{Categroid}
\begin{definition}
A categroid $\mathcal{C}$ consists of a family of objects $\mathrm{Obj} (\mathcal{C})$ and a set of morphisms $\mathrm{Hom}_{\mathcal{C}}(A,B)$ for any two objects  $A$, $B$ in $\mathrm{Obj} (\mathcal{C})$ such that the following two properties are satisfied. 

1. For any three objects $A$, $B$, $C\in\mathrm{Obj}(\mathcal{C})$, 
there exists a subset called composable morphisms 
\begin{equation*}
K_{A,B,C}^{\mathcal{C}}\subset \mathrm{Hom}_{\mathcal{C}}(A,B)\times \mathrm{Hom}_{\mathcal{C}}(B,C) 
\end{equation*}
and a composite map such that the composition of morphisms is associative,
\begin{equation*}
\circ: K_{A,B,C}^{\mathcal{C}}\rightarrow \mathrm{Hom}_{\mathcal{C}}(A,C).
\end{equation*}

2. For any object $A\in\mathrm{Obj}(\mathcal{C})$, there exists an identity morphism $\mathrm{id}_A\in\mathrm{Hom}_{\mathcal{C}}(A,A)$ which can be composite with any morphism $f\in\mathrm{Hom}_{\mathcal{C}}(A,B)$, $g\in\mathrm{Hom}_{\mathcal{C}}(B,A)$ such that the following equation satisfies.
\begin{equation*}
\mathrm{id}_A\circ f = f \quad \mathrm{and}\quad g\circ\mathrm{id}_A =g. 
\end{equation*}
\end{definition}
\subsection{A categroid of admissible leveled shaped pseudo 3-manifolds}
The equivalence class of leveled shaped pseudo 3-manifolds is a morphism of the cobordism category $\mathcal{B}$, where the object is a triangulated surface, and the composition of morphisms is gluing the corresponding parts of the boundary by the CW-homeomorphism that preserves the orientation of the edges and reverses the orientation of the faces, which involves an obvious composition of dihedral angles and a sum of levels. Depending on how the boundary is partitioned, even the same leveled shaped pseudo 3-manifolds can be interpreted as different morphisms of $\mathcal{B}$. However, there is a standard way of partitioning the boundary as described below.

For a tetrahedron $T=[v_0,v_1,v_2,v_3]$ in $\mathbb{R}^3$ with ordered vertices $v_0,v_1,v_2,v_3$, its sign is defined as
\begin{equation*}
{\rm sign}(T)={\rm sign}(\det(v_1-v_0,v_2-v_0,v_3-v_0)),
\end{equation*}
and the sign of the face is defined as 
\begin{equation*}
{\rm sign}(\partial_i T)=(-1)^i{\rm sign}(T),\quad i\in{0,\ldots,3}.
\end{equation*}

For a pseudo 3-manifold $X$, the sign of faces of tetrahedra constituting $X$ induces the following sign function on the faces of the boundary of $X$. 
\begin{equation*}
{\rm sign}_X:\Delta_2(\partial X)\rightarrow \left\{\pm 1\right\}
\end{equation*}

This sign function decomposes the boundary of $X$ into two components $\partial_{-}X$, $\partial_{+}X$ consisting of an equal number of triangles.

In the following (the equivalence class of) a leveled shaped pseudo 3-manifold $X$ is considered as a $\mathcal{B}$-morphism between the objects $\partial_{-}X$ and $\partial_{+}X$. In other words
\begin{equation*}
    X\in{\rm Hom }_{\mathcal{B}}(\partial_{-}X,\partial_{+}X). 
\end{equation*}

The TQFT of Andersen and Kashaev is not defined on the entire $\mathcal{B}$-category, but on the sub-categroid formed by admissible equivalence classes of admissible morphisms.
\begin{definition}
The categroid $\mathcal{B}_{a}$ of admissible leveled shaped pseudo 3-manifolds is a sub-categroid of the category formed by leveled shaped pseudo 3-manifolds, whose morphisms consist of admissible equivalence classes of admissible leveled shaped pseudo 3-manifolds.
The composition map in this sub-categroid is induced from the category $\mathcal{B}$, and
the composable morphisms are
\begin{equation*}
\begin{split}
 K_{A,B,C}^{\mathcal{B}_a}&=
 \{(X_1,X_2)\in\mathrm{Hom}_{\mathcal{B}_a}(A,B)\times \mathrm{Hom}_{\mathcal{B}_a}(B,C)\mid S_r(X_1\circ X_2)\neq\emptyset,\\ &\quad H_2(X_1\circ X_2 -\Delta_0(X_1\circ X_2),\mathbb{Z})=0 \}.     
\end{split}
\end{equation*}
\end{definition}
\subsection{The TQFT functor}
Andersen, Kashaev's TQFT constructs a family of functors $\{F_{\hbar}\}_{\hbar\in\mathbb{R}}$  from a cobordism categroid $\mathcal{B}_a$ consisting of admissible leveled shaped pseudo 3-manifolds to a categroid $\mathcal{D}$ of tempered distributions as described below.

The space $\mathcal{S}^\prime (\mathbb{R}^n)$ of (complex) tempered distributions is the space of continuous linear functionals on the (complex) Schwartz space $\mathcal{S}(\mathbb{R}^n)$.  
By the Schwartz representation theorem, any tempered distribution can be represented by differentiating a continuous function with polynomial growth a finite number of times. Therefore, we can consider a tempered distribution as a function of $\mathbb{R}^n$. For any $\phi\in\mathcal{S}^\prime (\mathbb{R}^n),x\in\mathbb{R}^n$, we denote $\phi(x)\equiv\langle x|\phi\rangle$. This notation should be considered in the usual sense of distributions. For example,
\begin{equation*}
\phi(f)=\int_{\mathbb{R}^n}\phi(x)f(x)dx
\end{equation*}

This formula shows that $\mathcal{S}(\mathbb{R}^n)\subset\mathcal{S}^\prime(\mathbb{R}^n)$.
\begin{definition}
The categroid $\mathcal{D}$ has a finite set as its object, and for two finite sets $n$ and $m$, the set of morphisms from $n$ to $m$ is
 \begin{equation*}
 {\rm Hom}_{\mathcal{D}}(n,m)=\mathcal{S}^\prime(\mathbb{R}^{n\sqcup m}).   
 \end{equation*}
\end{definition}
Let $\mathcal{L}(S(\mathbb{R}^n),\mathcal{S}^\prime(\mathbb{R}^m))$ denote the space of continuous linear maps from $\mathcal{S}(\mathbb{R}^n)$ to $\mathcal{S}^\prime(\mathbb{R}^m)$.

For any $\phi\in L(\mathcal{S}(\mathbb{R}^n)$, $\mathcal{S}^\prime(\mathbb{R}^m)), f\in\mathcal{S}(\mathbb{R}^n)$, $g\in\mathcal{S}(\mathbb{R}^m)$, there exists an isomorphism 
\begin{equation}\label{Schwartz-isom}
\widetilde{\cdot}:\mathcal{L}(\mathcal{S}(\mathbb{R}^n), \mathcal{S}^\prime(\mathbb{R}^m))\rightarrow\mathcal{S}^\prime(\mathbb{R}^{n\sqcup m})
\end{equation}
defined by 
\begin{equation*}
\phi(f)(g)=\widetilde{\phi}(f\otimes g).
\end{equation*}

The composition partially defined in this categroid is defined as follows. Let $n,m,l$ be three finite sets and $A\in{\rm Hom}_{\mathcal{D}}(n,m)$, $B\in{\rm Hom}_{\mathcal{D}}(m,l)$.
By theorem 6.1.2 in \cite{MR1996773}, there exist pullback maps
\begin{equation*}
\pi^{\ast}_{n,m}:\mathcal{S}^\prime(\mathbb{R}^{n\sqcup m})\rightarrow\mathcal{S}^{\prime}(\mathbb{R}^{n\sqcup m\sqcup l}) ,\quad
\pi^{\ast}_{m,l}:\mathcal{S}^\prime(\mathbb{R}^{m\sqcup l})\rightarrow\mathcal{S}^{\prime}(\mathbb{R}^{n\sqcup m\sqcup l}) .
\end{equation*}
By theorem IX.45 in \cite{MR0493420}, 
\begin{equation*}
\pi^\ast_{n,m}(A)\pi^\ast_{m,l}(B)\in\mathcal{S}^\prime(\mathbb{R}^{n\sqcup m \sqcup l})
\end{equation*}
is well-defined if $\pi_{n,m}^\ast(A)$ and $\pi_{m,l}^\ast(B)$ satisfy the following transversality condition.
\begin{equation}\label{wave_front_set}
(WF(\pi_{n,m}^\ast(A))\oplus WF(\pi_{m,l}^\ast(B)))\cap Z_{n\sqcup m\sqcup l}=\emptyset,
\end{equation}
where $Z_{n\sqcup m\sqcup l}$ is the zero section of $T^\ast(\mathbb{R}^{n\sqcup m\sqcup l})$. Furthermore, if $\pi^\ast_{n,m}(A)\pi^\ast_{m,l}(B)$ extends continuously to $\mathcal{S}^\prime(\mathbb{R}^{n\sqcup m\sqcup l })_{m}$, we obtain the following well-defined element.
\begin{equation*}
(\pi_{n,l})_\ast(\pi^\ast_{n,m}(A)\pi^\ast_{m,l}(B))\in\mathcal{S}^\prime(\mathbb{R}^{n \sqcup l}),
\end{equation*}
where $\mathcal{S}^\prime(\mathbb{R}^n)_m$ is defined as follows.
\begin{definition}
$\mathcal{S}(\mathbb{R}^n)_m$ is the set of all $\phi\in C^\infty(\mathbb{R}^n)$ such that  
\begin{equation*}
\sup_{x\in\mathbb{R}^n}|x^\beta\partial^\alpha(\phi)(x)|<\infty
\end{equation*} 
for all multi indices $\alpha,\beta$ such that for $n-m<i\le n$ if $\alpha_i=0$, then $\beta_i=0$. 
Also, $\mathcal{S}^\prime(\mathbb{R}^n)_m$ is a continuous dual of $\mathcal{S}(\mathbb{R}^n)_m$ regarding these seminorms.  
\end{definition}
\begin{definition}
If $A\in{\rm Hom}_D(n,m)$ and $B\in{\rm Hom}_D(m,l)$ satisfy the 
equation (\ref{wave_front_set}) and $\pi^\ast_{n,m}(A)\pi^\ast_{m,l}(B)$ continuously extends to $\mathcal{S}^\prime (\mathbb{R}^{n\sqcup m\sqcup l})_m$, we define 
\begin{equation*}
AB=(\pi_{n,l})_\ast(\pi_{n,m}^\ast(A)\pi_{m,l}^\ast(B))\in{\rm Hom}_{\mathcal{D}}(n,l).
\end{equation*}
\end{definition}
Therefore, the composable morphisms in the categroid $\mathcal{D}$ are 
\begin{equation*}
\begin{split}
K_{n,m,l}^{\mathcal{D}}=&\{(A,B)\in\mathcal{S}^\prime(\mathbb{R}^{n\sqcup m})\times\mathcal{S}^\prime(\mathbb{R}^{m\sqcup l})\mid (\mathrm{WF}(\pi_{n,m}^\ast(A))\oplus WF(\pi_{m,l}^\ast(B)))\cap Z_{n\sqcup m\sqcup l} \\
& =\emptyset, \pi^\ast_{n,m}(A)\pi^\ast_{m,l}(B)\in\mathcal{S}^\prime(\mathbb{R}^{n\sqcup m\sqcup l })_{m}\}.
\end{split}
\end{equation*}

For any $A\in\mathcal{L}(\mathcal{S}(\mathbb{R}^n),\mathcal{S}^\prime(\mathbb{R}^m))$, there 
exists an unique adjoint $A^\ast\in\mathcal{L}(\mathcal{S}(\mathbb{R}^m),\mathcal{S}^\prime(\mathbb{R}^n))$ defined as follows.
For any $f\in\mathcal{S}(\mathbb{R}^m), g\in\mathcal{S}(\mathbb{R}^n)$, 
\begin{equation*}
A^\ast(f)(g)=\overline{A(\overline{g})(\overline{f})}.
\end{equation*}
\begin{definition}
The functor $F: \mathcal{B}_a\rightarrow D$ is called 
a $\ast-$functor if 
\begin{equation*}
F(X^\ast)=F(X)^\ast, 
\end{equation*}
where $X^\ast$ is $X$ with an opposite orientation to $X$, and  $F(X)^\ast$ is the adjoint of $F(X)$.
\end{definition}
Faddeev's quantum dilogarithm plays an important role in constructing 
the functor of Andersen-Kashaev's TQFT.
\begin{definition}
Faddeev's quantum dilogarithm \cite{MR1345554} is a function whose arguments are two complex variables $z$ and $\mathsf{b}$ defined for $| \Im z |<\frac{1}{2}| \mathsf{b}+\mathsf{b}^{-1}|$, 
\begin{equation*}
\Phi_{\mathsf{b}}(z)\coloneqq\exp{\left(\int_{C}\frac{e^{-2izw}dw}{4\sinh{w{\mathsf{b}}}\sinh{(\frac{w}{\mathsf{b}})}w}\right)},
\end{equation*}
where $C$ runs along the real axis and deviates into the upper half-plane in the neighborhood of the origin, and it extends to the meromorphic function on $z\in\mathbb{C}$ by the functional equation
\begin{equation*}
\Phi_{\mathsf{b}}(z-i{\mathsf{b}}^{\pm 1}/2)=(1+e^{2\pi {\mathsf{b}}^{\pm 1}z})\Phi_{\mathsf{b}}(z+i{\mathsf{b}}^{\pm 1}/2).
\end{equation*}
\end{definition}
$\Phi_{\mathsf{b}}(z)$ depends on ${\mathsf{b}}$ only through $\hbar$
defined by
\begin{equation*}
\hbar\coloneqq({\mathsf{b}+\mathsf{b}^{-1}})^{-2}.
\end{equation*}
$\Phi_{\mathsf{b}}$ has the following properties.
\begin{prop}[ \cite{MR3227503}, Appendix A ]\label{property-of-Phi_b}
(1) 
For any ${\mathsf{b}}\in\mathbb{R}_{>0}$, for any $z\in\mathbb{R}+i\left(\frac{-1}{2\sqrt{\hbar}},\frac{1}{2\sqrt{\hbar}}\right)$, 
\begin{equation*}
\Phi_{\mathsf{b}}(z)\Phi_{\mathsf{b}}(-z)=e^{i\frac{\pi}{12}(\mathsf{b}^2+\mathsf{b}^{-2})}e^{i\pi z^2}.  \end{equation*}
(2) 
For any $\mathsf{b}\in\mathbb{R}_{>0}$, for any $z\in\mathbb{R}+i\left(\frac{-1}{2\sqrt{\hbar}},\frac{1}{2\sqrt{\hbar}}\right)$,
\begin{equation*}
\overline{\Phi_{\mathsf{b}}(z)}=\frac{1}{\Phi_{\mathsf{b}}(\overline{z})}.
\end{equation*}
(3) (Behavior at infinity) For any $\mathsf{b}\in\mathbb{R}_{>0}$,
\begin{equation*}
\begin{split}
\Phi_{\mathsf{b}}(z)&\underset{\Re(z)\rightarrow-\infty}{\sim}1,\\
\Phi_{\mathsf{b}}(z)&\underset{\Re(z)\rightarrow\infty}{\sim}e^{i\frac{\pi}{12}(\mathsf{b}^2+\mathsf{b}^{-2})}e^{i\pi z^2}.
\end{split}
\end{equation*}

In particular, for any $\mathsf{b}\in\mathbb{R}_{>0}$, for any 
$d\in\left(\frac{-1}{2\sqrt{\hbar}},\frac{1}{2\sqrt{\hbar}}\right)$,
\begin{equation*}
\begin{split}
\left|\Phi_{\mathsf{b}}(x+id)\right|&\underset{\mathbb{R}\ni x\rightarrow-\infty}{\sim}1,\\
\left|\Phi_{\mathsf{b}}(x+id)\right|&\underset{\mathbb
{R}\ni x\rightarrow +\infty}{\sim}e^{-2\pi xd}.
\end{split}
\end{equation*}
\end{prop}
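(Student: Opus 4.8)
The plan is to work throughout with the logarithm $\psi(z) := \log\Phi_{\mathsf b}(z) = \int_C \frac{e^{-2izw}\,dw}{4\sinh(w\mathsf b)\sinh(w/\mathsf b)\,w}$, so that the multiplicative identities in the statement become additive statements about one contour integral, and to exploit two structural features of the kernel $K(w) := 4\sinh(w\mathsf b)\sinh(w/\mathsf b)\,w$: it is an \emph{odd} function of $w$, and it is \emph{real} on the real axis whenever $\mathsf b\in\mathbb R_{>0}$. All three parts reduce to elementary manipulations of $C$ once these features are isolated. I assume throughout that $z$ lies in the strip $|\Im z|<\tfrac12|\mathsf b+\mathsf b^{-1}|$, which is precisely what makes the defining integral absolutely convergent, since near $w=\pm\infty$ one has $1/K(w)\sim e^{-(\mathsf b+\mathsf b^{-1})|w|}$ up to polynomial factors, dominating the growth of $e^{-2izw}$.

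For part (1) I would add the integrals for $\psi(z)$ and $\psi(-z)$: the combined integrand $(e^{-2izw}+e^{2izw})/K(w)$ has an even numerator over the odd kernel, hence is odd in $w$. Splitting $C$ into its two real half-lines and the small upper semicircle $C_\epsilon$ about the origin, the half-line contributions cancel under $w\mapsto -w$, leaving only $\int_{C_\epsilon}$. Expanding at $w=0$ via $\sinh(w\mathsf b)\sinh(w/\mathsf b)=w^2+\tfrac{w^4}{6}(\mathsf b^2+\mathsf b^{-2})+\cdots$ and $2\cos(2zw)=2-4z^2w^2+\cdots$ gives a Laurent tail whose $1/w^3$ term integrates to zero over $C_\epsilon$ and whose $1/w$ coefficient is $-z^2-\tfrac1{12}(\mathsf b^2+\mathsf b^{-2})$; since $\int_{C_\epsilon}dw/w\to -i\pi$ as $\epsilon\to 0$, I obtain $\psi(z)+\psi(-z)=i\pi z^2+i\tfrac{\pi}{12}(\mathsf b^2+\mathsf b^{-2})$, which exponentiates to the claimed inversion formula.

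For part (2), conjugating the integral and using that $K$ is real on $\mathbb R$ while $\overline{e^{-2iz\bar w}}=e^{2i\bar z w}$, I get $\overline{\psi(z)}=\int_{\bar C} e^{2i\bar z w}/K(w)\,dw$, where $\bar C$ is the real line deformed \emph{below} the origin. The substitution $w\mapsto -w$, which returns the downward detour to the upward one defining $C$ and, by oddness of $K$, flips the kernel's sign, converts this into $-\int_C e^{-2i\bar z w}/K(w)\,dw=-\psi(\bar z)$; exponentiating yields $\overline{\Phi_{\mathsf b}(z)}=1/\Phi_{\mathsf b}(\bar z)$. For part (3), the limit $\Re z\to -\infty$ follows from a Riemann--Lebesgue/dominated-convergence argument: writing $z=x+iy$, the rapid oscillation of $e^{-2ixw}$ against the absolutely integrable remaining factor forces $\psi(z)\to 0$ (the semicircle contribution decays as well, since $\Re(-2izw)=2x\epsilon\sin\theta\to-\infty$ on the upper semicircle), so $\Phi_{\mathsf b}(z)\to 1$. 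The behavior $\Re z\to +\infty$ is then immediate from part (1): solving the inversion formula for $\Phi_{\mathsf b}(z)$ and noting $\Re(-z)\to -\infty$ gives $\Phi_{\mathsf b}(z)\sim e^{i\frac{\pi}{12}(\mathsf b^2+\mathsf b^{-2})}e^{i\pi z^2}$, and taking the modulus of $e^{i\pi(x+id)^2}$ produces the stated asymptotics for $|\Phi_{\mathsf b}(x+id)|$.

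I expect the real work to lie not in the algebra but in the analytic bookkeeping: justifying that $C$ may be split and deformed without crossing the poles of $1/K$ (located at $w\in i\pi\mathsf b^{\pm 1}\mathbb Z\setminus\{0\}$), controlling the semicircle and the tail estimates uniformly so that the limits $\epsilon\to 0$ and $\Re z\to\pm\infty$ may be taken under the integral sign, and verifying that the strip hypothesis $|\Im z|<\tfrac12|\mathsf b+\mathsf b^{-1}|$ is exactly what legitimizes each interchange. Once these convergence and deformation lemmas are established, parts (1)--(3) follow as above, with part (3) riding entirely on the inversion relation of part (1).
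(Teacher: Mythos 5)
The paper does not actually prove this proposition: it is imported verbatim from Andersen--Kashaev \cite{MR3227503}, Appendix A, as the bracketed attribution indicates, so there is no internal argument to compare yours against. Judged on its own merits, your proof is correct, and it is in substance the standard derivation that the cited literature uses, reconstructed from the contour-integral definition. The three pillars all check out: (1) the parity argument is right --- the kernel $4w\sinh(w\mathsf{b})\sinh(w/\mathsf{b})$ is odd, the half-line contributions of the even numerator $2\cos(2zw)$ cancel, and the Laurent data on the semicircle are exactly as you computed, namely $\frac{1}{2w^3}-\bigl(z^2+\tfrac{1}{12}(\mathsf{b}^2+\mathsf{b}^{-2})\bigr)\frac{1}{w}+O(w)$ with $\int_{C_\epsilon}w^{-3}dw=0$ and $\int_{C_\epsilon}w^{-1}dw=-i\pi$ (in fact exactly, for every $\epsilon$, so no limit is even needed); (2) the conjugation argument is right, though the sign bookkeeping deserves one explicit sentence: conjugation sends $C$ to the downward-bumped contour $\bar C$, and the substitution $w\mapsto-w$ produces three sign flips (from $dw$, from the orientation reversal of the image contour, and from oddness of the kernel), of which the first two cancel, leaving $\overline{\psi(z)}=-\psi(\bar z)$; (3) the Riemann--Lebesgue argument for $\Re(z)\to-\infty$ works because $e^{2\Im(z)w}/K(w)$ is in $L^1$ of the real-line portions precisely when $|\Im(z)|<\tfrac12(\mathsf{b}+\mathsf{b}^{-1})=\tfrac{1}{2\sqrt{\hbar}}$, and deducing the $+\infty$ behavior from (1) is the clean way to do it. One point worth making explicit in a written version: the entire asymmetry between the two limits $\Re(z)\to\pm\infty$ lives in the semicircle, whose integrand has real part of the exponent equal to $2\epsilon(x\sin\theta+y\cos\theta)$; this tends to $-\infty$ pointwise in $\theta\in(0,\pi)$ only as $x\to-\infty$ (and blows up as $x\to+\infty$), and since the decay is not uniform near $\theta=0,\pi$ you should invoke dominated convergence rather than a uniform estimate there --- which is exactly the bookkeeping you flagged. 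With those details filled in, the proof is complete and self-contained, which is arguably an improvement over the paper's bare citation.
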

\begin{theorem}[Andersen, Kashaev]
For any $\hbar\in\mathbb{R}_{+}$, there exists a unique $\ast-$ functor satisfying the following conditions. 
For any $A\in{\rm Ob}(\mathcal{B}_a)$, $F_\hbar(A)=\Delta_2(A)$, and
for any admissible leveled shaped pseudo 3-manifold $(X, l_X)$, the corresponding morphism in $\mathcal{D}$ is
\begin{equation*}
F_\hbar(X,l_X)=Z_\hbar(X)e^{i\pi \frac{l_X}{4\hbar}}\in\mathcal{S}^\prime(\mathbb{R}^{\Delta_2(\partial X)}),
\end{equation*}
where $Z_\hbar(X)$ is defined such that for a tetrahedron $T$ with ${\rm sign}(T)=1$
\begin{equation*}
Z_\hbar(T)(x)=\delta(x_0+x_2-x_1)\frac{\exp\left(2\pi i(x_3-x_2)(x_0+\frac{\alpha_3}{2i\sqrt{\hbar}}
)+\pi i\frac{\varphi_T}{4\hbar}\right)}{\Phi_{\mathsf{b}}\left(x_3-x_2+\frac{1-\alpha_1}{2i\sqrt{\hbar}}\right)}.
\end{equation*}
Here $\delta(t)$ is Dirac's delta function and
\begin{equation*}
\varphi_T\coloneqq\alpha_1\alpha_3+\frac{\alpha_1-\alpha_3}{3}-\frac{2\hbar+1}{6},\quad \alpha_i\coloneqq\frac{1}{\pi }\alpha_T(\partial_0\partial_i T),\quad i\in\{1,2,3\},
\end{equation*}
\begin{equation*}
x_i\coloneqq x(\partial_i(T)),\quad x:\Delta_2(\partial T)\to\mathbb{R}.
\end{equation*}
\end{theorem}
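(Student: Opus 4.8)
The plan is to construct the functor on the generating tetrahedra and then verify that it descends to admissible equivalence classes. First I would define $F_\hbar$ on objects by $F_\hbar(A)=\Delta_2(A)$ and on a single positively oriented tetrahedron $T$ by the stated kernel $Z_\hbar(T)$, using the behavior at infinity (Proposition \ref{property-of-Phi_b} (3)) to check that, whenever the dihedral angles $\alpha_i$ are positive, the arguments of $\Phi_{\mathsf{b}}$ lie in the strip of analyticity and the kernel decays so that $Z_\hbar(T)$ genuinely defines an element of $\mathcal{S}^\prime(\mathbb{R}^{\Delta_2(\partial T)})$. For a general admissible $(X,l_X)$ I would set $Z_\hbar(X)$ to be the push-forward over the interior faces of the product $\prod_T Z_\hbar(T)$, and multiply by $e^{i\pi l_X/(4\hbar)}$. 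Here admissibility enters twice: positivity of angles ($S_r(X)\neq\emptyset$) regularizes each tetrahedral factor, and $H_2(X-\Delta_0(X),\mathbb{Z})=0$ secures the transversality condition \eqref{wave_front_set} on wave front sets, so that the product of distributions exists and the push-forward lands in $\mathcal{S}^\prime$.

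Second, I would establish that $F_\hbar$ is well defined on $\mathcal{B}_a$, i.e.\ invariant under the two equivalence moves. Invariance under gauge equivalence is a direct computation: substituting the angle shift $\alpha_Y(h(a))=\alpha_X(a)+\pi\sum_b\epsilon_{p(a),p(b)}g(\phi^{3,1}(b))$ into the tetrahedral kernels and collecting the phase produced by $\varphi_T$ against the prescribed level shift $l_Y-l_X=\sum_e g(e)\sum_a(\tfrac13-\tfrac{\alpha_X(a)}{\pi})$ shows that $Z_\hbar(X)e^{i\pi l_X/(4\hbar)}$ is unchanged; the level correction was designed precisely to cancel this phase. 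Invariance under the leveled shaped $3$-$2$ Pachner move is the analytic core: using the explicit shape relations $\alpha_4=\beta_2+\gamma_1$, $\alpha_5=\beta_1+\gamma_2$, and so on, together with the balancing condition $\alpha_1+\alpha_2+\alpha_3=2\pi$ on the removed edge $e$, I would reduce the equality of the three-tetrahedron kernel and the two-tetrahedron kernel to the pentagon functional identity for Faddeev's quantum dilogarithm, with the residual overall phase absorbed by the prescribed shift $l_Y-l_X=\tfrac{1}{12}\sum_a\sum_b\epsilon_{p(a),p(b)}\alpha_X(b)$.

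Third, I would check functoriality and the $\ast$-structure. Functoriality, that gluing along a boundary surface corresponds to the composition in $\mathcal{D}$, holds essentially by construction, since assembling $Z_\hbar(X)$ from its tetrahedra is itself a sequence of pullbacks, products, and push-forwards of exactly the type defining the composition map; one only needs to verify that the composable-morphism condition $K^{\mathcal{B}_a}$ maps into $K^{\mathcal{D}}$, which again follows from the admissibility hypotheses controlling wave front sets and temperedness. The $\ast$-property follows from Proposition \ref{property-of-Phi_b} (2), $\overline{\Phi_{\mathsf{b}}(z)}=1/\Phi_{\mathsf{b}}(\bar z)$: reversing the orientation of $X$ flips every $\mathrm{sign}(T)$, and this relation identifies the orientation-reversed kernel with the complex conjugate, yielding $F_\hbar(X^\ast)=F_\hbar(X)^\ast$ and fixing the value of $Z_\hbar$ on negatively oriented tetrahedra. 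Uniqueness is then immediate, since the tetrahedra generate the categroid under gluing, so the functor axioms together with the prescribed value on a single tetrahedron determine $F_\hbar$ on all of $\mathcal{B}_a$.

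The hard part will be the $3$-$2$ Pachner step. Matching the combinatorial shape data to the pentagon identity for $\Phi_{\mathsf{b}}$ while simultaneously tracking the quadratic phase factors coming from $\varphi_T$ and the $\delta$-function constraints, and verifying that every integral converges in the tempered sense throughout the move, is where the genuine difficulty lies; by comparison the gauge and functoriality steps are formal once the tetrahedral distribution has been shown to be well defined.
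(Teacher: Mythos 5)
You should first be aware that the paper you are working from contains no proof of this statement at all: it is quoted as background (with attribution to Andersen--Kashaev, \cite{MR3227503}), and the entire Section \ref{A-K-TQFT} is a summary of that reference. So the comparison here is necessarily against the proof in \cite{MR3227503}, not against anything in this paper. Measured against that, your outline reproduces the correct architecture: tetrahedral kernels built from the charged operators $\mathsf{T}(a,c)$, $\overline{\mathsf{T}}(a,c)$ as tempered distributions, gauge invariance by matching the charge shifts against the level shift, invariance under the leveled shaped $3$--$2$ Pachner move as the analytic core, the $\ast$-property from $\overline{\Phi_{\mathsf{b}}(z)}=1/\Phi_{\mathsf{b}}(\overline{z})$ (Proposition \ref{property-of-Phi_b}(2)), and uniqueness because tetrahedra generate the categroid. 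This is the actual shape of the Andersen--Kashaev argument.

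The genuine gap is that the one step carrying all of the content --- the $3$--$2$ move --- is named but not performed, and naming it is not a reduction. What is needed is the \emph{charged} pentagon identity: a five-term operator identity of the form $\mathsf{T}_{12}(a_4,c_4)\,\mathsf{T}_{13}(a_2,c_2)\,\mathsf{T}_{23}(a_0,c_0)=\mathsf{T}_{23}(a_1,c_1)\,\mathsf{T}_{12}(a_3,c_3)$, valid precisely when the ten charges satisfy the linear constraints coming from the shape relations and the balanced-edge condition $\alpha_1+\alpha_2+\alpha_3=2\pi$; one must derive these constraints, prove the identity from the uncharged pentagon relation for $\Phi_{\mathsf{b}}$, and track the resulting scalar phase against the prescribed level shift $\tfrac{1}{12}\sum_{a}\sum_{b}\epsilon_{p(a),p(b)}\alpha_X(b)$. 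None of this is routine, and your proposal explicitly defers it. Two further claims are also asserted rather than argued: that $H_2(X-\Delta_0(X),\mathbb{Z})=0$ ``secures'' the transversality condition \eqref{wave_front_set} (this requires an actual analysis of the wave front sets of the $\delta$-factors and quadratic phases, and of why the push-forward stays tempered), and that functoriality holds ``essentially by construction'' (composition in $\mathcal{D}$ is only partially defined, so one must check that composable morphisms in $\mathcal{B}_a$ land in $K^{\mathcal{D}}$, which is again a wave-front-set argument, not a formality). As it stands your text is a faithful plan of the known proof, not a proof.
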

$Z_\hbar(X)$ and $Z_\hbar(T)$ are called partition functions and described in detail in Subsection \ref{partition-func-of-tetrahedron}.
For an admissible pseudo 3-manifold $X$, Andersen-Kashaev's TQFT functor gives the following well-defined function, 
\begin{equation*}
F_\hbar : LS_r(X)\rightarrow \mathcal{S}^\prime (\mathbb{R}^{\partial X}).
\end{equation*}

If $\partial X=\emptyset$, then $\mathcal{S}^\prime(\mathbb{R}^{\partial X})=\mathbb{C}$, and we obtain a complex-valued function on $LS_r(X)$.

In particular, the value of the functor $F_\hbar$ on any fully balanced admissible leveled shaped 3-manifold is a complex number, and it is a topological invariant in the sense that if two fully balanced admissible leveled shaped 3-manifolds are admissibly equivalent, $F_\hbar$ assigns the same complex number to them. 
\subsection{Tetrahedral operators of quantum Teichm\"{u}ller theory}
The operators $\mathsf{q}_i$ and $\mathsf{p}_i$ which act on $\mathcal{S}(\mathbb{R}^n)$ are defined as follows. For any $f\in\mathcal{S}(\mathbb{R}^n)$,
\begin{equation*}
\mathsf{q}_i(f)(t)=t_if(t),\quad \mathsf{p}_i(f)(t)=\frac{1}{2\pi i}\frac{\partial}{\partial t_i}(f)(t),\quad \forall t\in\mathbb{R}^n.
\end{equation*}
These operators are known to extend continuously to $\mathcal{S}^\prime(\mathbb{R}^n)$ and satisfy the following Heisenberg's commutation relations,
\begin{equation*}
[\mathsf{p}_i,\mathsf{p}_j]=[\mathsf{q}_i,\mathsf{q}_j]=0,\quad [\mathsf{p}_i,\mathsf{q}_j]=\frac{1}{2\pi i}\delta_{i,j}.
\end{equation*}

Fix ${\rm{b}}\in\mathbb{C}$ such that ${\Re}({\mathsf{b}})\neq 0$. 
By the spectral theorem, the following operators
\begin{equation*}
\mathsf{u}_i=e^{2\pi\mathsf{bq}_i},\quad\mathsf{v}_i=e^{2\pi\mathsf{bp}_i}
\end{equation*}
can be defined.

The commutation relations between $\mathsf{u}_i$ and $\mathsf{v}_j$ are
\begin{equation*}
[\mathsf{u}_i,\mathsf{u}_j]=[\mathsf{v}_i,\mathsf{v}_j]=0,\quad \mathsf{u}_i\mathsf{v}_j=e^{i2\pi\mathsf{b}^2\delta_{i,j}}\mathsf{v}_j\mathsf{u}_i.
\end{equation*}

According to \cite{MR1607296}, consider the following operations on $\vec{\mathsf{w}}_i=(\mathsf{u}_i,\mathsf{v}_i), i=1,2$.
\begin{equation*}
\vec{\mathsf{w}}_1\cdot\vec{\mathsf{w}}_2\coloneqq(\mathsf{u}_1\mathsf{u}_2, \mathsf{u}_1\mathsf{v}_2+\mathsf{v}_1)
\end{equation*}
\begin{equation*}
\vec{\mathsf{w}}_1\ast\vec{\mathsf{w}}_2\coloneqq(\mathsf{v}_1\mathsf{u}_2(\mathsf{u}_1\mathsf{v}_2+\mathsf{v}_1)^{-1},\mathsf{v}_2(\mathsf{u}_1\mathsf{v}_2+\mathsf{v}_1)^{-1})
\end{equation*}
\begin{theorem}[\cite{MR1607296}]
Let $\psi(z)$ be a solution of the following functional equation
\begin{equation}\label{functional-equation}
\psi\left(z+\frac{i{\mathsf{b}}}{2}\right)=\psi\left(z-\frac{i\mathsf{b}}{2}\right)(1+e^{2\pi\mathsf{b}z}),\quad z\in\mathbb{C}.
\end{equation}
Then the operator 
\begin{equation}\label{T-operator}
\mathsf{T}=\mathsf{T}_{12}\coloneqq e^{2\pi i\mathsf{p}_1\mathsf{q}_2}\psi(\mathsf{q}_1+\mathsf{p}_2-\mathsf{q}_2)=\psi(\mathsf{q}_1-\mathsf{p}_1+\mathsf{p}_2)e^{2\pi i\mathsf{p}_1\mathsf{q}_2}
\end{equation}
is an element of $\mathcal{L}(\mathcal{S}(\mathbb{R}^4),\mathcal{S}(\mathbb{R}^4))$, and satisfies the following equations,
\begin{equation*}
\vec{\mathsf{w}}_1\cdot\vec{\mathsf{w}}_2\mathsf{T}=\mathsf{T}\vec{\mathsf{w}}_1,\quad
\vec{\mathsf{w}}_1\ast\vec{\mathsf{w}}_2\mathsf{T}=\mathsf{T}\vec{\mathsf{w}}_2.
\end{equation*}
\end{theorem}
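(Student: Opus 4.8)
The plan is to establish the two intertwining identities by computing, for each generator $\mathsf{u}_1,\mathsf{v}_1,\mathsf{u}_2,\mathsf{v}_2$, how it is conjugated by $\mathsf{T}$, and then matching the results against the four components of $\vec{\mathsf{w}}_1\cdot\vec{\mathsf{w}}_2$ and $\vec{\mathsf{w}}_1\ast\vec{\mathsf{w}}_2$. Everything reduces to two ingredients: the conjugation action of the Gaussian exponential $e^{A}$ with $A\coloneqq 2\pi i\mathsf{p}_1\mathsf{q}_2$, and the shift action of $\psi(\mathsf{L})$ with $\mathsf{L}\coloneqq\mathsf{q}_1+\mathsf{p}_2-\mathsf{q}_2$. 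First I would record that $\psi=\Phi_{\mathsf{b}}^{-1}$ (up to a constant) solves \eqref{functional-equation}, since the $+$ form of the functional equation for $\Phi_{\mathsf{b}}$ reads $\Phi_{\mathsf{b}}(z+i\mathsf{b}/2)=\Phi_{\mathsf{b}}(z-i\mathsf{b}/2)(1+e^{2\pi\mathsf{b}z})^{-1}$; by Proposition \ref{property-of-Phi_b}(2), this $\psi$ is smooth and unimodular on $\mathbb{R}$ with bounded derivatives, which both makes $\psi$ of a self-adjoint operator well defined through the spectral theorem and supplies the analytic control. For the membership $\mathsf{T}\in\mathcal{L}(\mathcal{S}(\mathbb{R}^4),\mathcal{S}(\mathbb{R}^4))$, note that $e^{A}$ is the metaplectic operator implementing a linear symplectic change of variables and hence preserves $\mathcal{S}(\mathbb{R}^4)$, while a further linear symplectic transformation turns $\mathsf{L}$ into a single coordinate, so $\psi(\mathsf{L})$ is unitarily equivalent to multiplication by the smooth bounded function $\psi$ and also preserves $\mathcal{S}$; composing the two gives the claim.

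For the conjugation by $e^{A}$, the Hadamard series terminates because the relevant commutators are central: $e^{A}\mathsf{q}_1e^{-A}=\mathsf{q}_1+\mathsf{q}_2$ and $e^{A}\mathsf{p}_2e^{-A}=\mathsf{p}_2-\mathsf{p}_1$, while $\mathsf{q}_2$ and $\mathsf{p}_1$ are fixed. Exponentiating yields $e^{A}\mathsf{u}_1e^{-A}=\mathsf{u}_1\mathsf{u}_2$ and $e^{A}\mathsf{v}_2e^{-A}=\mathsf{v}_2\mathsf{v}_1^{-1}$, with $\mathsf{u}_2,\mathsf{v}_1$ fixed; applying the same rule to $\mathsf{L}$ gives $e^{A}\mathsf{L}e^{-A}=\mathsf{q}_1-\mathsf{p}_1+\mathsf{p}_2$, which verifies the equality of the two displayed forms of $\mathsf{T}$. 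For the conjugation by $\psi(\mathsf{L})$, I would compute $[\mathsf{L},\mathsf{p}_1]$, $[\mathsf{L},\mathsf{q}_2]$, $[\mathsf{L},\mathsf{p}_2]$ and find that each of $\mathsf{v}_1,\mathsf{u}_2,\mathsf{v}_2$ shifts $\mathsf{L}$ by $\pm i\mathsf{b}$ under conjugation, whereas $\mathsf{u}_1$ commutes with $\mathsf{L}$. The functional equation \eqref{functional-equation} then converts $\psi(\mathsf{L}\pm i\mathsf{b})\psi(\mathsf{L})^{-1}$ into $(1+e^{\pm i\pi\mathsf{b}^2}e^{2\pi\mathsf{b}\mathsf{L}})^{\pm1}$, and a Baker--Campbell--Hausdorff computation gives $e^{2\pi\mathsf{b}\mathsf{L}}=e^{i\pi\mathsf{b}^2}\mathsf{u}_1\mathsf{u}_2^{-1}\mathsf{v}_2$.

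Combining the two ingredients produces all four conjugations. For instance $\mathsf{T}\mathsf{v}_1\mathsf{T}^{-1}=e^{A}\,\mathsf{v}_1(1+e^{2\pi i\mathsf{b}^2}\mathsf{u}_1\mathsf{u}_2^{-1}\mathsf{v}_2)\,e^{-A}$, and applying the $e^{A}$-rules together with the Weyl relation $\mathsf{v}_1\mathsf{u}_1=e^{-2\pi i\mathsf{b}^2}\mathsf{u}_1\mathsf{v}_1$ collapses this to $\mathsf{u}_1\mathsf{v}_2+\mathsf{v}_1$; since $\mathsf{u}_1$ commutes with $\psi(\mathsf{L})$ one gets directly $\mathsf{T}\mathsf{u}_1\mathsf{T}^{-1}=\mathsf{u}_1\mathsf{u}_2$, so the first relation $\vec{\mathsf{w}}_1\cdot\vec{\mathsf{w}}_2\,\mathsf{T}=\mathsf{T}\vec{\mathsf{w}}_1$ holds. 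The same mechanism gives $\mathsf{T}\mathsf{v}_2\mathsf{T}^{-1}=\mathsf{v}_2(\mathsf{u}_1\mathsf{v}_2+\mathsf{v}_1)^{-1}$ and, using $[\mathsf{u}_2,\mathsf{v}_1]=0$, $\mathsf{T}\mathsf{u}_2\mathsf{T}^{-1}=\mathsf{v}_1\mathsf{u}_2(\mathsf{u}_1\mathsf{v}_2+\mathsf{v}_1)^{-1}$, which are exactly the components of $\vec{\mathsf{w}}_1\ast\vec{\mathsf{w}}_2$, proving the second relation.

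The hard part will not be the algebra, which is routine bookkeeping with the Weyl relations, but making the imaginary-shift identities $\psi(\mathsf{L})\mathsf{v}_1=\mathsf{v}_1\psi(\mathsf{L}+i\mathsf{b})$ and $\psi(\mathsf{L}+i\mathsf{b})\psi(\mathsf{L})^{-1}=1+e^{i\pi\mathsf{b}^2}e^{2\pi\mathsf{b}\mathsf{L}}$ rigorous as operator identities. Here $\psi$ must be analytically continued off the real axis and the argument of $\psi$ ceases to be self-adjoint, so the functional calculus has to be justified on a common dense invariant domain; I would control this using the analyticity of $\Phi_{\mathsf{b}}$ in the strip $|\Im z|<\tfrac12|\mathsf{b}+\mathsf{b}^{-1}|$ together with the exact Weyl relations, treating the shifted operators via the translation action of $\mathsf{u}_i,\mathsf{v}_i$ rather than as literal spectral-theorem expressions.
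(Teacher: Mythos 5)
The paper does not prove this theorem: it is imported from Faddeev \cite{MR1607296} and used as a black box (immediately afterwards the paper merely records that $\psi=1/\Phi_{\mathsf{b}}$ solves \eqref{functional-equation}), so there is no internal proof to compare yours against. Judged on its own, your proposal is the standard argument for this result, and its algebraic core is correct. I verified: the conjugation rules $e^{A}\mathsf{q}_1e^{-A}=\mathsf{q}_1+\mathsf{q}_2$ and $e^{A}\mathsf{p}_2e^{-A}=\mathsf{p}_2-\mathsf{p}_1$ with $\mathsf{p}_1,\mathsf{q}_2$ fixed, hence $e^{A}\mathsf{L}e^{-A}=\mathsf{q}_1-\mathsf{p}_1+\mathsf{p}_2$, which indeed gives the equality of the two expressions in \eqref{T-operator}; the shift identities $\mathsf{v}_1^{-1}\mathsf{L}\mathsf{v}_1=\mathsf{L}+i\mathsf{b}$, $\mathsf{u}_2^{-1}\mathsf{L}\mathsf{u}_2=\mathsf{v}_2^{-1}\mathsf{L}\mathsf{v}_2=\mathsf{L}-i\mathsf{b}$, $[\mathsf{u}_1,\mathsf{L}]=0$; the normal-ordering identity $e^{2\pi\mathsf{b}\mathsf{L}}=e^{i\pi\mathsf{b}^2}\mathsf{u}_1\mathsf{u}_2^{-1}\mathsf{v}_2$; and the four conjugations $\mathsf{T}\mathsf{u}_1\mathsf{T}^{-1}=\mathsf{u}_1\mathsf{u}_2$, $\mathsf{T}\mathsf{v}_1\mathsf{T}^{-1}=\mathsf{u}_1\mathsf{v}_2+\mathsf{v}_1$, $\mathsf{T}\mathsf{u}_2\mathsf{T}^{-1}=\mathsf{v}_1\mathsf{u}_2(\mathsf{u}_1\mathsf{v}_2+\mathsf{v}_1)^{-1}$, $\mathsf{T}\mathsf{v}_2\mathsf{T}^{-1}=\mathsf{v}_2(\mathsf{u}_1\mathsf{v}_2+\mathsf{v}_1)^{-1}$, which are exactly the componentwise forms of the two intertwining relations with the correct orientation ($\mathsf{T}$ on the right of the dressed generators).

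Two points need repair or care. First, your analytic premise that $\psi$ is ``unimodular on $\mathbb{R}$ with bounded derivatives'' is false as stated: by Proposition \ref{property-of-Phi_b}(3), $\Phi_{\mathsf{b}}(z)$ behaves like $e^{i\pi z^2}$ (up to a constant) as $\Re z\to+\infty$, so the derivatives of $\psi=1/\Phi_{\mathsf{b}}$ grow polynomially. What your argument actually needs, and what does hold, is that $\psi$ is smooth with all derivatives of at most polynomial growth (a multiplier of $\mathcal{S}$); this still makes $\psi(\mathsf{L})$, conjugated into a multiplication operator by a metaplectic transformation, a continuous map of Schwartz space to itself, so the membership claim survives with that correction. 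Relatedly, the membership statement implicitly concerns the particular solution $1/\Phi_{\mathsf{b}}$ (or solutions of comparable growth): the functional equation determines $\psi$ only up to an $i\mathsf{b}$-periodic factor, which can destroy temperedness, so your specialization is the right reading. Second, the gap you flag yourself --- justifying $\psi(\mathsf{L})\mathsf{v}_1=\mathsf{v}_1\psi(\mathsf{L}+i\mathsf{b})$ and $\psi(\mathsf{L}+i\mathsf{b})\psi(\mathsf{L})^{-1}=1+e^{i\pi\mathsf{b}^2}e^{2\pi\mathsf{b}\mathsf{L}}$ as identities among unbounded operators --- is the genuine analytic content, and your proposed route (analyticity of $\Phi_{\mathsf{b}}$ in the strip plus a common dense invariant domain of analytic vectors) is workable. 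A cleaner alternative, closer to how the paper actually uses $\mathsf{T}$, is to verify all four relations on the explicit integral kernel $\langle x_0,x_2|\mathsf{T}|x_1,x_3\rangle=\delta(x_0+x_2-x_1)\widetilde{\psi}^{\prime}(x_3-x_2)e^{2\pi ix_0(x_3-x_2)}$: there each relation becomes the functional equation for $\psi$ in distributional form, and the functional calculus in a complex strip is avoided entirely.
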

One solution of the equation (\ref{functional-equation}) is given by Faddeev's quantum dilogarithm as follows.
\begin{equation*}
\psi(z)=\overline{\Phi}_\mathsf{b}(z)\coloneqq\frac{1}{\Phi_{\mathsf{b}}(z)}.
\end{equation*}
In the following sections, $\mathsf{b}$ is taken so that 
\begin{equation*}
\hbar\coloneqq\left(\mathsf{b}+\mathsf{b}^{-1}\right)^{-2}\in\mathbb{R}_+ .
\end{equation*}

\subsection{Charged tetrahedral operators}
For any positive real numbers $a$ and $c$ such that $b\coloneqq\frac{1}{2}-a-c$ is also positive, the charged $\mathsf{T}-$ operators are defined as 
\begin{equation*}
\mathsf{T}(a,c)\coloneqq e^{-\pi ic_{\mathsf{b}}^2\frac{4(a-c)+1}{6}}e^{4\pi ic_{\mathsf{b}}(c\mathsf{q}_2-a\mathsf{q}_1)}\mathsf{T}e^{-4\pi ic_{\mathsf{b}}(a\mathsf{p}_2+c\mathsf{q}_2)}
\end{equation*}
\begin{equation*}
\overline{\mathsf{T}}(a,c)\coloneqq e^{\pi ic_{\mathsf{b}}^2\frac{4(a-c)+1}{6}}e^{-4\pi ic_{\mathsf{b}}(a\mathsf{p}_2+c\mathsf{q}_2)}\overline{\mathsf{T}}e^{4\pi ic_{\mathsf{b}}(c\mathsf{q}_2-a\mathsf{q}_1)},
\end{equation*}
where $\overline{\mathsf{T}}\coloneqq\mathsf{T}^{-1}$ and  
\begin{equation*}
c_{\mathsf{b}}\coloneqq i\frac{(\mathsf{b}+\mathsf{b}^{-1})}{2}.
\end{equation*}
Then $\mathsf{T}(a,c):\mathcal{S}(\mathbb{R}^2)\rightarrow\mathcal{S}(\mathbb{R}^2)$ and $\overline{\mathsf{T}}(a,c):\mathcal{S}({\mathbb{R}^2})\rightarrow\mathcal{S}(\mathbb{R}^2)$.
Substituting the equation (\ref{T-operator}), we obtain
\begin{equation*}
\mathsf{T}(a,c)=e^{2\pi i\mathsf{p}_1\mathsf{q}_2}\psi_{a,c}(\mathsf{q}_1-\mathsf{q}_2+\mathsf{p}_2),
\end{equation*}
where 
\begin{equation*}
\psi_{a,c}(x)\coloneqq\psi(x-2c_{\mathsf{b}}(a+c))e^{-4\pi ic_{\mathsf{b}}a(x-c_{\mathsf{b}}(a+c))}e^{-\pi ic_{\mathsf{b}}^2\frac{4(a-c)+1}{6}}.
\end{equation*}
The following formula holds for $\mathsf{T}(a,c)\in\mathcal{S}^\prime(\mathbb{R}^4)$.
\begin{equation*}
\langle x_0,x_2|\mathsf{T}(a,c)|x_1,x_3\rangle =\delta(x_0+x_2-x_1)\widetilde{\psi}_{a,c}^\prime(x_3-x_2)e^{2\pi ix_0(x_3-x_2)},
\end{equation*}
where
\begin{equation*}
\widetilde{\psi}_{a,c}^\prime(x)\coloneqq e^{-\pi ix^2}\widetilde{\psi}_{a,c}(x),\quad \widetilde{\psi}_{a,c}(x)\coloneqq \int_{\mathbb{R}}\psi_{a,c}(y)e^{-2\pi ixy}dy.
\end{equation*}

The conditions imposed on $a$ and $c$ guarantee that the Fourier integral is absolutely convergent.

The Fourier transformation formula for Faddeev's quantum dilogarithm leads to the following identity.
\begin{equation*}
\widetilde{\psi}_{a,c}^\prime(x)=e^{-\frac{\pi i}{12}}\psi_{c,b}(x).
\end{equation*}
Concerning complex conjugation, 
\begin{equation*}
\overline{\psi_{a,c}(x)}=e^{-\frac{\pi i}{6}}e^{\pi ix^2}\psi_{c,a}(-x)=e^{-\frac{\pi i}{12}}\widetilde{\psi}_{b,c}(-x).
\end{equation*}
In combination with these, 
\begin{equation*}
\overline{\widetilde{\psi}^\prime_{a,c}(x)}=e^{\frac{\pi i}{12}}\overline{\psi_{c,b}(x)}=e^{-\frac{\pi i}{12}}e^{\pi ix^2}\psi_{b,c}(-x).
\end{equation*}

From the above, the following formula for $\overline{\mathsf{T}}(a,c)$ is obtained, 
\begin{eqnarray*}
\langle x,y|\overline{\mathsf{T}}(a,c)|u,v\rangle&=&\overline{\langle u,v|\mathsf{T}(a,c)|x,y\rangle}\\
&=&\delta(u+v-x)\overline{\widetilde{\psi}_{a,c}^\prime(y-v)}e^{-2\pi iu(y-v)}\\
&=&\delta(u+v-x)\psi_{b,c}(v-y)e^{-\frac{\pi i}{12}}e^{\pi i(v-y)^2}e^{-2\pi iu(y-v)}.
\end{eqnarray*}
\subsection{The partition function of a shaped tetrahedron}
\label{partition-func-of-tetrahedron}
Let $T$ be a shaped tetrahedron with ordered vertices $v_i\ ( i=0,1,2,3 )$ in $\mathbb{R}^3$.

We define the partition function 
$Z_\hbar(T)\in\mathcal{S}^\prime(\mathbb{R}^{\Delta_2(\partial T)})$
by the following formula.
\begin{equation}
\langle x|Z_\hbar(T)\rangle=\left\{
  \begin{aligned}
  &\langle x_0,x_2\lvert \mathsf{T}\left(c\left(v_0v_1\right),c\left(v_0v_3\right)\right)\rvert x_1,x_3\rangle\quad\quad {\rm{if}}\quad {\rm sign}(T)=1\\
  &\langle x_1,x_3\lvert \overline{\mathsf{T}}\left(c\left(v_0v_1\right),c\left(v_0v_3\right)\right)\rvert x_0,x_2\rangle\quad\quad {\rm{if}}\quad {\rm sign}(T)=-1
  \end{aligned}
\right., 
\end{equation}
where
\[
x_i=x(\partial _i T),\quad i\in\{0,1,2,3\}
\]
\[
c\coloneqq\frac{1}{2\pi}\alpha_T:\Delta_1(T)\rightarrow\mathbb{R}_+.
\]
\subsection{One vertex H-triangulation}
Let $(M,K)$ be a pair of oriented closed 3-manifold $M$ and a knot $K$ in $M$. A one vertex H-triangulation of $(M,K)$ is a tetrahedral decomposition of $M$ with one vertex and which has one edge representing the knot $K$. If $M=S^3$, we can obtain an example of one vertex H-triangulation from the projective diagram of $K$.  

\subsection{Notations and conditions}
We denote an oriented triangulated pseudo 3-manifold $X$ which consists of $N$ tetrahedra with ordered vertices $0,1,2,3$ by $X=(T_1, \ldots, T_N, \sim)$, where $\sim$ means the equivalence relation generated by gluing between faces which preserve the orientation of edges.

Let $\mathscr{S}_X$ denote the set corresponding to the shape structure $S(X)$ on $X$ defined by 
\begin{equation*}
\begin{split}
\mathscr{S}_X\coloneqq & \left\{ \alpha_X=(2\pi a_1,2\pi b_1,2\pi c_1,\ldots,2\pi a_N,2\pi b_N,2\pi c_N)\in(0,\pi)^{3N}\mid\right.\\
& \quad \left.\forall k\in\{1,\ldots,N\},a_k+b_k+c_k=\frac{1}{2}\right\},
\end{split}
\end{equation*}
 where $2\pi a_k$ (respectively, $2\pi b_k,2\pi c_k$) denotes the value of the dihedral angle on $\overrightarrow{01}$ (respectively, $\overrightarrow{02},\ \overrightarrow{03}$) and its opposite edge of a tetrahedron $T_k$. $\mathscr{S}_X$ is also called a shape structure.

Let $\overline{\mathscr{S}_X}$ denote the closure of $\mathscr{S}_X$ such that $a_k,b_k$, and $c_k$ have values in $[0,\frac{1}{2}]$, and the weight function is defined in an extended way. 

A fully balanced shape structure on $X$ is called an angle structure and is defined by $\mathscr{A}_X\coloneqq\{\alpha_X\in \mathscr{S}_X\mid
\forall e\in \Delta_1(X), \omega_{X}(e)=2\pi\}$ and the extended angle structure on $X$ is defined by
$\overline{\mathscr{A}_X}\coloneqq\{\alpha_X\in \overline{\mathscr{S}_X}\mid
\forall e\in \Delta_1(X), \omega_{X}(e)=2\pi\}$.

A shaped pseudo 3-manifold can be represented as a pair $(X,\alpha_X)$ of an underlying oriented triangulated pseudo 3-manifold $X$ and a shape structure $\alpha_X\in\mathscr{S}_X$. $\omega_{X,\alpha_X}$ denotes the weight function on $(X,\alpha_X)$. 

This paper discusses under the condition $\mathsf{b}\in\mathbb{R}_{>0}$.

\subsection{The conjecture analogous to the volume conjecture}
The following conjecture has been proposed.
\begin{conjecture}[Andersen, Kashaev]\label{Andersen-Kashaev}
Let $M$ be an oriented compact closed 3-manifold. For any hyperbolic knot $K$ in $M$, there exists a smooth function $J_{M,K}(\hbar,x)$ on $\mathbb{R}_{> 0}\times\mathbb{R}$ satisfying the following properties.

(1) For any fully balanced shaped ideal triangulation $X$ of the complementary space of $K$ in $M$, there exists a real linear combination $\lambda$ of gauge-invariant dihedral angles and a real second-order polynomial $\phi$ of (not necessarily gauge-invariant) dihedral angles such that 
\[
Z_{\hbar}(X)=e^{i\frac{\phi}{\hbar}}\int_{\mathbb{R}}J_{M,K}(\hbar,x)e^{-\frac{x\lambda}{\sqrt{\hbar}}}dx.
\]

(2) For any one vertex shaped H-triangulation $Y$ of the pair $(M,K)$, there exists a real second-order polynomial $\varphi$ of dihedral angles such that
\[
\lim_{\omega_{Y}\rightarrow\tau}\Phi_{\rm{b}}\left(\frac{\pi-\omega_{Y}(K)}{2\pi i\sqrt{\hbar}}\right)Z_{\hbar}(Y)=e^{i\frac{\varphi}{\hbar}-i\frac{\pi}{12}}J_{M,K}(\hbar,0),
\]
where $\tau:\Delta_1(Y)\rightarrow\mathbb{R}$ takes the value $0$ on the edge representing the knot $K$ and $2\pi$ on all other edges. 

(3) The hyperbolic volume of the complementary space of $K$ in $M$ is obtained by the following limit,
\[
\lim_{\hbar\rightarrow 0}2\pi\hbar\log|J_{M,K}(\hbar,0)|=-{\rm Vol}(M\backslash K).
\]
\end{conjecture}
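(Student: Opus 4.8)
The plan is to treat the three parts in order, since part (1) constructs the function $J_{M,K}$ whose behaviour parts (2) and (3) then analyse. First I would fix a fully balanced shaped ideal triangulation $X=(T_1,\dots,T_N,\sim)$ of $M\backslash K$ and assemble $Z_\hbar(X)$ from the tetrahedral partition functions by the rules of the functor $F_\hbar$: each positively oriented tetrahedron contributes a factor $\delta(x_0+x_2-x_1)\,e^{(\cdots)}/\Phi_{\mathsf{b}}(x_3-x_2+(1-\alpha_1)/(2i\sqrt{\hbar}))$, and the face gluings identify the shared boundary coordinates $x_i=x(\partial_i T)$. Integrating out the Dirac deltas and the identified face variables reduces $Z_\hbar(X)$ to a finite-dimensional, absolutely convergent integral whose exponent is quadratic in the remaining integration variables with coefficients built from the $\varphi_{T_k}$ and the angles. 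Isolating the single integration variable conjugate to the prescribed gauge-invariant combination $\lambda$ of dihedral angles, and packaging the rest of the integral as $J_{M,K}(\hbar,x)$, yields the representation in (1), with $\phi$ the residual quadratic form in the angles.

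For part (2) I would use that a one-vertex H-triangulation $Y$ of $(M,K)$ differs from $X$ by one additional tetrahedron whose distinguished edge represents $K$, together with a shaped $3$--$2$ Pachner move. As $\omega_Y\to\tau$ the weight on the knot edge tends to $0$, so the argument of the quantum-dilogarithm factor attached to that edge approaches a pole of $\Phi_{\mathsf{b}}$; this singularity is cancelled precisely by the prefactor $\Phi_{\mathsf{b}}((\pi-\omega_Y(K))/(2\pi i\sqrt{\hbar}))$. Applying the functional equation and the inversion relation of Proposition \ref{property-of-Phi_b}, the limit collapses the extra integration introduced by the additional tetrahedron and evaluates the surviving integral at $x=0$, reproducing $J_{M,K}(\hbar,0)$ up to the explicit factor $e^{i\varphi/\hbar-i\pi/12}$.

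For part (3) I would apply the saddle-point (steepest-descent) method to the integral defining $J_{M,K}(\hbar,0)$ as $\hbar\to 0$. Using the behaviour at infinity in Proposition \ref{property-of-Phi_b} together with the semiclassical asymptotics $\log\Phi_{\mathsf{b}}(x/(2\pi\mathsf{b}))\sim \frac{1}{2\pi i\mathsf{b}^2}\operatorname{Li}_2(-e^{x})$ as $\mathsf{b}\to 0$, the leading exponential is governed by a potential $V$ whose critical-point equations coincide with Thurston's gluing equations for the complete hyperbolic structure on $M\backslash K$. At the geometric critical point the imaginary part of $V$ equals a sum of Bloch--Wigner dilogarithms (Lobachevsky functions) over the tetrahedra, i.e. $\operatorname{Vol}(M\backslash K)$, so that $2\pi\hbar\log|J_{M,K}(\hbar,0)|\to -\operatorname{Vol}(M\backslash K)$.

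The hard part will be the global control demanded by part (3) for an arbitrary knot and arbitrary triangulation: one must (a) guarantee that a fully balanced shaped ideal triangulation with \emph{positive} angles exists at all, which is a geometricity question (in the spirit of Casson--Rivin angle structures) that is itself open in general, and (b) show that the integration contour can be deformed onto a steepest-descent path through the geometric saddle and that this saddle dominates every competing critical point of $V$. Without a uniform argument locating the dominant critical point at the discrete faithful (complete hyperbolic) solution and excluding the other saddles, the asymptotic in (3) cannot be concluded; this is precisely why the statement has so far been verified only case by case, and why the present paper proceeds by carrying out the estimates explicitly for the specific triangulation of $S^3\backslash 7_3$ rather than in full generality.
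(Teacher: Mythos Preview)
The statement you were asked to prove is a \emph{conjecture}, not a theorem: the paper states it as Conjecture~\ref{Andersen-Kashaev} and makes no attempt to prove it in general. Your final paragraph recognises exactly this, correctly isolating the two obstructions (existence of a geometric angle structure, and global dominance of the geometric saddle over competing critical points) that currently prevent a uniform proof and force the case-by-case verification strategy. So there is no ``paper's own proof'' to compare against.

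That said, your sketch of parts (1)--(3) is an accurate high-level summary of the strategy the paper carries out for the specific triangulation of $S^3\backslash 7_3$ in Theorem~\ref{uemura}. A few points of detail differ from what actually happens there. For (2), the paper does not literally use a $3$--$2$ Pachner move to pass from $Y$ to $X$; rather $X$ is obtained from $Y$ by collapsing the knot edge and the tetrahedron containing it, and the cancellation of the $\Phi_{\mathsf b}$-pole is handled via a dominated-convergence argument (Lemma~\ref{upper-bound-of-Phi_b}) rather than the functional equation alone. For (3), the paper does more than apply steepest descent formally: it first proves geometricity of $X$ via the Casson--Rivin/Futer--Gu\'eritaud maximisation argument (Theorem~\ref{geometric}, Lemma~\ref{volume-maximizer}), then establishes strict concavity of $\Re S$ on the complete-structure contour (Lemma~\ref{RS-concave}) so that the geometric saddle is the \emph{unique} global maximum there, and finally controls the passage from the classical dilogarithm potential $S$ to the quantum one $S_{\mathsf b}'$ by uniform remainder estimates (Lemmas~\ref{upperbound-of-dif}--\ref{upperbound-of-dif-b}). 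These are precisely the ``hard parts'' you flagged, and they are what make the argument work for this particular knot while remaining open in general.
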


Concerning this conjecture, the $(S^3,4_1)$ and $(S^3,5_2)$ cases are proven for a certain triangulation \cite{MR3227503}. 
A similar reformulated theorem is also proven for all twist knots under the condition $\mathsf{b}>0$ and with the specific triangulation \cite{MR3945172}.
In this paper, we aim to prove this conjecture in the case of the pair of $S^3$ and $7_3$ knot, which has yet to proven in previous studies, using a similar method to the proof of \cite{MR3945172}. 
Results similar to (1) and (2) regarding the form of the partition function are confirmed for a specific tetrahedral decomposition and its equivalence class by the calculation of partition functions in Section \ref{ideal-triangulation} and \ref{one-vertex_Htriangulation}. 
As for (3), we prove it by rigorously evaluating the integral using a similar method to the proof of \cite{MR3945172} for twist knots, and we describe the proof in Section \ref{volume-conjecture-strict-proof}.
In other words, the main theorem of this paper is as follows.
\begin{theorem}\label{uemura}
For the hyperbolic knot $7_3$ in $S^3$, there exists a function $J_{S^3,7_3}(\hbar,x)$ on $\mathbb{R}_{> 0}\times\mathbb{C}$ satisfying the following properties. 

(1) For an ideal tetrahedral decomposition $X$ of the complementary space of $7_3$ knot in $S^3$ and for any angle structure $\alpha_X\in\mathscr{A}_X$ on $X$, there exist gauge invariant real linear combinations of dihedral angles $\lambda(\alpha_X)$, $\mu(\alpha_X)$, and a (not necessarily gauge invariant) quadratic polynomial of dihedral angles $\phi(\alpha_X)$ such that  
\[
Z_{\hbar}(X,\alpha_X)=e^{i\frac{\phi(\alpha_X)}{\hbar}}\int_{\mathbb{R}+\frac{i\mu(\alpha_X)}{\sqrt{\hbar}}}J_{S^3,7_3}(\hbar,x)e^{-\frac{x\lambda(\alpha_X)}{\sqrt{\hbar}}}dx.
\]

(2) For a one vertex H-triangulation $Y$ of $(S^3,7_3)$, let $Z$ be a tetrahedron containing an edge $\Vec{K}$ representing $7_3$ knot. For all $\mathsf{b}>0$, all $\tau\in\overline{\mathscr{S}_Z}\times\mathscr{S}_{Y\backslash Z}$ such that $\omega_{Y,\tau}$ takes the value $0$ on the edge $\Vec{K}$ and $2\pi$ on the other edges, there exists a real quadratic polynomial $\varphi(\tau)$ of dihedral angles such that 

\[
\lim_{\alpha_Y\rightarrow\tau,\alpha_Y\in\mathscr{S}_Y}\Phi_{\mathsf{b}}\left(\frac{\pi-\omega_{Y,\alpha_Y}(\Vec{K})}{2\pi i\sqrt{\hbar}}\right)Z_{\hbar}(Y,\alpha_Y)=e^{i\frac{\varphi(\tau)}{\hbar}-i\frac{\pi}{12}}J_{S^3,7_3}(\hbar,0).
\]

(3) The following limit obtains the the hyperbolic volume of the complementary space of $7_3$ knot in $S^3$,
\[
\lim_{\hbar\rightarrow 0^+}2\pi\hbar\log|J_{S^3,7_3}(\hbar,0)|=-{\rm Vol}(S^3\backslash 7_3).
\]
\end{theorem}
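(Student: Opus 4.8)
The plan is to treat the three parts in sequence, following the strategy used for twist knots in \cite{MR3945172}, with the genuine content concentrated in part (3). For part (1), I would take the explicit ideal triangulation $X$ of $S^3\setminus 7_3$ constructed in the triangulation section and assign to each tetrahedron its partition function $Z_\hbar(T)$. Each positively oriented tetrahedron contributes a factor $\delta(x_0+x_2-x_1)\,\widetilde\psi'_{a,c}(x_3-x_2)\,e^{2\pi i x_0(x_3-x_2)}$, and the face identifications impose a system of delta functions that let me integrate out all but a few face variables. Inserting the Fourier identity $\widetilde\psi'_{a,c}=e^{-\pi i/12}\psi_{c,b}$ and the explicit form of $\psi_{a,c}$ in terms of $\Phi_\mathsf{b}$, the result is an integral of a product of quantum dilogarithms against Gaussian exponentials in the surviving variables. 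After completing squares and using the balancing relations $\omega_X(e)=2\pi$, the integral reduces to a one-parameter family $\int_{\mathbb{R}+i\mu(\alpha_X)/\sqrt\hbar} J_{S^3,7_3}(\hbar,x)\,e^{-x\lambda(\alpha_X)/\sqrt\hbar}\,dx$, which simultaneously defines $J_{S^3,7_3}$ and exhibits the claimed form; the gauge-invariant linear combinations $\lambda,\mu$ come from the weights and $\phi$ from the Gaussian prefactor.

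For part (2), I would repeat the gluing computation for the H-triangulation $Y$, isolating the tetrahedron $Z$ whose edge is $\vec K$. This extra tetrahedron contributes one additional factor of $\Phi_\mathsf{b}$ whose argument degenerates as $\omega_{Y}(\vec K)\to 0$; multiplying by $\Phi_\mathsf{b}\bigl((\pi-\omega_{Y}(\vec K))/(2\pi i\sqrt\hbar)\bigr)$ is designed to cancel precisely this singular factor. Evaluating the limit with the inversion relation and the behavior at infinity of Proposition \ref{property-of-Phi_b}, the remaining integral is exactly the defining integral of $J_{S^3,7_3}(\hbar,0)$ up to the scalar $e^{i\varphi(\tau)/\hbar-i\pi/12}$, with the real quadratic $\varphi(\tau)$ read off from the accumulated Gaussian phases.

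The substance of the theorem is part (3). Writing $J_{S^3,7_3}(\hbar,0)=\int_{\mathcal C}\prod_j \Phi_\mathsf{b}(\cdots)\,e^{(\text{quadratic})}\,d\mathbf t$, I would pass to the semiclassical regime $\hbar\to 0^+$, equivalently $\mathsf b\to 0$ with $\hbar\sim \mathsf b^2$, using the asymptotics $\log\Phi_\mathsf{b}\bigl(z/(2\pi\mathsf b)\bigr)\sim \tfrac{1}{2\pi i\mathsf b^2}\,\mathrm{Li}_2(-e^{z})$. After the rescaling $\mathbf t=\mathbf z/(2\pi\mathsf b)$ the integrand takes the form $\exp\!\bigl(\tfrac{1}{2\pi\hbar}V(\mathbf z)+o(1/\hbar)\bigr)$, where the potential $V$ is a sum of dilogarithms plus a quadratic term. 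I would then apply the steepest-descent method: locate the critical point $\mathbf z_0$ of $V$, show that the critical-point equations coincide with Thurston's gluing and completeness equations for $X$ so that $\mathbf z_0$ records the complete hyperbolic structure, and identify $\Re V(\mathbf z_0)$ with $-\mathrm{Vol}(S^3\setminus 7_3)$ through the Bloch--Wigner dilogarithm and the Neumann--Zagier framework. Taking $2\pi\hbar\log|\cdot|$ of the leading asymptotics then yields the asserted limit.

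The hard part will be the rigorous justification of this saddle-point evaluation, which has three delicate components. First, I must show the contour $\mathcal C$ can be deformed onto a steepest-descent contour through $\mathbf z_0$ without crossing poles of the quantum dilogarithms and with negligible end contributions, which requires controlling $|\Phi_\mathsf{b}|$ via Proposition \ref{property-of-Phi_b}(3) together with a careful tail estimate. Second, I must prove that the relevant critical point is the geometric one and is non-degenerate, i.e.\ that the Hessian of $V$ is non-singular and that $\Re V$ has a unique dominant maximum; here I would invoke concavity of the volume functional on the space of (extended) angle structures, in the spirit of Rivin and of the twist-knot argument. Third, I need uniform error estimates upgrading the formal expansion to a genuine limit. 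Because $7_3$ requires more tetrahedra than the twist knots treated previously, the explicit identification of $\mathbf z_0$ with the hyperbolic shapes and the verification of non-degeneracy will be the most computation-intensive and error-prone steps, and I expect them to be the principal obstacle.
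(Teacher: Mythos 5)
Your overall architecture coincides with the paper's: parts (1) and (2) are obtained by direct computation of the gluing integrals exactly as you describe (Theorems \ref{volume-conjecture-1} and \ref{volume-conjecture-2}, with $J_{S^3,7_3}=e^{-i\pi/3}J_X$), and part (3) by a saddle-point analysis of a three-variable dilogarithm potential whose critical-point equations are Thurston's gluing and completeness equations, with geometricity supplied by the Casson--Rivin volume-maximization argument, all following \cite{MR3945172}. The one place where your route genuinely diverges, and where it would cost you the most, is the contour. You propose to deform the integration contour onto a steepest-descent contour through the critical point; the paper never does this. Because the integrand is holomorphic and rapidly decaying, the Bochner--Martinelli argument already makes the integral independent of the choice of $\alpha_X\in\mathscr{A}_X$, so one may simply take the contour $\mathscr{Y}^0=\mathbb{R}^3+i\mathbf{d}^0$ attached to the complete hyperbolic angle structure $\alpha_X^0$; the key observation is that the critical point $\mathbf{y}^0$ automatically lies on this affine slice, since at the geometric solution the imaginary parts of the log-shapes are precisely the dihedral angles (Lemma \ref{completeness-bijection} and the parametrization in the complex gluing section). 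Strict concavity of $\Re S$ restricted to $\mathscr{Y}^0$ (Lemma \ref{RS-concave}: the real part of the holomorphic Hessian is negative-definite diagonal) then makes $\mathbf{y}^0$ the unique global maximum on the contour, and Fedoryuk's theorem (Theorem \ref{Fedoryuk}) plus the tail estimate of Lemma \ref{upper-bound} finish the argument; constructing and controlling a global steepest-descent $3$-manifold in $\mathbb{C}^3$ while avoiding the poles of the $\Phi_{\mathsf{b}}$'s is exactly the step that is hardest to make rigorous, and it is unnecessary. Relatedly, do not conflate two distinct concavity statements: the concavity of the Lobachevsky volume functional on $\overline{\mathscr{A}_X}$ is used only to prove geometricity (Theorem \ref{geometric}), whereas the dominance of the critical point on the contour comes from the separate concavity of $\Re S$ on $\mathscr{Y}^0$; your plan attributes the latter to the former. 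Finally, the $7_3$-specific computation-heavy step is not the identification of $\Re S(\mathbf{y}^0)$ with $-\mathrm{Vol}(S^3\backslash 7_3)$ (Lemma \ref{relation-between-S-and-Vol}, a direct Bloch--Wigner computation requiring no Neumann--Zagier machinery), but the boundary analysis showing the volume maximizer is interior (Lemma \ref{volume-maximizer}), which the paper tames by exploiting the symmetry $(a_4,b_4,c_4)=(c_5,a_5,b_5)$ of the balancing constraints to reduce the case analysis.
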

We note that we choose the contour of the integral in Theorem\ref{uemura} (1) different from the Conjecture \ref{Andersen-Kashaev} (1) in order to evaluate the integral rigorously. 
Regarding Theorem\ref{uemura} (3), we can also numerically confirm its establishment by evaluating the integral using the saddle-point method shown in Appendix \ref{volume-conjecture}.
\section{Derivation of a one vertex H-triangulation for $(S^3,7_3)$ and an ideal tetrahedral decomposition of the complementary space $S^3\backslash 7_3$}\label{triangulation}
In this section, we obtain a one vertex H-triangulation for a pair of $S^3$ and its contained knot $7_3$ and an ideal tetrahedral decomposition for the complementary space $S^3\backslash 7_3$ according to the methods described in \cite{MR718149},\cite{MR3486430}.
\begin{prop}\label{one_veretex_H-triantulation}
An example of one vertex H-triangulation $Y=(T_1,\cdots,T_6,\sim)$ for $(S^3,7_3)$ is given in Figure \ref{fig:one_vertex_Htriangulation}.
To explain the way to view this figure, the top left figure about the tetrahedron $T_1^{+}$ is a top view of the tetrahedron with the bottom face $\partial_0(T_1^{+})$ opposite to the vertex $0$. The variable $x$ assigned to the bottom face $\partial_0(T_1^{+})$ is marked outside, and the variables assigned to the other faces are marked inside the corresponding triangles. The same is true for the other tetrahedra. 
\begin{figure}[tbh]
\centering
\includegraphics[width=\textwidth]{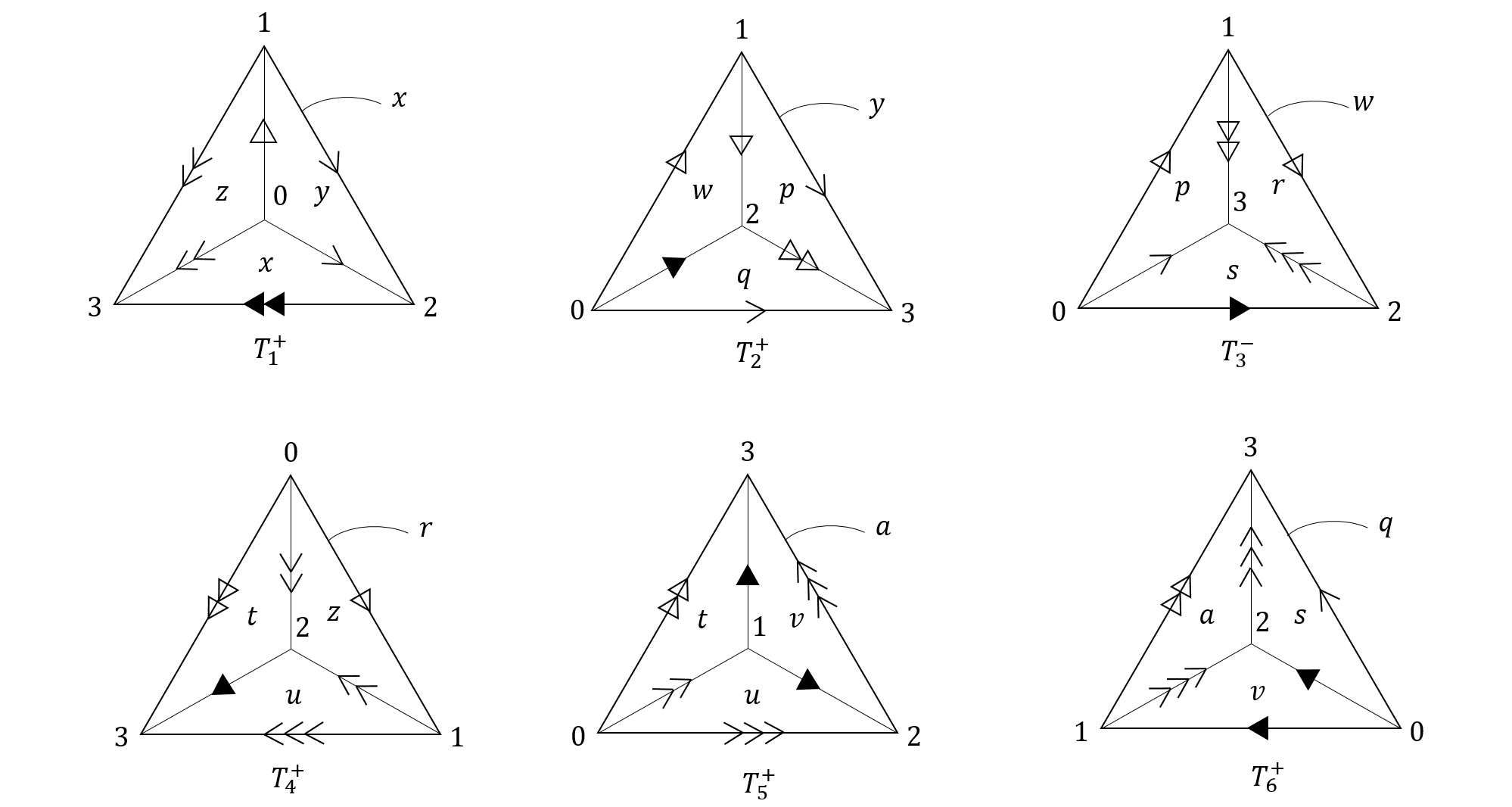}
\caption{One vertex H-triangulation for $(S^3,7_3)$.}
\label{fig:one_vertex_Htriangulation}
\end{figure}
\begin{figure}[tbh]
\centering
\includegraphics[width=0.5\textwidth]{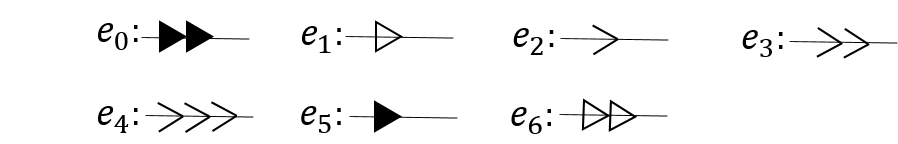}
\caption{Arrow symbols corresponding to edges in one vertex H-triangulation for ($S^3$, $7_3$).}
\label{fig:one_vertex_Htriangulation_edge}
\end{figure}

Let $\Delta_1(Y)=\{e_0, e_1, \cdots, e_6\}$, and the inverse image of each edge before gluing is marked with an arrow symbol. Figure \ref{fig:one_vertex_Htriangulation_edge} shows the correspondence. The edge corresponding to the knot is $e_0$. 
\end{prop}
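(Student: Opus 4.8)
The assertion is a verification that the explicit gluing data recorded in Figures \ref{fig:one_vertex_Htriangulation} and \ref{fig:one_vertex_Htriangulation_edge} assembles into a triangulation of $S^3$ with a single vertex in which the distinguished edge $e_0$ is isotopic to $7_3$. The plan is to proceed in four stages: extract and check the face-pairing combinatorics, show $Y$ is a closed $3$-manifold, identify that manifold as $S^3$, and identify $e_0$ with the knot. First I would read off the face-pairing explicitly. The six tetrahedra contribute $24$ faces, and the relation $\sim$ must organize them into $12$ pairs, each realized by an affine identification preserving the vertex order and reversing the induced orientation on faces, as demanded of an oriented triangulated pseudo $3$-manifold. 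From the face variables in Figure \ref{fig:one_vertex_Htriangulation} I would list these pairings and confirm that every face occurs exactly once, so that $\partial Y=\emptyset$.

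Next, using the arrow decorations of Figure \ref{fig:one_vertex_Htriangulation_edge}, I would compute the edge and vertex classes induced by $\sim$ and check that they collapse to the claimed seven edges $e_0,\dots,e_6$ and a single vertex. As a consistency check the Euler characteristic must vanish,
\[
\chi(Y)=V-E+F-T=1-7+12-6=0,
\]
as it must for a closed $3$-manifold. To see that $Y$ is an honest manifold and not merely a pseudo-manifold, I would verify that the link of the unique vertex is a $2$-sphere: assembling the corner triangles of the six tetrahedra about the vertex as prescribed by $\sim$ and checking that the result is $S^2$ (equivalently, that $\partial N_0(Y)\cong S^2$).

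The substantive stage is the topological identification. The triangulation $Y$ is produced by the standard algorithm that turns a planar knot diagram into a one-vertex triangulation of $S^3$ carrying the knot as an edge \cite{MR718149,MR3486430}, so the underlying space is $S^3$ by construction; to make this self-contained I would either exhibit a short sequence of $2$-$3$ and $3$-$2$ Pachner moves together with elementary subdivisions reducing $Y$ to a standard small triangulation of $S^3$, or collapse the tetrahedron $Z$ containing $e_0$ to recover the ideal triangulation $X$ of $S^3\setminus 7_3$ built in this same section, from which the homeomorphism type follows. I would then identify $e_0$: it is a loop based at the unique vertex, hence a knot in $S^3$ once the previous stage is in place, and I would read off its isotopy class from the diagram by tracking $e_0$ through the over- and under-passes at each of the seven crossings of $7_3$; the collapse of $Z$ simultaneously exhibits $e_0$ as the core curve that becomes the cusp of $S^3\setminus 7_3$, which pins down the isotopy type.

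The main obstacle lies precisely in this last stage. The bookkeeping for the face and edge identifications in the first two stages is lengthy but entirely mechanical, whereas proving that the assembled complex is $S^3$ and that $e_0$ is genuinely $7_3$, and not some other curve sharing the same complement data, requires organizing the Pachner/collapse sequence carefully and faithfully following the diagram through the construction. I expect essentially all of the genuine content to reside there.
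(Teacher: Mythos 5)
Your plan is a posteriori verification of the abstract gluing data (face pairings, edge/vertex classes, $\chi=0$, vertex link, then identification of the manifold and of $e_0$), whereas the paper's proof runs in the opposite direction: it \emph{constructs} $Y$ from the projective diagram of $7_3$, starting with a cellular decomposition of $S^3$ adapted to the diagram, collapsing the knot to a single edge by an explicit isotopy, gluing the two remaining $3$-cells $c_\pm$ along the outer hexagonal face, and then successively splitting off tetrahedra (finishing with one Pachner $3$-$2$ move). In that construction the identification of the underlying space with $S^3$ and of the distinguished edge with $7_3$ costs nothing, because every step is an explicit operation performed on the pair $(S^3,7_3)$ itself; the figures merely record the outcome. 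Your first two stages (pairing combinatorics, $\chi(Y)=1-7+12-6=0$, hence spherical vertex link and manifoldness) are fine and genuinely mechanical, but they only show $Y$ is some closed oriented $3$-manifold with a knotted edge, which is where all closed $3$-manifolds agree.

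The gap is that both of your proposed routes for the substantive stage are unusable as stated. The route ``collapse the tetrahedron containing $e_0$ to recover the ideal triangulation $X$ of $S^3\setminus 7_3$ built in this same section'' is circular within this paper: Proposition \ref{thm:ideal_triangulation} obtains $X$ \emph{from} Proposition \ref{one_veretex_H-triantulation} by exactly that collapse, so $X$ carries no independent certificate that its underlying space is $S^3\setminus 7_3$. Even granting an independent identification of $X$ (say via geometrization or a census), concluding $(|Y|,e_0)\cong(S^3,7_3)$ from ``collapsing $Y$ yields $X$'' still requires relating the quotient to the complement of an open neighborhood of $e_0$, matching meridians so that the filling of $S^3\setminus 7_3$ giving $|Y|$ is the trivial one, and then invoking Gordon--Luecke; none of this is supplied. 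The alternative route, an explicit Pachner/subdivision sequence from $Y$ to a standard triangulation of $S^3$, is not exhibited, and ``tracking $e_0$ through the over- and under-passes'' is not meaningful for an abstract complex: the diagram's crossings live in $S^3$, and you only get access to them after the homeomorphism $|Y|\cong S^3$ has been constructed, which is precisely what is at stake. So the proposal, as written, defers rather than proves the content of the proposition; the paper's diagram-based construction is what actually discharges it.
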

\begin{proof}
Let $S^3$ be a standard one-point compactification of $\mathbb{R}^3$, and fix the embedding $S^2\subset S^3$ as the closure of the standard embedding $\mathbb{R}^2\subset \mathbb{R}^3$, $(x,y)\mapsto (x,y,0)$. Let $K$ be a knot $7_3$ in $S^3$, take a small positive number $\epsilon$, and let $K \subset \mathbb{R}^2\times[-\epsilon,\epsilon]\subset\mathbb{R}^3$.
Let the projective diagram $D$ be the image of $K$ by the orthogonal projection $\mathbb{R}^3\rightarrow \mathbb{R}^2$. Let $K$ be cellularly decomposed, and small circles in $D$ represent the images of its vertices by the orthogonal projection.
 Figure \ref{fig:knot-diagram} shows $D$.
\begin{figure}[htbp]
\centering
\includegraphics[width=0.3\textwidth]{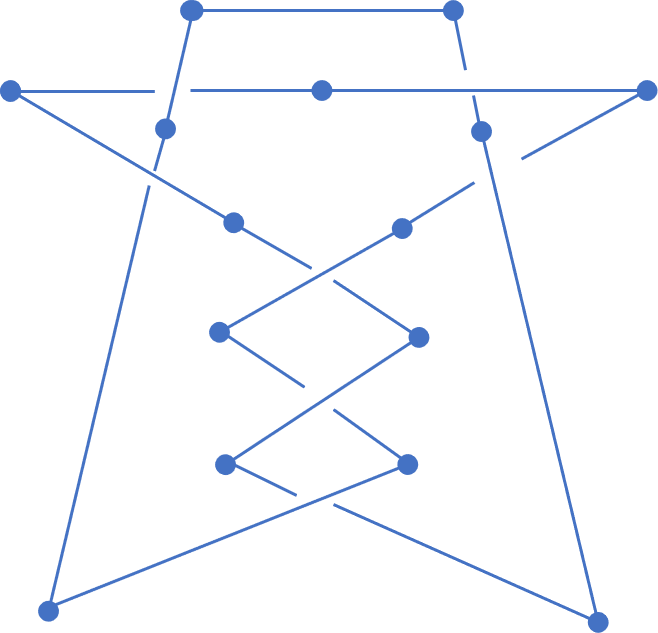}
\caption{A projective diagram of $7_3$ knot with the cellular structure.}
\label{fig:knot-diagram}
\end{figure}
The cellular decomposition of $K$ can be extended to a cellular decomposition of $S^3$ by adding new edges with the same vertices.
Here the edges are added so that each intersection point of $D$ is bounded by the orthogonal projective image of the newly added four edges, as in Figure \ref{fig:shaded-knot-diagram}, and each higher dimensional cell is given by a tetrahedral cell with the natural cellular structure contained in $\mathbb{R}^2\times[-\epsilon,\epsilon]$ such that it is projected onto a shaded rectangle containing the intersection point of $D$.
\begin{figure}[htbp]
\centering
\includegraphics[width=0.3\textwidth]{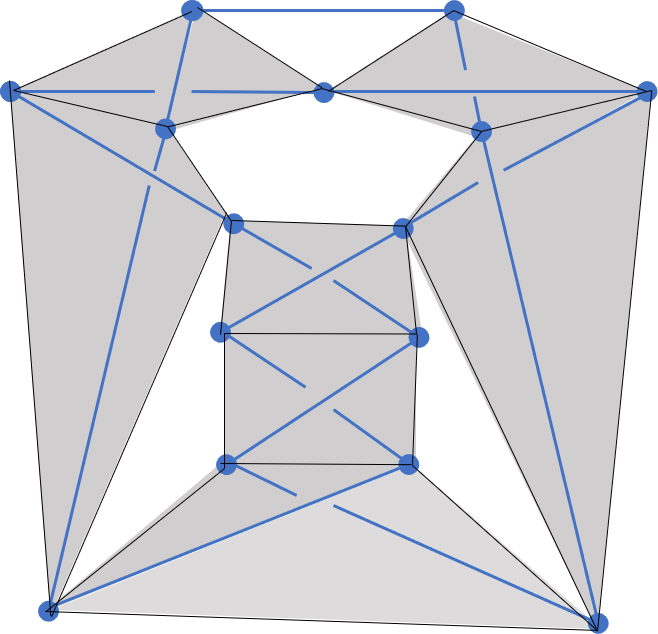}
\caption{An Induced cellular decompositon of $S^3$ such that shaded tetrahedral cells contain the intersection points.}
\label{fig:shaded-knot-diagram}
\end{figure}
The other higher dimensional cells are given by two 3-dimensional cells $\Tilde{c}_\pm$ which are respectively the common part of the 3-dimensional balls $B_+$, $B_-$ defined by the closure of upper and lower half spaces in $S^3$, i.e., $B_\pm=\overline{\left\{(x,y,z)\mid \pm z\ge 0 \right\}}$ and the complementary space of tetrahedral cells in $S^3$.
From the cellular decomposition of $S^3$ constructed in this way, we create a new cellular complex by an isotopy which starts from the identity map and ends up in the projection to the quotient space by an equivalence relation that identifies all points on $K$ except one edge and collapses each tetrahedron to one edge, as in Figure \ref{fig:isotopy}.

\begin{figure}[htbp]
\centering
\includegraphics[width=0.7\textwidth]{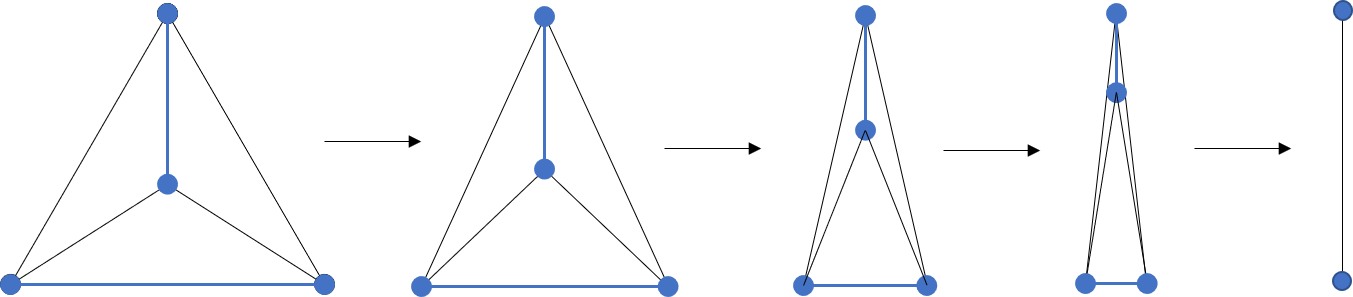}
\caption{Isotopy which collapses a tetrahedron to one edge.}
\label{fig:isotopy}
\end{figure}
The resulting cellular complex consists of two $3$-dimensional cells $c_\pm$, which are the projective images of $\Tilde{c}_\pm$, and the complementary region of the shaded rectangles containing the intersection points of the diagram $D$ gives the 2-skeleton.

In Figure \ref{fig:oriented-shaded-diagram}, the edge of $K$ that is not collapsed to one point is the uppermost horizontal line segment, and the unshaded region, i.e., three triangular cells, one pentagonal cell, and one hexagonal cell corresponding to the outer region, gives the $2$-dimensional cells. The same kinds of arrows attached to the edges represent the edges that are identified, including their orientations if tetrahedra are collapsed into line segments. 
\begin{figure}[htbp]
\centering
\includegraphics[width=0.3\textwidth]{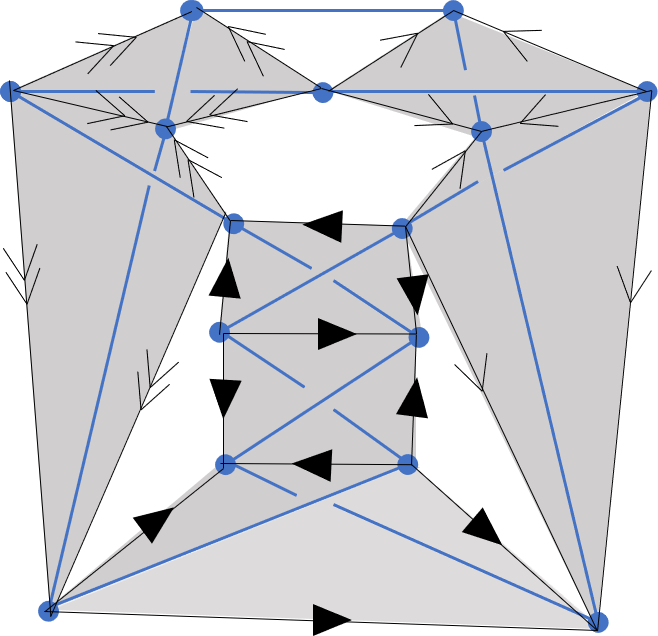}
\caption{Edges marked with the same arrow symbol are considered identical if tetrahedra are collapsed into line segments.}
\label{fig:oriented-shaded-diagram}
\end{figure}

By gluing two 3-dimensional cells $c_\pm$ along the hexagonal 2-dimensional cell corresponding to the outer region in the projective diagram of the knot, we obtain a cellular complex given by a single 3-dimensional cell whose boundaries are composed of remaining 2-dimensional cells as in Figure \ref{glued-complex}.
Here each 2-dimensional cell appears twice with different orientations corresponding to $c_+$ and $c_-$, and in Figure \ref{glued-complex}, 2-dimensional cells are glued together with the same letters as those marked with $\prime$ on the right shoulder. Figure \ref{glued-complex-2-decomposition} shows the cellular complex given by the cellular decomposition of the 2-dimensional cells represented by $2$ and $2^\prime$ in Figure \ref{glued-complex}. Figure \ref{glued-complex-cut-T0} shows the tetrahedron $T_0$ and the remaining cellular complex obtained by splitting the cellular complex with a new two-dimensional cell bounded by the dashed line in Figure \ref{glued-complex-2-decomposition}.

\begin{figure}[htbp]
\begin{tabular}{ccc}
\begin{subfigure}{0.33\textwidth}
\centering
\includegraphics[height=0.20\vsize]{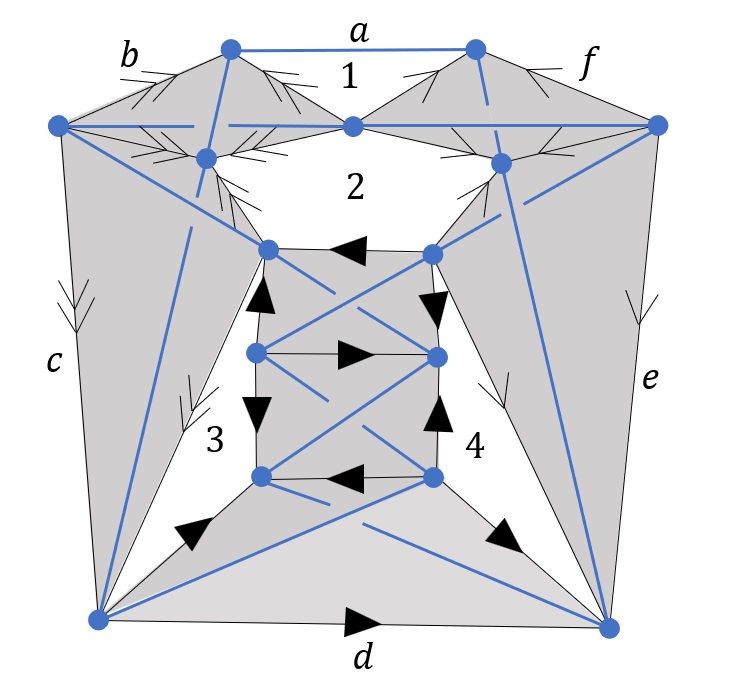}
\subcaption{}\label{a}
\end{subfigure}
& 
\begin{subfigure}{0.33\textwidth}
\centering
\includegraphics[height= 0.20\vsize]{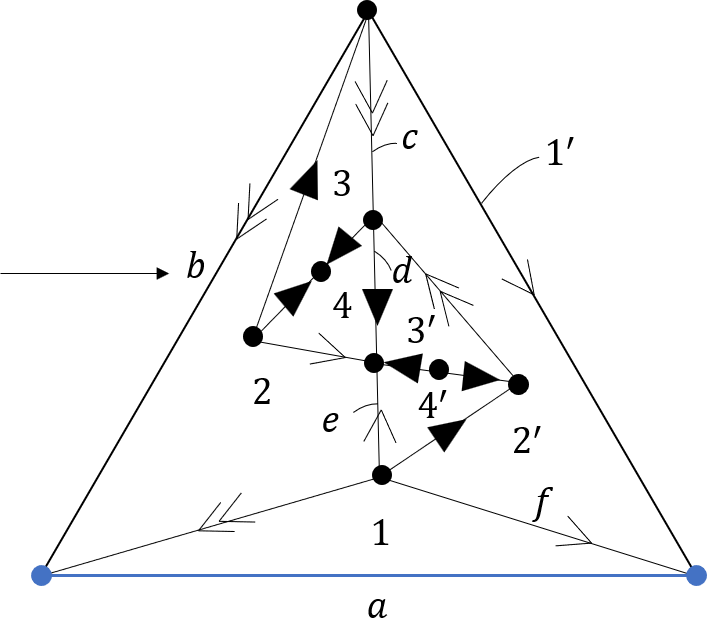}
\subcaption{}
\label{glued-complex} 
\end{subfigure}
& 
\begin{subfigure}{0.33\textwidth}
\centering
\includegraphics[height = 0.20\vsize]{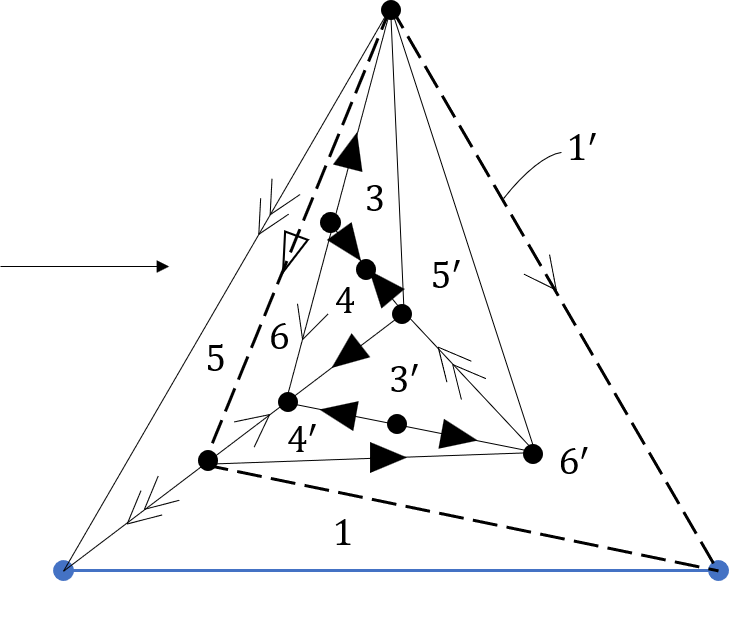}
\subcaption{}
\label{glued-complex-2-decomposition}
\end{subfigure}
\\
\multicolumn{3}{c}{
\begin{subfigure}{0.99\textwidth}
\centering
\includegraphics[height= 0.20\vsize]{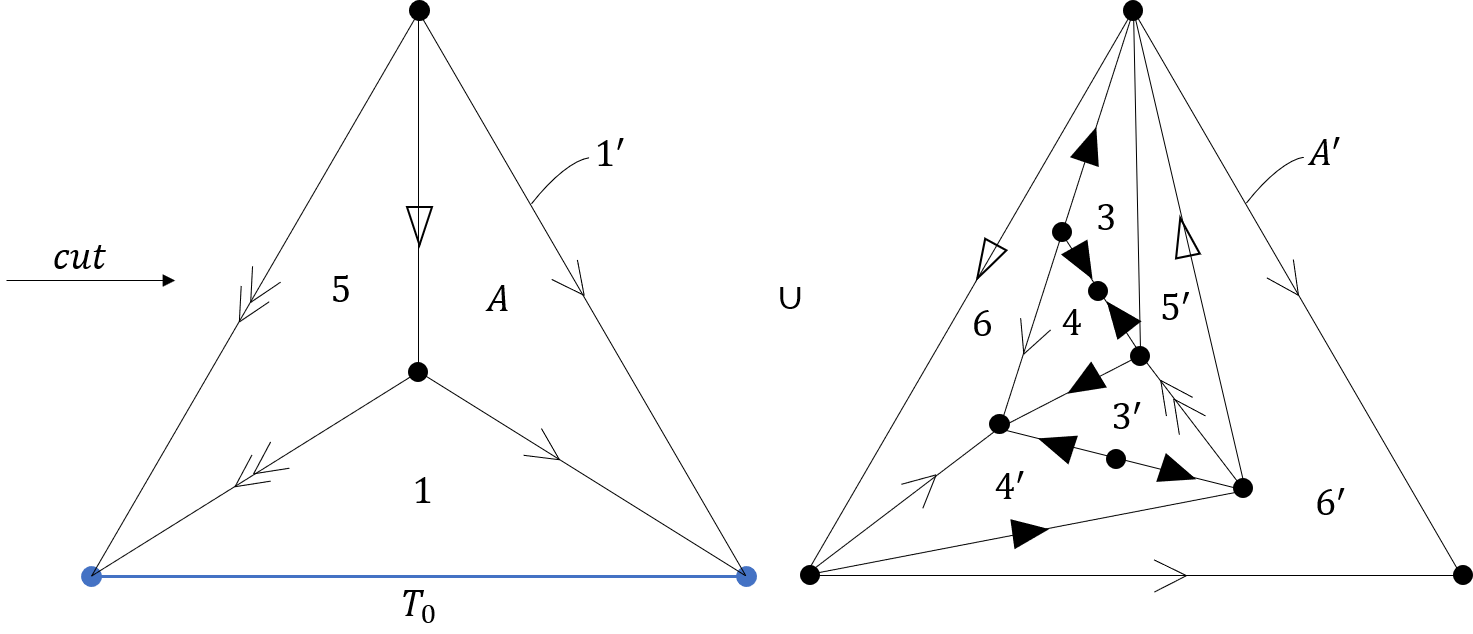}
\subcaption{}
\label{glued-complex-cut-T0}
\end{subfigure}}
\\
\end{tabular}
\caption{The cellular complex obtained by gluing $c_+$ and $c_-$ with the outer hexagon in Figure \ref{fig:oriented-shaded-diagram}.}
\end{figure}

Figure \ref{fig:T0_remainder} shows the cellular decomposition of $2$-dimensional cells represented by the symbol $6$, $6^\prime$ in the non-tetrahedral cellular complex obtained in Figure \ref{glued-complex-cut-T0} and Figure \ref{fig:glued-complex-cut-T1} represents the tetrahedron $T_1$ and the cellular complex obtained by splitting the tetrahedron $T_1$ by a new $2$-dimensional cell whose boundary is the dashed line in Figure \ref{fig:T0_remainder}.

\begin{figure}[htbp]
\begin{tabular}{c}
 \begin{minipage}[t]{0.96\hsize} 
\centering
\includegraphics[height= 0.20\vsize]{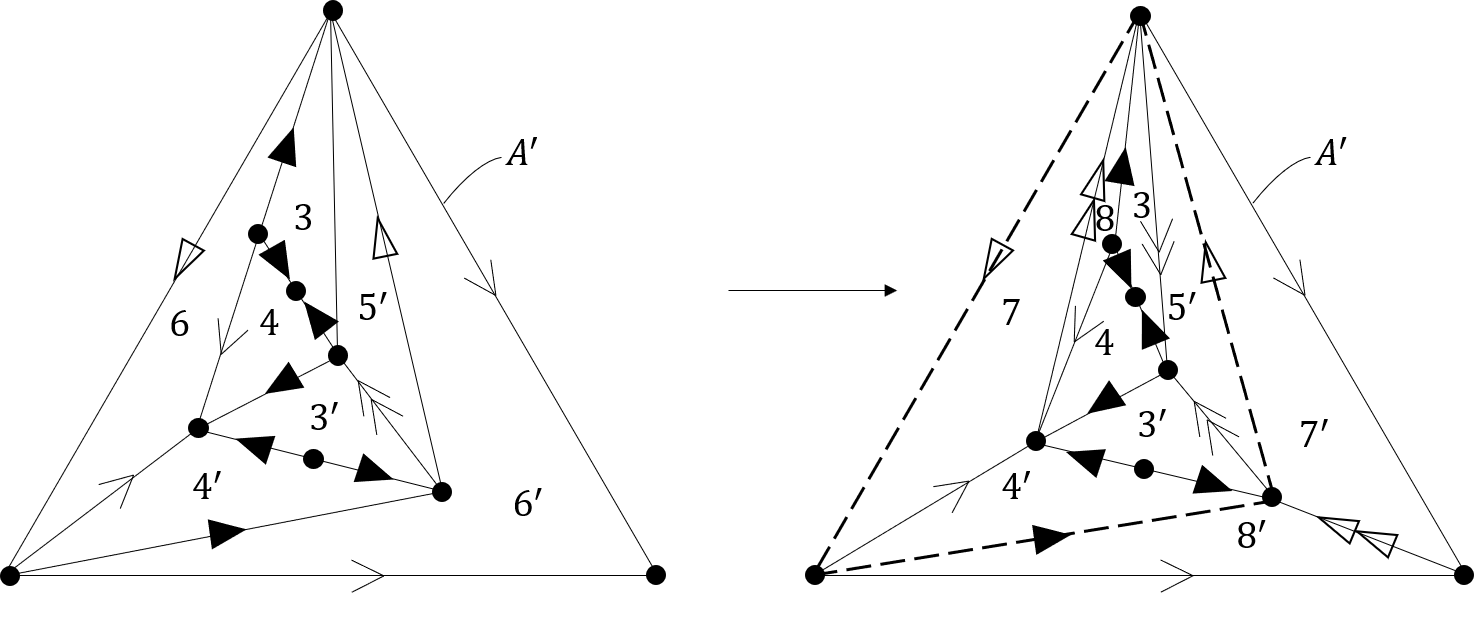}
\subcaption{}
\label{fig:T0_remainder}
\end{minipage} \\
 \begin{minipage}[t]{0.96\hsize} 
\centering
\includegraphics[height= 0.20\vsize]{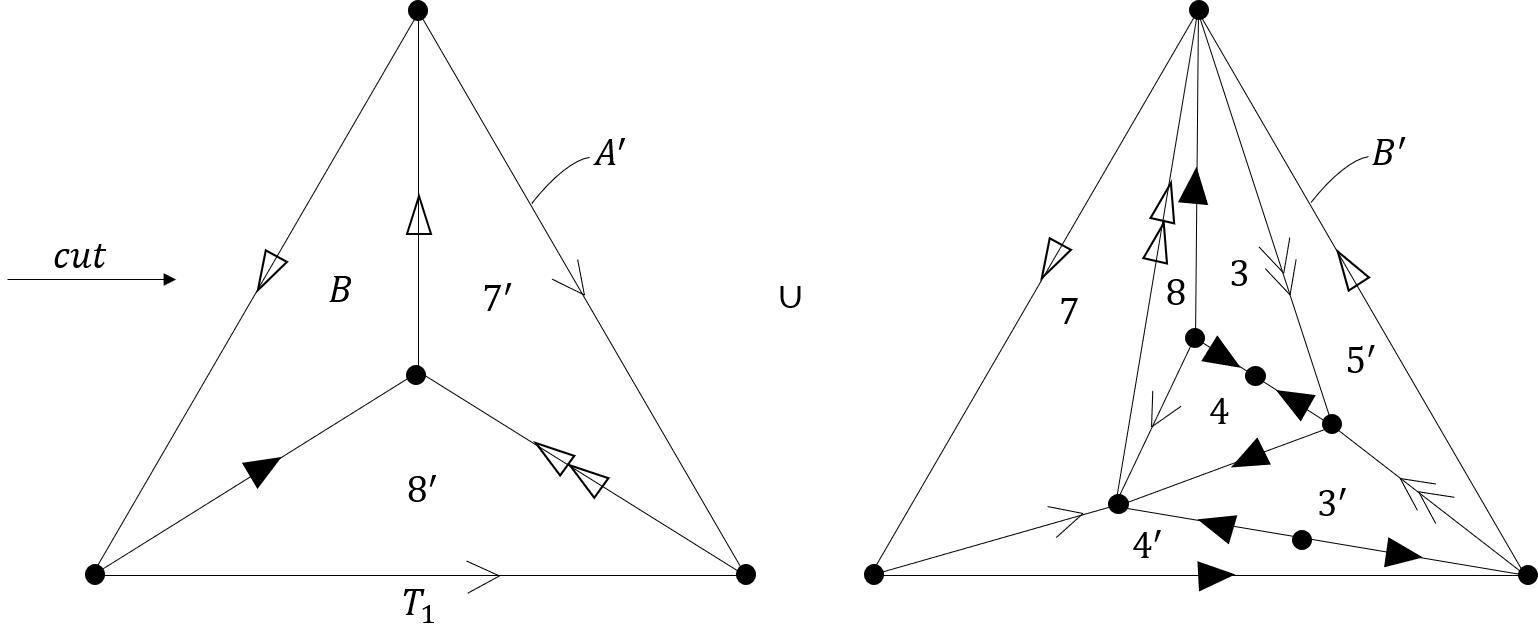}
\subcaption{}
\label{fig:glued-complex-cut-T1}
\end{minipage}
\end{tabular}
\caption{Cellular decomposition of the cellular complex obtained in Figure \ref{glued-complex-cut-T0}.}
\end{figure}

Similarly, Figure \ref{fig:T1-remainder} shows the cellular decomposition of $2$-dimensional cells represented by the symbol $4$,$4^\prime$ in the non-tetrahedral cellular complex obtained in Figure \ref{fig:glued-complex-cut-T1}, and Figure \ref{fig:T2-cut} represents the tetrahedron $T_2$ and the cellular complex obtained by splitting the tetrahedron $T_2$ by a new $2$-dimensional cell whose boundary is the dashed line in Figure \ref{fig:T1-remainder}.

\begin{figure}[htbp]
\begin{tabular}{c}
 \begin{minipage}[t]{0.96\hsize} 
\centering
\includegraphics[height= 0.20\vsize]{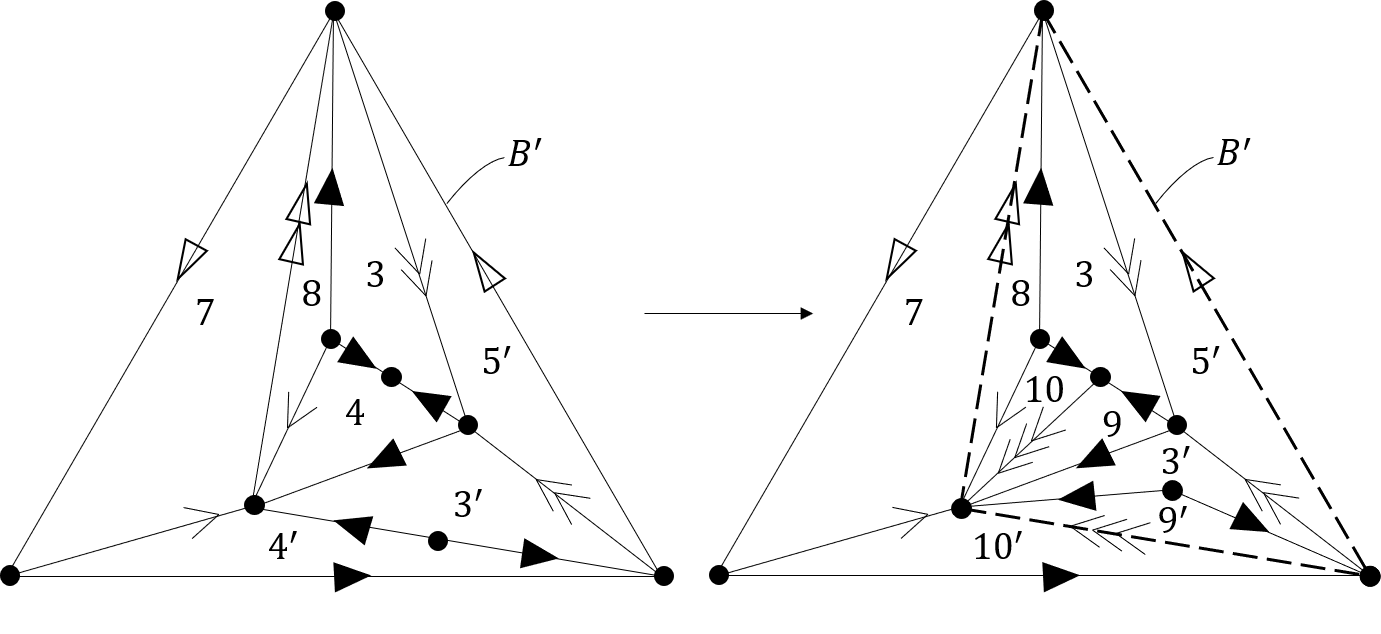}
\subcaption{}
\label{fig:T1-remainder}
\end{minipage} \\
 \begin{minipage}[t]{0.96\hsize} 
\centering
\includegraphics[height= 0.20\vsize]{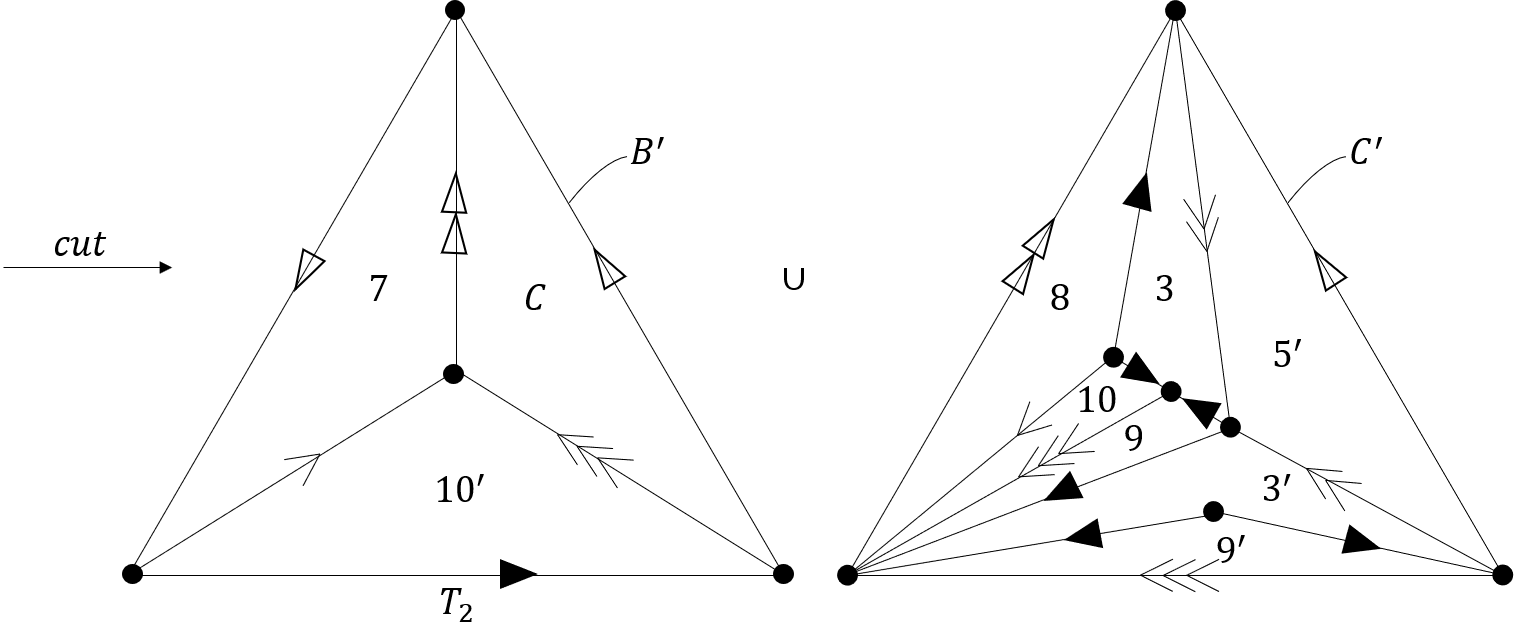}
\subcaption{}
\label{fig:T2-cut}
\end{minipage}
\end{tabular}
\caption{Cellular decomposition of the cellular complex obtained in Figure \ref{fig:glued-complex-cut-T1}.}
\end{figure}

Figure \ref{fig:T2-remainder} shows the cellular decomposition of the $2$-dimensional cells represented by the symbol $3$, $3^\prime$ in the non-tetrahedral cellular complex obtained in Figure\ref{fig:T2-cut}, and Figure \ref{fig:cut-T3-T6} shows the tetrahedra $T_3,\cdots,T_6$ obtained by splitting them by the $2$-dimensional cells whose boundaries are the dashed lines in Figure \ref{fig:T2-remainder}. 

\begin{figure}[htbp]
\begin{tabular}{c}
 \begin{minipage}[t]{\hsize} 
\centering
\includegraphics[height= 0.20\vsize]{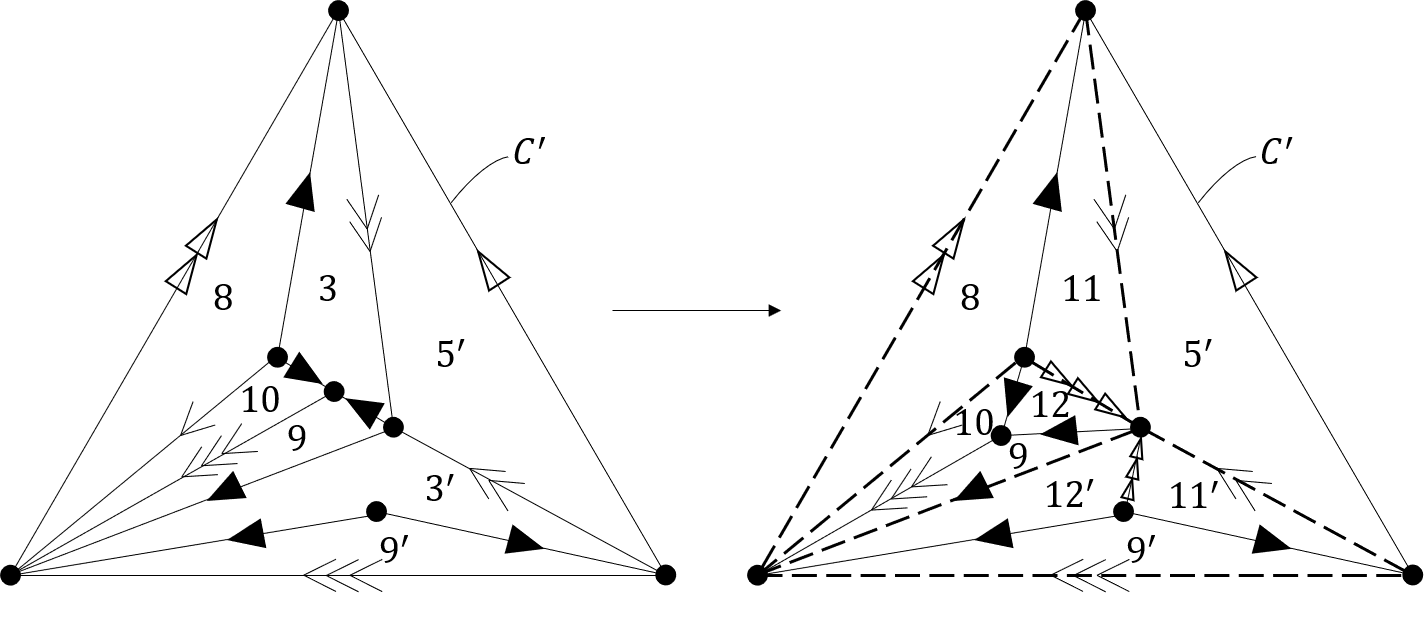}
\subcaption{}
\label{fig:T2-remainder}
\end{minipage} \\
 \begin{minipage}[t]{\hsize} 
\centering
\includegraphics[height= 0.15\vsize]{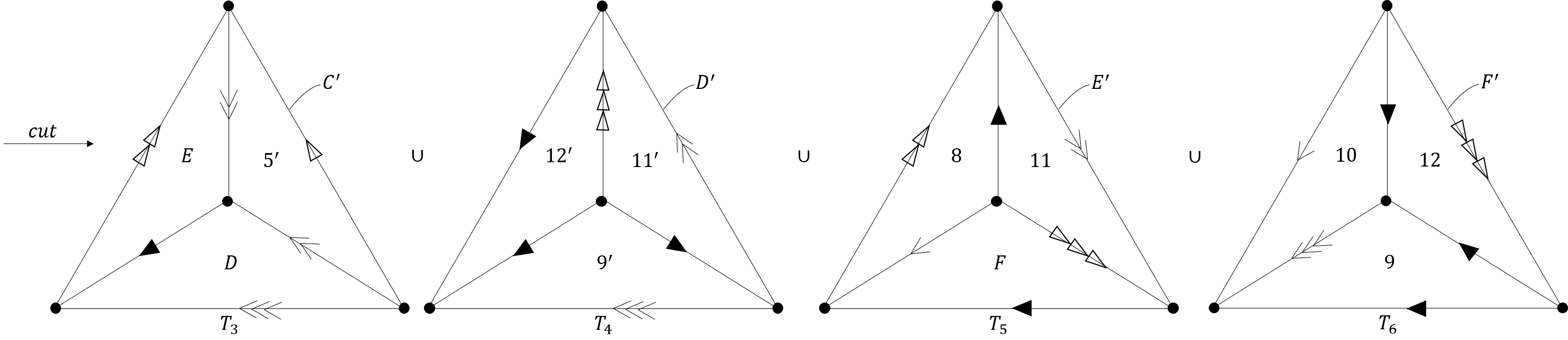}
\subcaption{}
\label{fig:cut-T3-T6}
\end{minipage}
\end{tabular}
\caption{Tetrahedral decomposition for the non-tetrahedral cellular complex  obtained in Figure \ref{fig:T2-cut}.}
\end{figure}

Furthermore, for the cellular complex consisting of tetrahedra $T_4$, $T_5$, and $T_6$ glued together with their corresponding $2$-dimensional cells, another cellular decomposition consisting of two tetrahedra $T_7,T_8$ is obtained by applying the Pachner $3-2$ move as shown in Figure \ref{fig:7_3-Pachner}.

\begin{figure}[htbp]
    \centering
    \includegraphics[height=0.19\vsize]{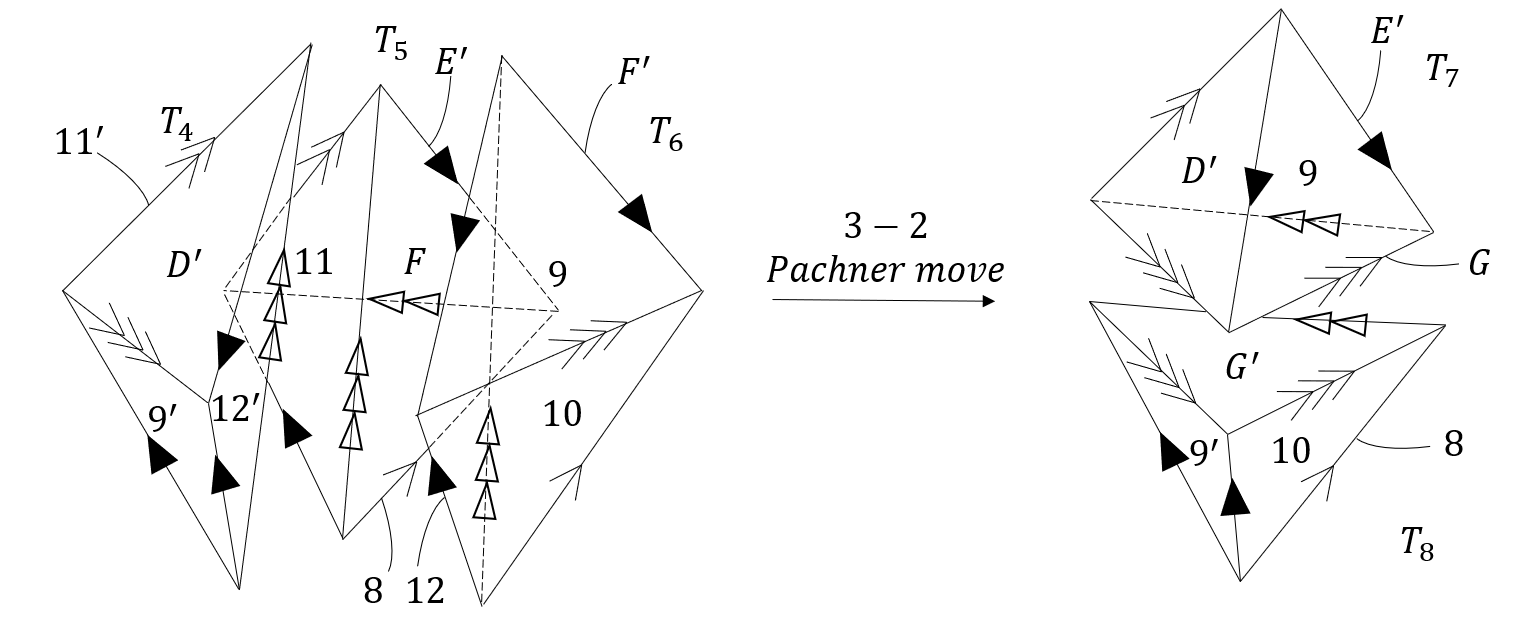}
    \caption{Pachner 3-2 move for tetrahedra $T_4,T_5$,and $T_6$.}
    \label{fig:7_3-Pachner}
\end{figure}

The tetrahedra $T_0,T_1,T_2,T_3,T_7$, and $T_8$ obtained in the above process give the one vertex H-triangulation.
\end{proof}
\begin{prop}\label{thm:ideal_triangulation}
An example of ideal tetrahedral decompositions of $S^3\backslash 7_3$, $X=(T_1,\cdots,T_5,\sim)$, is given in Figure \ref{fig:ideal_triangulation}.
Here $\Delta_1(X)=\{ e_1, e_2, \cdots, e_5 \}$ and the edges of the inverse image of each edge by $\sim$ are marked with the same arrow symbol. The correspondence between the arrow symbols and the edges $e_1,e_2,\cdots,e_5$ is shown in Figure \ref{fig:ideal_triangulation_edge}.
\begin{figure}[htbp]
\centering
\includegraphics[width=\textwidth]{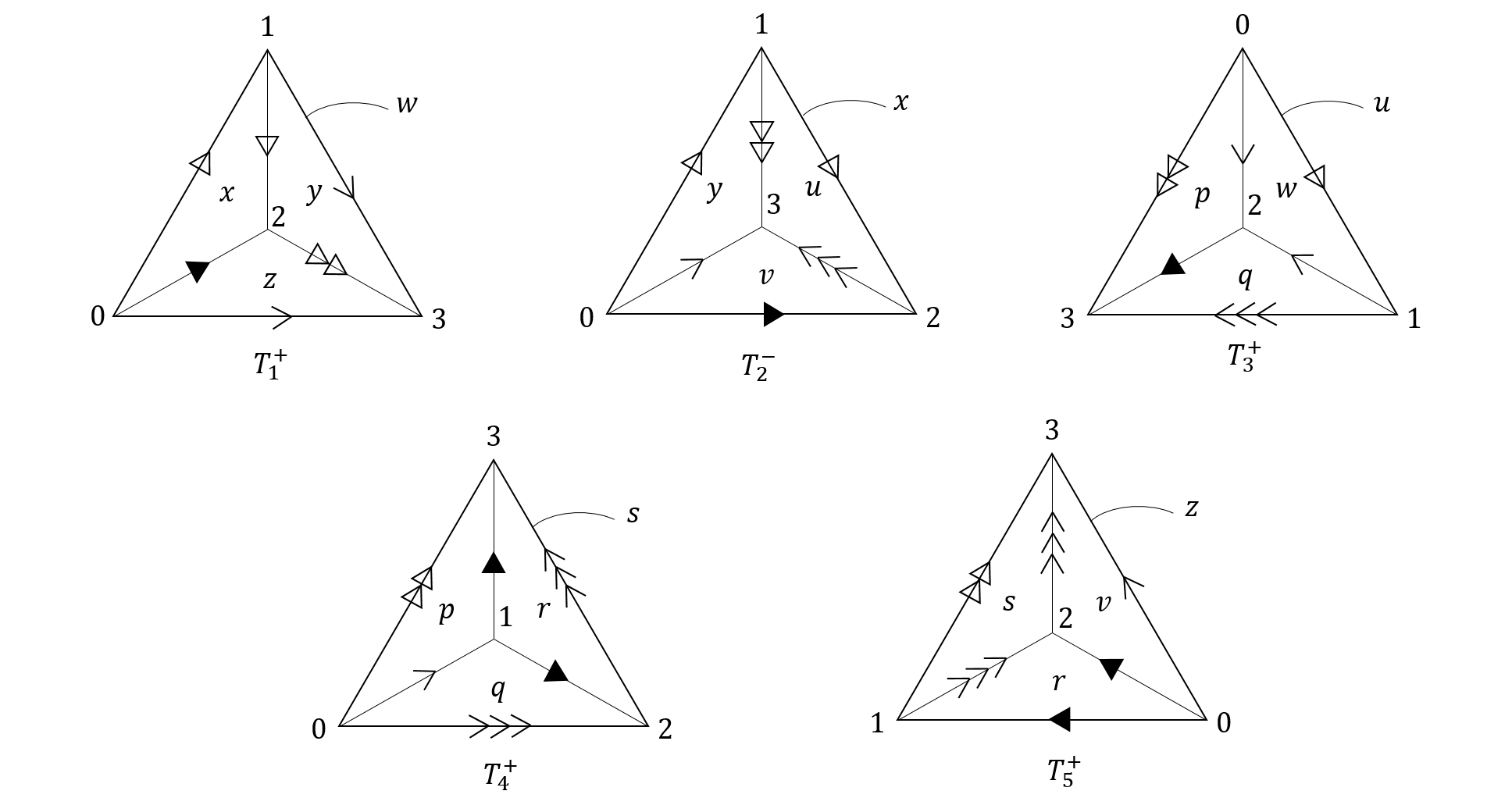}
\caption{An Ideal tetrahedral decomposition for the complementary space of $7_3$ knot.}
\label{fig:ideal_triangulation}
\end{figure}
\begin{figure}[htbp]
\centering
\includegraphics[width=0.4\textwidth]{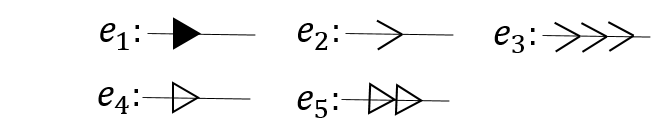}
\caption{Arrow symbols corresponding to edges in the ideal tetrahedral decomposition for the complementary space of $7_3$ knot.}
\label{fig:ideal_triangulation_edge}
\end{figure}
\end{prop}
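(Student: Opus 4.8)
The plan is to read the ideal triangulation $X$ off from the one-vertex H-triangulation $Y=(T_1,\ldots,T_6,\sim)$ produced in Proposition~\ref{one_veretex_H-triantulation}, via the standard passage from an H-triangulation of $(M,K)$ to an ideal triangulation of $M\backslash K$ used in \cite{MR718149,MR3486430}. In $Y$ the knot is the single edge $e_0$, which lies in a distinguished tetrahedron $Z$. Since $Y$ has only one vertex, excising an open tubular neighborhood of $K$ corresponds, combinatorially, to collapsing $Z$ along $e_0$: the two faces of $Z$ incident to $e_0$ each degenerate onto an edge, while the two remaining faces of $Z$ (those carrying the edge opposite to $e_0$) become identified with one another. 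This deletes exactly one tetrahedron, so the resulting decomposition has $6-1=5$ tetrahedra, matching $X=(T_1,\ldots,T_5,\sim)$.

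First I would fix, from Figure~\ref{fig:one_vertex_Htriangulation} and Figure~\ref{fig:one_vertex_Htriangulation_edge}, which tetrahedron is $Z$ and how its four faces are glued to the remaining five tetrahedra. Collapsing $Z$ replaces the two gluings ``neighbor face $\to$ face of $Z$ opposite $e_0$'' by a single direct gluing between the two neighbor faces; I would read off this induced face pairing and record it as the gluing data of $X$ in Figure~\ref{fig:ideal_triangulation}. Next I would track the edge classes: the collapse identifies the two side edges of $Z$ adjacent to $e_0$ with edges already present, so after deleting $e_0$ and performing the induced identifications, the original set $\Delta_1(Y)=\{e_0,\ldots,e_6\}$ descends to the five classes $\{e_1,\ldots,e_5\}$ exhibited in Figure~\ref{fig:ideal_triangulation_edge}.

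To certify that the output is genuinely an ideal triangulation of $S^3\backslash 7_3$ I would verify three points. First, the induced face pairing is a complete matching of the $2\cdot 5=10$ faces into five pairs with each gluing reversing orientation, so $X$ is an oriented pseudo 3-manifold. Second, the unique vertex link is a torus: here I would perform an Euler-characteristic count on the induced triangulation of $\partial N_0(X)$ and check connectedness and orientability, confirming a single torus cusp; the consistency relation $\#\Delta_1(X)=5=\#\{T_1,\ldots,T_5\}$, characteristic of a one-cusped ideal triangulation, serves as a quick sanity check. Third, the underlying space is $S^3\backslash 7_3$: this is inherited from Proposition~\ref{one_veretex_H-triantulation}, since the collapse is exactly the operation converting the H-triangulation of $(S^3,7_3)$ into the ideal triangulation of the complement, but as an independent confirmation I would check that re-inserting $Z$ (the inverse ``uncollapse'') reproduces $Y$, and/or that a presentation of $\pi_1$ read from the gluing agrees with the $7_3$ knot group.

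The main obstacle I expect is the purely combinatorial bookkeeping in the second paragraph together with the torus check: correctly propagating the face identifications and, above all, the edge identifications through the collapse of $Z$ so that precisely five edge classes survive and the vertex link closes up to a single torus. A miscount of even one edge incidence would either destroy the manifold condition or produce the wrong number of classes, so the delicate step is the careful, mutually consistent reading of the arrow decorations across all five tetrahedra in Figures~\ref{fig:ideal_triangulation} and \ref{fig:ideal_triangulation_edge}.
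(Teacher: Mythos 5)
Your proposal follows essentially the same route as the paper: the paper's proof obtains $X$ from the one-vertex H-triangulation $Y$ of Proposition \ref{one_veretex_H-triantulation} precisely by collapsing the knot edge $e_0$ to a point, which degenerates the tetrahedron $T_1^{+}$ containing it and forces the gluing of its two faces $\partial_2(T_1^{+})$ and $\partial_3(T_1^{+})$ (the ones labelled $y$ and $z$), exactly the collapse you describe, yielding $6-1=5$ tetrahedra. Your additional verification steps (orientation-reversing face matching, torus vertex link, recovering $Y$ by un-collapsing) are sound and go beyond the paper's very terse proof, but the underlying construction is identical.
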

\begin{proof}
 For the one vertex H-triangulation obtained by Theorem \ref{one_veretex_H-triantulation}, this decomposition is obtained by collapsing the edge $e_0$ corresponding to the knot to one point and the tetrahedron $T_1^{+}$ by gluing the faces $\partial_2(T_1^{+})$ and $\partial_3(T_1 ^{+})$. In this case, we glue the faces which are assigned the variables $y$ and $z$ in Figure \ref{fig:one_vertex_Htriangulation}. 
\end{proof}
\section{Calculation of the partition function for the ideal tetrahedral decomposition of the complementary space of $7_3$ knot.}
\label{ideal-triangulation}
In this section, we calculate the partition function of the ideal tetrahedral decomposition $X$ for the knot complement $S^3\backslash 7_3$ obtained by Proposition \ref{thm:ideal_triangulation} in Section \ref{triangulation}.

In general, the second-order homology group with an integer coefficient of the complementary space of the knot embedded in $S^3$ is shown to be trivial, either by using the Mayer-Vietoris exact sequence \cite{MR1472978} or the Alexander duality theorem. Then $H_2(X-\Delta_0(X))=0$. Therefore, $X$ is admissible. 

Let $\alpha_X=(2\pi a_1,2\pi b_1,2\pi c_1,\cdots,2\pi a_5, 2\pi b_5,2\pi c_5)\in\mathscr{S}_X$.

From Figure \ref{fig:ideal_triangulation}, the condition that $\alpha_X\in\mathscr{A}_X$, i.e., the weight of each edge is $2\pi$, is
\begin{align}
e_1: &\quad & b_1+b_2+a_3+b_4+c_4+a_5+b_5&=1\label{it1}\\
e_2:&\quad & b_1+c_1+c_2+b_3+c_3+a_4+c_5&=1\label{it2}\\
e_3:&\quad & a_2+b_3+a_4+b_4+a_5+c_5&=1\label{it3}\\
e_4:&\quad & a_1+c_1+a_2+c_2+a_3&=1\label{it4}\\
e_5:&\quad & a_1+b_2+c_3+c_4+b_5&=1\label{it5}.
\end{align}

Using $a_i+b_i+c_i=\frac{1}{2}$ for $i=1,2,\ldots,5$, 
\begin{empheq}[left={\eqref{it1},\eqref{it2},\eqref{it3},\eqref{it4},\eqref{it5}\iff\empheqlbrace}]{align}
b_1+b_2+a_3&=a_4+c_5 \label{it1'}\\
a_2&=2b_1+c_1 \label{it2''}\\
a_2+b_3&=c_4+b_5 \label{it3'}\\
a_3&=b_1+b_2\label{it4'}.
\end{empheq}
\begin{lemma}\label{non-empty}
$\mathscr{A}_X$ is non-empty.
\end{lemma}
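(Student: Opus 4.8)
The plan is to prove non-emptiness by exhibiting a single explicit interior point. Observe that $\mathscr{A}_X$ is precisely the intersection of the open shape polytope $\mathscr{S}_X$---defined by $a_i,b_i,c_i\in(0,\tfrac12)$ together with $a_i+b_i+c_i=\tfrac12$ for $i=1,\dots,5$---with the affine subspace cut out by the balancing conditions \eqref{it1}--\eqref{it5}. By the reduction already carried out, these balancing conditions are equivalent, modulo the normalizations $a_i+b_i+c_i=\tfrac12$, to the four linear equations \eqref{it1'}, \eqref{it2''}, \eqref{it3'}, \eqref{it4'}. Thus it suffices to produce one tuple $(a_i,b_i,c_i)_{i=1}^{5}$ with all fifteen coordinates strictly inside $(0,\tfrac12)$ satisfying the five normalizations and these four equations.

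To locate such a point, I would use \eqref{it4'} and \eqref{it2''} to set $a_3=b_1+b_2$ and $a_2=2b_1+c_1$, then read \eqref{it1'} as $a_4+c_5=2(b_1+b_2)$ and \eqref{it3'} as $a_2+b_3=c_4+b_5$; the remaining coordinates are free apart from $a_i+b_i+c_i=\tfrac12$. Choosing the free parameters small and roughly balanced and solving, one is led, for instance, to
\[
\begin{aligned}
&(a_1,b_1,c_1)=(\tfrac{3}{10},\tfrac{1}{10},\tfrac{1}{10}),\quad
(a_2,b_2,c_2)=(\tfrac{3}{10},\tfrac{1}{10},\tfrac{1}{10}),\\
&(a_3,b_3,c_3)=(\tfrac{1}{5},\tfrac{3}{20},\tfrac{3}{20}),\quad
(a_4,b_4,c_4)=(\tfrac{1}{5},\tfrac{1}{10},\tfrac{1}{5}),\\
&(a_5,b_5,c_5)=(\tfrac{1}{20},\tfrac{1}{4},\tfrac{1}{5}).
\end{aligned}
\]
A direct substitution then confirms that each triple sums to $\tfrac12$, that all five edge equations \eqref{it1}--\eqref{it5} hold, and that every coordinate lies strictly between $0$ and $\tfrac12$; hence this tuple defines an element of $\mathscr{A}_X$, proving the claim.

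The only point requiring care---and the sole potential obstacle---is strictness. Because $\mathscr{A}_X$ is built from the \emph{open} shape set $\mathscr{S}_X$ rather than its closure $\overline{\mathscr{S}_X}$, the witnessing point must have every dihedral angle in the open interval $(0,\pi)$, equivalently every coordinate in $(0,\tfrac12)$; a solution landing on a face of the cube (some coordinate equal to $0$ or $\tfrac12$) would fail to certify non-emptiness. Accordingly, in searching for the explicit point I would keep a safe margin away from the boundary while solving the linear system. No topological or hyperbolic-geometric input is needed here: once a single interior rational solution of the reduced system is exhibited and verified, the lemma follows immediately.
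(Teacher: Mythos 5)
Your proposal is correct and follows essentially the same strategy as the paper: both proofs simply exhibit an explicit rational tuple with all coordinates strictly in $(0,\tfrac12)$ and verify the normalizations $a_i+b_i+c_i=\tfrac12$ together with the balancing conditions (the paper uses $(\tfrac14,\tfrac18,\tfrac18)$, $(\tfrac38,\tfrac1{16},\tfrac1{16})$, $(\tfrac3{16},\tfrac18,\tfrac3{16})$, $(\tfrac18,\tfrac1{16},\tfrac5{16})$, $(\tfrac1{16},\tfrac3{16},\tfrac14)$, while your different witness also checks out against \eqref{it1}--\eqref{it5}). Your added remark about needing strict interiority is a fair point of care but does not change the substance of the argument.
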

\begin{proof}
$(a_1,b_1,c_1) =(\frac{1}{4},\frac{1}{8},\frac{1}{8})$, 
$(a_2,b_2,c_2)=(\frac{3}{8},\frac{1}{16},\frac{1}{16})$,  
$(a_3,b_3,c_3)=(\frac{3}{16},\frac{1}{8},\frac{3}{16})$, 
$(a_4,b_4,c_4)=(\frac{1}{8},\frac{1}{16},\frac{5}{16})$, 
$(a_5,b_5,c_5)=(\frac{1}{16},\frac{3}{16},\frac{1}{4})$
gives a non-empty element of $\mathscr{A}_X$.
\end{proof}
In the following, we calculate the partition function $Z_{\hbar}(X,\alpha_X)$ of the fully balanced shaped ideal triangulation $(X,\alpha_X)$. 
\begin{theorem}\label{volume-conjecture-1}
The partition function of the fully balanced shaped ideal triangulation $(X,\alpha_X)$ is expressed as follows.
\begin{equation*}
Z_\hbar(X,\alpha_X)=e^{-\frac{\pi}{3}i}e^{i\frac{\phi(\alpha_X)}{\hbar}}
\int_{\mathbb{R}+\frac{i\mu(\alpha_X)}{\sqrt{\hbar}}}J_X(\hbar,x)e^{-\frac{x\lambda(\alpha_X)}{\sqrt{\hbar}}}dx,
\end{equation*}
where $\lambda(\alpha_X)$ and $\mu(\alpha_X)$ are gauge invariant real linear combinations of dihedral angles, $\phi(\alpha_X)$ is a real quadratic polynomial of dihedral angles, and 
\begin{equation*}
\begin{split}
J_X:(\hbar,x)\mapsto
\int_{\mathscr{Y}^\prime} d\mathbf{y}^\prime e^{2i\pi({\mathbf{y}^\prime}^{\rm T}Q\mathbf{y}^\prime-\frac{3}{2}x^2)}e^{\frac{1}{\sqrt{\hbar}}{\mathbf{y}^\prime}^{\rm T}\mathscr{W}}\frac{\Phi_{\mathsf{b}}(Z^\prime)}{\Phi_{\mathsf{b}}(Y^\prime)\Phi_{\mathsf{b}}(W^\prime)\Phi_{\mathsf{b}}(W^\prime-x)\Phi_{\mathsf{b}}(W^\prime+x)}
\end{split}
\end{equation*}
where
\begin{eqnarray*}
\mathscr{Y}^\prime&\coloneqq&(\mathbb{R}-2c_{\mathsf{b}}(c_1+b_1))\times(\mathbb{R}+2c_{\mathsf{b}}(b_2+c_2))\times(\mathbb{R}-2c_{\mathsf{b}}(c_3+b_3))\\
&=&\left(\mathbb{R}-\frac{i}{2\sqrt{\hbar}}(1-2a_1)\right)\times\left(\mathbb{R}+\frac{i}{2\sqrt{\hbar}}(1-2a_2)\right)\times\left(\mathbb{R}-\frac{i}{2\sqrt{\hbar}}(1-2a_3)\right)
\end{eqnarray*}
and let
\begin{equation*}
\mathbf{y}^\prime=\begin{bmatrix}
Y^\prime\\
Z^\prime\\
W^\prime
\end{bmatrix},\quad
\mathscr{W}=\begin{bmatrix}
-\pi\\
0\\
\pi
\end{bmatrix},\quad
Q=\begin{bmatrix}
1 & -\frac{1}{2} & 0\\
-\frac{1}{2} & 0 & \frac{1}{2}\\
0 & \frac{1}{2} &\frac{1}{2}
\end{bmatrix}.
\end{equation*}
\end{theorem}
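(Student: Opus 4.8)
The plan is to compute $Z_\hbar(X,\alpha_X)$ straight from the definition of the TQFT functor, namely as the push-forward of the tensor product of the five tetrahedral partition functions along the face identifications recorded in Figure~\ref{fig:ideal_triangulation}. First I would attach a real variable to each of the ten faces of $X$ (one per glued pair) and, reading off the orientation $\mathrm{sign}(T_k)$ and the angles $(a_k,b_k,c_k)$ of each tetrahedron, write $Z_\hbar(T_k)$ explicitly: for $\mathrm{sign}(T_k)=1$ via $\langle x_0,x_2|\mathsf{T}(c(v_0v_1),c(v_0v_3))|x_1,x_3\rangle=\delta(x_0+x_2-x_1)\,\widetilde{\psi}'_{a_k,c_k}(x_3-x_2)\,e^{2\pi i x_0(x_3-x_2)}$, and for $\mathrm{sign}(T_k)=-1$ via the conjugate formula for $\overline{\mathsf{T}}$. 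Substituting $\widetilde{\psi}'_{a,c}=e^{-\pi i/12}\psi_{c,b}$ together with $\psi=1/\Phi_{\mathsf{b}}$ turns each factor into a product of a Dirac delta, a Gaussian $e^{2\pi i(\cdots)}$ quadratic in the face variables, a linear exponential, and a single dilogarithm $\Phi_{\mathsf{b}}^{\pm1}$ whose argument is affine in the face variables with an imaginary shift proportional to $c_{\mathsf{b}}=i/(2\sqrt{\hbar})$ times a sum of angles.

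Next I would multiply the five factors and carry out the push-forward. The five Dirac deltas impose five affine relations among the ten face variables; using them to eliminate five variables, and performing one further elementary Gaussian (Fresnel) integration that removes a decoupled variable, reduces the computation to the four-dimensional integral of the theorem, of which I retain one variable as the outer parameter $x$ and rename the remaining three as $\mathbf{y}'=(Y',Z',W')$. Two points require care here. One is that the numerator factor $\Phi_{\mathsf{b}}(Z')$ does not appear directly: it is produced from the $1/\Phi_{\mathsf{b}}$ coming from the conjugated ($\mathrm{sign}=-1$) tetrahedron by applying the inversion relation $\Phi_{\mathsf{b}}(z)\Phi_{\mathsf{b}}(-z)=e^{i\frac{\pi}{12}(\mathsf{b}^2+\mathsf{b}^{-2})}e^{i\pi z^2}$ of Proposition~\ref{property-of-Phi_b}(1), which also contributes a Gaussian and a constant phase. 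The other is the bookkeeping of the quadratic and linear exponents: after elimination the quadratic part should collapse to $2\pi i({\mathbf{y}^\prime}^{\mathrm T}Q\mathbf{y}^\prime-\tfrac32 x^2)$ with exactly the stated symmetric matrix $Q$, and the $1/\sqrt{\hbar}$-scaled part linear in $\mathbf{y}'$ to $\tfrac{1}{\sqrt{\hbar}}{\mathbf{y}^\prime}^{\mathrm T}\mathscr{W}$ with $\mathscr{W}=(-\pi,0,\pi)^{\mathrm T}$, the angle-dependent remainder being absorbed into $e^{i\phi(\alpha_X)/\hbar}$ and the weight $e^{-x\lambda(\alpha_X)/\sqrt{\hbar}}$.

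I would then pin down the remaining data. The contour $\mathscr{Y}'$ is dictated by the imaginary shifts of the denominator arguments: since $-2c_{\mathsf{b}}(b_1+c_1)=-\tfrac{i}{2\sqrt{\hbar}}(1-2a_1)$ and similarly for the other tetrahedra, each integration line must be shifted so that the argument of every $\Phi_{\mathsf{b}}$ stays in the holomorphic strip and the Gaussian converges, which is exactly guaranteed by the positivity $a_k,b_k,c_k\in(0,\tfrac12)$ of the shape structure; this reproduces $\mathscr{Y}'=(\mathbb{R}-2c_{\mathsf{b}}(c_1+b_1))\times(\mathbb{R}+2c_{\mathsf{b}}(b_2+c_2))\times(\mathbb{R}-2c_{\mathsf{b}}(c_3+b_3))$. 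The linear combinations $\lambda(\alpha_X)$ and $\mu(\alpha_X)$ are read off from the surviving coefficients of $x$; I would verify that the balancing relations \eqref{it1'}--\eqref{it4'} force them to be expressible through the (gauge-invariant) edge weights, hence gauge invariant. Finally the accumulated constant phases --- the five $e^{-\pi i/12}$ factors, the phase from the inversion relation, and the Fresnel phase of the extra Gaussian --- should combine to the overall $e^{-\pi i/3}$. The specific choice of which surviving variable is called $x$ is made so that $J_X(\hbar,0)$ matches the H-triangulation computation needed for part~(2) of Theorem~\ref{uemura}.

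The main obstacle I expect is the \emph{combinatorial bookkeeping of the gluing}: correctly extracting from Figure~\ref{fig:ideal_triangulation} the ten face identifications and the five tetrahedron orientations, keeping the delta-function substitutions mutually consistent, and choosing the three survivor variables so that the quadratic form lands on precisely $Q$ rather than on a gauge-equivalent variant with different off-diagonal entries. A secondary difficulty is the analytic justification that the interchange of integrations is legitimate and that the inner three-dimensional integral $J_X(\hbar,x)$ converges absolutely along $\mathscr{Y}'$ for each fixed $x$; this rests on the asymptotics of Proposition~\ref{property-of-Phi_b}(3), which control the decay of the ratio of dilogarithms against the Gaussian.
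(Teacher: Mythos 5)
Your overall route (kernels on glued face variables, delta-function eliminations, the inversion relation of Proposition \ref{property-of-Phi_b}(1) producing the numerator $\Phi_{\mathsf{b}}(Z')$ from the single sign $-1$ tetrahedron, contours dictated by the angle shifts) is essentially the paper's, but the step you describe as ``one further elementary Gaussian (Fresnel) integration that removes a decoupled variable'' is a genuine gap: no such variable exists. After the five delta functions are used, five variables remain, and in a suitable affine basis $(A,B,C,D,E)$ each of them is the argument of exactly one charged dilogarithm, $\psi_{c_1,b_1}(A)\psi_{b_2,c_2}(B)\psi_{c_3,b_3}(C)\psi_{c_4,b_4}(D)\psi_{c_5,b_5}(E)$; since these five arguments are linearly independent (the Jacobian is $\pm1$), every direction of the reduced integration space moves at least one dilogarithm argument, so a purely Gaussian integration is impossible no matter which variables you eliminate. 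The reduction from five to four variables must instead be done by integrating one variable against its dilogarithm (the paper takes $E$, whose exponent is linear), i.e.\ by the Fourier-transform identity for the charged quantum dilogarithm: $\int_{\mathbb{R}}\psi_{c_5,b_5}(E)e^{2\pi iE(D-2C)}dE=\widetilde{\psi}_{c_5,b_5}(2C-D)=e^{\pi i(2C-D)^2}e^{-\frac{\pi i}{12}}\psi_{b_5,a_5}(2C-D)$. This is precisely what creates the fifth factor $\Phi_{\mathsf{b}}(W'+x)$ (argument $2\tilde{C}-\tilde{D}$) in $J_X$; an elementary Gaussian would produce no new dilogarithm, leaving only four $\Phi_{\mathsf{b}}$ factors, which cannot match the stated integrand. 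Your phase bookkeeping also breaks at this point: a Fresnel integral contributes $e^{\pm i\pi/4}$, whereas the needed contribution is the $e^{-\frac{\pi i}{12}}$ from this identity (five kernel phases give $e^{-\frac{5\pi i}{12}}$, the Fourier step gives $e^{-\frac{\pi i}{12}}$, and the inversion relation gives $e^{\frac{i\pi}{6}}$, combining to $e^{-\frac{\pi i}{3}}$).

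A second, smaller flaw is your justification of gauge invariance: $\lambda(\alpha_X)$ and $\mu(\alpha_X)$ are \emph{not} expressible through the edge weights. On $\mathscr{A}_X$ every weight is identically $2\pi$, while $\mu=a_3-a_4$ and $\lambda$ vary along $\mathscr{A}_X$ (for instance perturb $a_4\mapsto a_4+\epsilon$, $c_4\mapsto c_4-\epsilon$, $c_5\mapsto c_5-\epsilon$, $b_5\mapsto b_5+\epsilon$, which preserves \eqref{it1'}--\eqref{it4'} but changes $\mu$), so they cannot be linear combinations of weight functionals plus constants. The paper instead verifies invariance directly: it checks that the combinations $-c_3-2b_5+2a_3-3a_4+a_2$ and $a_3-a_4$ are unchanged under the transformation rule of Definition \ref{gauge-equivalent}. (Conceptually these quantities are angular holonomies along curves on the boundary torus, which is the correct structural reason for their gauge invariance, as opposed to factoring through the weights.) With the Fourier-transform step and the gauge-invariance argument repaired, the rest of your outline matches the paper's proof.
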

\begin{proof}
\begin{equation*}
\begin{split}
Z_\hbar(X,\alpha_X)&=\int_{\mathbb{R}^{10}}dxdydzdwdudvdpdqdrds\ \langle y,w\lvert \mathsf{T}(a_1,c_1)\rvert z,x\rangle \langle v,x\lvert \overline{\mathsf{T}}(a_2,c_2)\rvert u,y\rangle \\ 
&\quad\quad\times \langle q,u\lvert \mathsf{T}(a_3,c_3)\rvert p,w\rangle
\langle r,p\lvert \mathsf{T}(a_4,c_4)\rvert s,q\rangle \langle s,z\lvert \mathsf{T}(a_5,c_5)\rvert v,r\rangle \\
&=\int_{\mathbb{R}^{10}}dxdydzdwdudvdpdqdrds\ \delta(y+w-z)e^{-\frac{\pi}{12}i}\psi_{c_1,b_1}(x-w)e^{2\pi iy(x-w)}\\
& \quad\quad\times  \delta(u+y-v)
\psi_{b_2,c_2}(y-x)e^{-\frac{\pi}{12}i}e^{\pi i(y-x)^2}e^{-2\pi iu(x-y)}\delta(q+u-p)e^{-\frac{\pi}{12}i}\\ 
&\quad\quad\times \psi_{c_3,b_3}(w-u)
e^{2\pi iq(w-u)}
\delta(r+p-s)e^{-\frac{\pi}{12}i}\psi_{c_4,b_4}(q-p)e^{2\pi i r(q-p)}\\
&\quad\quad\times\delta(s+z-v)e^{-\frac{\pi}{12}i}\psi_{c_5,b_5}(r-z)e^{2\pi is(r-z)}.
\end{split}
\end{equation*}
Iterated integration with respect to the variables $q$, $p$, $s$, $v$, and $z$, in turn, gives
\begin{equation*}
\begin{split}
Z_\hbar(X,\alpha_X)&=\int_{\mathbb{R}^5}dxdydwdudr\ e^{-\frac{5\pi}{12}i}\psi_{c_1,b_1}(x-w)e^{2\pi iy(x-w)}\psi_{b_2,c_2}(y-x)e^{\pi i(y-x)^2}\\
&\quad\quad\times e^{-2\pi iu(x-y)}\psi_{c_3,b_3}(w-u)
 e^{-2\pi i(r+w)(w-u)}\psi_{c_4,b_4}(-u)e^{-2\pi iru}\\
& \quad\quad\times \psi_{c_5,b_5}(r-y-w)e^{2\pi i(u-w)(r-y-w)}.
\end{split}
\end{equation*}
Transformation of variables,
$\left\{
\begin{aligned}
&A\coloneqq x-w\\
&B\coloneqq y-x\\
&C\coloneqq w-u\\
&D\coloneqq -u\\
&E\coloneqq r-y-w
\end{aligned},
\right.$
gives 
$\left\{
\begin{aligned}
&x=A+C-D\\
&y=A+B+C-D\\
&w=C-D\\
&u=-D\\
&r=A+B+2C-2D+E
\end{aligned}
\right.$
and its Jacobian is 
$
\begin{vmatrix}
1 & 0 & 1& -1&0\\
1 & 1& 1 & -1&0 \\
0 & 0 & 1 & -1&0 \\
0 & 0 & 0 & -1&0\\
1 & 1 & 2 & -2&1
\end{vmatrix}
=-1$. Then 
\begin{equation*}
\begin{split}
Z_{\hbar}(X,\alpha_X)
&=\int_{\mathbb{R}^5}dAdBdCdDdE\ e^{-\frac{5\pi}{12}i}\psi_{c_1,b_1}(A)\psi_{b_2,c_2}(B)\psi_{c_3,b_3}(C)\psi_{c_4,b_4}(D)\psi_{c_5,b_5}(E)\\
&\quad\quad\times e^{\pi iB^2}e^{2\pi i(A^2+AB-3C^2+5CD-2CE-2D^2+DE-BC)}.
\end{split}
\end{equation*}
Since
\begin{equation*}
\int_{\mathbb{R}}dE\ \psi_{c_5,b_5}(E)e^{2\pi iE(-2C+D)}=\widetilde{\psi}_{c_5,b_5}(2C-D)=e^{\pi i(2C-D)^2}e^{-\frac{\pi}{12}i}\psi_{b_5,a_5}(2C-D) , 
\end{equation*}
\begin{equation*}
\begin{split}
Z_{\hbar}(X,\alpha_X)
&=\int_{\mathbb{R}^4}dAdBdCdDe^{-\frac{\pi}{2}i}\psi_{c_1,b_1}(A)\psi_{b_2,c_2}(B)\psi_{c_3,b_3}(C)
\psi_{c_4,b_4}(D)\psi_{b_5,a_5}(2C-D)\\
&\quad\quad\times e^{\pi iB^2}e^{2\pi i(A^2+AB-C^2+3CD-BC)}e^{-3\pi iD^2}\\
&=\int_{\mathbb{R}^4}dAdBdCdD  e^{-\frac{\pi}{2}i}\psi(A-2c_{\mathsf{b}}(b_1+c_1))e^{-4\pi ic_{\mathsf{b}}c_1(A-c_{\mathsf{b}}(c_1+b_1))}\\
&\quad\quad\times e^{-\pi ic_{\mathsf{b}}^2\frac{4(c_1-b_1)+1}{6}}
\psi(B-2c_{\mathsf{b}}(b_2+c_2))e^{-4\pi ic_{\mathsf{b}}b_2(B-c_{\mathsf{b}}(b_2+c_2))}\\
&\quad\quad\times e^{-\pi ic_{\mathsf{b}}^2\frac{4(b_2-c_2)+1}{6}}\psi(C-2c_{\mathsf{b}}(c_3+b_3))e^{-4\pi ic_{\mathsf{b}}c_3(C-c_{\mathsf{b}}(c_3+b_3))}\\
&\quad\quad\times e^{-\pi ic_{\mathsf{b}}^2\frac{4(c_3-b_3)+1}{6}}\psi(D-2c_{\mathsf{b}}(c_4+b_4))e^{-4\pi ic_{\mathsf{b}}c_4(D-c_{\mathsf{b}}(c_4+b_4))}\\
&\quad\quad\times e^{-\pi ic_{\mathsf{b}}^2\frac{4(c_4-b_4)+1}{6}}\psi(2C-D-2c_{\mathsf{b}}(b_5+a_5))e^{-4\pi ic_{\mathsf{b}}b_5(2C-D-c_{\mathsf{b}}(b_5+a_5))}\\
&\quad\quad\times e^{-\pi ic_{\mathsf{b}}^2\frac{4(b_5-a_5)+1}{6}}e^{\pi iB^2}e^{2\pi i(A^2+AB-C^2+3CD-BC)}e^{-3\pi iD^2}.
\end{split}
\end{equation*}
Transformation of variables 
\begin{equation*}
\begin{aligned}
&\Tilde{A}\coloneqq A-2c_{\mathsf{b}}(c_1+b_1)\\
&\Tilde{B}\coloneqq B-2c_{\mathsf{b}}(b_2+c_2)\\
&\Tilde{C}\coloneqq C-2c_{\mathsf{b}}(c_3+b_3)\\
&\Tilde{D}\coloneqq D-2c_{\mathsf{b}}(c_4+b_4)
\end{aligned}
\end{equation*}
gives, using the equations \eqref{it1'}, \eqref{it4'}, and $a_i+b_i+c_i=\frac{1}{2}\quad( i=1, \ldots, 5 )$,
\begin{equation*} 
2C-D-2c_{\mathsf{b}}(a_5+b_5)=2(\Tilde{C}+2c_{\mathsf{b}}(b_3+c_3))-\Tilde{D}-2c_{\mathsf{b}}(b_4+c_4)-2c_{\mathsf{b}}(a_5+b_5)=2\Tilde{C}-\Tilde{D}.
\end{equation*}
Concerning
\begin{equation*}
\begin{split}
&\quad e^{-4\pi ic_{\mathsf{b}}c_1 A}e^{-4\pi ic_{\mathsf{b}}b_2B}e^{-4\pi ic_{\mathsf{b}}c_3C}e^{-4\pi ic_{\mathsf{b}}c_4D}
e^{-4\pi ic_{\mathsf{b}}b_5(2C-D)}e^{\pi iB^2}e^{2\pi i(A^2+AB-C^2+3CD-BC)}
\\
&\quad \times e^{-3\pi iD^2}\\
&= e^{-4\pi ic_{\mathsf{b}}c_1(\Tilde{A}+2c_{\mathsf{b}}(c_1+b_1))}e^{-4\pi ic_{\mathsf{b}}b_2(\Tilde{B}+2c_{\mathsf{b}}(b_2+c_2))}e^{-4\pi ic_{\mathsf{b}}c_3(\Tilde{C}+2c_{\mathsf{b}}(c_3+b_3))}e^{-4\pi ic_{\mathsf{b}}c_4(\Tilde{D}+2c_{\mathsf{b}}(c_4+b_4))}\\
&\quad\times e^{-4\pi ic_{\mathsf{b}}b_5(2\Tilde{C}-\Tilde{D}+2c_{\mathsf{b}}(a_5+b_5))}e^{\pi i\left\{\Tilde{B}+2c_{\mathsf{b}}(b_2+c_2)\right\}^2}e^{2\pi i\left\{\Tilde{A}+2c_{\mathsf{b}}(c_1+b_1)\right\}^2}\\
&\quad\times e^{2\pi i\left\{\Tilde{A}+2c_{\mathsf{b}}(c_1+b_1)\right\}\left\{\Tilde{B}+2c_{\mathsf{b}}(b_2+c_2)\right\}}e^{-2\pi i\left\{\Tilde{C}+2c_{\mathsf{b}}(c_3+b_3)\right\}^2}e^{6\pi i \left\{\Tilde{C}+2c_{\mathsf{b}}(c_3+b_3)\right\}\left\{\Tilde{D}+2c_{\mathsf{b}}(c_4+b_4)\right\}}\\
&\quad\times e^{-2\pi i\left\{\Tilde{B}+2c_{\mathsf{b}}(b_2+c_2)\right\}\left\{\Tilde{C}+2c_{\mathsf{b}}(c_3+b_3)\right\}}e^{-3\pi i\left\{\Tilde{D}+2c_{\mathsf{b}}(c_4+b_4)\right\}^2},
\end{split}
\end{equation*}
the first-order terms concerning $\Tilde{A}$ with constant coefficients in the exponent of $e$ are 
\begin{equation*}
\begin{split}
&\quad 4\pi ic_{\mathsf{b}}\Tilde{A}\left(-c_1+2(b_1+c_1)+b_2+c_2\right)=4\pi ic_{\mathsf{b}}\Tilde{A}\left(-c_1+2\left(\frac{1}{2}-a_1\right)+\frac{1}{2}-a_2\right)\\
&
=4\pi ic_{\mathsf{b}}\left(\frac{3}{2}-2a_1-c_1-2b_1-c_1\right)\quad (\because\eqref{it2''})\\
&=4\pi ic_{\mathsf{b}}\Tilde{A}\left(\frac{3}{2}-2(a_1+b_1+c_1)\right)=2\pi ic_{\mathsf{b}}\Tilde{A},
\end{split}
\end{equation*}
the first-order terms concerning $\Tilde{B}$ with constant coefficients in the exponent of $e$ are
\begin{equation*}
\begin{split}
&\quad 4\pi ic_{\mathsf{b}}\Tilde{B}(-b_2+b_2+c_2+b_1+c_1-(b_3+c_3))=4\pi ic_{\mathsf{b}}\left(c_2+\frac{1}{2}-a_1-\left(\frac{1}{2}-a_3\right)\right)\\
&=4\pi ic_{\mathsf{b}}\Tilde{B}(c_2-a_1+a_3)=4\pi ic_{\mathsf{b}}\Tilde{B}(c_2-a_1+b_1+b_2)\quad(\because\eqref{it4'})\\
&=4\pi ic_{\mathsf{b}}\Tilde{B}\left(\frac{1}{2}-a_2-a_1+b_1\right)=4\pi ic_{\mathsf{b}}\Tilde{B}\left(\frac{1}{2}-2b_1-c_1-a_1+b_1\right)\quad(\because\eqref{it2''})\\
&=4\pi ic_{\mathsf{b}}\Tilde{B}\left(\frac{1}{2}-a_1-b_1-c_1\right)=0,
\end{split}
\end{equation*}
the first-order terms concerning $\Tilde{C}$ with constant coefficients in the exponent of $e$ are
\begin{equation*}
\begin{split}
&\quad 4\pi ic_{\mathsf{b}}\Tilde{C}(-c_3-2b_5-2(b_3+c_3)+3(c_4+b_4)-(b_2+c_2))\\
&=4\pi ic_{\mathsf{b}}\Tilde{C}\left(-c_3-2b_5-2\left(\frac{1}{2}-a_3\right)+3\left(\frac{1}{2}-a_4\right)-\left(\frac{1}{2}-a_2\right)\right)\\
&=4\pi ic_{\mathsf{b}}\Tilde{C}(-c_3-2b_5+2a_3-3a_4+a_2),
\end{split}
\end{equation*}
the first-order terms concerning $\Tilde{D}$ with constant coefficients in the exponent of $e$ are
\begin{equation*}
\begin{split}
&\quad 4\pi ic_{\mathsf{b}}\Tilde{D}(-c_4+b_5+3(b_3+c_3)-3(c_4+b_4))\\
&=4\pi ic_{\mathsf{b}}\Tilde{D}\left(-c_4+b_5+3\left(\frac{1}{2}-a_3\right)-3\left(\frac{1}{2}-a_4\right)\right)\\
&=4\pi ic_{\mathsf{b}}\Tilde{D}(-c_4+b_5-3a_3+3a_4).
\end{split}
\end{equation*}
Since
\begin{equation*}
\begin{split}
&\quad -c_3-2b_5+2a_3-3a_4+a_2-c_4+b_5-3a_3+3a_4\\
&=-c_3-c_4-b_5-a_3+a_2=-c_3-(a_2+b_3)-a_3+a_2\quad(\because\eqref{it3'})\\
&=-a_3-b_3-c_3=-\frac{1}{2},
\end{split}
\end{equation*}
let
\begin{equation*}
\lambda\coloneqq -c_3-2b_5+2a_3-3a_4+a_2 ,    
\end{equation*} 
then
\begin{equation*}
-c_4+b_5-3a_3+3a_4=-\lambda-\frac{1}{2} .     
\end{equation*} 
Therefore, 
\begin{equation*}
\begin{split}
Z_{\hbar}(X,\alpha_X)&=\int_{\mathscr{Y}}d\Tilde{A}d\Tilde{B}d\Tilde{C}d\Tilde{D}\ e^{-\frac{\pi}{2}i}\frac{1}{\Phi_{\mathsf{b}}(\Tilde{A})\Phi_{\mathsf{b}}(\Tilde{B})\Phi_{\mathsf{b}}(\Tilde{C})\Phi_{\mathsf{b}}(\Tilde{D})\Phi_{\mathsf{b}}(2\Tilde{C}-\Tilde{D})}e^{2\pi ic_{\mathsf{b}}\Tilde{A}}\\
&\quad\times e^{4\pi ic_{\mathsf{b}}\lambda\Tilde{C}}e^{4\pi ic_{\mathsf{b}}(-\lambda-\frac{1}{2})\Tilde{D}} e^{\pi i\Tilde{B}^2}e^{2\pi i(\Tilde{A}^2+\Tilde{A}\Tilde{B}-\Tilde{C}^2+3\Tilde{C}\Tilde{D}-\Tilde{B}\Tilde{C})}e^{-3\pi i\Tilde{D}^2}\times\mbox{\textcircled{\scriptsize 1}},
\end{split}
\end{equation*}
where
\begin{equation*}
\begin{split}
\mbox{\textcircled{\scriptsize 1}}&\coloneqq e^{4\pi ic_{\mathsf{b}}^2c_1(c_1+b_1)}e^{-\pi ic_{\mathsf{b}}^2\frac{4(c_1-b_1)+1}{6}}e^{4\pi ic_{\mathsf{b}}^2b_2(b_2+c_2)}e^{-\pi ic_{\mathsf{b}}^2\frac{4(b_2-c_2)+1}{6}}
 e^{4\pi ic_{\mathsf{b}}^2c_3(c_3+b_3)}\\
 &\quad \times e^{-\pi ic_{\mathsf{b}}^2\frac{4(c_3-b_3)+1}{6}}e^{4\pi ic_{\mathsf{b}}^2c_4(c_4+b_4)}
  e^{-\pi ic_{\mathsf{b}}^2\frac{4(c_4-b_4)+1}{6}}e^{4\pi ic_{\mathsf{b}}^2b_5(b_5+a_5)}e^{-\pi ic_{\mathsf{b}}^2\frac{4(b_5-a_5)+1}{6}}\\
&\quad \times e^{-8\pi ic_{\mathsf{b}}^2c_1(c_1+b_1)}e^{-8\pi ic_{\mathsf{b}}^2b_2(b_2+c_2)}e^{-8\pi ic_{b}^2c_3(c_3+b_3)}e^{-8\pi ic_{\mathsf{b}}^2c_4(c_4+b_4)}e^{-8\pi c_{\mathsf{b}}^2b_5(a_5+b_5)}\\
&\quad \times e^{4\pi ic_{\mathsf{b}}^2(b_2+c_2)^2}
e^{8\pi ic_{\mathsf{b}}^2(c_1+b_1)^2}e^{8\pi ic_{\mathsf{b}}^2(b_1+c_1)(b_2+c_2)}
e^{-8\pi ic_{\mathsf{b}}^2(b_3+c_3)^2}e^{24\pi ic_{\mathsf{b}}^2(b_3+c_3)(b_4+c_4)}\\
&\quad \times e^{-8\pi ic_{\mathsf{b}}^2(b_2+c_2)(b_3+c_3)}
e^{-12\pi ic_{\mathsf{b}}^2(c_4+b_4)^2}
\end{split}
\end{equation*}
and let 
\begin{equation*}
\mathscr{Y}\coloneqq (\mathbb{R}-2c_{\mathsf{b}}(c_1+b_1))\times(\mathbb{R}-2c_{\mathsf{b}}(b_2+c_2))\times(\mathbb{R}-2c_{\mathsf{b}}(c_3+b_3))\times(\mathbb{R}-2c_{\mathsf{b}}(c_4+b_4)).
\end{equation*}

By Proposition \ref{property-of-Phi_b}(1), since 
\begin{equation*}
\frac{e^{i\pi\Tilde{B}^2}}{\Phi_{\mathsf{b}}(\Tilde{B})}=e^{i\pi\frac{1+2c_{\mathsf{b}}^2}{6}}\Phi_{\mathsf{b}}(-\Tilde{B}),
\end{equation*} 
\begin{equation}
\begin{split}
Z_{\hbar}(X,\alpha_X)&=e^{-\frac{\pi}{3}i}\int_{\mathscr{Y}}d\Tilde{A}d\Tilde{B}d\Tilde{C}d\Tilde{D}\frac{1}{\Phi_{\mathsf{b}}(\Tilde{A})}e^{\frac{i}{3}\pi c_{\mathsf{b}}^2}\Phi_{\mathsf{b}}(-\Tilde{B})\frac{1}{\Phi_{\mathsf{b}}(\Tilde{C})\Phi_{\mathsf{b}}(\Tilde{D})\Phi_{\mathsf{b}}(2\Tilde{C}-\Tilde{D})}\\
&\quad\times e^{2\pi ic_{\mathsf{b}}\Tilde{A}}e^{4\pi ic_{\mathsf{b}}\lambda\Tilde{C}} e^{4\pi ic_{\mathsf{b}}(-\lambda-\frac{1}{2})\Tilde{D}}e^{2\pi i(\Tilde{A}^2+\Tilde{A}\Tilde{B}-\Tilde{C}^2+3\Tilde{C}\Tilde{D}-\Tilde{B}\Tilde{C})}e^{-3\pi i\Tilde{D}^2}\times\mbox{\textcircled{\scriptsize 1}}. 
\end{split}\label{partition-fun-ideal-for-volume}
\end{equation}
Let
\begin{equation*}
\Tilde{B}^\prime\coloneqq -\Tilde{B},
\end{equation*} 
then
\begin{equation*}
\begin{split}
Z_{\hbar}(X,\alpha_X)&=e^{-\frac{\pi}{3}i}\int_{\Tilde{\mathscr{Y}}} d\Tilde{A}d\Tilde{B}^\prime d\Tilde{C}d\Tilde{D}\frac{1}{\Phi_{\mathsf{b}}(\Tilde{A})}e^{\frac{i}{3}\pi c_{\mathsf{b}}^2}\Phi_{\mathsf{b}}(\Tilde{B}^\prime)\frac{1}{\Phi_{\mathsf{b}}(\Tilde{C})\Phi_{\mathsf{b}}(\Tilde{D})\Phi_{\mathsf{b}}(2\Tilde{C}-\Tilde{D})}\\
&\quad\times e^{2\pi ic_{\mathsf{b}}\Tilde{A}}e^{4\pi ic_{\mathsf{b}}\lambda\Tilde{C}}
 e^{4\pi ic_{\mathsf{b}}(-\lambda-\frac{1}{2})\Tilde{D}}e^{2\pi i(\Tilde{A}^2-\Tilde{A}\Tilde{B}^\prime-\Tilde{C}^2+3\Tilde{C}\Tilde{D}+\Tilde{B}^\prime\Tilde{C})}e^{-3\pi i\Tilde{D}^2}\times\mbox{\textcircled{\scriptsize 1}},
\end{split}
\end{equation*}
where
\begin{equation*}
\Tilde{\mathscr{Y}}\coloneqq (\mathbb{R}-2c_{\mathsf{b}}(c_1+b_1))\times(\mathbb{R}+2c_{\mathsf{b}}(b_2+c_2))\times(\mathbb{R}-2c_{\mathsf{b}}(c_3+b_3))\times(\mathbb{R}-2c_{\mathsf{b}}(c_4+b_4)).
\end{equation*}

Transformation of variables, 
$\left\{
\begin{aligned}
&x\coloneqq \Tilde{C}-\Tilde{D}\\
&Y\coloneqq \Tilde{A}\\
&Z\coloneqq \Tilde{B}^\prime\\
&W\coloneqq \Tilde{C}\\
\end{aligned}
\right.$, 
gives
$\left\{
\begin{aligned}
&\Tilde{A}=Y\\
&\Tilde{B}^\prime=Z\\
&\Tilde{C}=W\\
&\Tilde{D}=W-x\\
\end{aligned}
\right.$
and its Jacobian is 
$
\begin{vmatrix}
0 & 1 & 0 &0\\
0 & 0& 1 & 0\\
0 & 0 & 0 & 1\\
-1 & 0 & 0 & 1 
\end{vmatrix}
=1$. Therefore,  
\begin{equation*}
\begin{split}
Z_{\hbar}(X,\alpha_X)&=e^{-\frac{\pi}{3}i}\int_{\Tilde{\mathscr{Y}}^\prime} dx dY dZ dW e^{\frac{i}{3}\pi c_{\mathsf{b}}^2}\frac{\Phi_{\mathsf{b}}(Z)}{\Phi_{\mathsf{b}}(Y)\Phi_{\rm b}(W)\Phi_{\mathsf{b}}(W-x)\Phi_{\mathsf{b}}(W+x)}\\
&\quad\times  e^{4\pi ic_{\mathsf{b}}(\lambda+\frac{1}{2}) x}
e^{2\pi ic_{\mathsf{b}}(Y-W)}e^{\pi i(-3x^2+2Y^2+W^2-2YZ+2ZW)}\times\mbox{\textcircled{\scriptsize 1}}, 
\end{split}
\end{equation*}
where
\begin{equation*}
\Tilde{\mathscr{Y}}^\prime\coloneqq(\mathbb{R}+2c_{\mathsf{b}}(c_4+b_4-c_3-b_3))\times(\mathbb{R}-2c_{\mathsf{b}}(c_1+b_1))\times(\mathbb{R}+2c_{\mathsf{b}}(b_2+c_2))\times(\mathbb{R}-2c_{\mathsf{b}}(c_3+b_3)),
\end{equation*}
and $(x,Y,Z,W)\in\Tilde{\mathscr{Y}}^\prime$.

Since the integrand of $J_{X}(\hbar,X)$ rapidly decreases if $\alpha_X\in\mathscr{A}_X$ by the property of Proposition \ref{property-of-Phi_b} (3), the value of $J_{X}(\hbar,x)$ does not depend on the choice of $\alpha_X\in\mathscr{A}_X$ by the Bochner-Martinelli formula \cite{MR635928}, which generalizes Cauchy's integral theorem.

Then 
\begin{equation*}
\begin{split}
Z_{\hbar}(X,\alpha_X)&=e^{-\frac{\pi}{3}i}e^{\frac{i}{3}\pi c_{\mathsf{b}}^2}\times\mbox{\textcircled{\scriptsize 1}}\times\int_{\mathbb{R}+\frac{i\mu(\alpha_X)}{\sqrt{\hbar}}} dx J_X(\hbar,x) e^{-\frac{x}{\sqrt{\hbar}}\lambda(\alpha_X)}, \\
\end{split}
\end{equation*}
where
\begin{eqnarray*}
\mu(\alpha_X)&=&a_3-a_4\\
\lambda(\alpha_X)&=&-2\pi(-c_4+b_5-3a_3+3a_4).\\
\end{eqnarray*}

Let
\begin{equation*}
\lambda^\prime\coloneqq\lambda+\frac{1}{2}.
\end{equation*}
We check the gauge invariance of $\lambda^\prime$.  
Let $(\Tilde{X},\alpha_{\Tilde{X}})$ be a shaped pseudo $3$-manifold which is gauge equivalent to $(X,\alpha_X)$ and
\begin{equation*}
g: \Delta_{1}(X)\longrightarrow \mathbb{R}
\end{equation*}
be the map in Definition \ref{gauge-equivalent} associated with the gauge equivalence.
Let 
$\alpha_{\Tilde{X}}=(2\pi\Tilde{a}_1,2\pi\Tilde{b}_1,2\pi\Tilde{c}_1,\ldots,2\pi\Tilde{a}_5,2\pi\Tilde{b}_5,2\pi\Tilde{c}_5)$. Since
\begin{equation*}
\begin{split}
&-\Tilde{c}_3-2\Tilde{b}_5+2\Tilde{a}_3-3\Tilde{a}_4+\Tilde{a}_2\\
=&-\left(c_3+\frac{1}{2}(g(e_4)-g(e_3)+g(e_1)-g(e_2))\right)-2\left(b_5+\frac{1}{2}(g(e_2)-g(e_1)+g(e_3)-g(e_3))\right)\\
&+2\left(a_3+\frac{1}{2}(g(e_2)-g(e_2)+g(e_3)-g(e_5))\right)-3\left(a_4+\frac{1}{2}(g(e_1)-g(e_1)+g(e_3)-g(e_5))\right)\\
&+\left(a_2+\frac{1}{2}(g(e_2)-g(e_5)+g(e_4)-g(e_1))\right)\\
=&-c_3-2b_5+2a_3-3a_4+a_2, 
\end{split}
\end{equation*}
$\lambda$ is gauge invariant. Therefore $\lambda^\prime=\lambda+\frac{1}{2}$ and  $\lambda(\alpha_X)$ are also gauge invariant.\\
Since
\begin{equation*}
\begin{split}
&\quad \Tilde{a}_3-\Tilde{a}_4\\
&=\left(a_3+\frac{1}{2}\left(g(e_2)-g(e_2)+g(e_3)-g(e_5)\right)\right)-\left(a_4+\frac{1}{2}\left(g(e_1)-g(e_1)+g(e_3)-g(e_5)\right)\right)\\
&=a_3-a_4,
\end{split}
\end{equation*} 
$\mu(\alpha_X)$ is gauge invariant.
\end{proof}
\section{Calculation of the partition function for one vertex H-triangulation of $(S^3, 7_3)$}
\label{one-vertex_Htriangulation}

In this section, we calculate the partition function for one vertex H-triangulation $Y$ of ($S^3$, $7_3$) obtained from Proposition \ref{one_veretex_H-triantulation} in Section \ref{triangulation}.
Since $Y-\Delta_0(Y)$ is homeomorphic to the space $S^3$ minus one point, 
\begin{equation*}
H_2(Y-\Delta_0(Y),\mathbb{Z})\cong H_2(S^{3}\backslash\text{point},\mathbb{Z})\cong H_2(\mathbb{R}^3,\mathbb{Z})\cong H_2(\text{point},\mathbb{Z}) =0.
\end{equation*}

Therefore, $Y$ is admissible.

Let $\alpha_Y=(2\pi a_1,2\pi b_1,2\pi c_1,\ldots,2\pi a_6,2\pi b_6,2\pi c_6)\in\mathscr{S}_Y$.

From Figure \ref{fig:one_vertex_Htriangulation}, the weight of each edge divided by $2\pi$ is obtained as follows, 
\begin{align*}
e_0:&\quad & a_1\\
e_1:&\quad & a_1+a_2+c_2+a_3+c_3+a_4\\
e_2:&\quad & b_1+c_1+b_2+c_2+c_3+c_6\\
e_3:&\quad & b_1+c_1+b_4+c_4+a_5\\
e_4:&\quad & a_3+b_4+a_5+b_5+a_6+c_6\\
e_5:&\quad & b_2+b_3+a_4+b_5+c_5+a_6+b_6\\
e_6:&\quad & a_2+b_3+c_4+c_5+b_6 .
\end{align*}

Let $\tau=(2\pi a_1^\tau,2\pi b_1^\tau,2\pi c_1^\tau,\cdots,2\pi a_6^\tau,2\pi b_6^\tau, 2\pi c_6^\tau)\in\overline{\mathscr{S}_{T_1}}\times\mathscr{S}_{Y\backslash T_1}$ be the extended shape structure which gives the weight function $\omega_{Y,\tau}:\Delta_1(Y)\longrightarrow\mathbb{R}$ such that each weight on $e_1,e_2,\cdots,e_6$ is $2\pi$, and the weight on $e_0$ is $0$.

Then 
\begin{align}
a_1^\tau+a_2^\tau+c_2^\tau+a_3^\tau+c_3^\tau+a_4^\tau=1\label{da1}\\
b_1^\tau+c_1^\tau+b_2^\tau+c_2^\tau+c_3^\tau+c_6^\tau=1\label{da2}\\
b_1^\tau+c_1^\tau+b_4^\tau+c_4^\tau+a_5^\tau=1\label{da3}\\
a_3^\tau+b_4^\tau+a_5^\tau+b_5^\tau+a_6^\tau+c_6^\tau=1\label{da4}\\
b_2^\tau+b_3^\tau+a_4^\tau+b_5^\tau+c_5^\tau+a_6^\tau+b_6^\tau=1\label{da5}\\
a_2^\tau+b_3^\tau+c_4^\tau+c_5^\tau+b_6^\tau=1\label{da6}\\
a_1^\tau=0.\label{da7}
\end{align}

Using $a_i^\tau+b_i^\tau+c_i^\tau=\frac{1}{2}$ for $i=1,2,\ldots,6$,
\begin{empheq}[left={\eqref{da1},\eqref{da2},\eqref{da3},\eqref{da4},\eqref{da5},\eqref{da6},\eqref{da7}\iff\empheqlbrace}]{align}
&a_3^\tau+b_4^\tau=c_5^\tau+b_6^\tau \label{da1''}\\
&a_4^\tau=b_2^\tau+b_3^\tau=a_5^\tau \label{da2''}\\
&a_4^\tau=c_6^\tau \label{da3''}\\
&a_2^\tau=c_3^\tau+a_4^\tau \label{da4''}\\
&a_1^\tau=0.\label{da5''}
\end{empheq}

The following holds in the same way as in \cite{MR3945172} Lemma 6.2.
\begin{lemma}\label{upper-bound-of-Phi_b}
Let $\hbar >0$, $\delta\in(0,\frac{1}{4})$. Then 
$M_{\delta,\hbar}\coloneqq\max_{z\in\mathbb{R}+\frac{i}{\sqrt{\hbar}}[\delta,\frac{1}{2}-\delta]}|\Phi_{\mathsf{b}}(z)|$ is finite.
\end{lemma}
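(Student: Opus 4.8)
The plan is to reduce the claim to the boundedness of the continuous function $|\Phi_{\mathsf b}|$ on a closed horizontal strip that lies strictly inside the domain of $\Phi_{\mathsf b}$, and then to control the two horizontal ends of that strip using the asymptotics in Proposition~\ref{property-of-Phi_b}(3). First I would write $z=x+id$ with $x\in\mathbb R$ and $d\in I\coloneqq\frac1{\sqrt\hbar}[\delta,\tfrac12-\delta]$. Because $\delta\in(0,\tfrac14)$, the set $I$ is a nonempty compact interval with $0<\tfrac{\delta}{\sqrt\hbar}\le d\le\tfrac{1-2\delta}{2\sqrt\hbar}<\tfrac1{2\sqrt\hbar}$; and since $\mathsf b\in\mathbb R_{>0}$ gives $\tfrac12|\mathsf b+\mathsf b^{-1}|=\tfrac1{2\sqrt\hbar}$, the region $R\coloneqq\mathbb R+iI$ sits inside the open strip $\{\,|\Im z|<\tfrac1{2\sqrt\hbar}\,\}$ on which $\Phi_{\mathsf b}$ is given by its integral representation and is holomorphic, hence continuous. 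Thus $|\Phi_{\mathsf b}|$ is bounded on every compact subset of $R$, and the whole statement reduces to bounding $|\Phi_{\mathsf b}(x+id)|$ as $x\to\pm\infty$, uniformly in $d\in I$.

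Next I would feed in Proposition~\ref{property-of-Phi_b}(3): as $x\to-\infty$ one has $|\Phi_{\mathsf b}(x+id)|\to1$, and as $x\to+\infty$ one has $|\Phi_{\mathsf b}(x+id)|\sim e^{-2\pi xd}\to0$, the decay being a consequence of $d\ge\tfrac{\delta}{\sqrt\hbar}>0$. Granting that these limits are attained uniformly for $d\in I$, I would fix $X_-<X_+$ so that $|\Phi_{\mathsf b}(x+id)|\le2$ for all $d\in I$ whenever $x\le X_-$ or $x\ge X_+$; on the compact rectangle $[X_-,X_+]\times I$ continuity gives a finite bound $M_0$, whence $M_{\delta,\hbar}\le\max\{2,M_0\}<\infty$. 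The same splitting shows that the supremum is realized on a compact set, so the maximum defining $M_{\delta,\hbar}$ really exists.

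The hard part will be the uniformity in $d$, since Proposition~\ref{property-of-Phi_b}(3) is stated only pointwise. I would establish it in the direction $x\to-\infty$ and transfer it to $x\to+\infty$ by reflection. Taking moduli in Proposition~\ref{property-of-Phi_b}(1) with $\mathsf b>0$ gives $|\Phi_{\mathsf b}(x+id)|\,|\Phi_{\mathsf b}(-x-id)|=e^{-2\pi xd}$, so the behavior at $x\to+\infty$ is governed by that of $|\Phi_{\mathsf b}|$ as $\Re\to-\infty$ with imaginary part in the compact set $-I\subset(-\tfrac1{2\sqrt\hbar},0)$. It therefore suffices to show $\Phi_{\mathsf b}(x+id)\to1$ as $x\to-\infty$ uniformly for $d$ in any fixed compact subinterval of $(-\tfrac1{2\sqrt\hbar},\tfrac1{2\sqrt\hbar})$. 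This I would read off the integral representation: the $d$-dependence of the exponent enters only through the factor $e^{2dw}$, which admits an integrable majorant along the contour $C$ uniformly for such $d$; the part of $C$ on the real axis then tends to $0$ by Riemann--Lebesgue, while the small excursion into the upper half-plane near the origin contributes a factor $e^{2x\Im w}\to0$ as $x\to-\infty$, so the exponent tends to $0$ uniformly in $d$. This is precisely the estimate carried out as in \cite{MR3945172}, Lemma~6.2.
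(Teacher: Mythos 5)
Your proposal is correct, but it follows a genuinely different route from the paper's own proof. The paper argues by contradiction: if $M_{\delta,\hbar}=\infty$, it extracts a sequence $(z_n)$ in the substrip with $|\Phi_{\mathsf b}(z_n)|\to\infty$; bounded $(\Re(z_n))$ is ruled out by continuity of $|\Phi_{\mathsf b}|$ on compact sets, and unbounded $(\Re(z_n))$ is ruled out by the behavior at infinity in Proposition \ref{property-of-Phi_b}(3). In doing so it implicitly reads Proposition \ref{property-of-Phi_b}(3) as applying along arbitrary sequences with $\Re(z_n)\to\pm\infty$ whose imaginary parts vary over $\frac{1}{\sqrt\hbar}[\delta,\frac12-\delta]$, i.e.\ as an asymptotic that is locally uniform in $\Im z$; this uniformity is not justified separately (the same is true of the model proof, Lemma 6.2 of \cite{MR3945172}). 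Your argument is direct rather than by contradiction---compact rectangle plus two horizontal ends---and, more substantively, you do not take that uniformity for granted: you transfer the $+\infty$ end to the $-\infty$ end through the modulus of Proposition \ref{property-of-Phi_b}(1), $|\Phi_{\mathsf b}(x+id)|\,|\Phi_{\mathsf b}(-x-id)|=e^{-2\pi xd}$ (legitimate, since $\Phi_{\mathsf b}$ has no zeros in the open strip), and then prove $\Phi_{\mathsf b}\to1$ uniformly for $\Im z$ in compact subintervals from the integral representation: the densities $e^{2dw}\bigl(4w\sinh(w\mathsf b)\sinh(w/\mathsf b)\bigr)^{-1}$ admit a common integrable majorant because $2|d|<\mathsf b+\mathsf b^{-1}$, Riemann--Lebesgue applies uniformly over this $L^1$-compact family on the real-axis part of $C$, and dominated convergence handles the small arc, where $|e^{-2ixw}|=e^{2x\Im w}\to0$. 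What the paper's argument buys is brevity; what yours buys is that the one delicate point---uniformity in the imaginary part, which the sequence argument also needs---is actually established rather than absorbed into the citation of Proposition \ref{property-of-Phi_b}(3). A minor remark applying equally to both treatments: what is really proved (and all that is used later) is finiteness of the supremum; attainment of the maximum is cosmetic.
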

\begin{proof}
Let $\hbar > 0$, $\delta\in(0,\frac{1}{4})$. Suppose $M_{\delta,\hbar}=\infty$. Then there exists a sequence of points $(z_n)_{n\in\mathbb{N}}\in\left(\mathbb{R}+\frac{i}{\sqrt{\hbar}}[\delta,\frac{1}{2}-\delta]\right)^{\mathbb{N}}$ such that $\left|\Phi_{\mathsf{b}}(z_n)\right|\underset{n\rightarrow\infty}{\rightarrow}\infty$.
If $(\Re(z_n))_{n\in\mathbb{N}}$ is bounded, $(z_n)_{n\in\mathbb{N}}$ is on the compact set and by the continuity of $\left|\Phi_{\mathsf{b}}\right|$, $(|\Phi_{\mathsf{b}}(z_n)|)_{n\in\mathbb{N}}$ is bounded, which is a contradiction. 
Therefore there exists a subsequence tending to $-\infty$ or $\infty$ in $(\Re(z_n))_{n\in\mathbb{N}}$, and the images of this sequence by $\left|\Phi_{\mathsf{b}}\right|$ must tend to $\infty$, which contradicts Proposition \ref{property-of-Phi_b}(3).
\end{proof}
Below we calculate the partition function $Z_\hbar(Y,\alpha_Y)$ of $(Y,\alpha_Y)$. 
\begin{theorem}\label{volume-conjecture-2}
The following holds for the partition function $Z_\hbar(Y,\alpha_Y)$ of $(Y,\alpha_Y)$.
\begin{equation*}
\lim_{\alpha_Y\rightarrow \tau}\Phi_{\mathsf{b}}\left(\frac{\pi- \omega_{Y,\alpha_Y}(e_0)}{2\pi i\sqrt{\hbar}}\right)Z_\hbar(Y,\alpha_Y)=e^{-\frac{5}{12}\pi i}e^{-\frac{i\varphi(\tau)}{\hbar}}J_{X}(\hbar,0),
\end{equation*}
where $\varphi(\tau)$ is a real quadratic polynomial of dihedral angles and $J_{X}(\hbar,x)$ is the function defined in Section \ref{ideal-triangulation}.
\end{theorem}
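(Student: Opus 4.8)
The plan is to follow the strategy of \cite{MR3945172}, exploiting the geometric fact recorded in Proposition \ref{thm:ideal_triangulation} that the ideal triangulation $X$ arises from $Y$ by collapsing the knot edge $e_0$ together with the distinguished tetrahedron $T_1$. First I would write $Z_\hbar(Y,\alpha_Y)$ as an integral over the face variables of the product of the six charged tetrahedral operators attached to $T_1,\dots,T_6$, reading off the signs and vertex orderings from Figure \ref{fig:one_vertex_Htriangulation} exactly as in the proof of Theorem \ref{volume-conjecture-1}. Using the Dirac delta factors $\delta(x_0+x_2-x_1)$ present in each $Z_\hbar(T_k)$ I would carry out the integrations forced by these constraints, and then apply the Fourier-transform identity $\widetilde{\psi}^\prime_{a,c}=e^{-\pi i/12}\psi_{c,b}$ and the inversion relation Proposition \ref{property-of-Phi_b}(1) to bring the result into a Gaussian-times-dilogarithm normal form. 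The essential bookkeeping is to isolate the single factor $1/\Phi_{\mathsf{b}}$ coming from $T_1$, whose argument depends on $a_1$, i.e.\ on the weight $\omega_{Y,\alpha_Y}(e_0)=2\pi a_1$ of the knot edge.

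Next I would analyze the limit $\alpha_Y\to\tau$. Since $a_1^\tau=0$ by \eqref{da5''}, the argument of the $T_1$-dilogarithm tends to $-c_{\mathsf{b}}$, a simple zero of $\Phi_{\mathsf{b}}$; equivalently the reciprocal $1/\Phi_{\mathsf{b}}$ acquires a simple pole that, after the variable changes, approaches the integration contour at the point corresponding to the $J_X$-variable $x=0$. The prefactor $\Phi_{\mathsf{b}}\!\left(\frac{\pi-\omega_{Y,\alpha_Y}(e_0)}{2\pi i\sqrt{\hbar}}\right)=\Phi_{\mathsf{b}}\!\left(-c_{\mathsf{b}}(1-2a_1)\right)$ vanishes linearly in $a_1$ at the same zero, so the product is of indeterminate $0\cdot\infty$ type. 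I would resolve it by noting that as the pole of $1/\Phi_{\mathsf{b}}$ approaches the contour the integral is dominated by a neighbourhood of that pole; multiplying by the vanishing prefactor and passing to the limit extracts a finite multiple of the residue of $\Phi_{\mathsf{b}}$ at $c_{\mathsf{b}}$ (obtained from the zero at $-c_{\mathsf{b}}$ via the inversion relation), which localizes one integration variable to $x=0$. This is precisely the analytic counterpart of collapsing $T_1$ in Proposition \ref{thm:ideal_triangulation}.

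After this localization the surviving integral is, term by term, the integrand defining $J_X(\hbar,x)$ evaluated at $x=0$. Collecting the remaining Gaussian exponents into the real quadratic polynomial $\varphi(\tau)$, and the accumulated constant phases, namely the $e^{-\pi i/3}$ inherited from the ideal-triangulation normal form together with the $e^{-\pi i/12}$ produced by the residue, into $e^{-\frac{5}{12}\pi i}$, then yields the claimed identity.

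The main obstacle is the rigorous justification of interchanging the limit $\alpha_Y\to\tau$ with the multidimensional integration and of the contour manipulation near the coincident pole and zero. I would control the non-singular dilogarithm factors uniformly along the contour by Lemma \ref{upper-bound-of-Phi_b} together with the asymptotics Proposition \ref{property-of-Phi_b}(3), producing an integrable dominating majorant so that dominated convergence applies, and then compute the residue constant explicitly in order to pin down the exact phase factors $e^{-\frac{5}{12}\pi i}$ and $e^{-i\varphi(\tau)/\hbar}$.
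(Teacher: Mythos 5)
Your high-level outline (write out the six charged operators, integrate out the delta functions, apply the Fourier identity and the inversion relation, and control the limit $\alpha_Y\to\tau$ by dominated convergence via Lemma \ref{upper-bound-of-Phi_b} and Proposition \ref{property-of-Phi_b}) matches the paper's computation, but your central analytic step --- the residue/localization mechanism --- misidentifies where the divergence sits, and as described it would fail. In the H-triangulation of Figure \ref{fig:one_vertex_Htriangulation} the two relevant faces of $T_1$ carry the \emph{same} variable, so its matrix element is $\langle x,z\lvert\mathsf{T}(a_1,c_1)\rvert x,y\rangle$ and its delta function reads $\delta(z)$; after the other delta functions are used, the only remaining $x$-dependence is $e^{2\pi ixy}$, so the $x$-integration produces $\delta(y)$ and pins the argument of the $T_1$ quantum dilogarithm to the constant value $0$. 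The entire $T_1$ contribution thus degenerates to the number $\psi_{c_1,b_1}(0)$, which contains the factor $1/\Phi_{\mathsf{b}}(-2c_{\mathsf{b}}(b_1+c_1))$. Since $\Phi_{\mathsf{b}}\bigl(\frac{\pi-\omega_{Y,\alpha_Y}(e_0)}{2\pi i\sqrt{\hbar}}\bigr)=\Phi_{\mathsf{b}}(2c_{\mathsf{b}}a_1-c_{\mathsf{b}})=\Phi_{\mathsf{b}}(-2c_{\mathsf{b}}(b_1+c_1))$ exactly, the prefactor cancels this factor \emph{identically for every} $\alpha_Y$ near $\tau$: the product is never of indeterminate $0\cdot\infty$ type, no pole ever approaches an integration contour, and no integration variable is localized in the limit. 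The limit is then taken purely by dominated convergence on the remaining three-dimensional integral, whose value at $\tau$ is identified with $J_X(\hbar,0)$ because the conditions \eqref{da1''}--\eqref{da5''} at $\tau$ reproduce the balance equations \eqref{it1}--\eqref{it5} of the ideal triangulation; the phase $e^{-\frac{5}{12}\pi i}$ arises from the accumulated $e^{-\pi i/12}$ factors and one application of Proposition \ref{property-of-Phi_b}(1), not from any residue.

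Concretely, the step that breaks is: ``as the pole of $1/\Phi_{\mathsf{b}}$ approaches the contour the integral is dominated by a neighbourhood of that pole; multiplying by the vanishing prefactor \dots extracts a finite multiple of the residue.'' An ordinary one-variable integral whose integrand has a simple pole approaching the contour from one side does \emph{not} blow up --- it converges to a principal value plus $i\pi$ times the residue --- so multiplying it by your linearly vanishing prefactor would yield $0$, not $J_X(\hbar,0)$. The true divergence of $Z_\hbar(Y,\alpha_Y)$ as $a_1\to 0$ is distributional in origin: $\delta(y)$ evaluates $\psi_{c_1,b_1}$ at the point $y=0$, toward which its pole at $y=-2c_{\mathsf{b}}a_1$ converges. (Equivalently, if one insists on contour-shifting in $y$, the $1/a_1$ growth appears only after integrating the residue term $e^{2\pi xa_1/\sqrt{\hbar}}$ over the unbounded $x$ half-line, not from a neighbourhood of the pole.) Note also that the pinned variable is this auxiliary face variable $y$ of $T_1$, not the variable $x$ of $J_X(\hbar,x)$: in the H-triangulation computation no analogue of that variable survives at all, and the identification with $J_X(\hbar,0)$ is made by matching the residual integral --- with its factor $\Phi_{\mathsf{b}}^3(\Tilde{B})$ coming from $T_4,T_5,T_6$ sharing one argument --- against the definition of $J_X$ at $x=0$, where $\Phi_{\mathsf{b}}(W^\prime)\Phi_{\mathsf{b}}(W^\prime-x)\Phi_{\mathsf{b}}(W^\prime+x)$ collapses to $\Phi_{\mathsf{b}}(W^\prime)^3$. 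Your dominated-convergence and phase-bookkeeping steps are fine, but the proof needs the exact algebraic cancellation above in place of the residue mechanism.
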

\begin{proof}
\begin{equation*}
\begin{split}
Z_\hbar(Y,\alpha_Y)&=\int_{\mathbb{R}^{12}}dxdydzdwdpdqdrdsdtdudvda\ \langle x,z\lvert \mathsf{T}(a_1,c_1)\rvert x,y\rangle \langle p,y\lvert \mathsf{T}(a_2,c_2)\rvert q,w\rangle \\ 
& \quad\quad\times \langle s,w\lvert \overline{\mathsf{T}}(a_3,c_3)\rvert r,p\rangle\langle u,r\lvert \mathsf{T}(a_4,c_4)\rvert t,z\rangle \langle v,t\lvert \mathsf{T}(a_5,c_5)\rvert a,u\rangle \\
& \quad\quad\times \langle a,q\lvert \mathsf{T}(a_6,c_6)\rvert s,v\rangle \\
&=\int_{\mathbb{R}^{12}}dxdydzdwdpdqdrdsdtdudvda\ \delta(z)\psi_{c_1,b_1}(y-z)e^{2\pi ix(y-z)}e^{-\frac{\pi}{12}i}\\
&\quad\quad\times \delta(p+y-q)\psi_{c_2,b_2}(w-y) e^{2\pi ip(w-y)}e^{-\frac{\pi}{12}i}\delta(r+p-s)\psi_{b_3,c_3}(p-w)\\
&\quad\quad\times e^{\pi i (p-w)^2}e^{-2\pi ir(w-p)}e^{-\frac{\pi}{12}i}\delta(u+r-t)
 \psi_{c_4,b_4}(z-r)e^{2\pi i u(z-r)}e^{-\frac{\pi}{12}i}\\
 & \quad\quad\times \delta(v+t-a)\psi_{c_5,b_5}(u-t)e^{2\pi i v(u-t)}e^{-\frac{\pi}{12}i}\delta(a+q-s)\psi_{c_6,b_6}(v-q)\\
&\quad\quad\times e^{2\pi i a(v-q)}e^{-\frac{\pi}{12}i}.
\end{split}
\end{equation*}

Iterated integration with respect to the variables $z, u, t, a, s$, and $q$, in turn, gives
\begin{equation*}
\begin{split}
Z_\hbar(Y,\alpha_Y)&=\int_{\mathbb{R}^{6}}dxdydwdpdrdv\ e^{-\frac{\pi}{2}}\psi_{c_1,b_1}(y)\psi_{c_2,b_2}(w-y)\psi_{b_3,c_3}(p-w)\psi_{c_4,b_4}(-r)\\
&\quad\quad\times \psi_{c_5,b_5}(-r)\psi_{c_6,b_6}(v-p-y)
 e^{\pi i(p^2+w^2+2y^2+2xy-2rw+2rv-2yv)}.
\end{split} 
\end{equation*}
Since
\begin{equation*}
\delta(y)=\int_{\mathbb{R}}\ e^{2\pi ixy}dx,   
\end{equation*}
\begin{equation*}
\begin{split}
Z_\hbar(Y,\alpha_Y)&=\int_{\mathbb{R}^5}dydwdpdrdv\ e^{-\frac{\pi}{2}i}\delta(y)
\psi_{c_1,b_1}(y)\psi_{c_2,b_2}(w-y)\psi_{b_3,c_3}(p-w)\psi_{c_4,b_4}(-r)\\
&\quad\quad\times\psi_{c_5,b_5}(-r)\psi_{c_6,b_6}(v-p-y)e^{\pi i(p^2+w^2+2y^2-2rw+2rv-2yv)}\\
&=\int_{\mathbb{R}^4}dwdpdrdv\ e^{-\frac{\pi}{2}i}\psi_{c_1,b_1}(0)\psi_{c_2,b_2}(w)\psi_{b_3,c_3}(p-w)\psi_{c_4,b_4}(-r)\psi_{c_5,b_5}(-r)\\
&\quad\quad\times \psi_{c_6,b_6}(v-p)e^{\pi i(p^2+w^2-2rw+2rv)}.
\end{split}    
\end{equation*}
Transformation of variables, 
$\left\{
\begin{aligned}
&A\coloneqq w\\
&B\coloneqq -r\\
&C\coloneqq v-p\\
&D\coloneqq p-w\\
\end{aligned}
\right.$,
gives
$\left\{
\begin{aligned}
&w=A\\
&p=A+D\\
&r=-B\\
&v=A+C+D\\
\end{aligned}
\right.$
and its Jacobian is 
$
\begin{vmatrix}
1 & 0 & 0 & 0\\
1 & 0& 0 & 1 \\
0 & -1 & 0 & 0 \\
1 & 0 & 1 & 1
\end{vmatrix}
=-1
$. Therefore, 
\begin{equation*}
\begin{split}
Z_{\hbar}(Y,\alpha_Y)&=\int_{\mathbb{R}^4}dAdBdCdD\ e^{-\frac{\pi}{2}i}
\psi_{c_1,b_1}(0)\psi_{c_2,b_2}(A)\psi_{b_3,c_3}(D)\psi_{c_4,b_4}(B)
\psi_{c_5,b_5}(B)\\
&\quad\quad\times \psi_{c_6,b_6}(C)e^{\pi i(2A^2-2BC-2BD+2AD+D^2)}.
\end{split}
\end{equation*}
Since
\begin{equation*}
\int_{\mathbb{R}}dC\ \psi_{c_6,b_6}(C)e^{-2\pi iBC}=\widetilde{\psi}_{c_6,b_6}(B)=e^{\pi iB^2}e^{-\frac{\pi}{12}i}\psi_{b_6,a_6}(B),
\end{equation*}
\begin{equation*}
\begin{split}
Z_{\hbar}(Y,\alpha_Y)&=\int_{\mathbb{R}^3}dAdBdD\ e^{-\frac{7}{12}\pi i}\psi_{c_1,b_1}(0)\psi_{c_2,b_2}(A)\psi_{b_3,c_3}(D)\psi_{c_4,b_4}(B)\psi_{c_5,b_5}(B)\\ &\quad\quad\times \psi_{b_6,a_6}(B)e^{\pi i(2A^2+B^2-2BD+D^2+2AD)}\\
&=e^{-\frac{7}{12}\pi i}\int_{\mathbb{R}^3}dAdBdD \psi(-2c_{\mathsf{b}}(c_1+b_1))e^{-4\pi i c_{\mathsf{b}}c_1(-c_{\mathsf{b}}(c_1+b_1))}e^{-\pi ic_{\mathsf{b}}^2\frac{(4(c_1-b_1)+1)}{6}}\\
&\quad\times \psi(A-2c_{\mathsf{b}}(c_2+b_2))e^{-4\pi ic_{\mathsf{b}}c_2(A-c_{\mathsf{b}}(c_2+b_2))}e^{-\pi i c_{\mathsf{b}}^2\frac{(4(c_2-b_2)+1)}{6}}\psi(D-2c_{\mathsf{b}}(b_3+c_3))\\
&\quad\times e^{-4\pi ic_{\mathsf{b}}b_3(D-c_{\mathsf{b}}(b_3+c_3))}e^{-\pi ic_{\mathsf{b}}^2\frac{(4(b_3-c_3)+1)}{6}}\psi(B-2c_{\mathsf{b}}(c_4+b_4))e^{-4\pi ic_{\mathsf{b}}c_4(B-c_{\mathsf{b}}(c_4+b_4))}\\
&\quad\times e^{-\pi i c_{\mathsf{b}}^2\frac{(4(c_4-b_4)+1)}{6}}\psi(B-2c_{\mathsf{b}}(c_5+b_5))e^{-4\pi ic_{\mathsf{b}}c_5(B-c_{\mathsf{b}}(c_5+b_5))}e^{-\pi ic_{\mathsf{b}}^2\frac{4(c_5-b_5)+1}{6}}\\
&\quad\times \psi(B-2c_{\mathsf{b}}(b_6+a_6))e^{-4\pi ic_{\mathsf{b}}b_6(B-c_{\mathsf{b}}(b_6+a_6))}e^{-\pi ic_{\mathsf{b}}^2\frac{4(b_6-a_6)+1}{6}}\\
&\quad\times e^{\pi i(2A^2+B^2-2BD+D^2+2AD)}\\
&=e^{-\frac{7}{12}\pi i}\int_{\mathbb{R}^3}dAdBdD\frac{1}{\Phi_{\mathsf{b}}(-2c_{\mathsf{b}}(c_1+b_1))}\frac{1}{\Phi_{\mathsf{b}}(A-2c_{\mathsf{b}}(c_2+b_2))}
e^{-4\pi ic_{\mathsf{b}}c_2A}\\
&\quad\times \frac{1}{\Phi_{\mathsf{b}}(B-2c_{\mathsf{b}}(b_4+c_4))}e^{-4\pi ic_{\mathsf{b}}c_4B}\frac{1}{\Phi_{\mathsf{b}}(B-2c_{\mathsf{b}}(b_5+c_5))}
e^{-4\pi ic_{\mathsf{b}}c_5B}\\
&\quad \times \frac{1}{\Phi_{\mathsf{b}}(B-2c_{\mathsf{b}}(a_6+b_6))}
e^{-4\pi ic_{\mathsf{b}}b_6B}\frac{1}{\Phi_{\mathsf{b}}(D-2c_{\mathsf{b}}(b_3+c_3))} e^{-4\pi ic_{\mathsf{b}}b_3D} \\
& \quad \times e^{\pi i(2A^2+B^2-2BD+D^2+2AD)}\times\mbox{\textcircled{\scriptsize 2}},
\end{split}
\end{equation*}
where
\begin{equation*}
\begin{split}
\mbox{\textcircled{\scriptsize 2}}&\coloneqq e^{4\pi i c_{\mathsf{b}}^2c_1(c_1+b_1)}
e^{-\pi ic_{\mathsf{b}}^2\frac{(4(c_1-b_1)+1)}{6}}
e^{4\pi ic_{\mathsf{b}}^2c_2(c_2+b_2)}e^{-\pi i c_{\mathsf{b}}^2\frac{(4(c_2-b_2)+1)}{6}}e^{4\pi ic_{\mathsf{b}}^2b_3(b_3+c_3)}\\
&\quad\times e^{-\pi ic_{\mathsf{b}}^2\frac{(4(b_3-c_3)+1)}{6}}e^{4\pi ic_{\mathsf{b}}^2c_4(c_4+b_4)}e^{-\pi i c_{\mathsf{b}}^2\frac{(4(c_4-b_4)+1)}{6}}  e^{4\pi ic_{\mathsf{b}}^2c_5(c_5+b_5)}\\
&\quad \times e^{-\pi i c_{\mathsf{b}}^2\frac{(4(c_5-b_5)+1)}{6}}e^{4\pi ic_{\mathsf{b}}^2b_6(b_6+a_6)}e^{-\pi i c_{\mathsf{b}}^2\frac{(4(b_6-a_6)+1)}{6}} .
\end{split}
\end{equation*}
Since
\begin{equation*}
\Phi_{\mathsf{b}}\left(\frac{\pi-2\pi a_1}{2\pi i\sqrt{\hbar}}\right)=\Phi_{\mathsf{b}}(2c_{\mathsf{b}}a_1-c_{\mathsf{b}})=\Phi_{\mathsf{b}}(-2c_{\mathsf{b}}(b_1+c_1)),
\end{equation*} 
\begin{equation*}
\begin{split}
&\quad \Phi_{\mathsf{b}}\left(\frac{\pi-2\pi a_1}{2\pi i\sqrt{\hbar}}\right)Z_\hbar(Y,\alpha_Y)\\
&=e^{-\frac{7}{12}\pi i}\int_{\mathbb{R}^3}dAdBdD \frac{1}{\Phi_{\mathsf{b}}(A-2c_{\mathsf{b}}(c_2+b_2))}
e^{-4\pi ic_{\mathsf{b}}c_2A}\frac{1}{\Phi_{\mathsf{b}}(B-2c_{\mathsf{b}}(b_4+c_4))}e^{-4\pi ic_{\mathsf{b}}c_4B}\\
&\quad\times\frac{1}{\Phi_{\mathsf{b}}(B-2c_{\mathsf{b}}(b_5+c_5))}
e^{-4\pi ic_{\mathsf{b}}c_5B}\frac{1}{\Phi_{\mathsf{b}}(B-2c_{\mathsf{b}}(a_6+b_6))}e^{-4\pi ic_{\mathsf{b}}b_6B}\frac{1}{\Phi_{\mathsf{b}}(D-2c_{\mathsf{b}}(b_3+c_3))}\\
&\quad\times e^{-4\pi ic_{\mathsf{b}}b_3D} e^{\pi i(2A^2+B^2-2BD+D^2+2AD)}\times\mbox{\textcircled{\scriptsize 2}}.
\end{split}
\end{equation*}

Take $\delta>0$ such that there exists a neighborhood $\mathfrak{U}$ of $\tau$ in $\overline{\mathscr{S}_{T_1}}\times\mathscr{S}_{Y\backslash T_1}$ such that for each $\alpha_Y\in\mathfrak{U}\cap \mathscr{S}_Y$, the values of $15$ variables $a_2,b_2,c_2,\ldots,a_6,b_6,c_6$ of $\alpha_Y$ are in $(\delta,\frac{1}{2}-\delta)$.
Then for every $\alpha_Y\in\mathfrak{U}\cap\mathscr{S}_Y$, for any $j\in\{2,\ldots,6\}$, any $t\in\mathbb{R}$,
\begin{equation*}
\left|e^{-4\pi ic_{\mathsf{b}}c_jt}\Phi_{\mathsf{b}}\left(t\pm 2c_{\mathsf{b}}(b_j+c_j)\right)^{\pm 1}\right|=\left|e^{\frac{2\pi}{\sqrt{\hbar}}c_jt}\Phi_{\mathsf{b}}\left(t\pm\frac{i}{\sqrt{\hbar}}(b_j+c_j)\right)^{\pm 1}\right| \le M_{\delta,\hbar}e^{-\frac{2\pi}{\sqrt{\hbar}}\delta|t|}
\end{equation*}
holds. In addition, the above inequality holds even if we replace the letters $(b_j, c_j)\rightarrow (c_j, b_j), (b_j, c_j) \rightarrow(a_j, b_j)$ and so on.
In fact, for $t\le 0$ it follows by Lemma \ref{upper-bound-of-Phi_b}, Proposition \ref{property-of-Phi_b} (2), $c_j >\delta$, and $\delta<b_j+c_j=\frac{1}{2}-a_j<\frac{1}{2}-\delta$.
For $t\ge 0$, it follows by $b_j>\delta$ and by the fact
\begin{equation*}
\begin{split}
&\left|\Phi_{\mathsf{b}}\left(t+\frac{i}{\sqrt{\hbar}}(b_j+c_j)\right)\right|
=\left|\Phi_{\mathsf{b}}\left(-t+\frac{i}{\sqrt{\hbar}}(b_j+c_j)\right)\right|\left|e^{i\pi(t+\frac{i}{\sqrt{\hbar}}(b_j+c_j))^2}\right| \\
&\le M_{\delta,\hbar}e^{-\frac{2\pi}{\sqrt{\hbar}}(b_j+c_j)t}
\end{split}
\end{equation*}
because of Proposition \ref{property-of-Phi_b} (1), (2).

Therefore, for every $\alpha_Y\in\mathfrak{U}\cap\mathscr{S}_Y$, every $(A,B,D)\in\mathbb{R}^3$,
\begin{equation*}
\begin{split}
&\left|\frac{1}{\Phi_{\mathsf{b}}(A-2c_{\mathsf{b}}(c_2+b_2))}
e^{-4\pi ic_{\mathsf{b}}c_2A}\frac{1}{\Phi_{\mathsf{b}}(B-2c_{\mathsf{b}}(b_4+c_4))}e^{-4\pi ic_{\mathsf{b}}c_4B}\frac{1}{\Phi_{\mathsf{b}}(B-2c_{\mathsf{b}}(b_5+c_5))}\right.
\\
&\times e^{-4\pi ic_{\mathsf{b}}c_5B}\frac{1}{\Phi_{\mathsf{b}}(B-2c_{\mathsf{b}}(a_6+b_6))}
e^{-4\pi ic_{\mathsf{b}}b_6B}\frac{1}{\Phi_{\mathsf{b}}(D-2c_{\mathsf{b}}(b_3+c_3))} e^{-4\pi ic_{\mathsf{b}}b_3D}\\
&\times\left.e^{\pi i(2A^2+B^2-2BD+D^2+2AD)}\times \mbox{\textcircled{\scriptsize 2}}\right|\le (M_{\delta,\hbar})^5 e^{-\frac{2\pi}{\sqrt{\hbar}}\delta(|A|+3|B|+|D|)}
\end{split}
\end{equation*}
holds.

Since the right-hand side of this inequality is integrable in $\mathbb{R}^3$, because of the dominated convergence theorem, it follows that 
\begin{equation*}
\begin{split}
&\lim_{\alpha_Y\rightarrow \tau}\Phi_{\mathsf{b}}\left(\frac{\pi- \omega_{Y,\alpha_Y}(e_0)}{2\pi i\sqrt{\hbar}}\right)Z_\hbar(Y,\alpha_Y)=e^{-\frac{7}{12}\pi i}\int_{\mathbb{R}^3}dAdBdD \frac{1}{\Phi_{\mathsf{b}}(A-2c_{\mathsf{b}}(c_2^\tau+b_2^\tau))}\\
&\quad\times e^{-4\pi ic_{\mathsf{b}}c_2^\tau A}\frac{1}{\Phi_{\mathsf{b}}(B-2c_{\mathsf{b}}(b_4^\tau+c_4^\tau))} e^{-4\pi ic_{\mathsf{b}}c_4^\tau B}\frac{1}{\Phi_{\mathsf{b}}(B-2c_{\mathsf{b}}(b_5^\tau+c_5^\tau))}
e^{-4\pi ic_{\mathsf{b}}c_5^\tau B}\\
&\quad \times \frac{1}{\Phi_{\mathsf{b}}(B-2c_{\mathsf{b}}(a_6^\tau+b_6^\tau))}
e^{-4\pi ic_{\mathsf{b}}b_6^\tau B}\frac{1}{\Phi_{\mathsf{b}}(D-2c_{\mathsf{b}}(b_3^\tau +c_3^\tau))}e^{-4\pi ic_{\mathsf{b}}b_3^\tau D} \\
&\quad\times e^{\pi i(2A^2+B^2-2BD+D^2+2AD)}\times\left.\mbox{\textcircled{\scriptsize 2}}\right|_{\alpha_Y=\tau}.
\end{split}
\end{equation*}

Transformation of variables,
\begin{equation*}
\begin{aligned}
&\Tilde{A}\coloneqq A-2c_{\mathsf{b}}(c_2^\tau+b_2^\tau)=A-2c_{\mathsf{b}}\left(\frac{1}{2}-a_2^\tau\right)\\
&\Tilde{D}\coloneqq D-2c_{\mathsf{b}}(b_3^\tau+c_3^\tau)=D-2c_{\mathsf{b}}\left(\frac{1}{2}-a_3^\tau\right)\\
&\Tilde{B}\coloneqq B-2c_{\mathsf{b}}(c_4^\tau+b_4^\tau)=B-2c_{\mathsf{b}}\left(\frac{1}{2}-a_4^\tau\right)=B-2c_{\mathsf{b}}\left(\frac{1}{2}-a_5^\tau\right)=B-2c_{\mathsf{b}}(b_5^\tau+c_5^\tau)\\
&=B-2c_{\mathsf{b}}\left(\frac{1}{2}-c_6^\tau\right)=B-2c_{\mathsf{b}}(a_6^\tau+b_6^\tau)
\end{aligned}
\end{equation*}
(We used \eqref{da2''}, \eqref{da3''} in the deformation of $\Tilde{B}$.), gives 
\begin{equation*}
\begin{split}
&\quad e^{-4\pi ic_{\mathsf{b}}c_2^\tau A}e^{-4\pi ic_{\mathsf{b}}b_3^\tau D}e^{-4\pi ic_{\mathsf{b}}c_4^\tau B}e^{-4\pi ic_{\mathsf{b}}c_5^\tau B}e^{-4\pi ic_{\mathsf{b}}b_6^\tau B}e^{\pi i\left(2A^2+(B-D)^2+2AD\right)}\\
&=e^{-4\pi ic_{\mathsf{b}}c_2^\tau \left\{\Tilde{A}+2c_{\mathsf{b}}\left(\frac{1}{2}-a_2^\tau \right)\right\}}e^{-4\pi ic_{\mathsf{b}}b_3\left\{\Tilde{D}+2c_{\mathsf{b}}\left(\frac{1}{2}-a_3^\tau\right)\right\}}e^{-4\pi ic_{\mathsf{b}}(c_4^\tau+c_5^\tau+b_6^\tau)\left\{\Tilde{B}+2c_{\mathsf{b}}\left(\frac{1}{2}-a_4^\tau\right)\right\}}\\
&\quad \times e^{\pi i\left[2\left(\Tilde{A}+2c_{\mathsf{b}}\left(\frac{1}{2}-a_2^\tau\right)\right)^2+\left\{\Tilde{B}+2c_{\mathsf{b}}\left(\frac{1}{2}-a_4^\tau\right)-\Tilde{D}-2c_{\mathsf{b}}\left(\frac{1}{2}-a_3^\tau\right)\right\}^2+2\left(\Tilde{A}+2c_{\mathsf{b}}\left(\frac{1}{2}-a_2^\tau\right)\right)\left(\Tilde{D}+2c_{\mathsf{b}}\left(\frac{1}{2}-a_3^\tau\right)\right)\right]}\\
&=e^{2\pi i\Tilde{A}^2}e^{\pi i\Tilde{B}^2}e^{\pi i\Tilde{D}^2}e^{-2\pi i\Tilde{B}\Tilde{D}}e^{2\pi i\Tilde{A}\Tilde{D}}e^{2\pi ic_{\mathsf{b}}\left(-2c_2^\tau+2\left(1-a_2^\tau\right)+1-2a_3^\tau\right)\Tilde{A}}e^{2\pi ic_{\mathsf{b}}\left(-2\left(c_4^\tau+c_5^\tau+b_6^\tau\right)-2(a_4^\tau-a_3^\tau)\right)\Tilde{B}}\\
&\quad \times e^{2\pi ic_{\mathsf{b}}\left(-2b_3^\tau+2\left(a_4^\tau-a_3^\tau\right)+1-2a_2^\tau\right)\Tilde{D}}e^{-4\pi ic_{\mathsf{b}}^2c_2^\tau(1-2a_2^\tau)}e^{-4\pi ic_{\mathsf{b}}^2b_3^\tau(1-2a_3^\tau)}e^{-4\pi ic_{\mathsf{b}}^2(c_4^\tau+c_5^\tau+b_6^\tau)(1-2a_4^\tau)}\\
&\quad \times e^{2\pi ic_{\mathsf{b}}^2(1-2a_2^\tau)^2}e^{4\pi ic_{\mathsf{b}}^2(a_4^\tau-a_3^\tau)^2}e^{2\pi ic_{\mathsf{b}}^2(1-2a_2^\tau)(1-2a_3^\tau)}.
\end{split}
\end{equation*}

Deformation of the coefficients of the first-order terms on $\tilde{A},\tilde{B},\tilde{D}$ in the exponent of $e$ using the equations from \eqref{da1''} to \eqref{da5''} gives
\begin{equation*}
-2c_2^\tau+2(1-a_2^\tau)+1-2a_3^\tau=1   
\end{equation*}
\begin{equation*}
-2(c_4^\tau+c_5^\tau+b_6^\tau+a_4^\tau-a_3^\tau)=-1
\end{equation*}
\begin{equation*}
-2b_3^\tau+2(a_4^\tau-a_3^\tau)+1-2a_2^\tau=0.
\end{equation*}

Therefore,
\begin{equation*}
\begin{split}
&\quad \lim_{\alpha_Y\rightarrow \tau}\Phi_{\mathsf{b}}\left(\frac{\pi-\omega_{Y,\alpha_Y}(e_0)}{2\pi i\sqrt{\hbar}}\right)Z_{\hbar}(Y,\alpha_Y)\\
&=e^{-\frac{7}{12}\pi i}\int_{\mathscr{Y}_\tau}d\Tilde{A}d\Tilde{B}d\Tilde{D}\frac{1}{\Phi_{\mathsf{b}}(\tilde{A})}\frac{1}{\Phi_{\mathsf{b}}^3(\tilde{B})} \frac{1}{\Phi_{\mathsf{b}}(\tilde{D})}e^{2\pi i\Tilde{A}^2}e^{\pi i\Tilde{B}^2}e^{\pi i\Tilde{D}^2}e^{-2\pi i\Tilde{B}\Tilde{D}}e^{2\pi i\Tilde{A}\Tilde{D}}e^{2\pi ic_{\mathsf{b}}\Tilde{A}}\\
&\quad \times e^{-2\pi ic_{\mathsf{b}}\Tilde{B}}\times \mbox{\textcircled{\scriptsize 3}},
\end{split}
\end{equation*}
where
\begin{equation*}
\begin{split}
\mbox{\textcircled{\scriptsize 3}}&\coloneqq e^{4\pi ic_{\mathsf{b}}^2c_1^\tau(c_1^\tau+b_1^\tau)}e^{-\pi ic_{\mathsf{b}}^2\frac{4(c_1^\tau-b_1^\tau)+1}{6}}e^{4\pi ic_{\mathsf{b}}^2c_2^\tau(c_2^\tau+b_2^\tau)}e^{-\pi ic_{\mathsf{b}}^2\frac{4(c_2^\tau-b_2^\tau)+1}{6}}e^{4\pi ic_{\mathsf{b}}^2b_3^\tau(b_3^\tau+c_3^\tau)}\\
&\quad\times e^{-\pi ic_{\mathsf{b}}^2\frac{4(b_3^\tau-c_3^\tau)+1}{6}}e^{4\pi ic_{\mathsf{b}}^2c_4^\tau(c_4^\tau+b_4^\tau)}e^{-\pi ic_{\mathsf{b}}^2\frac{4(c_4^\tau-b_4^\tau)+1}{6}}e^{4\pi ic_{\mathsf{b}}^2c_5^\tau(c_5^\tau+b_5^\tau)}e^{-\pi ic_{\mathsf{b}}^2\frac{4(c_5^\tau-b_5^\tau)+1}{6}}\\
&\quad\times e^{4\pi ic_{\mathsf{b}}^2b_6^\tau(b_6^\tau+a_6^\tau)}e^{-\pi ic_{\mathsf{b}}^2\frac{4(b_6^\tau-a_6^\tau)+1}{6}}e^{-4\pi ic_{\mathsf{b}}^2c_2^\tau(1-a_2^\tau)}e^{-4\pi ic_{\mathsf{b}}^2b_3^\tau(1-2a_3^\tau)}e^{-4\pi ic_{\mathsf{b}}^2(c_4^\tau+c_5^\tau+b_6^\tau)(1-2a_4^\tau)}\\
&\quad\times e^{2\pi ic_{\mathsf{b}}^2(1-2a_2^\tau)^2} e^{4\pi ic_{\mathsf{b}}^2(a_4^\tau-a_3^\tau)^2}e^{2\pi ic_{\mathsf{b}}^2(1-2a_2^\tau)(1-2a_3^\tau)},
\end{split}
\end{equation*}
and
\begin{equation*}
\mathscr{Y}_\tau\coloneqq(\mathbb{R}-2c_{\mathsf{b}}(b_2^\tau+c_2^\tau))\times
(\mathbb{R}-2c_{b}(b_4^\tau+c_4^\tau))\times(\mathbb{R}-2c_{\mathsf{b}}(b_3^\tau+c_3^\tau)).
\end{equation*}

By Proposition \ref{property-of-Phi_b} (1), since 
\begin{equation*}
\frac{e^{i\pi\Tilde{D}^2}}{\Phi_{\mathsf{b}}(\Tilde{D})}=e^{i\pi\frac{1+2c_{\mathsf{b}}^2}{6}}\Phi_{\mathsf{b}}(-\Tilde{D}),
\end{equation*} 
\begin{equation*}
\begin{split}
&\quad\lim_{\alpha_Y\rightarrow \tau}\Phi_{\mathsf{b}}\left(\frac{\pi-\omega_{Y,\alpha_Y}(e_0)}{2\pi i\sqrt{\hbar}}\right)Z_\hbar(Y,\alpha_Y)\\
&=e^{-\frac{5}{12}\pi i}\int_{\mathscr{Y}_\tau}d\Tilde{A}d\Tilde{B}d\Tilde{D}\frac{1}{\Phi_{\mathsf{b}}(\Tilde{A})}\Phi_{\mathsf{b}}(-\Tilde{D})\frac{1}{\Phi_{\mathsf{b}}^3(\Tilde{B})}e^{2\pi i\Tilde{A}^2}e^{\pi i\Tilde{B}^2}e^{2\pi i\Tilde{D}(\Tilde{A}-\Tilde{B})}e^{2\pi ic_{\mathsf{b}}\Tilde{A}}\\
&\quad\times e^{-2\pi ic_{\mathsf{b}}\Tilde{B}}e^{\frac{i\pi}{3}c_{\mathsf{b}}^2}\times\mbox{\textcircled{\scriptsize 3}}.
\end{split}
\end{equation*}

Let
\begin{equation*}
\Tilde{D}^\prime\coloneqq -\Tilde{D},
\end{equation*}
then 
\begin{equation*}
\begin{split}
&\quad \lim_{\alpha_Y\rightarrow \tau}\Phi_{\mathsf{b}}\left(\frac{\pi-\omega_{Y,\alpha_Y}(e_0)}{2\pi i\sqrt{\hbar}}\right)Z_\hbar(Y,\alpha_Y)\\
&=e^{-\frac{5}{12}\pi i}\int_{\mathscr{Y}_\tau^\prime}d\Tilde{A}d\Tilde{B}d\Tilde{D}^\prime\frac{\Phi_{\mathsf{b}}(\Tilde{D}^\prime)}{\Phi_{\mathsf{b}}(\Tilde{A})\Phi_{\mathsf{b}}^3(\Tilde{B})}e^{2\pi i(\Tilde{A}^2+\frac{\Tilde{B}^2}{2}-\Tilde{D}^\prime\Tilde{A}+\Tilde{B}\Tilde{D}^\prime)}e^{-\frac{\pi}{\sqrt{\hbar}}(\Tilde{A}-\Tilde{B})}e^{-\frac{i\pi}{12\hbar}}\times\mbox{\textcircled{\scriptsize 3}},
\end{split}
\end{equation*}
where
\begin{equation*}
\begin{split}
\mathscr{Y}_\tau^\prime&\coloneqq(\mathbb{R}-2c_{\rm b}(b_2^\tau+c_2^\tau))\times
(\mathbb{R}-2c_{b}(b_4^\tau+c_4^\tau))\times(\mathbb{R}+2c_{\rm b}(b_3^\tau+c_3^\tau))\\
&=\left(\mathbb{R}-\frac{i}{2\sqrt{\hbar}}(1-2a_2^{\tau})\right)\times
\left(\mathbb{R}-\frac{i}{2\sqrt{\hbar}}(1-2a_4^{\tau})\right)\times
\left(\mathbb{R}+\frac{i}{2\sqrt{\hbar}}(1-2a_3^{\tau})\right).
\end{split}
\end{equation*}

Since $\tau$ satisfies the equations from \eqref{da1} to \eqref{da7}, if we define $a_i=a_{i+1}^\tau,b_i=b_{i+1}^\tau,c_i=c_{i+1}^\tau\quad ( i=1, \cdots, 5 )$, then we can see that $( a_1, b_1, c_1, \cdots, a_5, b_5, c_5 )$ satisfy the equations from \eqref{it1} to \eqref{it5} in Section \ref{ideal-triangulation}. Therefore, 
\begin{equation*}
\lim_{\alpha_Y\rightarrow \tau}\Phi_{\mathsf{b}}\left(\frac{\pi-\omega_{Y,\alpha_Y}(e_0)}{2\pi i\sqrt{\hbar}}\right)Z_\hbar(Y,\alpha_Y)=e^{-\frac{5}{12}\pi i}e^{-\frac{i\pi}{12\hbar}}\times\mbox{\textcircled{\scriptsize 3}}\times J_{X}(\hbar,0)
\end{equation*}
is derived.
\end{proof}
\section{Proof of Theorem \ref{uemura}}
\label{volume-conjecture-strict-proof}
In this section, we describe the proof of Theorem \ref{uemura}, referring to the policy of the proof for twist knots \cite{MR3945172}.

By Theorem \ref{volume-conjecture-1}, and Theorem \ref{volume-conjecture-2},
let
\begin{equation*}
J_{S^3,7_3}\coloneqq e^{-\frac{i\pi}{3}}J_{X}, 
\end{equation*}
then it is shown that Theorem \ref{uemura} (1) and (2) hold.

Furthermore, transformation of variables ${\sf{x}}=2\pi\sqrt{\hbar}x$, $Y=2\pi\sqrt{\hbar}Y^\prime$, $Z=2\pi\sqrt{\hbar}Z^\prime$,
$W=2\pi\sqrt{\hbar}W^\prime$ gives
\begin{equation*}
\begin{split}
Z_\hbar(X,\alpha_X)&=e^{-\frac{\pi}{3}i}e^{\frac{i}{3}\pi c_{\rm{b}}^2}\times\mbox{\textcircled{\scriptsize 1}}\times
\int_{\mathbb{R}+2\pi i\mu(\alpha_X)}\mathfrak{J}_X(\hbar,\mathsf{x})e^{-\frac{\mathsf{x}}{2\pi\hbar}\lambda(\alpha_X)}d\mathsf{x},
\end{split}
\end{equation*}
where we define
\begin{equation*}
\mathbf{y}\coloneqq\begin{bmatrix}
Y\\
Z\\
W
\end{bmatrix}
\end{equation*}
and 
\begin{equation*}
\begin{split}
\mathfrak{J}_X:(\hbar,\mathsf{x})\mapsto\left(\frac{1}{2\pi\sqrt{\hbar}}\right)^4\int_{\mathscr{Y}_{\alpha_X}}d\mathbf{y}e^{\frac{i\mathbf{y}^{\rm T}Q\mathbf{y}-\frac{3}{2}i\mathsf{x}^2+\mathbf{y}^{\rm T}\mathscr{W}}{2\pi\hbar}}\frac{\Phi_{\mathsf{b}}(\frac{Z}{2\pi\sqrt{\hbar}})}{\Phi_{\mathsf{b}}(\frac{Y}{2\pi\sqrt{\hbar}})\Phi_{\mathsf{b}}(\frac{W}{2\pi\sqrt{\hbar}})\Phi_{\mathsf{b}}(\frac{W-\mathsf{x}}{2\pi\sqrt{\hbar}})\Phi_{\mathsf{b}}(\frac{W+\mathsf{x}}{2\pi\sqrt{\hbar}})}. 
\end{split}
\end{equation*}
Here the integral domain is 
\begin{equation*}
\mathscr{Y}_{\alpha_X}\coloneqq (\mathbb{R}-i\pi(1-2a_1))\times (\mathbb{R}+i\pi(1-2a_2))\times(\mathbb{R}-i\pi(1-2a_3)).
\end{equation*}
Then 
\begin{equation}\label{J-relation2}
J_X(\hbar,x)=2\pi\sqrt{\hbar}\mathfrak{J}_X(\hbar,(2\pi\sqrt{\hbar})x)
\end{equation}
holds.
By the equation \eqref{J-relation2}, Theorem \ref{uemura} (3) holds by proving the following equation.
\begin{equation*}
\lim_{\hbar\rightarrow 0^{+}}2\pi\hbar \log |J_X(\hbar,0)|=\lim_{\hbar\rightarrow 0^{+}}2\pi\hbar\log|\mathfrak{J}_X(\hbar,0)|=-\mathrm{Vol}(S^3\backslash 7_3)
\end{equation*}

We prove this below.

First, we discuss the geometricity of the ideal tetrahedral decomposition, i.e., the property that positive dihedral angles corresponding to the complete hyperbolic structure of the underlying $3$-dimensional hyperbolic manifold can be assigned to these tetrahedra.

In \cite{Thurston}, Thurston gives a method to check whether a given tetrahedral decomposition has geometricity. A system of gluing equations with respect to the complex parameters associated with the tetrahedra is defined there, and it is shown that if this system has a solution, the solution is unique and corresponds to the complete hyperbolic metric of the tetrahedrally decomposed manifold.

However, this system of equations is difficult to solve. In the 1990s, Casson and Rivin devised another method to prove geometricity (for a review paper, see
\cite{MR2796632}). The idea is to focus on the argument part of the complex gluing equation and to use the volume functional property.

For the ideal tetrahedral decomposition $X$ of $S^3\backslash 7^3$ obtained in Section \ref{ideal-triangulation}, we prove the following theorem.
\begin{theorem}\label{geometric}
$X$ is geometric.
\end{theorem}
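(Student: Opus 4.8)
The plan is to follow the Casson--Rivin variational program, as reviewed in \cite{MR2796632}: realize the complete hyperbolic structure as the maximum of the total volume functional over the space of angle structures. Concretely, I would work on the compact convex polytope $\overline{\mathscr{A}_X}$ cut out of $\overline{\mathscr{S}_X}$ by the balancing equations \eqref{it1}--\eqref{it5}, and define the volume functional
\[
\mathcal{V}(\alpha_X)=\sum_{k=1}^{5}\bigl(\Lambda(2\pi a_k)+\Lambda(2\pi b_k)+\Lambda(2\pi c_k)\bigr),
\]
where $\Lambda(\theta)=-\int_0^\theta\log|2\sin t|\,dt$ is the Lobachevsky function, so that $\mathcal{V}(\alpha_X)$ is the sum of the volumes of the five ideal tetrahedra whose dihedral angles are prescribed by $\alpha_X$ (the identity $2\pi a_k+2\pi b_k+2\pi c_k=\pi$ ensures each summand is the volume of an ideal tetrahedron). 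By Lemma \ref{non-empty} the interior $\mathscr{A}_X$ is non-empty, so $\overline{\mathscr{A}_X}$ is a non-degenerate compact convex polytope.

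The first step is to record the analytic properties of $\mathcal{V}$. Since $\Lambda$ is continuous on $[0,\pi]$ and real-analytic on $(0,\pi)$ with $\Lambda''(\theta)=-\cot\theta$, the functional $\mathcal{V}$ is continuous on $\overline{\mathscr{A}_X}$ and strictly concave on the relative interior $\mathscr{A}_X$; its restriction to the affine subspace defined by the balancing constraints has negative-definite Hessian. Hence $\mathcal{V}$ attains a unique maximum on $\overline{\mathscr{A}_X}$. The Casson--Rivin theorem then asserts that if this maximum lies in the interior $\mathscr{A}_X$, the corresponding critical point is a genuine angle structure whose shape parameters satisfy Thurston's gluing equations for the complete structure; by Mostow rigidity this is the complete hyperbolic metric, and $X$ is geometric. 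Thus everything reduces to showing the maximizer is interior.

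Ruling out a boundary maximum is the crux, and I expect it to be the main obstacle. A boundary point of $\overline{\mathscr{A}_X}$ is one where some angle $2\pi a_k$, $2\pi b_k$, or $2\pi c_k$ equals $0$ or $\pi$, i.e. at least one tetrahedron degenerates. The key analytic fact is that $\Lambda'(\theta)=-\log|2\sin\theta|\to+\infty$ as $\theta\to0^+$, so the gradient of $\mathcal{V}$ blows up transversally to each degeneration face. Following the method of Futer--Gu\'eritaud, it suffices to exhibit, at each such boundary point, a tangent vector to $\overline{\mathscr{A}_X}$ pointing into the interior along which the directional derivative of $\mathcal{V}$ diverges to $+\infty$; this forces the maximum off the boundary. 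Producing such a vector is precisely a statement about the combinatorics of $X$: one must verify, using the explicit balancing relations \eqref{it1'}--\eqref{it4'}, that $\overline{\mathscr{A}_X}$ admits an inward deformation strictly increasing every vanishing angle, equivalently that no normal-surface obstruction blocks the leading--trailing deformations. I would carry this out by checking the rank and sign structure of the linear system directly for this five-tetrahedron triangulation.

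Finally, because $7_3$ is a known hyperbolic knot, I would corroborate the outcome by solving Thurston's gluing equations for $X$ explicitly and confirming that a solution exists with all shape parameters in the upper half-plane and total volume equal to $\mathrm{Vol}(S^3\backslash 7_3)$; matching this value certifies that the interior critical point found above is indeed the complete structure rather than a spurious solution, and it dovetails with the volume computation underlying Theorem \ref{uemura} (3).
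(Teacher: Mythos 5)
Your overall framework coincides with the paper's: both invoke Casson--Rivin (Theorem \ref{Casson-Rivin}), maximize the volume functional on the compact polytope $\overline{\mathscr{A}_X}$ (non-empty by Lemma \ref{non-empty}), and reduce everything to showing the maximizer lies in the interior $\mathscr{A}_X$. The gap is in the one step you yourself identify as the crux. The mechanism you propose for excluding a boundary maximum --- a tangent vector into the interior along which the directional derivative of $\mathcal{V}$ diverges to $+\infty$ --- does not exist in general. Since $\Lambda'(\theta)=-\log|2\sin\theta|$ blows up at \emph{both} $\theta=0$ and $\theta=\pi$, for a taut tetrahedron (angles $0,0,\pi$) the singular contributions cancel along any deformation compatible with the angle-sum constraint: if the two zero angles grow like $\epsilon t$ and $\delta t$, the $\pi$-angle must decrease like $(\epsilon+\delta)t$, and the coefficient of $\log t$ in the derivative is $\epsilon+\delta-(\epsilon+\delta)=0$, leaving a finite derivative. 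The blow-up argument therefore proves only that at a boundary maximizer every flat tetrahedron is taut --- exactly the paper's Lemma \ref{flat-taut} --- and cannot by itself push the maximizer off the boundary.

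Excluding the taut configurations is the real content of the paper's Lemma \ref{volume-maximizer}, and in your proposal it is deferred to ``checking the rank and sign structure of the linear system,'' which is never carried out. The paper does it in three triangulation-specific steps: first, the balancing equations are invariant under $(a_4,b_4,c_4)\leftrightarrow(c_5,a_5,b_5)$, so by concavity one may restrict to the symmetric slice $(a_4,b_4,c_4)=(c_5,a_5,b_5)$; second, an exhaustive case analysis over which tetrahedron is taut shows that every case but one forces all tetrahedra taut, hence total volume $0$, contradicting maximality; third, in the surviving case ($(a_2,b_2,c_2)=(\tfrac12,0,0)$ with $T_1,T_3,T_4$ non-flat) one needs an explicit inward deformation whose derivative is the \emph{finite} quantity $-\log\sin s+\log\sin 2s-\log\sin 2(s+v)+\log\sin(s+2v)$, and its positivity requires a separate monotonicity argument. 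None of this is automatic from the combinatorics, and it is precisely what your sketch omits. Two smaller points: strict concavity of $\mathscr{V}$ is not a naive Hessian computation --- $\Lambda''(\theta)=-\cot\theta$ changes sign on $(0,\pi)$, and the concavity statement is Rivin's theorem about the constrained functional, which the paper cites; and your final ``corroboration'' by numerically solving the gluing equations is not part of a proof and cannot substitute for the missing step.
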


To prove Theorem \ref{geometric}, we use the method by Futer and Gu\'{e}ritaud (\cite{MR2796632, MR2255497}).
First, we show that the volume functional is not maximal on the boundary of the space of the extended angle structures, which is non-empty by Lemma \ref{non-empty}.
Then Theorem \ref{geometric} can be proven using the result by Casson and Rivin (Theorem \ref{Casson-Rivin}).
\subsection{The volume functional}
Lobachevsky function $\Lambda :\mathbb{R}\rightarrow \mathbb{R}$ is given by
\begin{equation*}
\Lambda(x)=-\int_0^x \log |2\sin t|dt.
\end{equation*}
It is well-defined and continuous on $\mathbb{R}$ with period $\pi$.
If $T$ is a hyperbolic ideal tetrahedron with dihedral angles $a$, $b$, $c$, then its volume is
\begin{equation*}
{\rm Vol}(T)=\Lambda(a)+\Lambda(b)+\Lambda(c).
\end{equation*}

Let $X= ( T_1, \ldots, T_N, \sim )$ be an ideal tetrahedral decomposition and $\mathscr{A}_X$ be the space of its angle structures, then it is a (possibly empty) convex polyhedron in $\mathbb{R}^{3N}$.

The volume functional $\mathscr{V}:\overline{\mathscr{A}_X}\rightarrow \mathbb{R}$ is defined by assigning to (the extended) angle structure $\alpha_X=(2\pi a_1,2\pi b_1,2\pi c_1,\ldots,2\pi a_N,2\pi b_N,2\pi c_N)$ a real number 
\begin{equation*}
\mathscr{V}(\alpha_X)=\Lambda(2\pi a_1)+\Lambda(2\pi b_1)+\Lambda(2\pi c_1)+\cdots+\Lambda(2\pi a_N)+\Lambda(2\pi b_N)+\Lambda(2\pi c_N).    
\end{equation*}

From \cite[Theorem 6.1, 6.6]{MR2255497} and \cite[Lemma 5.3]{MR2796632},
it is known that $\mathscr{V}$ is strictly concave on $\mathscr{A}_X$ and concave on $\overline{\mathscr{A}_X}$.
Furthermore, the following theorem holds as stated in \cite[Theorem 1.2]{MR2796632}.
\begin{theorem}[Casson-Rivin \cite{MR1283870}]\label{Casson-Rivin}
Let $M$ be an orientable 3-manifold whose boundaries consist of tori, and let $X$ be an ideal tetrahedral decomposition of $M$. Then, an angle structure $\alpha$ corresponds to the unique complete hyperbolic metric on the interior of $M$ if and only if $\alpha$ is a critical point of the volume functional.
\end{theorem}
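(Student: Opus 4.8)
The plan is to prove both implications by reducing ``complete hyperbolic metric'' and ``critical point of $\mathscr{V}$'' to one common algebraic condition on the tetrahedra, namely Thurston's gluing equations. First I would record the shape dictionary for a single ideal tetrahedron: an assignment of dihedral angles $(\alpha,\beta,\gamma)$ with $\alpha+\beta+\gamma=\pi$ fixes, together with a modulus, a shape parameter $z$ in the upper half-plane, with edge-parameters $z,\ z'=\tfrac{1}{1-z},\ z''=1-\tfrac{1}{z}$ of arguments $\alpha,\beta,\gamma$ and moduli obeying $\log|z|=\log\sin\gamma-\log\sin\beta$ and its cyclic analogues. In these terms the tetrahedron's volume is $\Lambda(\alpha)+\Lambda(\beta)+\Lambda(\gamma)$ and $\Lambda'(\theta)=-\log|2\sin\theta|$, which ties the volume functional directly to the moduli.

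I would then recall, following \cite{Thurston}, that $X$ carries a complete hyperbolic structure exactly when the shape parameters solve the edge-gluing equations, one per edge $e$ of $X$, asserting that the product of the incident edge-parameters equals $1$. Writing each equation in logarithmic form splits it into its imaginary part $\sum_{\text{around }e}\arg=2\pi$, which is precisely the balancing condition defining membership in $\mathscr{A}_X$ and so holds for every angle structure, and its real part $\sum_{\text{around }e}\log|\cdot|=0$, the modulus conditions. Hence the theorem reduces to showing that, for $\alpha\in\mathscr{A}_X$, the modulus conditions hold if and only if $\alpha$ is a critical point of $\mathscr{V}$.

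The analytic core is a Lagrange-multiplier computation. The tangent space to $\mathscr{A}_X$ is the space of angle variations $(\dot\alpha_i,\dot\beta_i,\dot\gamma_i)$ with $\dot\alpha_i+\dot\beta_i+\dot\gamma_i=0$ for each tetrahedron and $\sum_{\text{around }e}=0$ for each edge. Since $d\mathscr{V}=-\sum_i\big(\log|2\sin\alpha_i|\,\dot\alpha_i+\log|2\sin\beta_i|\,\dot\beta_i+\log|2\sin\gamma_i|\,\dot\gamma_i\big)$, criticality says that this covector annihilates every admissible variation. Testing it against the explicit leading--trailing deformations attached to each edge (in the sense of \cite{MR2255497,MR2796632}) I would extract exactly the relations $\sum_{\text{around }e}\big(\log\sin\gamma-\log\sin\beta\big)=0$, i.e.\ $\sum_{\text{around }e}\log|\cdot|=0$; conversely these relations make $d\mathscr{V}$ vanish on the whole tangent space. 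This matching is the step I expect to be the main obstacle, since it demands careful bookkeeping of how the three edge-parameters of each tetrahedron are distributed among the edge cycles so that the per-tetrahedron multipliers (coming from the constraints $\alpha_i+\beta_i+\gamma_i=\pi$) cancel, which is essentially the Neumann--Zagier combinatorics in disguise.

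Finally I would settle completeness and uniqueness. Each ideal-vertex link is a Euclidean triangle with angles $\alpha,\beta,\gamma$, so the cusp cross-sections are similarity tori; summing the modulus conditions around the edges incident to a fixed cusp and using that the cusp is a torus ($\chi=0$) forces the dilation holonomy of that similarity structure to be trivial, so the structure built from a critical $\alpha$ is complete, and conversely completeness returns the modulus conditions. Uniqueness then follows two ways that agree: the complete metric is unique by Mostow--Prasad rigidity, while on the functional side the strict concavity of $\mathscr{V}$ on $\mathscr{A}_X$ recorded above guarantees that any interior critical point is the unique global maximum, hence the unique angle structure realizing the complete metric.
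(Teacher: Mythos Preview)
The paper does not supply its own proof of this theorem: it is quoted from Casson--Rivin \cite{MR1283870} via the survey \cite{MR2796632} and used as a black box in the proof of Theorem~\ref{geometric}. So there is no in-paper argument to compare against; your sketch is to be measured against the standard proof in those references, and it follows that proof closely: set up the shape dictionary, observe that the imaginary parts of the logarithmic gluing equations are the angle-structure constraints, and identify the real parts with the vanishing of $d\mathscr{V}$ on the tangent space to $\mathscr{A}_X$ via $\Lambda'(\theta)=-\log|2\sin\theta|$ and the leading--trailing deformations of \cite{MR2255497,MR2796632}.

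One step deserves tightening. Your completeness paragraph says that ``summing the modulus conditions around the edges incident to a fixed cusp'' forces the dilation holonomy to vanish. That is not quite how the argument runs: the edge modulus conditions by themselves are the edge equations, not the cusp equations, and one does not obtain the peripheral holonomy by summing edge equations alone. The correct mechanism (as in \cite{MR2796632}) is that the leading--trailing deformation attached to a \emph{peripheral} curve is itself tangent to $\mathscr{A}_X$ --- this is where $\chi(\text{torus})=0$ enters, guaranteeing the per-tetrahedron angle sums are preserved --- so criticality of $\mathscr{V}$ forces $d\mathscr{V}$ to vanish on that deformation too, and that vanishing is exactly $\log|\text{dilation}|=0$. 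Combined with the rotation part being trivial (a Gauss--Bonnet count you already mention), this gives completeness. With that correction your outline is sound and matches the literature the paper cites.
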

When the last situation of this theorem holds, we say that the ideal tetrahedral decomposition $X$ of the $3$-manifold $M$ is geometric.
\subsection{Thurston's complex gluing equation}\label{complex-gluing-equation}
There exists a bijective map from shape structures $\{(2\pi a,2\pi b,2\pi c)\in(0,\pi)^3\mid a+b+c=\frac{1}{2}\}$ on a vertex ordered tetrahedron $T$ to $\mathbb{R}+i\mathbb{R}_{>0}$.
By this map, if $(2\pi a,2\pi b,2\pi c)\in\mathscr{S}_T$ maps to $z\in\mathbb{R}+i\mathbb{R}_{>0}$, and we call $z$ a complex shape structure.

In addition, we define
\begin{equation*}
z^\prime\coloneqq \frac{1}{1-z},\quad z^{\prime\prime}\coloneqq \frac{z-1}{z}.
\end{equation*}

The complex shape structure $z$ corresponds to the edge with a dihedral angle of $2\pi a$. Furthermore, let $\epsilon(T)$ be the sign of a tetrahedron $T$, then if $\epsilon(T)=1$, $z^\prime$ corresponds to $2\pi c$ and $z^{\prime\prime}$ corresponds to $2\pi b$. If $\epsilon(T)=-1$, $z^\prime$ corresponds to $2\pi b$ and $z^{\prime\prime}$ corresponds to $2\pi c$.

In this section, we define the complex logarithmic function as for $z\in\mathbb{C}^\ast$,
\begin{equation*}
\mathrm{Log}(z)\coloneqq\log |z|+i\arg(z),\quad (\arg(z)\in(-\pi, \pi] ).
\end{equation*}

In addition, let
$y:=\epsilon(T)(\mathrm{Log}(z)-i\pi)\in \mathbb{R}+i\epsilon(T)(-\pi,0)$.

The relations between $(2\pi a,2\pi b,2\pi c)$, $(z,z^\prime,z^{\prime\prime})$, and $y$ for each sign of $\epsilon(T)$ are as follows.

If $\epsilon(T)=1$,
\begin{eqnarray*}
y+i\pi=\mathrm{Log}(z)=\log \left(\frac{\sin{2\pi c}}{\sin{2\pi b}}\right)+2\pi ia,\\ 
-\mathrm{Log}(1+e^y)=\mathrm{Log}(z^\prime)=\log\left(\frac{\sin{2\pi b}}{\sin{2\pi a}}\right)+2\pi ic,\\
\mathrm{Log}(1+e^{-y})=\mathrm{Log}(z^{\prime\prime})=\log\left(\frac{\sin{2\pi a}}{\sin{2\pi c}}\right)+2\pi ib,\\
y=\log\left(\frac{\sin{2\pi c}}{\sin{2\pi b}}\right)-i\pi(1-2a)\in\mathbb{R}-i\pi(1-2 a),\\
z=-e^y\in\mathbb{R}+i\mathbb{R}_{>0}.\\
\end{eqnarray*}

If $\epsilon(T)=-1$, 
\begin{eqnarray*}
-y+i\pi=\mathrm{Log}(z)=\log \left(\frac{\sin{2\pi b}}{\sin{2\pi c}}\right)+2\pi ia,\\ 
-\mathrm{Log}(1+e^{-y})=\mathrm{Log}(z^\prime)=\log\left(\frac{\sin{2\pi c}}{\sin{2\pi a}}\right)+2\pi ib,\\
\mathrm{Log}(1+e^{y})=\mathrm{Log}(z^{\prime\prime})=\log\left(\frac{\sin{2\pi a}}{\sin{2\pi b}}\right)+2\pi ic,\\
y=\log\left(\frac{\sin{2\pi c}}{\sin{2\pi b}}\right)+i\pi(1-2 a)\in\mathbb{R}+i\pi(1-2a),\\
z=-e^{-y}\in\mathbb{R}+i\mathbb{R}_{>0}.\\
\end{eqnarray*}

For a tetrahedral decomposition $X$ and an angle structure $\alpha_X\in\mathscr{A}_X$, the complex weight function $\omega_{X,\alpha_X}^\mathbb{C}:X^1\rightarrow \mathbb{C}$ is defined by assigning to the edge $e\in X^1$ the logarithmic sum of the complex shapes corresponding to the inverse images of $e$ by $\sim$.

Given an ideal tetrahedral decomposition $X$ of a 3-manifold $M$ with torus $S$ as a boundary component, suppose that $S$ is triangulated by triangles generated by truncating the corners of tetrahedra.
Furthermore, let $\sigma$ be an oriented normal closed curve on $S$.
Then $\sigma$ cuts off the corners of triangles on $S$, and let $z_1, z_2, \cdots, z_l$ be the corresponding complex shapes.
Assume $\epsilon_k=1$ if the corner of the triangle is to the left of $\sigma$ and $\epsilon_k=-1$ if it is to the right. Then 
the complex holonomy is defined as 
\begin{equation}
H^{\mathbb{C}}(\sigma)\coloneqq \sum_{k=1}^l \epsilon_k \mathrm{Log}(z_k).
\end{equation}

The angular holonomy is defined by replacing $\mathrm{Log}(z_k)$ with $\arg(z_k)=\Im(\mathrm{Log}(z_k))$.

The complex gluing edge equation is defined as follows,
\begin{equation*}
\forall e\in X^1,\quad \omega_{X,\alpha_X}^\mathbb{C}(e)=2i\pi.
\end{equation*}

On the other hand, the complex completeness equation is defined by the vanishing of the complex holonomy for any closed curve generating the first order homology $H_1(\partial M)$.

Let $M$ be an orientable $3$-manifold whose boundary consists of tori and given an ideal tetrahedral decomposition $X$. The angle structure $\alpha_X\in\mathscr{A}_X$
corresponds to the unique complete hyperbolic metric on the interior of $M$ if and only if  it satisfies the complex gluing edge equation and the complex completeness equation.
\subsection{The classical dilogarithm}
The classical dilogarithm is defined as follows. For $z\in\mathbb{C}\backslash[1,\infty)$, 
\begin{equation*}
\mathrm{Li}_2(z)\coloneqq -\int_0^z\mathrm{Log}(1-u)\frac{du}{u}.
\end{equation*}
The classical dilogarithm satisfies the following properties.
\begin{theorem}\label{classical-dilog}
(1) ( inversion relation )
\begin{equation*}
\forall z\in\mathbb{C}\backslash[1,\infty),\quad \mathrm{Li}_2\left(\frac{1}{z}\right)=-\mathrm{Li}_2(z)-\frac{\pi^2}{6}-\frac{1}{2}\mathrm{Log}(-z)^2.
\end{equation*}
(2) ( integral form )

For any $y\in\mathbb{R}+i(-\pi,\pi)$,
\begin{equation*}
\frac{-i}{2\pi}\mathrm{Li_2}(-e^y)=\int_{v\in\mathbb{R}+i0^+}\frac{\exp(-i\frac{yv}{\pi})}{4v^2\sinh{v}}dv,
\end{equation*}
where $\mathbb{R}+i0^+$ means the deformed path of the real axis in the complex plane which does not pass through the origin, but the upper half plane in the neighborhood of the origin. In this section, we use this notation. For example, the path may be a semicircle on the upper half-plane centered at the origin in the neighborhood of the origin.
\end{theorem}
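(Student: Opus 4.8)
The plan is to treat the two assertions separately, since each is a classical dilogarithm identity provable by elementary complex analysis. For part (1), I would introduce the auxiliary function $F(z):=\mathrm{Li}_2(1/z)+\mathrm{Li}_2(z)+\frac{\pi^2}{6}+\frac12\mathrm{Log}(-z)^2$ and show $F\equiv 0$. Differentiating the defining integral gives $\frac{d}{dz}\mathrm{Li}_2(w)=-\mathrm{Log}(1-w)/w$, so by the chain rule $\frac{d}{dz}\mathrm{Li}_2(1/z)=\mathrm{Log}(1-1/z)/z$ and $\frac{d}{dz}\bigl(\frac12\mathrm{Log}(-z)^2\bigr)=\mathrm{Log}(-z)/z$. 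Hence $F'(z)=\frac1z\bigl(\mathrm{Log}(1-1/z)-\mathrm{Log}(1-z)+\mathrm{Log}(-z)\bigr)$, and using the algebraic identity $1-1/z=(1-z)/(-z)$ this reduces to the logarithmic identity $\mathrm{Log}(1-1/z)=\mathrm{Log}(1-z)-\mathrm{Log}(-z)$. Once $F'\equiv 0$ is established on a connected domain, I would fix the (locally constant) value of $F$ by evaluating at $z=-1$, where $\mathrm{Li}_2(-1)=-\pi^2/12$ and $\mathrm{Log}(1)=0$ give $F(-1)=-\pi^2/12-\pi^2/12+\pi^2/6+0=0$; this pins down the identity.

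For part (2), I would evaluate the contour integral by residues. The integrand $\frac{\exp(-iyv/\pi)}{4v^2\sinh v}$ is meromorphic with poles at $v=in\pi$, $n\in\mathbb{Z}$, the contour $\mathbb{R}+i0^+$ passing above the triple pole at the origin. For $\Re(y)<0$ the factor $\exp(-iyv/\pi)$ decays on a large semicircle in the upper half-plane (aided by the $1/\sinh v$ decay away from the imaginary axis), so I would close the contour upward and collect the residues at $v=in\pi$, $n\ge 1$. A direct computation, using $\sinh v\sim(-1)^n(v-in\pi)$ near $v=in\pi$ together with $\exp(-iyv/\pi)|_{v=in\pi}=e^{ny}$, gives the residue $\frac{(-1)^{n+1}e^{ny}}{4n^2\pi^2}$. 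Summing, $2\pi i\sum_{n\ge1}\frac{(-1)^{n+1}e^{ny}}{4n^2\pi^2}=\frac{i}{2\pi}\sum_{n\ge1}\frac{(-1)^{n+1}e^{ny}}{n^2}=\frac{-i}{2\pi}\mathrm{Li}_2(-e^y)$, which is exactly the claim. The restriction $\Re(y)<0$ is then removed by analytic continuation in $y$ across the strip $\mathbb{R}+i(-\pi,\pi)$, both sides being holomorphic there.

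The main obstacle in both parts is branch bookkeeping. In part (1) the functions $\mathrm{Li}_2(1/z)$ and $\mathrm{Log}(1-1/z)$ genuinely fail to be single-valued across the segment $(0,1)$, where $1/z\in[1,\infty)$ meets the dilogarithm's cut, so the identity $\mathrm{Log}(1-1/z)=\mathrm{Log}(1-z)-\mathrm{Log}(-z)$ must be checked not to acquire a spurious $2\pi i$. The safe route is to prove $F\equiv0$ first on the slit domain $\mathbb{C}\backslash[0,\infty)$, where all four logarithms are unambiguous and single-valued and the point $z=-1$ lies, and then extend to $\mathbb{C}\backslash[1,\infty)$ by continuity of the boundary values, treating the locus $(0,1)$ explicitly. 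In part (2) the delicate points are justifying that the arc contribution vanishes uniformly for $y$ in the strip and confirming that the detour above the origin excludes the triple pole at $v=0$ when closing upward; computing the same integral by closing downward (which encloses $v=0$ and the lower poles) provides a useful consistency check on the residue at the origin.
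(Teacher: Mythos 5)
The paper never actually proves this theorem: no proof follows the statement, and both identities are quoted as classical facts, with (1) invoked only in Lemma \ref{rewrite-S} and (2) underlying the comparison between $S$ and the quantum-dilogarithm potentials. So your proposal is not an alternative to the paper's argument but a self-contained substitute for a missing one, and it is essentially correct. For (1), differentiating $F(z)=\mathrm{Li}_2(1/z)+\mathrm{Li}_2(z)+\frac{\pi^2}{6}+\frac12\mathrm{Log}(-z)^2$, checking that $\mathrm{Log}(1-1/z)=\mathrm{Log}(1-z)-\mathrm{Log}(-z)$ holds with no $2\pi i$ defect on $\mathbb{C}\backslash[0,\infty)$ (for non-real $z$ both $1-z$ and $-z$ lie in the same open half-plane, and for $z<0$ both are positive), and normalizing at $z=-1$ via $\mathrm{Li}_2(-1)=-\pi^2/12$ gives the identity on the slit plane. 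Your instinct that the segment $(0,1)$ needs separate care is right, and in fact more is true: for $z\in(0,1]$ the point $1/z$ lies on the cut $[1,\infty)$, so $\mathrm{Li}_2(1/z)$ is not even defined by the paper's own definition of $\mathrm{Li}_2$; on that segment the identity can only be asserted for matched one-sided boundary values, whose jumps (of $\mathrm{Li}_2(1/z)$ and of $\frac12\mathrm{Log}(-z)^2$) cancel. This is a blemish in the theorem's stated domain rather than a gap in your proof; note the paper only ever applies (1) at $z=-e^{y_2}$ with $y_2\in\mathbb{R}+i(0,\pi)$, which is non-real and therefore inside your slit domain, so your proof covers every use made of the statement.

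For (2), your residue scheme is the standard proof and the computations check out: since the contour passes above the origin, closing upward for $\Re(y)<0$ encloses exactly the simple poles $v=in\pi$, $n\ge 1$, whose residues are $(-1)^{n+1}e^{ny}/(4n^2\pi^2)$, and $2\pi i$ times their sum reproduces $\frac{-i}{2\pi}\mathrm{Li}_2(-e^y)$ through the power series of $\mathrm{Li}_2$ (valid because $|e^y|<1$ there); the identity theorem then extends the equality over the whole strip, both sides being holomorphic in $y$ on $\mathbb{R}+i(-\pi,\pi)$. The one detail you should make explicit is the arc estimate: take the closing arcs at radii $(N+\frac12)\pi$, where $|\sinh v|$ is bounded below uniformly in $N$, and combine $|e^{-iyv/\pi}|=e^{(\Re(y)\Im(v)+\Im(y)\Re(v))/\pi}$ with $\Re(y)<0$, $|\Im(y)|<\pi$, the decay $|\sinh v|^{-1}=O(e^{-|\Re(v)|})$, and the $|v|^{-2}$ factor to see that the arc contribution vanishes as $N\to\infty$. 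With that inserted, the argument is complete.
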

\subsection{Bloch-Wigner function}
Bloch-Wigner function $D:\mathbb{C}\rightarrow\mathbb{R}$ is defined as follows.
\begin{equation*}
D(z)\coloneqq
\left\{
\begin{array}{lr}
\Im(\mathrm{Li}_2(z))+\arg(1-z)\log|z|& (\text{if}\  z\in\mathbb{C}\backslash\mathbb{R} ) \\
0 & ( \text{otherwise} ).
\end{array}
\right.    
\end{equation*}
Bloch-Wigner function is continuous on $\mathbb{C}$ and real analytic on $\mathbb{C}\backslash\{0,1\}$.
\begin{theorem}[\cite{MR0662760}]\label{bloch-hyperbolic-volume}
Let $T$ be an ideal tetrahedron in $3$-dimensional hyperbolic space $\mathbb{H}^3$, and the complex shape structure is given by $z$. 
Then the hyperbolic volume $\mathrm{Vol}(T)$ is expressed by the following equation.
\begin{equation*}
\mathrm{Vol}(T)=D(z)=D\left(\frac{z-1}{z}\right)=D\left(\frac{1}{1-z}\right).
\end{equation*}
\end{theorem}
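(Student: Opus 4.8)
The plan is to reduce the statement to the classical Lobachevsky volume formula and then to identify the resulting sum of Lobachevsky functions with the Bloch--Wigner dilogarithm, deducing the cyclic symmetry at the end. First I would record the dictionary from Subsection~\ref{complex-gluing-equation}: if $T$ has complex shape $z\in\mathbb{R}+i\mathbb{R}_{>0}$ then $z$, $z'=\frac{1}{1-z}$, and $z''=\frac{z-1}{z}$ all lie in the upper half-plane, and the three dihedral angles along the opposite edge pairs are $\alpha=\arg z$, $\gamma=\arg z'=-\arg(1-z)$, and $\beta=\arg z''$, subject to $\alpha+\beta+\gamma=\pi$. By the Lobachevsky volume formula already recorded above, $\mathrm{Vol}(T)=\Lambda(\alpha)+\Lambda(\beta)+\Lambda(\gamma)$. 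Hence the theorem is equivalent to the single analytic identity
\[
\Lambda(\arg z)+\Lambda(\arg z'')+\Lambda(\arg z')=D(z)
\]
together with the invariance of the right-hand side under the $3$-cycle $z\mapsto z'\mapsto z''$.

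For the identity itself I would argue by matching differentials on the upper half-plane, which is connected and simply connected. Starting from $D(z)=\Im\,\mathrm{Li}_2(z)+\arg(1-z)\log|z|$ and $d\,\mathrm{Li}_2(z)=-\mathrm{Log}(1-z)\,\frac{dz}{z}$, a short computation gives
\[
dD=\log|z|\,d\!\arg(1-z)-\log|1-z|\,d\!\arg z .
\]
On the other side, using $\Lambda'(\theta)=-\log|2\sin\theta|$ and eliminating $d\beta=-d\alpha-d\gamma$, the factors of $\log 2$ cancel and the differential of $\Lambda(\alpha)+\Lambda(\beta)+\Lambda(\gamma)$ becomes $-(\log\sin\alpha-\log\sin\beta)\,d\alpha-(\log\sin\gamma-\log\sin\beta)\,d\gamma$. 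Feeding in the explicit moduli from the gluing dictionary, namely $\log|z|=\log\sin\gamma-\log\sin\beta$ and $\log|1-z|=\log\sin\alpha-\log\sin\beta$ together with $\arg(1-z)=-\gamma$, I would check that $dD$ matches this expression term by term. Thus $D$ and the Lobachevsky sum differ by a constant on the upper half-plane; letting $z$ approach a point $t\in(0,1)$ of the real axis, both sides tend to $0$ (for $D$ by definition, and for the sum because two of the angles tend to $0$ while the third tends to $\pi$), so the constant vanishes and the identity holds.

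Finally, the symmetry $D(z)=D(z')=D(z'')$ is immediate: the sum $\Lambda(\alpha)+\Lambda(\beta)+\Lambda(\gamma)$ is invariant under any permutation of $\{\alpha,\beta,\gamma\}$, while the $3$-cycle $z\mapsto z'\mapsto z''$ merely cyclically relabels these three angles; alternatively one may quote the standard six-term functional equation of the Bloch--Wigner function, of which $D(z)=D(\tfrac{1}{1-z})=D(\tfrac{z-1}{z})$ is a special case. I expect the main obstacle to be bookkeeping rather than conceptual: one must track the branches of $\arg$ and $\mathrm{Log}$ carefully so that the moduli identities $\log|z|=\log\sin\gamma-\log\sin\beta$ and $\log|1-z|=\log\sin\alpha-\log\sin\beta$ hold with consistent signs throughout the upper half-plane, and one must justify the boundary limit used to fix the integration constant. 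The underlying Lobachevsky formula $\mathrm{Vol}(T)=\Lambda(\alpha)+\Lambda(\beta)+\Lambda(\gamma)$ is the classical computation of the volume of an ideal tetrahedron (place the vertices at $0,1,\infty,z$ and integrate), which I would invoke as known.
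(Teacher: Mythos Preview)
The paper does not supply its own proof of this theorem: it is stated as a citation of a classical result (the reference \cite{MR0662760}) and used as a black box later in Lemma~\ref{relation-between-S-and-Vol}. So there is nothing to compare your argument against.

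That said, your sketch is correct and is essentially the standard derivation. The differential identity $dD=\log|z|\,d\arg(1-z)-\log|1-z|\,d\arg z$ is right, and your matching with the differential of $\Lambda(\alpha)+\Lambda(\beta)+\Lambda(\gamma)$ via the moduli relations $\log|z|=\log\sin\gamma-\log\sin\beta$, $\log|1-z|=\log\sin\alpha-\log\sin\beta$, $\arg(1-z)=-\gamma$ (all of which are exactly the dictionary recorded in Subsection~\ref{complex-gluing-equation}) goes through. The boundary limit $z\to t\in(0,1)$ does kill the constant: $\alpha,\gamma\to 0$, $\beta\to\pi$, and $\Lambda$ is continuous with $\Lambda(0)=\Lambda(\pi)=0$, while $D$ is continuous with $D|_{\mathbb{R}}=0$. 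The cyclic invariance then follows immediately from the permutation symmetry of the Lobachevsky sum, as you say. The only caution I would add is to be explicit that on the upper half-plane all three of $z,z',z''$ stay in the upper half-plane, so the principal branches are globally consistent and the ``differential matching on a simply connected domain'' argument is legitimate.
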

We now give the definition of the asymptotic expansion and state the theorem on the asymptotic expansion necessary for the proof of Theorem \ref{uemura} (3).
\begin{definition}
Let $f:\Omega\rightarrow \mathbb{C}$ be a function on a non-bounded domain $\Omega\subset \mathbb{C}$. (Either convergent or divergent) complex power series $\sum_{n=0}^\infty a_nz^{-n}$ is called an asymptotic expansion of $f$, if for any fixed integer $N\ge 0$, 
\begin{equation*}
f(z)=\sum_{n=0}^N a_n z^{-n}+O (z^{-(N+1)})
\end{equation*}
if $z\rightarrow \infty$.
In this case, we denote
\begin{equation*}
f(z)\underset{z\rightarrow\infty}{\cong} \sum_{n=0}^\infty a_nz^{-n}.
\end{equation*}
\end{definition}
By Section 2.4.5 in \cite{Fedoryuk1989}, the following holds.
\begin{theorem}[Fedoryuk]\label{Fedoryuk}
Let $m\ge 1$ be an integer, and $\gamma^m$ be an $m$-dimensional smooth compact real submanifold of $\mathbb{C}^m$ with a connected boundary.
Denote $z=(z_1,\ldots,z_m)\in\mathbb{C}^m$, $dz=dz_1\cdots dz_m$.
Let $z\mapsto f(z)$ and $z\mapsto S(z)$ be analytic complex-valued functions on the domain $D$ such that $\gamma ^m\subset D\subset \mathbb{C}^m$.
With a parameter $\lambda\in\mathbb{R}$, we define
\begin{equation*}
F(\lambda)=\int_{\gamma^m} f(z)\exp (\lambda S(z))dz.
\end{equation*}

Suppose $\max_{z\in\gamma ^m} \Re S(z)$ is attained only at the point $z^0$, where $z^0$ is an interior point of $\gamma^m$ and a simple saddle point of $S$, i.e., $\nabla S(z^0)=0$ and $\det\mathrm{Hess}(S)(z^0)\neq 0$. 
Then if $\lambda\rightarrow \infty$, the following asymptotic expansion exists.
\begin{equation*}
F(\lambda)\underset{\lambda\rightarrow\infty}{\cong}\left(\frac{2\pi}{\lambda}\right)^{\frac{m}{2}}\frac{\exp(\lambda S(z^0))}{\sqrt{\det{\mathrm{Hess}(S)(z^0)}}}\left[f(z^0)+\sum_{k=1}^\infty c_k\lambda^{-k}\right],
\end{equation*}
where $c_k$ is a complex number, and the choice of the branch of the square root $\sqrt{\det \mathrm{Hess}(S)(z^0)}$ depends on the orientation of $\gamma ^m$.

In particular,
\begin{equation*}
\lim_{\lambda\rightarrow\infty}\frac{1}{\lambda}\log |F(\lambda)|=\Re S(z^0)   
\end{equation*}
holds.
\end{theorem}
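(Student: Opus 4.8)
The plan is to prove this by the classical multidimensional method of steepest descent (saddle-point method), reducing $F(\lambda)$ to a Gaussian integral after a holomorphic change of coordinates near $z^0$. Write $M_0 := \Re S(z^0) = \max_{z\in\gamma^m}\Re S(z)$. First I would localize: since the maximum of $\Re S$ on the compact contour $\gamma^m$ is attained only at $z^0$, for any small ball $B=B(z^0,r)$ there is $\delta>0$ with $\Re S(z)\le M_0-\delta$ on $\gamma^m\setminus B$. As $f$ is continuous (hence bounded) on $\gamma^m$ and $\gamma^m$ has finite $m$-dimensional volume,
\begin{equation*}
\left|\int_{\gamma^m\setminus B} f(z)e^{\lambda S(z)}\,dz\right|\le C\,e^{\lambda(M_0-\delta)},
\end{equation*}
which is exponentially smaller than the claimed main term of size $\lambda^{-m/2}e^{\lambda M_0}$. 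Hence $F(\lambda)$ agrees, up to error $O(e^{\lambda(M_0-\delta)})$, with the integral over $\gamma^m\cap B$.

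Next I would put $S$ into normal form. Because $\nabla S(z^0)=0$ and $\det\mathrm{Hess}(S)(z^0)\ne0$, the point $z^0$ is a nondegenerate critical point of the holomorphic function $S$, so the holomorphic Morse lemma supplies a biholomorphism $z=\psi(w)$ from a neighborhood of $0\in\mathbb{C}^m$ onto a neighborhood of $z^0$, with $\psi(0)=z^0$ and $S(\psi(w))=S(z^0)-\tfrac12(w_1^2+\cdots+w_m^2)$. Differentiating twice at $w=0$ yields $(D\psi(0))^{\mathrm T}\,\mathrm{Hess}(S)(z^0)\,D\psi(0)=-I$, hence $(\det D\psi(0))^2=(-1)^m/\det\mathrm{Hess}(S)(z^0)$; this is exactly what produces the factor $1/\sqrt{\det\mathrm{Hess}(S)(z^0)}$, together with a sign/branch ambiguity that is ultimately fixed by the orientation of $\gamma^m$.

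The heart of the argument, and the step I expect to be the main obstacle, is the contour deformation. The $m$-form $f(z)e^{\lambda S(z)}\,dz$ is holomorphic, hence closed, so by Stokes' theorem its integral is unchanged under deformations of the chain within $D$ rel boundary. Using the hypothesis that $\Re S$ is maximized on $\gamma^m$ only at the interior saddle $z^0$, one argues that the local piece $\gamma^m\cap B$ can be deformed, inside the sublevel region $\{\Re S\le M_0\}$ and without raising $\Re S$, onto the steepest-descent cycle $\Gamma=\psi(\{w\in\mathbb{R}^m:\ |w|<\rho\})$, on which $\Im S$ is constant and $\Re S=M_0-\tfrac12|w|^2$; the boundary mismatch again contributes only $O(e^{\lambda(M_0-\delta)})$. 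Making this rigorous requires the Morse-theoretic analysis of how the sublevel sets of $\Re S$ change as one passes the critical level, which is precisely the content abstracted in the cited treatment of Fedoryuk and is the delicate point of the whole proof.

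Finally I would evaluate the resulting Gaussian integral and extract the full expansion. On $\Gamma$, with $g(w):=f(\psi(w))\det D\psi(w)$,
\begin{equation*}
\int_{\Gamma} f\,e^{\lambda S}\,dz=e^{\lambda S(z^0)}\int_{|w|<\rho} g(w)\,e^{-\frac{\lambda}{2}|w|^2}\,dw.
\end{equation*}
Extending the domain to all of $\mathbb{R}^m$ (again an exponentially small error) and expanding the analytic amplitude $g$ in a power series about $w=0$, term-by-term Gaussian integration — odd moments vanish, and the even moments of $e^{-\lambda|w|^2/2}$ contribute $(2\pi/\lambda)^{m/2}$ times integer powers of $\lambda^{-1}$ — gives
\begin{equation*}
F(\lambda)\underset{\lambda\to\infty}{\cong}\left(\frac{2\pi}{\lambda}\right)^{m/2}e^{\lambda S(z^0)}\,g(0)\Big[1+\sum_{k\ge1}c_k\lambda^{-k}\Big],
\end{equation*}
with $g(0)=f(z^0)\det D\psi(0)=f(z^0)/\sqrt{\det\mathrm{Hess}(S)(z^0)}$, the branch being the one fixed above; this is exactly the asserted expansion. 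The "in particular" statement then follows, since $\tfrac1\lambda\log|F(\lambda)|=M_0+\tfrac1\lambda\log\big|(2\pi/\lambda)^{m/2}g(0)(1+o(1))\big|\to M_0=\Re S(z^0)$, the logarithm of the algebraically growing or decaying prefactor being $O(\lambda^{-1}\log\lambda)$.
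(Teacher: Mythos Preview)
Your sketch is the standard steepest-descent argument and is essentially correct in outline; the paper, however, does not prove this theorem at all but simply cites it from Fedoryuk (Section~2.4.5 of \cite{Fedoryuk1989}). So there is nothing to compare: your proposal reconstructs the classical proof that the paper merely quotes as a known result.
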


By the equations $\eqref{it1'},\eqref{it2''},\eqref{it3'},\eqref{it4'}$, the angle structure $\mathscr{A}_X$ on the ideal tetrahedral decomposition $X$ of $S^3\backslash 7^3$ obtained in Section \ref{ideal-triangulation} is given by 
\begin{equation*}
 \mathscr{A}_X=\{\alpha_X\in\mathscr{S}_X\mid b_1+b_2+a_3=a_4+c_5, a_2=2b_1+c_1, a_2+b_3=c_4+b_5,a_3=b_1+b_2\} .  
\end{equation*}

In general, for a single tetrahedron $T$ constituting a tetrahedral decomposition $X$ with an extended shape structure $\alpha_X\in\overline{\mathscr{S}_X}$, if at least one of the dihedral angles of $T$ is $0$, it is called flat for $\alpha_X$, and if two dihedral angles are $0$ and the third dihedral angle is $\pi$, it is 
called taut for $\alpha_X$. In both cases, the volume of $T$ is $0$.

The following holds in the same way as in Lemma 4.3 in \cite{MR3945172}.
\begin{lemma}\label{flat-taut}
Let $\alpha_X\in\overline{\mathscr{A}_X}\backslash \mathscr{A}_X$ be such that the volume functional on $\overline{\mathscr{A}_X}$ is maximal at $\alpha_X$.
If one dihedral angle of $\alpha_X$ is zero, the other two dihedral angles of the tetrahedron containing this dihedral angle are $0$ and $\pi$. That is, if a tetrahedron is flat for $\alpha_X$, then it is taut.
\end{lemma}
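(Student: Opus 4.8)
The plan is to argue by contradiction, exploiting the infinite slope of the Lobachevsky function at the endpoints of $[0,\pi]$. Differentiating the defining integral gives $\Lambda'(x)=-\log|2\sin x|$, which is continuous on $(0,\pi)$ but satisfies $\Lambda'(x)\to+\infty$ as $x\to 0^+$ and as $x\to\pi^-$. Since the volume functional is $\mathscr{V}(\alpha_X)=\sum_{k}\bigl(\Lambda(2\pi a_k)+\Lambda(2\pi b_k)+\Lambda(2\pi c_k)\bigr)$, the partial derivative of each summand with respect to its angle variable is $2\pi\Lambda'$ of that angle, and this blows up to $+\infty$ exactly when the angle degenerates to $0$ or $\pi$. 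This is the mechanism I would use: opening a vanishing dihedral angle of a flat tetrahedron increases $\mathscr{V}$ with infinite initial slope, which must contradict maximality.

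Suppose then that $\alpha_X\in\overline{\mathscr{A}_X}\setminus\mathscr{A}_X$ maximizes $\mathscr{V}$ and that some tetrahedron $T$ has a dihedral angle equal to $0$ but is not taut; after relabelling its three angle-pairs we may assume $T$ contributes angles $(0,2\pi b,2\pi c)$ with $2\pi b,2\pi c\in(0,\pi)$ and $b+c=\tfrac12$. By Lemma \ref{non-empty} the open polytope $\mathscr{A}_X$ is non-empty, so I would fix an interior structure $\beta\in\mathscr{A}_X$ and consider the segment $\alpha(t)=\alpha_X+t(\beta-\alpha_X)$, which lies in $\mathscr{A}_X$ for $t\in(0,1]$. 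Writing $g(t)=\mathscr{V}(\alpha(t))$, the concavity of $\mathscr{V}$ on $\overline{\mathscr{A}_X}$ (\cite[Theorem 6.1, 6.6]{MR2255497}, \cite[Lemma 5.3]{MR2796632}) together with the maximality of $\alpha_X$ forces the one-sided derivative $g'(0^+)\le 0$. On the other hand, the summand coming from the vanishing angle of $T$ is $\Lambda$ evaluated at $2\pi t\,a_\beta$ with $a_\beta>0$, whose $t$-derivative behaves like $-2\pi a_\beta\log t\to+\infty$ as $t\to0^+$, while the angles of $T$ lying in $(0,\pi)$ contribute only finite terms. The problem therefore reduces to controlling the contributions of the remaining degenerate angles along the segment.

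The main obstacle is exactly this last point: other tetrahedra whose angle equals $\pi$ at $\alpha_X$ close up along the segment and produce competing $-\infty$ contributions, so that $g'(0^+)$ is a priori of indeterminate form $(+\infty)+(-\infty)$. To resolve it I would not move toward an arbitrary interior point, but instead construct, from the edge-gluing relations \eqref{it1'}, \eqref{it2''}, \eqref{it3'}, \eqref{it4'} of the explicit decomposition $X$, a tangent vector $u$ to $\overline{\mathscr{A}_X}$ at $\alpha_X$ that (i) strictly increases the vanishing angle of $T$ and (ii) keeps every angle already equal to $\pi$ fixed and every other vanishing angle non-decreasing, so that no $-\infty$ term appears. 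Along such a direction the directional derivative $D_u\mathscr{V}(\alpha_X)$ equals $+\infty$, contradicting $D_u\mathscr{V}(\alpha_X)\le 0$; hence no flat non-taut tetrahedron can occur at a boundary maximum, which is the assertion of the lemma (and the step that later feeds into Theorem \ref{Casson-Rivin} to establish geometricity). The combinatorial heart is verifying that the homogeneous system given by the simplex constraints $\dot a_k+\dot b_k+\dot c_k=0$ and the balancing conditions admits such a $u$, i.e. that the compensating adjustments needed to preserve edge balance fall only on strictly interior angles; this I would carry out by the same bookkeeping as in \cite[Lemma 4.3]{MR3945172}, using the small number of edges of $X$ and the incidences recorded in Figure \ref{fig:ideal_triangulation}.
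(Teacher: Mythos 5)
Your setup is the right one---the segment from $\alpha_X$ toward an interior point $\beta\in\mathscr{A}_X$, the inequality $g'(0^+)\le 0$ at a maximum, and the $+\infty$ contribution of the opening zero angle---and you correctly identify the danger coming from angles equal to $\pi$. But your resolution has a genuine gap, and it is fatal rather than cosmetic: the direction $u$ you postulate (strictly increase the vanishing angle of $T$, freeze every angle equal to $\pi$, keep all other vanishing angles non-decreasing) need not exist as a feasible direction in $\overline{\mathscr{A}_X}$, and it fails in exactly the configurations the lemma must exclude. Concretely, take $(a_1,b_1,c_1)=(\tfrac{1}{2},0,0)$ (so $T_1$ is taut, with one angle $\pi$) and $a_2=0$ with $b_2=c_2=\tfrac{1}{4}$ (so $T_2$ is flat but not taut); this extends to a point of $\overline{\mathscr{A}_X}\backslash\mathscr{A}_X$ compatible with \eqref{it1'}--\eqref{it4'}. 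By \eqref{it2''}, every tangent direction satisfies $\dot a_2=2\dot b_1+\dot c_1$, so $\dot a_2>0$ forces $\dot b_1>0$ or $\dot c_1>0$, and then $\dot a_1=-\dot b_1-\dot c_1<0$, i.e.\ the $\pi$-angle of $T_1$ must decrease. Your conditions (i) and (ii) are therefore mutually exclusive here, so the ``bookkeeping'' you defer cannot be carried out; and you cannot invoke the lemma itself to rule this configuration out, since that would be circular.

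The missing idea---which is how the argument the paper invokes (Lemma 4.3 of \cite{MR3945172}, following the Casson--Rivin/Futer--Gu\'eritaud scheme \cite{MR2796632}) resolves the indeterminate form---is that taut tetrahedra contribute a \emph{finite} one-sided derivative along the segment toward an arbitrary interior point, because the logarithmic singularities cancel. If a taut tetrahedron has angles $(0,0,\pi)$ moving with rates $(x,y,-(x+y))$, $x,y>0$, then $\Lambda(\theta)=\theta-\theta\log(2\theta)+O(\theta^3)$ together with the $\pi$-periodicity and oddness of $\Lambda$ give
\begin{equation*}
\Lambda(tx)+\Lambda(ty)+\Lambda\bigl(\pi-t(x+y)\bigr)
= t\bigl(-x\log x-y\log y+(x+y)\log(x+y)\bigr)+O(t^3),
\end{equation*}
so the $t\log t$ terms cancel and the derivative at $t=0^+$ is finite. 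Hence, along the segment toward \emph{any} interior point, non-flat and taut tetrahedra contribute finite derivatives, while each flat-not-taut tetrahedron contributes $+\infty$ (its zero angle opens at a positive rate and its other two angles lie in $(0,\pi)$). The existence of a single flat-not-taut tetrahedron therefore already gives $g'(0^+)=+\infty>0$, contradicting maximality, with no special direction $u$ needed. As written, your proof leaves the existence of $u$ unproven, and the example above shows it is false in general, so the argument does not go through.
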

In the four constraints in the definition of $\mathscr{A}_X$, even if we replace the variables like $(a_4,b_4,c_4)\leftrightarrow (c_5,a_5,b_5)$, the constraints are invariant. Therefore, by the concavity of the volume functional and the convexity of $\overline{\mathscr{A}_X}$, there exists an extended shape structure with $(a_4,b_4,c_4)=(c_5,a_5,b_5)$ which has the maximal volume. In this case, the constraints are given by

\begin{empheq}[left={\left\{
\begin{aligned}
b_1+b_2+a_3&=2a_4\\
a_2&=2b_1+c_1\\
a_2+b_3&=2c_4\\
a_3&=b_1+b_2\\
\end{aligned}
\right.\iff\empheqlbrace}]{align}
a_3&=a_4 \label{restriction-1}\\
a_2&=2b_1+c_1\label{restriction-2}\\
a_2+b_3&=2c_4\label{restriction-3}\\
a_3&=b_1+b_2\label{restriction-4}
\end{empheq}
\begin{lemma}\label{volume-maximizer}
Suppose the volume functional on $\overline{\mathscr{A}_X}$ is maximal at $\alpha_X$. Then $\alpha_X$ is not on the boundary $\overline{\mathscr{A}_X}\backslash\mathscr{A}_X$, but necessarily on the interior $\mathscr{A}_X$.
\end{lemma}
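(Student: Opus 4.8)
The plan is to argue by contradiction, following the Futer--Gu\'eritaud strategy already prepared in the preceding discussion. Suppose that $\mathscr{V}$ attains its maximum over $\overline{\mathscr{A}_X}$ at some boundary point $\alpha_X\in\overline{\mathscr{A}_X}\backslash\mathscr{A}_X$. By the symmetrization remark preceding the statement---namely that the defining constraints are invariant under $(a_4,b_4,c_4)\leftrightarrow(c_5,a_5,b_5)$, so that concavity of $\mathscr{V}$ together with convexity of $\overline{\mathscr{A}_X}$ lets us assume the maximizer is symmetric---I would reduce to a maximizer satisfying $(a_4,b_4,c_4)=(c_5,a_5,b_5)$ together with the four equations \eqref{restriction-1}--\eqref{restriction-4}, in addition to the box constraints $0\le a_k,b_k,c_k\le\frac12$ and $a_k+b_k+c_k=\frac12$.

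Being on the boundary, at least one of the variables $a_k,b_k,c_k$ vanishes, so some tetrahedron is flat for $\alpha_X$; by Lemma \ref{flat-taut} that tetrahedron is in fact taut, i.e.\ two of its three variables equal $0$ and the remaining one equals $\frac12$. Before entering the case analysis I would record that the maximum value is strictly positive: evaluating $\mathscr{V}$ at the explicit angle structure $\alpha_0\in\mathscr{A}_X$ exhibited in Lemma \ref{non-empty} gives a sum of five volumes of genuinely nondegenerate ideal tetrahedra, each of which is positive because $\Lambda(x)+\Lambda(y)+\Lambda(z)>0$ whenever $x,y,z>0$ and $x+y+z=\pi$. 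Hence $\max_{\overline{\mathscr{A}_X}}\mathscr{V}\ge\mathscr{V}(\alpha_0)>0$, and in particular a structure all of whose tetrahedra are flat, and so of total volume $0$, cannot be a maximizer.

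The core of the argument is then a finite case analysis. For each tetrahedron $T_k$ and each of the three possible locations of the angle $\pi$ (namely $a_k=\frac12$, or $b_k=\frac12$, or $c_k=\frac12$, with the other two variables $0$), I would substitute the taut values into \eqref{restriction-1}--\eqref{restriction-4} and propagate the consequences through the remaining variables, keeping all box constraints in force. In each branch one of two things happens: either the linear relations force some variable to be negative or to exceed $\frac12$, an immediate contradiction with $\alpha_X\in\overline{\mathscr{A}_X}$; or they force every tetrahedron to become flat, so that $\mathscr{V}(\alpha_X)=0$, contradicting the positivity established above. The symmetry reduction $(a_4,b_4,c_4)=(c_5,a_5,b_5)$ halves the work by identifying the $T_4$ and $T_5$ branches, while \eqref{restriction-1} and \eqref{restriction-4}, which tie $a_3$ to $a_4$ and to $b_1+b_2$, make the propagation between tetrahedra explicit. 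Having exhausted every way in which a tetrahedron can be taut, I would conclude that no boundary point maximizes $\mathscr{V}$, so the maximizer lies in the interior $\mathscr{A}_X$.

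I expect the main obstacle to be precisely this bookkeeping: ensuring that every admissible taut configuration is enumerated and that, in each, the interaction of the four equality constraints with the fifteen box constraints genuinely yields a contradiction rather than a consistent nontrivial solution. The delicate point is that a single taut tetrahedron does not obviously collapse the whole decomposition; one has to chase the equalities carefully, often several steps, to reach either a sign violation or total flatness, and a misidentified branch would leave a gap. Organizing the cases around which variable carries the angle $\pi$, and exploiting the symmetry to prune them, is what keeps the analysis finite and checkable.
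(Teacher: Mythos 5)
Your overall strategy (symmetry reduction, Lemma \ref{flat-taut}, positivity of the maximum via Lemma \ref{non-empty}, and a finite case analysis over taut tetrahedra) coincides with the paper's proof up to a point, but your central dichotomy --- that every taut branch ends either in a box-constraint violation or in total flatness --- is false, and the failure occurs exactly at the one case that carries all the difficulty. Consider the branch in which $T_2$ is taut with $(a_2,b_2,c_2)=(\tfrac12,0,0)$. Substituting into \eqref{restriction-1}--\eqref{restriction-4} gives $a_1=b_1$ (from $\tfrac12=2b_1+c_1$ and $a_1+b_1+c_1=\tfrac12$), $a_3=b_1$, $a_3=a_4$, and $\tfrac12+b_3=2c_4$; setting $a_1=b_1=a_3=a_4=\tfrac{s}{2\pi}$ and $b_4=\tfrac{v}{2\pi}$ yields a consistent two-parameter family of boundary points for all $0<s$, $0<v$, $s+v<\tfrac{\pi}{2}$ (for instance $s=v=\tfrac{\pi}{6}$ gives $a_1=b_1=a_3=a_4=b_4=\tfrac{1}{12}$, $c_1=c_4=\tfrac13$, $b_3=\tfrac16$, $c_3=\tfrac14$), in which $T_1$, $T_3$, $T_4$, $T_5$ are all non-flat and the total volume is strictly positive. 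No amount of chasing the linear relations produces a sign violation or forces flatness here, so your enumeration cannot close this case; this is precisely why the paper's case analysis ends with the statement that the maximum could still occur when $(a_2,b_2,c_2)=(\tfrac12,0,0)$ and the remaining tetrahedra are non-flat.

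The missing idea is a first-order variational argument on this surviving stratum, which is how the paper concludes: it constructs the explicit constraint-preserving deformation $a_1^t=a_1+\tfrac{t}{2\pi}$, $c_1^t=c_1-\tfrac{t}{2\pi}$, $a_2^t=a_2-\tfrac{t}{2\pi}$, $c_2^t=c_2+\tfrac{t}{2\pi}$, $b_3^t=b_3+\tfrac{t}{2\pi}$, $c_3^t=c_3-\tfrac{t}{2\pi}$ (with $T_4$ fixed, and $b_2^t=0$ so that $T_2$ contributes nothing to the volume for any $t$), and computes
\begin{equation*}
\left.\frac{\partial\mathscr{V}}{\partial t}\right|_{t=0}
=-\log\sin s+\log\sin 2s-\log\sin 2(s+v)+\log\sin(s+2v),
\end{equation*}
which is strictly positive because $v\mapsto\frac{\sin 2(s+v)}{\sin(s+2v)}$ is strictly decreasing with limit $2\cos s$ as $v\rightarrow 0^+$; this contradicts maximality at the boundary point. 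Without this perturbation step (or an equivalent argument that the volume strictly increases when one moves off the stratum $(a_2,b_2,c_2)=(\tfrac12,0,0)$), your proof does not go through.
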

\begin{proof}
Since the volume functional takes its maximum value on $(a_4,b_4,c_4)=(c_5,a_5,b_5)$, if it can be shown that the volume functional under the constraints of $(a_4,b_4,c_4)=(c_5,a_5,b_5)$ and the equations \eqref{restriction-1}, \eqref{restriction-2}, \eqref{restriction-3}, \eqref{restriction-4} is maximal, not on the boundary $\overline{\mathscr{A}_X}\backslash\mathscr{A}_X$ but on the interior $\mathscr{A}_X$, the lemma is proven because the volume functional is strictly concave on $\mathscr{A}_X$.

Therefore, by reductio ad absurdum, we derive the contradiction by assuming that the volume functional under the constraints of $(a_4,b_4,c_4)=(c_5,a_5,b_5)$ and the equations \eqref{restriction-1}, \eqref{restriction-2},
 \eqref{restriction-3}, \eqref{restriction-4} takes the maximum value on the boundary $\overline{\mathscr{A}_X}\backslash\mathscr{A}_X$. 

In this case, at least one of $T_1,\ldots,T_4(T_5)$ is a taut tetrahedron by Lemma \ref{flat-taut}.

(i) In the case $T_1$ is taut.

\quad (a) If $(a_1,b_1,c_1)=(\frac{1}{2},0,0)$, by the equation \eqref{restriction-2}, $a_2=0$. Therefore $T_2$ is taut, so $b_2=0$ or $b_2=\frac{1}{2}$.

\quad\quad ($\alpha$) If $b_2=0$, by the equations \eqref{restriction-1} and \eqref{restriction-4}, $a_3=a_4=0$.
Since all tetrahedra are taut, the volume is $0$, which violates the maximality.

\quad\quad ($\beta$) If $b_2=\frac{1}{2}$, $a_3=a_4=\frac{1}{2}$ by the equations \eqref{restriction-1} and \eqref{restriction-4}.
Since all tetrahedra are taut also, in this case, the volume is $0$, which violates the maximality.

\quad (b) If $(a_1,b_1,c_1)=(0,0,\frac{1}{2})$, $a_2=\frac{1}{2}$ by the equation \eqref{restriction-2}.
Therefore, $b_2=c_2=0$. By the equations \eqref{restriction-1} and \eqref{restriction-4}, $a_3=a_4=0$. Therefore, $T_1,\ldots,T_4(T_5)$
are all taut, and the volume is $0$, which violates the maximality.

\quad (c) If $(a_1,b_1,c_1)=(0,\frac{1}{2},0)$, this situation does not exist because $a_2=1>\frac{1}{2}$ by the equation \eqref{restriction-2}.

(ii) In the case $T_2$ is taut.

\quad (a) If $(a_2,b_2,c_2)=(0,0,\frac{1}{2})$, by the equation \eqref{restriction-2}, $2b_1+c_1=0$. Therefore $b_1=c_1=0$, and $T_1$ is taut, so this case is attributed to (i).

\quad (b) If $(a_2,b_2,c_2)=(0,\frac{1}{2},0)$, as in (a), by the equation \eqref{restriction-2}, $b_1=c_1=0$
and $T_1$ is taut. Then this case is attributed to (i).

(iii) In the case $T_3$ is taut.

\quad (a) If $(a_3,b_3,c_3)=(0,0,\frac{1}{2})$ or $(0,\frac{1}{2},0)$, by the equation \eqref{restriction-4}, $b_1=b_2=0$ and this case is attributed to (i).

\quad (b) If $(a_3,b_3,c_3)=(\frac{1}{2},0,0)$, $a_4=\frac{1}{2}$ by the equation \eqref{restriction-1}. Therefore $b_4=c_4=0$. 
By the equation \eqref{restriction-3}, $a_2=0$. By the equation \eqref{restriction-2}, $b_1=c_1=0$, so this case is attributed to (i).

(iv) In the case $T_4(T_5)$ is taut.

\quad (a) If $(a_4,b_4,c_4)=(0,0,\frac{1}{2})$ or $(0,\frac{1}{2},0)$, by the equation \eqref{restriction-1}, $a_3=0$, and by the equation \eqref{restriction-4}, $b_1=b_2=0$. Therefore, this case is attributed to (i).

\quad (b) If $(a_4,b_4,c_4)=(\frac{1}{2},0,0)$, by the equation \eqref{restriction-3}, $a_2=b_3=0$. By the equation \eqref{restriction-2}, $b_1=c_1=0$. Therefore, this case is attributed to (i).

Therefore, the volume functional can be maximal on the boundary if $(a_2,b_2,c_2)=(\frac{1}{2},0,0)$ and $T_1$, $T_3$ and $T_4(T_5)$ are non-flat tetrahedra.

Then 
\begin{empheq}[left={\empheqlbrace}]{align}
a_3&=a_4 \label{restriction-1'}\\
\frac{1}{2}&=2b_1+c_1\label{restriction-2'}\\
\frac{1}{2}+b_3&=2c_4\label{restriction-3'}\\
a_3&=b_1 \label{restriction-4'}.
\end{empheq}

By the equation \eqref{restriction-2'}, $\frac{1}{2}=b_1+(\frac{1}{2}-a_1)$, then $a_1=b_1$.

Let $a_3=b_1=a_4=a_1=\frac{s}{2\pi}$. Then $c_1=\frac{1}{2}-\frac{s}{\pi}$. Let $b_4=\frac{v}{2\pi}$, then $c_4=\frac{1}{2}-\frac{s}{2\pi}-\frac{v}{2\pi}$.
By the equation \eqref{restriction-3'}, $b_3=\frac{1}{2}-\frac{s}{\pi}-\frac{v}{\pi}$. $c_3=\frac{1}{2}-a_3-b_3=\frac{s}{2\pi}+\frac{v}{\pi}$.

Since $0<a_i<\frac{1}{2}$, $0<b_i<\frac{1}{2}$, and $0<c_i<\frac{1}{2}$ for $i=1$, $3$, and $4$, it follows that $0<s$, $0<v$ and $s+v<\frac{\pi}{2}$.

Let $t>0$ be a sufficiently small real number and
\begin{eqnarray*}
a_1^t=a_1+\frac{t}{2\pi},\quad b_1^t=b_1,\quad c_1^t=c_1-\frac{t}{2\pi}\\
a_2^t=a_2-\frac{t}{2\pi},\quad b_2^t=b_2,\quad c_2^t=c_2+\frac{t}{2\pi}\\
a_3^t=a_3,\quad b_3^t=b_3+\frac{t}{2\pi},\quad c_3^t=c_3-\frac{t}{2\pi}\\
a_4^t=a_4,\quad b_4^t=b_4,\quad c_4^t=c_4,
\end{eqnarray*}
then $(a_i^t,b_i^t,c_i^t)\quad(i=1,\cdots,4)$ satisfy the constraints from \eqref{restriction-1} to \eqref{restriction-4}.

Calculation of the derivative of the volume functional $\mathscr{V}$ on the extended angle structure given by $(a_i^t,b_i^t,c_i^t)\quad(i=1,\cdots,4)$ gives, noting that since $b_2^t=b_2=0$, the volume of $T_2$ is zero independently of $t$, 
\begin{eqnarray*}
\left.\frac{\partial \mathscr{V}}{\partial t}\right|_{t=0}&=&\Lambda^\prime(2\pi a_1)-\Lambda^\prime(2\pi c_1)
+\Lambda^\prime(2\pi b_3)-\Lambda^\prime(2\pi c_3)\\
&=&-\log{\sin{2\pi a_1}}+\log{\sin{2\pi c_1}}-\log{\sin{2\pi b_3}}+\log{\sin{2\pi c_3}}\\
&=&-\log{\sin{s}}+\log{\sin{2s}}-\log{\sin{2(s+v)}}+\log{\sin{(s+2v)}}.
\end{eqnarray*}

Then 
\begin{equation*}
\exp{\left(\left.-\frac{\partial \mathscr{V}}{\partial t}\right |_{t=0}\right)}=\frac{\sin{2(s+v)}}{2\cos{s}\sin{(s+2v)}}.
\end{equation*}

Fix $s$. Let $f(v)=\frac{\sin{2(s+v)}}{\sin{(s+2v)}}$, then
\begin{eqnarray*}
f^\prime(v)&=&\frac{2\cos{2(s+v)}\sin{(s+2v)}-2\sin{2(s+v)}\cos{(s+2v)}}{\sin^2{(s+2v)}}\\
&=&\frac{2\{\sin{(s+2v-2(s+v))}\}}{\sin^2{(s+2v)}}=\frac{2\sin{(-s)}}{\sin^2{(s+2v)}}<0.
\end{eqnarray*}

Thus, $f(v)$ strictly monotonically decreases and 
\begin{equation*}
\lim_{v\rightarrow 0^+}\frac{\sin{2(s+v)}}{2(\cos{s})\sin{(s+2v)}}=\lim_{v\rightarrow 0^+}\frac{\sin{2s}}{2\cos{s}\sin{s}}=1.
\end{equation*}

Therefore, $\exp{\left(\left.-\frac{\partial\mathscr{V}}{\partial t}\right |_{t=0}\right)}<1$, that is $\left.\frac{\partial\mathscr{V}}{\partial t}\right |_{t=0}>0$, then the volume functional $\mathscr{V}$ is not maximal at $t=0$, which is a contradiction.
\end{proof}
\begin{proof}[{\bf Proof of Theorem \ref{geometric}}]
By Lemma \ref{non-empty}, $\mathscr{A}_X$ is non-empty. Therefore, since a continuous function on a non-empty compact set has a maximum value, the volume functional $\mathscr{V}:\overline{\mathscr{A}_X}\rightarrow \mathbb{R}$ is maximal at some $\alpha_X\in\overline{\mathscr{A}_X}$.  
Since Lemma \ref{volume-maximizer}, $\alpha_X \notin\overline{\mathscr{A}_X}\backslash\mathscr{A}_X$, that is, $\alpha_X\in\mathscr{A}_X$, it is shown that $X$ is geometric by Theorem \ref{Casson-Rivin}. 
\end{proof}
\subsection{Cusp triangulation and the complex gluing equation}
By removing the neighborhood of each vertex of tetrahedra in Figure \ref{fig:ideal_triangulation}, we cut off the corners of tetrahedra. Then let $\nu(7_3)$ be a tubular neighborhood of the knot $7_3$ in $S^3$, and we obtain a triangulation of the boundary torus $\partial \nu(7_3)$. This triangulation of the torus is shown in Figure \ref{fig:torus_triangulation}.
\begin{figure}[tbh]
\centering
\includegraphics[width=\textwidth]{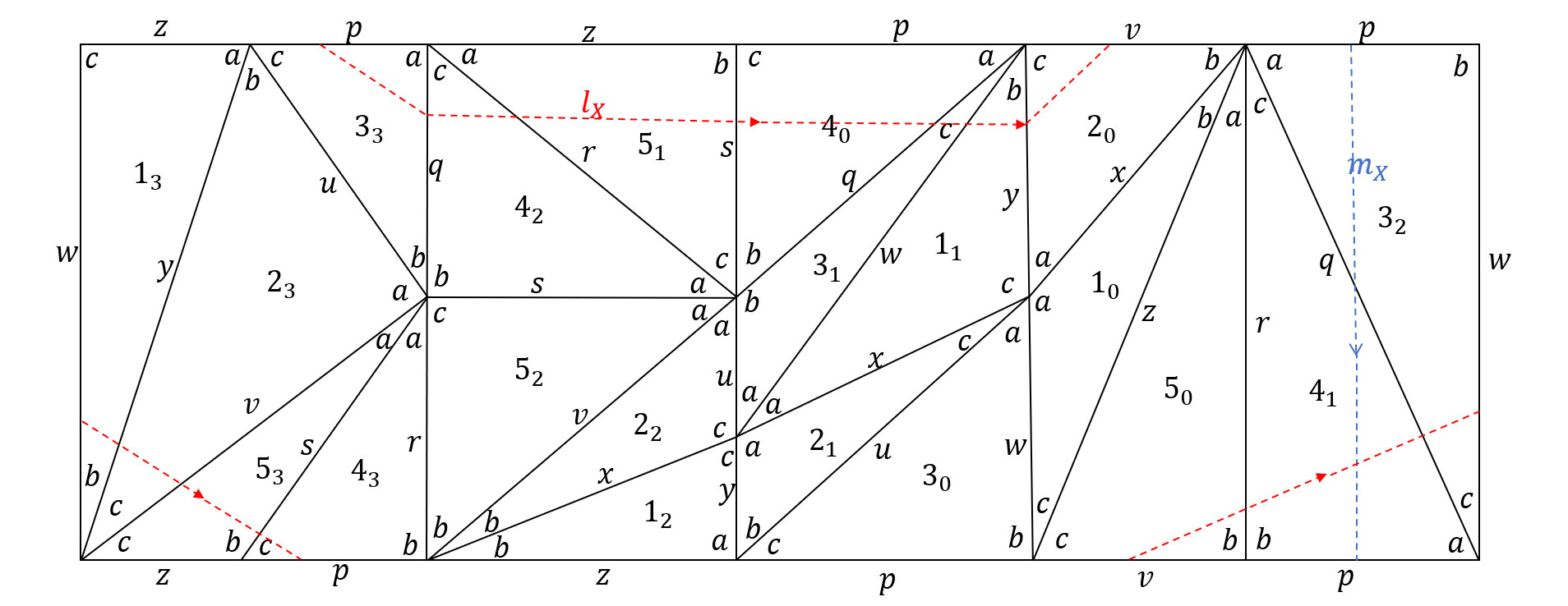}
\caption{A triangulation of the boundary torus of the tubular neighborhood of the knot $7_3$.}
\label{fig:torus_triangulation}
\end{figure}
Here, $i_j$ ($i=1,\ldots,5$, $j=0,\ldots,3$) inside the triangle in Figure \ref{fig:torus_triangulation} denotes the triangle created by truncating the neighborhood of the vertex $j$ of the tetrahedron $T_i$ in Figure \ref{fig:ideal_triangulation}.
The letter marked near each edge of each triangle indicates the variable of Figure \ref{fig:ideal_triangulation} that represents the face of the tetrahedron that includes the edge. The edges with the same letter on the top, bottom, left, and right of the rectangle in the figure are considered to be identical to each other.
The symbols $a,b$, and $c$ on the corners of the triangle indicate the dihedral angles, and for a triangle with $i_j$ marked inside, it means $2\pi a_i, 2\pi b_i, 2\pi c_i$, respectively. Closed curves representing the torus meridian $m_X$ and longitude $l_X$ are drawn by dashed lines in the figure.
Let $z_i$ be a complex shape structure of the tetrahedron $T_i$, then the complex gluing equations are given as follows. Here, $e_1,\ldots,e_5\in X^1$ are shown in Figure \ref{fig:ideal_triangulation}, \ref{fig:ideal_triangulation_edge}.
\begin{eqnarray}
\omega_{X,\alpha}^\mathbb{C}(e_1)=2i\pi &\iff&\mathrm{Log} z_1^{\prime\prime}+\mathrm{Log} z_2^\prime +\mathrm{Log} z_3 +\mathrm{Log} z_4^{\prime\prime} +\mathrm{Log} z_4^\prime \nonumber\\
& & +\mathrm{Log} z_5 +\mathrm{Log} z_5^{\prime\prime} =2i\pi, \label{e1-complex-gluing}\\
\omega_{X,\alpha}^\mathbb{C}(e_2)=2i\pi&\iff&\mathrm{Log} z_1^{\prime\prime} +\mathrm{Log} z_1^\prime +\mathrm{Log} z_2^{\prime\prime} +\mathrm{Log} z_3^{\prime\prime} +\mathrm{Log} z_3^\prime \nonumber\\
& &+\mathrm{Log} z_4+\mathrm{Log} z_5^\prime =2i\pi, \label{e2-complex-gluing}\\
\omega_{X,\alpha}^\mathbb{C}(e_3)=2i\pi&\iff&\mathrm{Log} z_2+\mathrm{Log}z_3^{\prime\prime}+\mathrm{Log} z_4+\mathrm{Log} z_4^{\prime\prime}\nonumber\\
& &+\mathrm{Log} z_5+\mathrm{Log} z_5^\prime=2i\pi,\label{e3-complex-gluing}\\
\omega_{X,\alpha}^\mathbb{C}(e_4)=2i\pi&\iff&\mathrm{Log} z_1+\mathrm{Log} z_1^\prime +\mathrm{Log} z_2+\mathrm{Log} z_2^{\prime\prime} +\mathrm{Log} z_3 =2i\pi, \label{e4-complex-gluing}\\
\omega_{X,\alpha}^\mathbb{C}(e_5)=2i\pi&\iff&\mathrm{Log} z_1 +\mathrm{Log} z_2^\prime +\mathrm{Log} z_3^\prime +\mathrm{Log} z_4^\prime +\mathrm{Log} z_5^{\prime\prime} =2i\pi.\label{e5-complex-gluing}
\end{eqnarray}

The complex completeness equations regarding the meridian $m_X$ and longitude $l_X$ in Figure \ref{fig:torus_triangulation} are
\begin{equation}
H^{\mathbb{C}}(m_X)=0\iff-\mathrm{Log} z_3+\mathrm{Log} z_4=0\iff z_3=z_4\label{meridian-completeness}
\end{equation}
\begin{eqnarray}
& & H^{\mathbb{C}}(l_X)=0\nonumber\\
&\iff&-\mathrm{Log} z_1^{\prime\prime}-\mathrm{Log} z_2^{\prime\prime}+\mathrm{Log} z_5-\mathrm{Log}z_4^\prime+\mathrm{Log} z_3+\mathrm{Log} z_4^\prime
-\mathrm{Log} z_5^\prime\nonumber\\
&&\quad-\mathrm{Log} z_4^{\prime\prime}+\mathrm{Log} z_3^\prime+\mathrm{Log} z_1^{\prime\prime}+\mathrm{Log} z_2^{\prime\prime}
-\mathrm{Log} z_5^{\prime\prime}+\mathrm{Log} z_4^\prime-\mathrm{Log} z_3^\prime=0\nonumber\\
&\iff&2\mathrm{Log} z_5+\mathrm{Log} z_3+\mathrm{Log} z_4^\prime-\mathrm{Log}z_4^{\prime\prime}=i\pi,\label{completeness-longitude}
\end{eqnarray}
where, in the derivation of the equation \eqref{completeness-longitude}, we used $\mathrm{Log} z_i+\mathrm{Log} z_i^\prime+\mathrm{Log} z_i^{\prime\prime}=i\pi\quad (i=1,\ldots,5)$.

Under the assumption that the equation \eqref{meridian-completeness} holds, that is, $z_3=z_4$, the equation \eqref{e1-complex-gluing} is transformed as follows, 
\begin{align}
&\mathrm{Log} z_1^{\prime\prime}+\mathrm{Log} z_2^\prime+\mathrm{Log} z_3+\mathrm{Log} z_3^{\prime\prime}+\mathrm{Log} z_3^\prime 
+\mathrm{Log} z_5+\mathrm{Log} z_5^{\prime\prime}=2i\pi\nonumber\\
\iff&\mathrm{Log} z_1^{\prime\prime}+\mathrm{Log} z_2^{\prime}+i\pi+i\pi-\mathrm{Log} z_5^\prime=2i\pi\nonumber\\
\iff&\mathrm{Log} z_1^{\prime\prime}+\mathrm{Log} z_2^\prime-\mathrm{Log} z_5^\prime=0\tag{$\ref{e1-complex-gluing}^\prime$}.\label{e1-complex-gluing'}
\end{align}

The equation \eqref{e2-complex-gluing} is transformed as 
\begin{align}
&i\pi-\mathrm{Log}z_1+\mathrm{Log} z_2^{\prime\prime}+i\pi+\mathrm{Log}z_5^\prime =2i\pi\nonumber\\
\iff&\mathrm{Log} z_1-\mathrm{Log} z_2^{\prime\prime}-\mathrm{Log}z_5^\prime=0.\tag{$\ref{e2-complex-gluing}^\prime$}\label{e2-complex-gluing'}
\end{align}

The equation \eqref{e3-complex-gluing} is transformed as
\begin{align}
&\mathrm{Log} z_2+i\pi-\mathrm{Log} z_3^\prime+\mathrm{Log} z_3^{\prime\prime}+i\pi-\mathrm{Log} z_5^{\prime\prime}=2i\pi\nonumber\\
\iff&\mathrm{Log} z_2-\mathrm{Log} z_3^\prime+\mathrm{Log} z_3^{\prime\prime}-\mathrm{Log} z_5^{\prime\prime}=0.\tag{$\ref{e3-complex-gluing}^\prime$}\label{e3-complex-gluing'}
\end{align}

The equation \eqref{e4-complex-gluing} is transformed as
\begin{align}
&i\pi-\mathrm{Log} z_1^{\prime\prime}+i\pi-\mathrm{Log} z_2^\prime+\mathrm{Log} z_3=2i\pi\nonumber\\
\iff&\mathrm{Log} z_3=\mathrm{Log} z_1^{\prime\prime}+\mathrm{Log} z_2^\prime.\tag{$\ref{e4-complex-gluing}^\prime$}\label{e4-complex-gluing'}
\end{align}

The equation \eqref{e5-complex-gluing} is transformed as 
\begin{align}
\mathrm{Log} z_1 +\mathrm{Log} z_2^\prime +2\mathrm{Log} z_3^\prime +\mathrm{Log} z_5^{\prime\prime} =2i\pi.\tag{$\ref{e5-complex-gluing}^\prime$}\label{e5-complex-gluing'}
\end{align}

The equation \eqref{completeness-longitude} is transformed as 
\begin{align}
&2\mathrm{Log} z_5+i\pi-\mathrm{Log} z_3^{\prime\prime} -\mathrm{Log} z_3^{\prime\prime}=i\pi\nonumber\\
\iff&\mathrm{Log} z_5=\mathrm{Log} z_3^\prime
\iff z_5=z_3^{\prime\prime}\iff z_3=z_5^\prime.\tag{$\ref{completeness-longitude}^\prime$}\label{completeness-longitude'}
\end{align}

If the equation \eqref{completeness-longitude'} holds, $z_3^\prime=z_5^{\prime\prime}$, thus the equation \eqref{e5-complex-gluing'} is transformed as
\begin{align}
\mathrm{Log} z_1+\mathrm{Log} z_2^{\prime} +3\mathrm{Log} z_3^\prime=2i\pi\tag{$\ref{e5-complex-gluing}^{\prime\prime}$},\label{e5-complex-gluing''}
\end{align}
and the equation \eqref{e1-complex-gluing'} is transformed as
\begin{align*}
\mathrm{Log}z_3=\mathrm{Log}z_1^{\prime\prime}+\mathrm{Log}z_2^\prime,
\end{align*}
and the equation \eqref{e4-complex-gluing'} is derived. In addition, the equation \eqref{e3-complex-gluing'} is transformed as 
\begin{align}
2\mathrm{Log} z_3^\prime=\mathrm{Log} z_2+\mathrm{Log} z_3^{\prime\prime},\tag{$\ref{e3-complex-gluing}^{\prime\prime}$}\label{e3-complex-gluing''}
\end{align}
and the equation \eqref{e2-complex-gluing'} is transformed as 
\begin{align}
\mathrm{Log} z_3=\mathrm{Log} z_1-\mathrm{Log} z_2^{\prime\prime}.\tag{$\ref{e2-complex-gluing}^{\prime\prime}$}\label{e2-complex-gluing''}
\end{align}

On the other hand, transforming the equation \eqref{e5-complex-gluing''} using the equation \eqref{e2-complex-gluing''},
\begin{align*}
&\mathrm{Log} z_3+\mathrm{Log} z_2^{\prime\prime}+\mathrm{Log} z_2^\prime+3\mathrm{Log} z_3^\prime=2i\pi\\
\iff&i\pi-\mathrm{Log} z_2+i\pi-\mathrm{Log} z_3^{\prime\prime}+2\mathrm{Log} z_3^\prime=2i\pi\\
\iff&2\mathrm{Log} z_3^\prime=\mathrm{Log} z_2+\mathrm{Log} z_3^{\prime\prime},
\end{align*}
and the equation \eqref{e3-complex-gluing''} is derived.

Therefore, the equations from \eqref{e1-complex-gluing} to \eqref{completeness-longitude} are satisfied if and only if the equations \eqref{meridian-completeness}, \eqref{completeness-longitude'}, \eqref{e4-complex-gluing'}, \eqref{e2-complex-gluing''}, \eqref{e5-complex-gluing''} are satisfied.

By Theorem \ref{geometric}, $X$ has a unique complex shape structure corresponding to the complete hyperbolic metric, and this complex shape structure is the unique $z=(z_1,z_2,z_3,z_4,z_5)\in(\mathbb{R}+i\mathbb{R}_{>0})^5$ which satisfies the equations \eqref{meridian-completeness}, \eqref{completeness-longitude'}, \eqref{e4-complex-gluing'}, \eqref{e2-complex-gluing''}, and \eqref{e5-complex-gluing''}.  

Let
\begin{equation*}
\mathscr{U}\coloneqq (\mathbb{R}+i(-\pi,0))\times(\mathbb{R}+i(0,\pi))\times(\mathbb{R}+i(-\pi,0)).  
\end{equation*} 

Let $\alpha_X^0\in\mathscr{A}_X$ be the angle structure corresponding to the unique complete hyperbolic metric, which exists by Theorem \ref{geometric}, and let
\begin{equation*}
\mathscr{Y}^0\coloneqq\mathscr{Y}_{\alpha_X^0}.  
\end{equation*}
The potential function $S:\mathscr{U}\rightarrow \mathbb{C}$ is defined as follows, 
\begin{equation}\label{definition-of-S}
S(\mathbf{y})\coloneqq i\mathbf{y}^{\mathrm{T}}Q\mathbf{y}+\mathbf{y}^{\mathrm{T}}\mathscr{W}+i\mathrm{Li}_2(-e^{Y})+3i\mathrm{Li}_2(-e^{W})-i\mathrm{Li}_2(-e^{Z}).
\end{equation}
\subsection{The property of the potential function $S$ on $\mathscr{U}$.}
\begin{lemma}[ \cite{MR3945172}, Lemma 7.2 ]\label{invertible}
Let $m\ge 1$ be an integer, and let $S_1$ and $S_2\in M_m(\mathbb{R})$ be real square matrices of order $m$ such that $S_1$ is a symmetric positive definite matrix and $S_2$ is a symmetric matrix.
Then the complex symmetric matrix $S_1+iS_2$ is invertible.
\end{lemma}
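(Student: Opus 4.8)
The plan is to prove invertibility by showing that the kernel of $S_1+iS_2$ is trivial, which for a square matrix is equivalent to being invertible. First I would take an arbitrary $v\in\mathbb{C}^m$ satisfying $(S_1+iS_2)v=0$ and pair it on the left with $v^*$ to obtain the scalar identity $v^*S_1v+i\,v^*S_2v=0$. The goal is then to extract enough information from this single complex equation to force $v=0$.

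The key observation is that both $v^*S_1v$ and $v^*S_2v$ are \emph{real} numbers. Since $S_1$ and $S_2$ are real and symmetric, they are Hermitian, so $(v^*S_jv)^*=v^*S_j^*v=v^*S_jv$ for $j=1,2$; equivalently, writing $v=a+ib$ with $a,b\in\mathbb{R}^m$ and using symmetry in the form $a^TS_jb=b^TS_ja$, one computes directly that $v^*S_jv=a^TS_ja+b^TS_jb$, which is real. Separating the identity $v^*S_1v+i\,v^*S_2v=0$ into its real and imaginary parts then forces in particular $v^*S_1v=0$, i.e. $a^TS_1a+b^TS_1b=0$. Finally, positive definiteness of $S_1$ gives $a^TS_1a\ge 0$ and $b^TS_1b\ge 0$, each vanishing only when its argument is zero; hence $a=b=0$, so $v=0$, and the map is injective, hence invertible.

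There is no substantial obstacle here, as the statement is elementary linear algebra; the only point requiring genuine care is the extension of positive definiteness from real vectors to complex vectors. It is essential that $S_1$ be \emph{symmetric}, not merely that $x^TS_1x>0$ on $\mathbb{R}^m$, since symmetry is exactly what makes the imaginary cross terms in $v^*S_1v$ cancel and leaves a sum of two nonnegative real quadratic forms. As an alternative route one could instead use the symmetric positive definite square root $S_1^{1/2}$ and factor $S_1+iS_2=S_1^{1/2}\bigl(I+iS_1^{-1/2}S_2S_1^{-1/2}\bigr)S_1^{1/2}$, reducing the claim to the fact that the real symmetric matrix $S_1^{-1/2}S_2S_1^{-1/2}$ has real eigenvalues, so that $I+iS_1^{-1/2}S_2S_1^{-1/2}$ has eigenvalues of the form $1+i\mu$ with $\mu\in\mathbb{R}$, all nonzero; I would nonetheless present the quadratic-form argument as the primary proof since it is shorter and avoids invoking the spectral theorem.
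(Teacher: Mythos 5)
Your proof is correct, and it is in substance the same as the source of this lemma: the paper under review gives no proof at all, quoting the statement directly from \cite{MR3945172} (Lemma 7.2), where the argument is precisely your primary one --- a kernel vector $v$ yields $v^*S_1v+i\,v^*S_2v=0$, both pairings $v^*S_jv$ are real because the $S_j$ are real symmetric (hence Hermitian), and positive definiteness of $S_1$ applied to $v^*S_1v=a^{\mathrm T}S_1a+b^{\mathrm T}S_1b=0$ forces $v=0$. Your alternative factorization $S_1+iS_2=S_1^{1/2}\bigl(I+iS_1^{-1/2}S_2S_1^{-1/2}\bigr)S_1^{1/2}$ is also valid, but the quadratic-form argument you chose as primary is exactly the standard (and the cited) one.
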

Hereafter, let $y_1\coloneqq Y$, $y_2\coloneqq Z$, and $y_3\coloneqq W$.
\begin{lemma}\label{hessian}
For any $\mathbf{y}\in\mathscr{U}$, the holomorphic Hessian of $S$ is given by
\begin{equation*}
\mathrm{Hess}(S)(\mathbf{y})=\left(\frac{\partial^2S}{\partial y_j\partial y_k}\right)_{j,k\in \{1,2,3\}}(\mathbf{y})=2iQ+i
\begin{pmatrix}
\frac{-1}{1+e^{-y_1}} &  &  \\
 & \frac{1}{1+e^{-y_2}} & \\
  &  &  \frac{-3}{1+e^{-y_3}}
\end{pmatrix}.
\end{equation*}

Furthermore, for any $\mathbf{y}\in\mathscr{U}$, the determinant of $\mathrm{Hess}(S)(\mathbf{y})$ is not $0$.
\end{lemma}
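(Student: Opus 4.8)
The plan is to establish the two assertions separately: the explicit form of the Hessian by direct differentiation, and the non-degeneracy of its determinant by reducing to Lemma~\ref{invertible}.

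First I would compute the second derivatives of the three dilogarithm terms in $S$. From the integral definition in Theorem~\ref{classical-dilog} one has $\mathrm{Li}_2'(z)=-\mathrm{Log}(1-z)/z$, so the chain rule gives $\frac{d}{dY}\mathrm{Li}_2(-e^{Y})=-\mathrm{Log}(1+e^{Y})$ and hence $\frac{d^2}{dY^2}\mathrm{Li}_2(-e^{Y})=-\frac{1}{1+e^{-Y}}$; the identical computation applies in the variables $Z$ and $W$. Because $Q$ is symmetric, the quadratic form $i\mathbf{y}^{\mathrm T}Q\mathbf{y}$ contributes $2iQ$ to the holomorphic Hessian, while the linear term $\mathbf{y}^{\mathrm T}\mathscr{W}$ contributes nothing. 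Since each transcendental term depends on a single coordinate, the three second derivatives populate only the diagonal, carrying the respective prefactors $i$, $-i$, $3i$ of $\mathrm{Li}_2(-e^{Y})$, $\mathrm{Li}_2(-e^{Z})$, $\mathrm{Li}_2(-e^{W})$ in $S$. Assembling these reproduces exactly the stated matrix $\mathrm{Hess}(S)(\mathbf{y})=2iQ+iD(\mathbf{y})$, where $D$ denotes the claimed diagonal matrix.

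For the non-vanishing of the determinant, the decisive step is to control the imaginary part of each diagonal entry on the strip product $\mathscr{U}$. Writing $y_j=u_j+iv_j$, a short calculation gives $\Im\bigl(1/(1+e^{-y_j})\bigr)=e^{-u_j}\sin v_j/\lvert 1+e^{-y_j}\rvert^{2}$, so the sign of this imaginary part is that of $\sin v_j$. On $\mathscr{U}$ we have $v_1,v_3\in(-\pi,0)$ and $v_2\in(0,\pi)$, whence $\sin v_1,\sin v_3<0$ and $\sin v_2>0$. Combined with the diagonal prefactors $-1$, $+1$, $-3$ that multiply $1/(1+e^{-y_j})$ in $D$, these sign relations show that every diagonal entry of $\Im D$ is strictly positive, so $\Im D$ is a real positive definite diagonal matrix.

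Finally I would split the Hessian into real and imaginary parts as $\mathrm{Hess}(S)=2iQ+iD=-\Im D+i\bigl(2Q+\Re D\bigr)$. Here $-\Im D$ is real symmetric and negative definite, while $2Q+\Re D$ is real symmetric; equivalently $-\mathrm{Hess}(S)=\Im D+i\bigl(-(2Q+\Re D)\bigr)$ has real part $\Im D$ positive definite. Lemma~\ref{invertible} then forces $-\mathrm{Hess}(S)$, and hence $\mathrm{Hess}(S)$, to be invertible, so $\det\mathrm{Hess}(S)(\mathbf{y})\neq 0$ for every $\mathbf{y}\in\mathscr{U}$. The only genuinely delicate point is the sign bookkeeping for $\Im D$: it is precisely the alternation of the strips defining $\mathscr{U}$ together with the signs of the prefactors $-1,+1,-3$ that makes all three imaginary parts positive and lets Lemma~\ref{invertible} apply; the differentiation itself is routine.
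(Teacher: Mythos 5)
Your proposal is correct and takes essentially the same route as the paper: direct differentiation using $\tfrac{d}{dy}\mathrm{Li}_2(-e^{y})=-\mathrm{Log}(1+e^{y})$ to obtain $\mathrm{Hess}(S)=2iQ+iD$, then the sign computation $\Im\bigl(1/(1+e^{-y_j})\bigr)=e^{-u_j}\sin v_j/\lvert 1+e^{-y_j}\rvert^{2}$ on the strips of $\mathscr{U}$ to see that $\Re\bigl(-\mathrm{Hess}(S)\bigr)=\Im D$ is positive definite, and finally Lemma~\ref{invertible} applied to $-\mathrm{Hess}(S)$. The paper's proof is the same argument, merely phrased as showing the diagonal entries of $\Re(\mathrm{Hess}(S))$ are negative rather than those of $\Im D$ positive.
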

\begin{proof}
The first equation is derived by the second derivative of $S$ and by the fact that for any $y\in\mathbb{R}\pm i(0,\pi)$
\begin{equation*}
\frac{\partial \mathrm{Li}_2(-e^y)}{\partial y}=-\mathrm{Log}(1+e^y)
\end{equation*}
holds.

Let $\mathbf{y}\in\mathscr{U}$, then $Q$ is a real symmetric matrix, so $\Im (\mathrm{Hess}(S)(\mathbf{y}))$ is a symmetric matrix.
Furthermore,
\begin{equation*}
\Re(\mathrm{Hess}(S)(\mathbf{y}))=\begin{pmatrix}
-\Im \left(\frac{-1}{1+e^{-y_1}}\right) & & \\
 & -\Im \left(\frac{1}{1+e^{-y_2}}\right) & \\
 &   &  -\Im \left(\frac{-3}{1+e^{-y_3}}\right) 
\end{pmatrix},
\end{equation*}
and all the diagonal components are negative.
In fact, let $y_1=a+bi\quad(a,b\in\mathbb{R})$, then $b\in(-\pi,0)$ and 
\begin{eqnarray*}
-\Im \left(\frac{-1}{1+e^{-y_1}}\right)&=&\Im \frac{1}{1+e^{-a}(\cos{b}-i\sin{b})}\\
&=&\frac{e^{-a}\sin{b}}{(1+e^{-a}\cos{b})^2+(e^{-a}\sin{b})^2}<0.
\end{eqnarray*} The same applies to the other diagonal components.
Therefore, since $-\mathrm{Hess}(S)(\mathbf{y})$ is invertible by Lemma \ref{invertible}, $\mathrm{Hess}(S)(\mathbf{y})$ is also invertible and the lemma is proven.
\end{proof}
On the other hand, for a tetrahedron $T$ with a shape structure, there exists the following diffeomorphism $\psi_T$.
\begin{equation*}
\psi_T:\mathbb{R}+i\mathbb{R}_{>0}\rightarrow\mathbb{R}-i\epsilon(T)(0,\pi),\quad z\mapsto\epsilon(T)(\mathrm{Log}(z)-i\pi)
\end{equation*}
The inverse map is given by
\begin{equation*}
\psi_T^{-1}:\mathbb{R}-i\epsilon(T)(0,\pi)\rightarrow \mathbb{R}+i\mathbb{R}_{>0},\quad y\mapsto -\exp(\epsilon(T)y).
\end{equation*}
\begin{lemma}\label{completeness-bijection}
Let us consider the following diffeomorphism
\begin{equation*}
\psi\coloneqq\left(\prod_{T\in\{T_1,T_2,T_3\}}\psi_T \right):(\mathbb{R}+i\mathbb{R}_{>0})^{3}\rightarrow \mathscr{U}.    
\end{equation*}
Here, we note that $\epsilon(T_1)=\epsilon(T_3)=1$, and $\epsilon(T_2)=-1$ according to Figure \ref{fig:ideal_triangulation}.

The map $\psi$ induces the bijective map between $\{\mathbf{z}=(z_1,z_2,z_3)\in(\mathbb{R}+i\mathbb{R}_{>0})^3\mid\eqref{e4-complex-gluing'}\land\eqref{e2-complex-gluing''}\land\eqref{e5-complex-gluing''}\}$ and $\{\mathbf{y}\in\mathscr{U}\mid\nabla S(\mathbf{y})=0\}$.
In particular, $S$ has the unique critical point on $\mathscr{U}$ corresponding to the unique complete hyperbolic structure $\mathbf{z}^0$ on the geometric ideal tetrahedral decomposition $X$. Here, we define $z_4^0=z_3^0$ and $z_5^0={z_3^{\prime\prime}}^0$. 
\end{lemma}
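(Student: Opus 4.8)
The plan is to translate the critical point condition $\nabla S(\mathbf{y})=0$ directly into the three reduced equations \eqref{e4-complex-gluing'}, \eqref{e2-complex-gluing''}, \eqref{e5-complex-gluing''} through the diffeomorphism $\psi$, and then to read off uniqueness from Theorem \ref{geometric}. First I would compute the holomorphic gradient of $S$ exactly as in the proof of Lemma \ref{hessian}, using $\frac{\partial}{\partial y}\mathrm{Li}_2(-e^y)=-\mathrm{Log}(1+e^y)$ and the symmetry of $Q$. Writing $\mathbf{y}=(y_1,y_2,y_3)=(Y,Z,W)$, this yields the system
\begin{align*}
i(2y_1-y_2)-\pi-i\mathrm{Log}(1+e^{y_1})&=0,\\
i(-y_1+y_3)+i\mathrm{Log}(1+e^{y_2})&=0,\\
i(y_2+y_3)+\pi-3i\mathrm{Log}(1+e^{y_3})&=0.
\end{align*}

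Next I would substitute the coordinate expressions coming from $\psi$. Since $\epsilon(T_1)=\epsilon(T_3)=1$ and $\epsilon(T_2)=-1$, the inverse of $\psi$ gives $z_1=-e^{y_1}$, $z_2=-e^{-y_2}$, $z_3=-e^{y_3}$, together with $\mathrm{Log}(z_1)=y_1+i\pi$, $\mathrm{Log}(z_2)=i\pi-y_2$, $\mathrm{Log}(z_3)=y_3+i\pi$, and the branch identities recorded in Subsection \ref{complex-gluing-equation}, namely $\mathrm{Log}(1+e^{y_1})=-\mathrm{Log}(z_1')$, $\mathrm{Log}(1+e^{y_2})=\mathrm{Log}(z_2'')$, $\mathrm{Log}(1+e^{y_3})=-\mathrm{Log}(z_3')$. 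As $\mathbf{y}$ ranges over $\mathscr{U}$ the corresponding $\mathbf{z}$ ranges over $(\mathbb{R}+i\mathbb{R}_{>0})^3$, so these identities hold on all of $\mathscr{U}$ with the principal branch and the substitution is valid pointwise, not merely at a solution. Dividing each equation by $i$ and collecting terms, the second equation becomes exactly $\mathrm{Log} z_3=\mathrm{Log} z_1-\mathrm{Log} z_2''$, that is \eqref{e2-complex-gluing''}. The first and third equations reduce to $2\mathrm{Log} z_1+\mathrm{Log} z_1'+\mathrm{Log} z_2=2i\pi$ and $\mathrm{Log} z_3+3\mathrm{Log} z_3'-\mathrm{Log} z_2=i\pi$; applying the triple relation $\mathrm{Log} z_i+\mathrm{Log} z_i'+\mathrm{Log} z_i''=i\pi$ and then eliminating $\mathrm{Log} z_3$ via \eqref{e2-complex-gluing''} turns the first into \eqref{e4-complex-gluing'} and the third into \eqref{e5-complex-gluing''}. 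Because $\psi$ is a diffeomorphism, it then restricts to a bijection between $\{\mathbf{z}\in(\mathbb{R}+i\mathbb{R}_{>0})^3\mid\eqref{e4-complex-gluing'}\land\eqref{e2-complex-gluing''}\land\eqref{e5-complex-gluing''}\}$ and $\{\mathbf{y}\in\mathscr{U}\mid\nabla S(\mathbf{y})=0\}$.

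For the final assertion I would recall that the analysis of the cusp triangulation already shows the full gluing and completeness system to be equivalent to \eqref{meridian-completeness}, \eqref{completeness-longitude'}, \eqref{e4-complex-gluing'}, \eqref{e2-complex-gluing''}, \eqref{e5-complex-gluing''}, where \eqref{meridian-completeness} and \eqref{completeness-longitude'} merely fix $z_4=z_3$ and $z_5=z_3''$. By Theorem \ref{geometric} the decomposition $X$ is geometric, so there is a unique $\mathbf{z}^0\in(\mathbb{R}+i\mathbb{R}_{>0})^5$ realizing the complete hyperbolic structure; equivalently the triple $(z_1^0,z_2^0,z_3^0)$ is the unique solution of the three reduced equations. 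Transporting through the bijection produces the unique critical point $\mathbf{y}^0=\psi(z_1^0,z_2^0,z_3^0)\in\mathscr{U}$, and setting $z_4^0=z_3^0$, $z_5^0={z_3''}^0$ recovers the full shape. The main obstacle is the bookkeeping of logarithm branches: one must check that each identity $\mathrm{Log}(1+e^{y_j})=-\mathrm{Log}(z_j')$ for $j=1,3$, the identity $\mathrm{Log}(1+e^{y_2})=\mathrm{Log}(z_2'')$, and the triple relation all hold globally on $\mathscr{U}$ on the principal branch, and that the recombination using \eqref{e2-complex-gluing''} is a genuine equivalence in both directions; once this is secured the remaining work is only the matching of linear coefficients.
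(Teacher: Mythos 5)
Your proposal is correct and follows essentially the same route as the paper: compute the holomorphic gradient of $S$, pull it back through $\psi$ using the branch identities of Subsection \ref{complex-gluing-equation}, recombine the three resulting equations into \eqref{e4-complex-gluing'}, \eqref{e2-complex-gluing''}, \eqref{e5-complex-gluing''}, and then deduce existence and uniqueness of the critical point from Theorem \ref{geometric}. Your elimination of the first and third equations using the second is exactly the paper's multiplication of $\nabla S(\psi(\mathbf{z}))$ by the invertible matrix $A$, so the two arguments coincide up to presentation.
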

\begin{proof}
For any $\mathbf{y}\in\mathscr{U}$,
\begin{equation*}
\nabla S(\mathbf{y})=\begin{pmatrix}
\partial_{y_1} S(\mathbf{y})\\
\partial_{y_2} S(\mathbf{y})\\
\partial_{y_3} S(\mathbf{y})
\end{pmatrix}
=2iQ\mathbf{y}+\mathscr{W}+i\begin{pmatrix}
-\mathrm{Log}(1+e^{y_1})\\
\mathrm{Log}(1+e^{y_2})\\
-3\mathrm{Log}(1+e^{y_3})
\end{pmatrix}.
\end{equation*}
Let $y_1=\psi_{T_1}(z_1)$, $y_3=\psi_{T_3}(z_3)$, then 
\begin{eqnarray*}
\mathrm{Log}(z_1)=y_1+i\pi,&\quad\mathrm{Log}(z_1^\prime)=-\mathrm{Log}(1+e^{y_1}),&\quad\mathrm{Log}(z_1^{\prime\prime})=\mathrm{Log}(1+e^{-y_1})\\
\mathrm{Log}(z_3)=y_3+i\pi,&\quad\mathrm{Log}(z_3^\prime)=-\mathrm{Log}(1+e^{y_3}),&\quad \mathrm{Log}(z_3^{\prime\prime})=\mathrm{Log}(1+e^{-y_3})
\end{eqnarray*}
and let $y_2=\psi_{T_2}(z_2)$, then 
\begin{equation*}
\mathrm{Log}(z_2)=-y_2+i\pi,\quad \mathrm{Log}(z_2^\prime)=-\mathrm{Log}(1+e^{-y_2}),\quad \mathrm{Log}(z_2^{\prime\prime})=\mathrm{Log}(1+e^{y_2}).
\end{equation*}
Since
\begin{eqnarray*}
\nabla S(\mathbf{y})&=i\begin{pmatrix}
2 & -1 & 0\\
-1 & 0 &1 \\
0 & 1 & 1
\end{pmatrix}\begin{pmatrix}
y_1\\
y_2\\
y_3
\end{pmatrix}
+\begin{pmatrix}
-\pi\\
0\\
\pi
\end{pmatrix}
+i\begin{pmatrix}
-\mathrm{Log}(1+e^{y_1})\\
\mathrm{Log}(1+e^{y_2})\\
-3\mathrm{Log}(1+e^{y_3})
\end{pmatrix},
\end{eqnarray*}

\begin{eqnarray*}
\nabla S(\psi(\mathbf{z}))&=i\begin{pmatrix}
2\mathrm{Log}z_1+\mathrm{Log}z_2+\mathrm{Log}z_1^\prime-2i\pi\\
-\mathrm{Log}z_1+\mathrm{Log}z_3+\mathrm{Log}z_2^{\prime\prime}\\
-\mathrm{Log}z_2+\mathrm{Log}z_3+3\mathrm{Log}z_3^\prime-i\pi
\end{pmatrix}.
\end{eqnarray*}
Let 
\begin{equation*}
A=\begin{pmatrix}
1 & 1 & \\
 & 1 & \\
 & -1 & 1
\end{pmatrix}.    
\end{equation*}
Since $|A|=1$, $A$ is a regular matrix and
\begin{eqnarray*}
A\cdot(\nabla S(\psi(\mathbf{z}))&=&i\begin{pmatrix}
\mathrm{Log}z_1+\mathrm{Log}z_1^\prime+\mathrm{Log}z_2+\mathrm{Log}z_2^{\prime\prime}+\mathrm{Log}z_3-2i\pi\\
-\mathrm{Log}z_1+\mathrm{Log}z_3+\mathrm{Log}z_2^{\prime\prime}\\
\mathrm{Log}z_1-\mathrm{Log}z_2-\mathrm{Log}z_2^{\prime\prime}+3\mathrm{Log}z_3^\prime-i\pi
\end{pmatrix}\\
&=&i\begin{pmatrix}
\mathrm{Log}z_3-\mathrm{Log}z_1^{\prime\prime}-\mathrm{Log}z_2^\prime\\
-\mathrm{Log}z_1+\mathrm{Log}z_3+\mathrm{Log}z_2^{\prime\prime}\\
\mathrm{Log}z_1+\mathrm{Log}z_2^\prime+3\mathrm{Log}z_3^\prime-2i\pi
\end{pmatrix}.
\end{eqnarray*}
In this deformation, we used the fact that $\mathrm{Log}z_j+\mathrm{Log}z_j^\prime+\mathrm{Log}z_j^{\prime\prime}=i\pi\quad(j=1,2,3)$.

Therefore, since satisfying $\mathbf{z}\in(\mathbb{R}+i\mathbb{R}_{>0})^3$ and $\eqref{e4-complex-gluing'}\land\eqref{e2-complex-gluing''}\land\eqref{e5-complex-gluing''}$ is equivalent to satisfying $\psi(\mathbf{z})\in\mathscr{U}$ and $\nabla S(\psi(\mathbf{z}))=\mathbf{0}$, the lemma is proven.
\end{proof}
Let
\begin{equation*}
\mathscr{Y}^0=\mathscr{Y}_{\alpha_X^0}=(\mathbb{R}-i\pi(1-2a_1^{0}))\times(\mathbb{R}+i\pi(1-2a_2^{0}))\times(\mathbb{R}-i\pi(1-2a_3^{0})),
\end{equation*}
where $\alpha_X^0\in\mathscr{A}_X$ is the complete hyperbolic angle structure corresponding to the complete hyperbolic complex shape structure $\mathbf{z}^0$.

Let $\mathbf{y}\in\mathscr{Y}^0$ be parametric represented by 
\begin{equation*}
\mathbf{y}=\begin{pmatrix}
y_1\\
y_2\\
y_3
\end{pmatrix}
=\begin{pmatrix}
x_1+id_1^0\\
x_2+id_2^0\\
x_3+id_3^0
\end{pmatrix}
=\mathbf{x}+i\mathbf{d}^0,
\end{equation*}
where $d_1^0=-\pi(1-2a_1^{0})<0$, $d_2^0=\pi(1-2a_2^{0})>0$, $d_3^0=-\pi(1-2a_3^{0})<0$. In addition, let $\mathbf{y}^0\coloneqq\psi(z_1^0,z_2^0,z_3^0)\in\mathscr{Y}^0$. $\mathscr{Y}^0=\mathbb{R}^3+i\mathbf{d}^0\subset \mathbb{C}^3$ is a $\mathbb{R}$-affine subspace of $\mathbb{C}^3$.
\subsection{The concavity of $\Re S$ on each contour $\mathscr{Y}_\alpha$}
\begin{lemma}\label{RS-concave}
For any $\alpha\in\mathscr{A}_X$, the function $\Re S:\mathscr{Y}_\alpha\rightarrow \mathbb{R}$ is strictly concave on $\mathscr{Y}_\alpha$.
\end{lemma}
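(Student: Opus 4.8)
The plan is to reduce the strict concavity of $\Re S$ along the flat contour $\mathscr{Y}_\alpha$ to the Hessian computation already carried out in Lemma \ref{hessian}. First I would record that for $\alpha\in\mathscr{A}_X$ the contour is a real affine subspace $\mathscr{Y}_\alpha=\mathbb{R}^3+i\mathbf{d}\subset\mathbb{C}^3$, with $\mathbf{d}=(-\pi(1-2a_1),\,\pi(1-2a_2),\,-\pi(1-2a_3))$, and parametrize it by $\mathbf{x}\in\mathbb{R}^3$ via $\mathbf{y}=\mathbf{x}+i\mathbf{d}$. Since the shape structure forces $a_i\in(0,\tfrac12)$, we have $1-2a_i\in(0,1)$, so the imaginary parts lie in $(-\pi,0)\times(0,\pi)\times(-\pi,0)$; hence $\mathscr{Y}_\alpha\subset\mathscr{U}$ and $S$ is holomorphic in a neighborhood of the contour.

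The central observation is that, because $S$ is holomorphic and $\partial y_j/\partial x_j=1$ along the contour, the real Hessian of $g(\mathbf{x}):=\Re S(\mathbf{x}+i\mathbf{d})$ in the real variables is exactly the real part of the holomorphic Hessian:
\[
\left(\frac{\partial^2 g}{\partial x_j\partial x_k}\right)_{j,k\in\{1,2,3\}}=\Re\big(\mathrm{Hess}(S)(\mathbf{y})\big).
\]
I would prove this bookkeeping identity by writing $\Re F=\tfrac12(F+\overline{F})$ for each holomorphic first derivative $F=\partial_{y_j}S$ and using $\partial_{x_k}F(\mathbf{x}+i\mathbf{d})=\partial_{y_k}F(\mathbf{x}+i\mathbf{d})$ together with $\partial_{x_k}\overline{F}=\overline{\partial_{y_k}F}$, so that $\partial_{x_k}\Re F=\Re(\partial_{y_k}\partial_{y_j}S)$.

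It then remains to invoke Lemma \ref{hessian}. Since $Q$ is real, $2iQ$ is purely imaginary and contributes nothing to the real part; thus $\Re(\mathrm{Hess}(S)(\mathbf{y}))$ is the diagonal matrix with entries $-\Im\big(\tfrac{-1}{1+e^{-y_1}}\big)$, $-\Im\big(\tfrac{1}{1+e^{-y_2}}\big)$, $-\Im\big(\tfrac{-3}{1+e^{-y_3}}\big)$, each of which was shown to be strictly negative for every $\mathbf{y}\in\mathscr{U}$. Because the parameter domain $\mathbb{R}^3$ is convex and the Hessian of $g$ is diagonal and negative definite at every point of $\mathscr{Y}_\alpha$, the function $\Re S$ is strictly concave there, as claimed.

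I do not anticipate a serious obstacle: all the analytic content is already packaged in Lemma \ref{hessian}, and the only genuinely new ingredient is the identity expressing the real Hessian of the restriction as $\Re(\mathrm{Hess}(S))$. The single point requiring care is verifying the inclusion $\mathscr{Y}_\alpha\subset\mathscr{U}$ so that the sign computations from Lemma \ref{hessian} apply verbatim, and this follows at once from the angle constraints $0<a_i<\tfrac12$.
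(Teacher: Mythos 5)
Your proposal is correct and follows essentially the same route as the paper: both reduce the claim to the computation in Lemma \ref{hessian}, identifying the real Hessian of $\Re S|_{\mathscr{Y}_\alpha}$ with $\Re(\mathrm{Hess}(S))$, which is diagonal with strictly negative entries since the imaginary parts of the contour lie in $(-\pi,0)\times(0,\pi)\times(-\pi,0)$. The only difference is that you spell out the bookkeeping identity and the inclusion $\mathscr{Y}_\alpha\subset\mathscr{U}$, which the paper asserts without proof.
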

\begin{proof}
Let $\alpha\in\mathscr{A}_X$. 
Since $\Re S:\mathscr{Y}_\alpha\rightarrow\mathbb{R}$ is twice continuously differentiable as a function of three real variables, 
it is sufficient to show that the Hessian $(\Re S|_{\mathscr{Y}_\alpha})^{\prime\prime}$ is negative definite for all $\mathbf{x}+i\mathbf{d}\in\mathscr{Y}_\alpha$.
Since this Hessian is equal to the real part of the holomorphic Hessian of $S$, by the calculation of Lemma \ref{hessian}, for all $\mathbf{x}\in\mathbb{R}^3$, the following holds.
\begin{eqnarray*}
(\Re S|_{\mathscr{Y}_\alpha})^{\prime\prime}(\mathbf{x}+i\mathbf{d})&=&\Re(\mathrm{Hess}(S)(\mathbf{x}+i\mathbf{d}))\\
&=&\begin{pmatrix}
-\Im\left(\frac{-1}{1+e^{-x_1-id_1}}\right) & & \\
 & -\Im\left(\frac{1}{1+e^{-x_2-id_2}}\right) & \\
 & & -\Im (\frac{-3}{1+e^{-x_3-id_3}})
\end{pmatrix}.
\end{eqnarray*}
Since $d_1,d_3\in(-\pi,0)\quad d_2\in(0,\pi)$, all diagonal components of this matrix are negative.
In fact, 
\begin{equation*}
-\Im\left(\frac{-1}{1+e^{-x_1-id_1}}\right)=\frac{e^{-x_1}\sin{d_1}}{(1+e^{-x_1}\cos{d_1})^2+(e^{-x_1}\sin{d_1})^2}<0,
\end{equation*}
\begin{equation*}
-\Im\left(\frac{1}{1+e^{-x_2-id_2}}\right)=\frac{-e^{-x_2}\sin{d_2}}{(1+e^{-x_2}\cos{d_2})^2+(e^{-x_2}\sin{d_2})^2}<0
\end{equation*}
and the same holds for the other diagonal component. 

Therefore, $(\Re S|_{\mathscr{Y}_\alpha})^{\prime\prime}$ is negative definite at any point on $\mathscr{Y}_\alpha$, and $\Re S|_{\mathscr{Y}_\alpha}$ is strictly concave.
\end{proof}
\subsection{Properties of $\Re S$ on the contour $\mathscr{Y}^0$ corresponding to the complete hyperbolic structure}
The following lemma holds as well as Lemma 7.6 in \cite{MR3945172}.
\begin{lemma}\label{strict-max-of-ReS}
The function $\Re S:\mathscr{Y}^0\rightarrow\mathbb{R}$ has a strictly global maximum at $\mathbf{y}^0\in\mathscr{Y}^0$.
\end{lemma}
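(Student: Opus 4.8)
The plan is to deduce the statement from two facts already established, rather than to analyse $\Re S$ from scratch. By Lemma \ref{completeness-bijection} the holomorphic potential $S$ has a \emph{unique} critical point on $\mathscr{U}$, namely $\mathbf{y}^0=\psi(z_1^0,z_2^0,z_3^0)$, corresponding to the complete hyperbolic structure; and by Lemma \ref{RS-concave}, applied with $\alpha=\alpha_X^0$, the restriction $\Re S|_{\mathscr{Y}^0}$ is strictly concave on the convex set $\mathscr{Y}^0$. Since a strictly concave $C^1$ function on a convex set attains a strict global maximum precisely at a critical point whenever one exists, it suffices to check that $\mathbf{y}^0$ lies in $\mathscr{Y}^0$ and is a critical point of the \emph{real} function $\Re S|_{\mathscr{Y}^0}$. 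I would first record that $\mathscr{Y}^0=\mathbb{R}^3+i\mathbf{d}^0\subset\mathscr{U}$, which holds because $a_i^0\in(0,\tfrac12)$ forces $d_1^0,d_3^0\in(-\pi,0)$ and $d_2^0\in(0,\pi)$, so that $\Re S|_{\mathscr{Y}^0}$ is well defined and Lemma \ref{RS-concave} applies; the containment $\mathbf{y}^0\in\mathscr{Y}^0$ is exactly the defining property of $\mathbf{y}^0$.

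Next I would verify that $\mathbf{y}^0$ is a critical point of the restriction. Parametrize $\mathscr{Y}^0$ by $\mathbf{x}\in\mathbb{R}^3$ via $\mathbf{y}=\mathbf{x}+i\mathbf{d}^0$. Because $S$ is holomorphic on $\mathscr{U}$, the Cauchy--Riemann equations give $\partial_{x_j}(\Re S)(\mathbf{x}+i\mathbf{d}^0)=\Re\bigl(\partial_{y_j}S\bigr)(\mathbf{x}+i\mathbf{d}^0)$ for each $j\in\{1,2,3\}$. Evaluating at $\mathbf{y}^0$ and using $\nabla S(\mathbf{y}^0)=\mathbf{0}$ from Lemma \ref{completeness-bijection}, I conclude that the real gradient of $\Re S|_{\mathscr{Y}^0}$ vanishes at $\mathbf{y}^0$, so $\mathbf{y}^0$ is a critical point of the restriction.

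Finally I would invoke strict concavity to upgrade this critical point to a strict global maximum. For a strictly concave $C^1$ function $f$ on the convex set $\mathscr{Y}^0$ the supporting-hyperplane inequality reads $f(\mathbf{y})<f(\mathbf{y}^0)+\langle\nabla f(\mathbf{y}^0),\mathbf{y}-\mathbf{y}^0\rangle$ for every $\mathbf{y}\neq\mathbf{y}^0$; since $\nabla f(\mathbf{y}^0)=\mathbf{0}$ this yields $\Re S(\mathbf{y})<\Re S(\mathbf{y}^0)$ for all $\mathbf{y}\in\mathscr{Y}^0\setminus\{\mathbf{y}^0\}$, which is precisely the asserted strict global maximum. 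I do not expect a substantive obstacle here: the genuinely hard content — the identification of the vanishing locus of $\nabla S$ with the hyperbolic gluing and completeness equations, and hence the uniqueness of the critical point — has already been carried out in Lemmas \ref{completeness-bijection} and \ref{hessian}. The only point requiring care is the Cauchy--Riemann bookkeeping that transports the holomorphic gradient $\nabla S$ to the real gradient of $\Re S$ along the affine contour $\mathscr{Y}^0$, after which the conclusion is the standard fact that a critical point of a strictly concave function is its unique global maximum.
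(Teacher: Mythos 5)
Your proposal is correct and follows essentially the same route as the paper's own proof: deduce that $\mathbf{y}^0$ is a critical point of $\Re S|_{\mathscr{Y}^0}$ from the vanishing of the holomorphic gradient (Lemma \ref{completeness-bijection}) via the Cauchy--Riemann relations, then invoke the strict concavity of $\Re S$ on the contour (Lemma \ref{RS-concave}) to conclude a strict global maximum. The extra details you supply --- the explicit check that $\mathbf{y}^0\in\mathscr{Y}^0\subset\mathscr{U}$ and the supporting-hyperplane inequality --- merely make explicit what the paper leaves implicit.
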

\begin{proof}
Since the holomorphic gradient of $S:\mathscr{U}\rightarrow \mathbb{C}$ vanishes at $\mathbf{y}^0$ by Lemma \ref{completeness-bijection}, the real gradient of $\Re S|_{\mathscr{Y}^0}$, which is the real part of the holomorphic gradient of $S$, also vanishes at $\mathbf{y}^0$. Therefore $\mathbf{y}^0$ is the critical point of $\Re S|_{\mathscr{Y}^0}$.
Furthermore, since $\Re S|_{\mathscr{Y}^0}$ is strictly concave by Lemma \ref{RS-concave}, $\Re S|_{\mathscr{Y}^0}$ takes the global maximum at $\mathbf{y}^0$.
\end{proof}
\begin{lemma}\label{rewrite-S}
The function $S:\mathscr{U}\rightarrow \mathbb{C}$ can be rewritten as
\begin{equation*}
S(\mathbf{y})=i\mathrm{Li}_2(-e^{y_1})+i\mathrm{Li}_2(-e^{-y_2})+3i\mathrm{Li}_2(-e^{y_3})+i\mathbf{y}^{\rm T}Q\mathbf{y}+i\frac{y_2^2}{2}+\mathbf{y}^{\rm T}\mathscr{W}+i\frac{\pi^2}{6}.
\end{equation*}
\end{lemma}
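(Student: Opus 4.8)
The plan is to observe that the two expressions for $S$ differ only in the term built from $Z=y_2$: the definition \eqref{definition-of-S} contributes $-i\mathrm{Li}_2(-e^{Z})$, whereas the asserted formula contains instead $i\mathrm{Li}_2(-e^{-y_2})+i\frac{y_2^2}{2}+i\frac{\pi^2}{6}$. All remaining summands, namely $i\mathbf{y}^{\mathrm{T}}Q\mathbf{y}$, $\mathbf{y}^{\mathrm{T}}\mathscr{W}$, $i\mathrm{Li}_2(-e^{Y})=i\mathrm{Li}_2(-e^{y_1})$, and $3i\mathrm{Li}_2(-e^{W})=3i\mathrm{Li}_2(-e^{y_3})$, are literally identical under the identification $y_1=Y$, $y_2=Z$, $y_3=W$. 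So it suffices to prove the single scalar identity
\[
-i\mathrm{Li}_2(-e^{y_2})=i\mathrm{Li}_2(-e^{-y_2})+i\frac{y_2^2}{2}+i\frac{\pi^2}{6}
\]
for every $y_2\in\mathbb{R}+i(0,\pi)$, which is the second factor of $\mathscr{U}$.

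I would derive this from the inversion relation of Theorem \ref{classical-dilog}(1) applied with $z=-e^{y_2}$, so that $\frac{1}{z}=-e^{-y_2}$ and $-z=e^{y_2}$, giving
\[
\mathrm{Li}_2(-e^{-y_2})=-\mathrm{Li}_2(-e^{y_2})-\frac{\pi^2}{6}-\frac{1}{2}\mathrm{Log}(e^{y_2})^2 .
\]
Before invoking it I must check its hypotheses: since $\Im(y_2)\in(0,\pi)$ one has $\sin(\Im y_2)>0$, so $-e^{y_2}$ has strictly negative imaginary part and $-e^{-y_2}$ has strictly positive imaginary part; in particular neither lies in $[1,\infty)$, so both $z$ and $1/z$ are in $\mathbb{C}\setminus[1,\infty)$ as required. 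Multiplying the displayed equation by $i$ and rearranging then yields exactly the scalar identity above, and substituting it back into \eqref{definition-of-S} produces the claimed rewriting.

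The one genuine subtlety—what I expect to be the only real point to watch—is the branch bookkeeping in the term $\mathrm{Log}(-z)^2$. I must verify that $\mathrm{Log}(e^{y_2})=y_2$ rather than $y_2+2\pi i k$ for some nonzero $k$. Writing $y_2=a+bi$ with $a\in\mathbb{R}$ and $b\in(0,\pi)$, the argument of $e^{y_2}=e^{a}(\cos b+i\sin b)$ equals $b$, which lies in $(0,\pi)\subset(-\pi,\pi]$, so by the chosen principal branch $\mathrm{Log}(z)=\log|z|+i\arg(z)$ with $\arg\in(-\pi,\pi]$ we indeed get $\mathrm{Log}(e^{y_2})=a+bi=y_2$, hence $\mathrm{Log}(-z)^2=y_2^2$ with no stray $2\pi i$ corrections. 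This is precisely where the hypothesis $\mathbf{y}\in\mathscr{U}$ (controlling the imaginary part of $y_2$) is used, and once it is settled the lemma follows by the elementary substitution described above.
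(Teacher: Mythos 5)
Your proposal is correct and follows exactly the paper's own route: the paper's proof is the one-line observation that the identity follows from the inversion relation of Theorem \ref{classical-dilog}(1) applied with $z=-e^{y_2}$, which is precisely your reduction. Your additional verifications (that $-e^{y_2}$ and $-e^{-y_2}$ avoid $[1,\infty)$, and that $\mathrm{Log}(e^{y_2})=y_2$ because $\Im(y_2)\in(0,\pi)$) are the details the paper leaves implicit, and they are checked correctly.
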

\begin{proof}
It is proven by transforming $S(\mathbf{y})$ using the inversion relation formula of Theorem \ref{classical-dilog} (1) where $z=-e^{y_2}$.
\end{proof}
\begin{lemma}\label{relation-between-S-and-Vol}
\begin{equation*}
\Re(S)(\mathbf{y}^0)=-\mathrm{Vol}(S^3\backslash 7_3)    
\end{equation*}
holds.
\end{lemma}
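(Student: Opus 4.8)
The plan is to start from the rewritten potential in Lemma \ref{rewrite-S} and evaluate it at the critical point $\mathbf{y}^0=\psi(z_1^0,z_2^0,z_3^0)$. Since $\epsilon(T_1)=\epsilon(T_3)=1$ and $\epsilon(T_2)=-1$, the relations of Subsection \ref{complex-gluing-equation} give $-e^{y_1^0}=z_1^0$, $-e^{-y_2^0}=z_2^0$, and $-e^{y_3^0}=z_3^0$, so the three dilogarithm terms collapse to $i\bigl(\mathrm{Li}_2(z_1^0)+\mathrm{Li}_2(z_2^0)+3\mathrm{Li}_2(z_3^0)\bigr)$. Taking real parts and using $\Re(i\,\mathrm{Li}_2(z))=-\Im(\mathrm{Li}_2(z))$ together with the definition $D(z)=\Im(\mathrm{Li}_2(z))+\arg(1-z)\log|z|$ of the Bloch--Wigner function, I would write
\[
\Re S(\mathbf{y}^0)=-\bigl(D(z_1^0)+D(z_2^0)+3D(z_3^0)\bigr)+C,
\]
where $C$ collects the corrections $\arg(1-z_j^0)\log|z_j^0|$ (with weight $3$ on the $j=3$ term) together with the real parts of $i(\mathbf{y}^0)^{\mathrm T}Q\mathbf{y}^0$, $i(y_2^0)^2/2$, and $(\mathbf{y}^0)^{\mathrm T}\mathscr{W}$; the summand $i\pi^2/6$ is imaginary and drops out.

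Next I would identify the Bloch--Wigner sum with the hyperbolic volume. By Lemma \ref{completeness-bijection} the complete structure satisfies $z_4^0=z_3^0$ and $z_5^0={z_3^{\prime\prime}}^0$, and by Theorem \ref{bloch-hyperbolic-volume} the invariance $D(z)=D(z')=D(z'')$ yields $D(z_5^0)=D(z_3^0)$. Hence $D(z_1^0)+D(z_2^0)+3D(z_3^0)=\sum_{i=1}^5 D(z_i^0)=\sum_{i=1}^5\mathrm{Vol}(T_i)=\mathrm{Vol}(S^3\backslash 7_3)$, and the lemma reduces to the claim $C=0$.

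The main obstacle is the vanishing of $C$. I would parametrize $\mathbf{y}^0=\mathbf{x}^0+i\mathbf{d}^0$ and note, using Subsection \ref{complex-gluing-equation}, that $\log|z_1^0|=x_1^0$, $\log|z_3^0|=x_3^0$, $\log|z_2^0|=-x_2^0$, while $\arg(1-z_1^0)=-2\pi c_1^0$, $\arg(1-z_3^0)=-2\pi c_3^0$, $\arg(1-z_2^0)=-2\pi b_2^0$, and $d_1^0=-\pi(1-2a_1^0)$, $d_2^0=\pi(1-2a_2^0)$, $d_3^0=-\pi(1-2a_3^0)$. The key structural observation is that every summand of $C$ carries a factor $x_j^0$, so $C=\sum_{j=1}^3\kappa_j\,x_j^0$ with each $\kappa_j$ a real-linear combination of dihedral angles and with no angle-only constant term remaining; it then suffices to show $\kappa_1=\kappa_2=\kappa_3=0$.

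Finally I would verify each coefficient vanishes by substituting these expressions (together with $a_i^0+b_i^0+c_i^0=\tfrac12$) and invoking the angle relations. I expect $\kappa_1=0$ to follow from \eqref{it2''}, $\kappa_2=0$ from \eqref{it4'} and \eqref{it2''}, and $\kappa_3=0$ to require in addition the completeness relations $z_4^0=z_3^0$ and $z_5^0={z_3^{\prime\prime}}^0$ --- which, translated into angles, read $c_4^0=c_3^0$ and $b_5^0=c_3^0$ --- combined with \eqref{it3'}. The genuinely delicate points are the correct tracking of the principal branches of $\arg(1-z_j^0)$ and $\log|z_j^0|$ and the faithful conversion of the two completeness equations into linear identities among the $a_i^0,b_i^0,c_i^0$; once these are in place the cancellation is a finite linear check.
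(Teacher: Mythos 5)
Your proposal is correct and follows essentially the same route as the paper's own proof: rewrite $S$ via Lemma \ref{rewrite-S}, express $\Re S(\mathbf{y}^0)$ as $-\bigl(D(z_1^0)+D(z_2^0)+3D(z_3^0)\bigr)$ plus a correction linear in $\mathbf{x}^0$, identify the Bloch--Wigner sum with $\mathrm{Vol}(S^3\backslash 7_3)$ using $z_4^0=z_3^0$, $z_5^0={z_3^{\prime\prime}}^0$, and kill the correction by showing its coefficient vector vanishes via \eqref{it2''}, \eqref{it4'}, and \eqref{it3'} combined with the angle translations of the completeness relations. Your stated branch values $\arg(1-z_j^0)$, $\log|z_j^0|$ and the attribution of each $\kappa_j=0$ to the specific angle equations agree with the paper's computation (where the coefficient vector is called $\mathcal{J}$), so what remains is exactly the finite linear check the paper carries out.
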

\begin{proof}
By Lemma \ref{rewrite-S}, for all $\mathbf{y}\in\mathscr{U}$,
\begin{equation*}
\Re(S)(\mathbf{y})=-\Im(\mathrm{Li}_2(-e^{y_1}))-\Im(\mathrm{Li}_2(-e^{y_2}))-3\Im(\mathrm{Li}_2(-e^{y_3}))-\Im\left(\mathbf{y}^{\rm T}Q\mathbf{y}+\frac{y_2^2}{2}\right)+
\Re(\mathbf{y}^{\rm T}\mathscr{W}) .
\end{equation*}

For $z\in\mathbb{R}+i\mathbb{R}_{>0}$, the hyperbolic volume of the ideal hyperbolic tetrahedron with the complex shape $z$ can be expressed using the Bloch-Wigner function $D$ as 
$D(z)=\Im(\mathrm{Li}_2(z))+\arg(1-z)\log|z|$.

For $z_1=-e^{y_1}=-e^{x_1+id_1}$, by the relations in Section \ref{complex-gluing-equation}, $\arg(1-z_1)\log|z_1|=-2\pi c_1 x_1$.

Similarly, for $z_2=-e^{-y_2}=-e^{-x_2-id_2}$, $\arg(1-z_2)\log|z_2|=2\pi b_2 x_2$ and for $z_3=-e^{y_3}=-e^{x_3+id_3}$, $\arg(1-z_3)\log|z_3|=-2\pi c_3 x_3$.

Therefore, for $\mathbf{y}\in\mathscr{U}$,
\begin{equation*}
\Re(S)(\mathbf{y})=-D(z_1)-2\pi c_1 x_1-D(z_2)+2\pi b_2 x_2-3D(z_3)-6\pi c_3 x_3-2\mathbf{x}^{\rm T}Q\mathbf{d}-x_2d_2+\mathbf{x}^{\rm T}\mathscr{W} . 
\end{equation*}

Since $\mathbf{z}^0$ is the complex shape structure corresponding to the complete hyperbolic structure on the ideal tetrahedral decomposition $X$, and  $z_3^0=z_4^0$, $z_5^0={z_3^{\prime\prime}}^0$,
using the properties of the Bloch-Wigner function described in Theorem \ref{bloch-hyperbolic-volume}, we can obtain
\begin{eqnarray*}
-\mathrm{Vol}(S^3\backslash 7_3)&=& -D(z_1^0)-D(z_2^0)-D(z_3^0)-D(z_4^0)-D(z_5^0)\\
&=&-D(z_1^0)-D(z_2^0)-D(z_3^0)-D(z_3^0)-D({z_3^{\prime\prime}}^0)\\
&=&-D(z_1^0)-D(z_2^0)-3D(z_3^0).
\end{eqnarray*}

Therefore, let
\begin{equation*}
\mathcal{J}:=\begin{pmatrix}
-2\pi c_1^{0}\\
2\pi b_2^{0}\\
-6\pi c_3^{0}\\
\end{pmatrix}
+\mathscr{W}
-2Q\mathbf{d}^0+\begin{pmatrix}
0\\
-d_2^0\\
0
\end{pmatrix},
\end{equation*}
then it is sufficient to prove $(\mathbf{x}^0)^{\rm T}\cdot\mathcal{J}=0$.

Since $d_2^0=\pi(1-2a_2^{0})=2\pi(b_2^{0}+c_2^{0})$, $2\pi b_2^{0}-d_2^0=-2\pi c_2^{0}$. Thus,
\begin{eqnarray*}
\mathcal{J}=-\begin{pmatrix}
2\pi c_1^{0}\\
2\pi c_2^{0}\\
6\pi c_3^{0}
\end{pmatrix}
+\begin{pmatrix}
-\pi\\
0\\
\pi
\end{pmatrix}+\begin{pmatrix}
-2 & 1 & \\
1 & 0 & -1 \\
 & -1 & -1 
\end{pmatrix}
\begin{pmatrix}
d_1^0\\
d_2^0\\
d_3^0
\end{pmatrix}.
\end{eqnarray*}

Since $d_1^0=-\pi(1-2a_1^{0})$, $d_2^0=\pi(1-2a_2^{0})$, $d_3^0=-\pi(1-2a_3^0)$,
\begin{eqnarray*}
\mathcal{J}&=&\begin{pmatrix}
-2\pi c_1^{0}-\pi +2\pi(1-2a_1^{0})+\pi-2\pi a_2^{0}\\
-2\pi c_2^{0}-\pi+2\pi a_1^{0}+\pi-2\pi a_3^{0}\\
-6\pi c_3^{0}+\pi-\pi+2\pi a_2^{0}+\pi-2\pi a_3^{0}
\end{pmatrix}
=\begin{pmatrix}
2\pi-2\pi c_1^{0}-4\pi a_1^{0}-2\pi a_2^{0}\\
-2\pi c_2^{0}+2\pi a_1^{0}-2\pi a_3^{0}\\
\pi-6\pi c_3^{0}+2\pi a_2^{0}-2\pi a_3^{0}
\end{pmatrix}\\
&=&\begin{pmatrix}
4\pi(b_1^0+c_1^{0})-2\pi c_1^{0}-2\pi a_2^{0}\\
-2\pi c_2^{0}+2\pi a_1^{0}-2\pi a_3^{0}\\
\pi-6\pi c_3^{0}+2\pi a_2^{0}-2\pi a_3^{0}
\end{pmatrix}
=\begin{pmatrix}
2\pi (2b_1^{0}+c_1^{0}-a_2^{0})\\
2\pi (-c_2^{0}+a_1^{0}-a_3^{0})\\
2\pi(\frac{1}{2}-3c_3^{0}+a_2^{0}-a_3^{0})

\end{pmatrix}.
\end{eqnarray*}

$z_3^0=z_4^0$ leads to $a_3^{0}=a_4^{0}$, $b_3^{ 0}=b_4^{ 0}$, $c_3^{ 0}=c_4^{ 0}$ and $z_5^0={z_3^{\prime\prime}}^0$ leads to $a_3^0=c_5^0$, $c_3^0=b_5^0$, $b_3^0=a_5^0$.

Therefore, by the constraints in the definition of $\mathscr{A}_X$,
\begin{equation*}
\left\{
\begin{aligned}
b_1^{0}+b_2^{0}&=a_3^{0}\\
a_2^{0}&=2b_1^{0}+c_1^{0}\\
a_2^{0}+b_3^{0}&=2c_3^{0}\\
\end{aligned}
\right.
\end{equation*}
holds.

Then \begin{eqnarray*}
-c_2^{0}+a_1^{0}-a_3^{0}&=&-c_2^{0}+a_1^{0}-(b_1^{0}+b_2^{0})\quad(\because b_1^{0}+b_2^{0}=a_3^{0})\\
&=&-\left(\frac{1}{2}-a_2^{0}\right)+a_1^{0}-b_1^{0}\\
&=& -\frac{1}{2}+2b_1^{0}+c_1^{0}+a_1^{0}-b_1^{0} \quad (\because a_2^{0}=2b_1^{0}+c_1^{0})\\
&=&0,
\end{eqnarray*}
\begin{eqnarray*}
\frac{1}{2}-3c_3^{0}+a_2^{0}-a_3^{0}&=&\frac{1}{2}-c_3^{0}-(a_2^{0}+b_3^{0})+a_2^{0}-a_3^{0}\quad (\because a_2^{0}+b_3^{0}=2c_3^{0})\\
&=&\frac{1}{2}-c_3^{0}-b_3^{0}-a_3^{0}=0,
\end{eqnarray*}
thus, $\mathcal{J}=0$. The above proves the lemma.
\end{proof}
Thereafter, let $r_0>0$ and $\gamma =\left\{\mathbf{y}\in\mathscr{Y}^0\mid\|\mathbf{y}-\mathbf{y}^0\|\le r_0\right\}$ be the $3$-dimensional ball containing $\mathbf{y}^0$ in $\mathscr{Y}^0$.

\subsection{Asymptotic expansion of the integral on $\mathscr{Y}^0$}
The following is proven in a similar way to the proof of Proposition 7.9. in \cite{MR3945172}.
\begin{prop}\label{rho}

There exists a constant $\rho\in\mathbb{C}^\ast$ such that the following holds.

If $\lambda\rightarrow \infty$,
\begin{equation*}
\int_{\gamma} d\mathbf{y} e^{\lambda S(\mathbf{y})}=\rho \lambda^{-\frac{3}{2}}\exp(\lambda S(\mathbf{y^0}))(1+o_{\lambda\rightarrow\infty}(1)).
\end{equation*}

In particular,
\begin{equation*}
\frac{1}{\lambda}\log\left|\int_{\gamma}d\mathbf{y}e^{\lambda S(\mathbf{y})}\right|\underset{\lambda\rightarrow\infty}{\longrightarrow} \Re S(\mathbf{y}^0)=-\mathrm{Vol}(S^3\backslash 7_3).
\end{equation*}
\end{prop}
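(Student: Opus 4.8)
The plan is to obtain the statement as a direct application of Fedoryuk's saddle point theorem (Theorem \ref{Fedoryuk}) with $m=3$, amplitude $f\equiv 1$, phase $S$, and contour $\gamma^m=\gamma$; the preceding lemmas have been arranged precisely so as to supply every hypothesis of that theorem, so the work here is essentially verification and bookkeeping. First I would record that $\gamma=\{\mathbf{y}\in\mathscr{Y}^0\mid\|\mathbf{y}-\mathbf{y}^0\|\le r_0\}$ is a compact, smooth, $3$-dimensional real submanifold of $\mathbb{C}^3$ (a round ball in the affine $3$-plane $\mathscr{Y}^0=\mathbb{R}^3+i\mathbf{d}^0$) whose boundary is a $2$-sphere, hence connected, and that $\mathbf{y}^0$ is its center, so an interior point of $\gamma$. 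Since $\mathbf{d}^0\in(-\pi,0)\times(0,\pi)\times(-\pi,0)$ lies strictly inside the defining strips of $\mathscr{U}$, the whole plane $\mathscr{Y}^0$, and in particular $\gamma$, is contained in the open set $\mathscr{U}\subset\mathbb{C}^3$, on which $S$ is holomorphic (there $-e^{y_j}\notin[1,\infty)$ for each coordinate, so every dilogarithm term is analytic); thus I may take $D=\mathscr{U}$ as the analytic domain required by Theorem \ref{Fedoryuk}.

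Next I would check the two analytic hypotheses on $S$ along $\gamma$. By Lemma \ref{strict-max-of-ReS} the function $\Re S\colon\mathscr{Y}^0\to\mathbb{R}$ attains a strict global maximum at $\mathbf{y}^0$; restricting to $\gamma\subset\mathscr{Y}^0$, the maximum of $\Re S$ over the compact set $\gamma$ is therefore attained only at the interior point $\mathbf{y}^0$, which is exactly the requirement that $\max_{\mathbf{y}\in\gamma}\Re S(\mathbf{y})$ be realized at a single interior point. That $\mathbf{y}^0$ is a simple saddle follows from Lemma \ref{completeness-bijection}, which gives $\nabla S(\mathbf{y}^0)=0$, together with Lemma \ref{hessian}, which gives $\det\mathrm{Hess}(S)(\mathbf{y}^0)\neq 0$. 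With all hypotheses in place, Theorem \ref{Fedoryuk} yields, as $\lambda\to\infty$,
\[
\int_{\gamma}d\mathbf{y}\,e^{\lambda S(\mathbf{y})}=\left(\frac{2\pi}{\lambda}\right)^{3/2}\frac{\exp(\lambda S(\mathbf{y}^0))}{\sqrt{\det\mathrm{Hess}(S)(\mathbf{y}^0)}}\bigl(1+o_{\lambda\to\infty}(1)\bigr),
\]
so setting $\rho\coloneqq(2\pi)^{3/2}\bigl(\det\mathrm{Hess}(S)(\mathbf{y}^0)\bigr)^{-1/2}$ gives the first displayed formula. Because the Hessian determinant is nonzero, $\rho\in\mathbb{C}^\ast$; only the branch of the square root, and hence the phase of $\rho$, depends on the orientation of $\gamma$, which is immaterial for the conclusion.

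For the final assertion I would take $\tfrac1\lambda\log|\cdot|$ of the asymptotic. The contributions of $\rho$ and of $\lambda^{-3/2}$ are $O(\lambda^{-1}\log\lambda)\to 0$, the factor $1+o(1)$ contributes $o(1)$, and $\tfrac1\lambda\log\bigl|\exp(\lambda S(\mathbf{y}^0))\bigr|=\Re S(\mathbf{y}^0)$; hence $\tfrac1\lambda\log\bigl|\int_\gamma d\mathbf{y}\,e^{\lambda S(\mathbf{y})}\bigr|\to\Re S(\mathbf{y}^0)$, and Lemma \ref{relation-between-S-and-Vol} identifies this limit with $-\mathrm{Vol}(S^3\backslash 7_3)$. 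I do not expect a serious obstacle within this proposition, since the analytic heavy lifting is already contained in Lemmas \ref{completeness-bijection}, \ref{hessian}, \ref{strict-max-of-ReS}, and \ref{relation-between-S-and-Vol}; the only point demanding care is confirming simultaneously that $\mathbf{y}^0$ is the unique maximizer of $\Re S$ on $\gamma$ and a nondegenerate saddle of the holomorphic phase, so that no boundary or competing-saddle contributions enter the Laplace expansion. The genuinely delicate global issue, namely that the integral over the unbounded contour $\mathscr{Y}^0$ rather than merely over the ball $\gamma$ is asymptotically governed by this same saddle, lies outside the scope of this statement and is handled by the separate tail estimates.
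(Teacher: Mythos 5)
Your proposal is correct and follows essentially the same route as the paper: both apply Fedoryuk's theorem (Theorem \ref{Fedoryuk}) with $m=3$, $f\equiv 1$, $D=\mathscr{U}$, verify the hypotheses via Lemmas \ref{completeness-bijection}, \ref{hessian}, and \ref{strict-max-of-ReS}, set $\rho=(2\pi)^{3/2}\bigl(\det\mathrm{Hess}(S)(\mathbf{y}^0)\bigr)^{-1/2}$, and deduce the limit statement from Lemma \ref{relation-between-S-and-Vol}. Your additional verifications (that $\gamma\subset\mathscr{U}$, that $S$ is holomorphic there, and the explicit $\tfrac{1}{\lambda}\log|\cdot|$ bookkeeping) are details the paper leaves implicit but introduce nothing different in substance.
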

\begin{proof}
We apply the saddle point method of Theorem \ref{Fedoryuk}. In Theorem \ref{Fedoryuk}, let $m=3$, $\gamma^3=\gamma$, $z=\mathbf{y}$, $z^0=\mathbf{y}^0$, $D=\mathscr{U}$, $f=1$ and $S$ is defined by the equation \eqref{definition-of-S}. We check the conditions for the application of Theorem \ref{Fedoryuk}.

1) $\mathbf{y}^0$ is the interior point of $\gamma$ by the configuration.

2) $\max_{\gamma} \Re S$ is attained only at $\mathbf{y}^0$ by Lemma \ref{strict-max-of-ReS}.

3) By Lemma \ref{completeness-bijection}, $\nabla S(\mathbf{y}^0)=0$.

4) By Lemma \ref{hessian}, $\det\mathrm{Hess}(\mathbf{y}^0)\neq 0$.

Therefore, by setting $\rho\coloneqq \frac{(2\pi)^\frac{3}{2}}{\sqrt{\det\mathrm{Hess}(S)(\mathbf{y}^0)}}\in\mathbb{C}^\ast$, the first statement is proven by Theorem \ref{Fedoryuk}.

The second statement follows by Lemma \ref{relation-between-S-and-Vol}.
\end{proof}
Next, we calculate the remainder term, that is, the upper bound of the integral on $\mathscr{Y}^0\backslash\gamma$, which excludes the compact ball from the unbounded integral domain.

The following is proven in the same way as the proof of Lemma 7.10 in \cite{MR3945172}.
\begin{lemma}\label{upper-bound}
There exist constants $A,B >0$ such that the following holds. For all $\lambda > A$,
\begin{equation*}
\left|\int_{\mathscr{Y}^0\backslash \gamma}d\mathbf{y}e^{\lambda S(\mathbf{y})}\right|\le Be^{\lambda M},
\end{equation*}
where $M\coloneqq \max_{\partial\gamma}\Re S$.
\end{lemma}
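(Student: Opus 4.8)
The plan is to reduce the bound to a one-dimensional concavity estimate along rays emanating from $\mathbf{y}^0$. Since $|e^{\lambda S(\mathbf{y})}| = e^{\lambda\Re S(\mathbf{y})}$, it suffices to bound $\int_{\mathscr{Y}^0\setminus\gamma} e^{\lambda\Re S(\mathbf{y})}\,d\mathbf{y}$, where $d\mathbf{y}$ is the Lebesgue measure on the real affine space $\mathscr{Y}^0=\mathbb{R}^3+i\mathbf{d}^0$. The two structural inputs are Lemma \ref{RS-concave}, which gives that $\Re S$ is strictly concave on $\mathscr{Y}^0$ with everywhere negative-definite Hessian, and Lemma \ref{strict-max-of-ReS}, which places its unique global maximum at the interior point $\mathbf{y}^0$ of $\gamma$. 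Together these yield $\Re S\le M$ on $\partial\gamma$ and, by concavity, $\Re S\le M$ on all of $\mathscr{Y}^0\setminus\gamma$; but this pointwise bound is not integrable, so the first task is to upgrade it to a uniform linear-decay estimate.

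The key step is to produce a constant $c>0$ with $\Re S(\mathbf{y})\le M-c(\|\mathbf{y}-\mathbf{y}^0\|-r_0)$ for every $\mathbf{y}\in\mathscr{Y}^0\setminus\gamma$. I would fix a unit vector $\mathbf{u}\in\mathbb{R}^3$ and set $g(t):=\Re S(\mathbf{y}^0+t\mathbf{u})$, a strictly concave function of $t\ge 0$ with $g'(0)=0$ since $\mathbf{y}^0$ is a critical point of $\Re S|_{\mathscr{Y}^0}$. Because $\Re(\mathrm{Hess}(S))$ is continuous and negative definite on the compact closed ball $\overline{\gamma}$ (Lemma \ref{hessian} and the computation in Lemma \ref{RS-concave}), its largest eigenvalue attains a maximum $-\kappa<0$ there, whence $g''(s)=\mathbf{u}^{\mathrm{T}}\Re(\mathrm{Hess}(S)(\mathbf{y}^0+s\mathbf{u}))\,\mathbf{u}\le -\kappa$ for $s\in[0,r_0]$. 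Integrating gives $g'(r_0)\le -\kappa r_0=:-c$, and concavity then yields $g(t)\le g(r_0)+g'(r_0)(t-r_0)\le M-c(t-r_0)$ for all $t\ge r_0$. Taking $t=\|\mathbf{y}-\mathbf{y}^0\|$ and letting $\mathbf{u}$ be the corresponding unit direction proves the claimed decay, uniformly in the direction.

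Finally I would integrate this estimate. Parametrizing $\mathscr{Y}^0\setminus\gamma$ by $\mathbf{x}=\mathbf{y}-\mathbf{y}^0\in\{\mathbf{x}\in\mathbb{R}^3:\|\mathbf{x}\|>r_0\}$ gives, for $\lambda\ge 1$,
\begin{equation*}
\left|\int_{\mathscr{Y}^0\setminus\gamma} e^{\lambda S(\mathbf{y})}\,d\mathbf{y}\right|\le e^{\lambda M}\int_{\|\mathbf{x}\|>r_0} e^{-\lambda c(\|\mathbf{x}\|-r_0)}\,d\mathbf{x}\le e^{\lambda M}\int_{\|\mathbf{x}\|>r_0} e^{-c(\|\mathbf{x}\|-r_0)}\,d\mathbf{x},
\end{equation*}
where the last inequality uses $\lambda\ge 1$ together with $\|\mathbf{x}\|\ge r_0$. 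The remaining integral is a finite constant $B:=4\pi e^{cr_0}\int_{r_0}^{\infty}t^2 e^{-ct}\,dt$ independent of $\lambda$, so the lemma follows with $A:=1$.

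The main obstacle is precisely the uniform linear-decay estimate of the second paragraph: the inequality $\Re S\le M$ supplied directly by concavity is far from integrable over the noncompact contour, and the argument hinges on converting strict concavity into a \emph{direction-independent} exponential rate. This is exactly where the compactness of $\overline{\gamma}$ and the everywhere-negative-definiteness of $\Re(\mathrm{Hess}(S))$ (rather than mere strict concavity) are essential, since one needs a uniform spectral gap $\kappa$ to extract the radial-derivative bound $g'(r_0)\le -c$.
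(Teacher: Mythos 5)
Your proof is correct, and its skeleton matches the paper's: both reduce the bound to one-dimensional concavity of $\Re S$ along rays emanating from $\mathbf{y}^0$ (via Lemmas \ref{RS-concave} and \ref{strict-max-of-ReS}) and then use compactness to make the exponential decay rate uniform in the direction. The implementations of the two key steps genuinely differ, though. For uniformity, the paper never invokes the Hessian: along each ray it works with the chord-slope function $N(r)=\frac{f(r)-f(r_0)}{r-r_0}$, which decreases by concavity, so that $f(r)\le f(r_0)+N(r_0)(r-r_0)$ with $N(r_0)=f'(r_0)<0$, and it obtains the uniform gap $0<m_1\le|f'(r_0)|$ from continuity of the directional derivative as a function on the compact sphere $\mathbb{S}^2$; you instead integrate the uniform spectral bound $g''\le-\kappa$ over $[0,r_0]$, using $g'(0)=0$ and compactness of $\overline{\gamma}$, to get the explicit rate $c=\kappa r_0$. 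Both are valid, but this means your concluding remark --- that negative-definiteness of $\Re(\mathrm{Hess}(S))$ rather than ``mere strict concavity'' is essential --- is overstated: the paper's sphere-compactness argument shows that strict concavity together with continuity of the gradient already suffices. For the tail integral, the paper keeps $\lambda$ in the exponent, evaluates $\int_{r_0}^\infty r^2 e^{\lambda N(r_0)(r-r_0)}dr$ by integration by parts, and bounds it by $3!\,r_0^3$ once $\lambda>\frac{1}{m_1 r_0}$ (whence $A=\frac{1}{m_1 r_0}$), whereas you dominate $e^{-\lambda c(\|\mathbf{x}\|-r_0)}$ by the $\lambda$-independent integrable function $e^{-c(\|\mathbf{x}\|-r_0)}$ for $\lambda\ge 1$, getting $A=1$ with no computation. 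Your route buys cleaner constants and a reusable linear-decay estimate for $\Re S$; the paper's route uses less structure (no quantitative Hessian bound) at the cost of an explicit integration and a $\lambda$-threshold depending on $m_1$ and $r_0$.
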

\begin{proof}
First, we perform a variable transformation to $3$-dimensional spherical coordinates.

\begin{equation*}
\mathbf{y}\in\mathscr{Y}^0\backslash\gamma\iff r\Vec{\eta}\in(r_0,\infty)\times\mathbb{S}^{2}
\end{equation*}

Then for all $\lambda>0$, 
\begin{equation*}
\int_{\mathscr{Y}^0\backslash\gamma}d\mathbf{y}e^{\lambda S(\mathbf{y})}=\int_{\mathbb{S}^2}d \mathrm{vol}_{\mathbb{S}^2}\int_{r_0}^\infty r^2e^{\lambda S(r\Vec{\eta})}dr.
\end{equation*}

Therefore, for all $\lambda >0$,
\begin{equation*}
\left|\int_{\mathscr{Y}^0\backslash\gamma}d\mathbf{y}e^{\lambda S(\mathbf{y})}\right|\le 4\pi \sup_{\Vec{\eta}\in\mathbb{S}^2}\int_{r_0}^\infty 
r^2 e^{\lambda \Re(S)(r\Vec{\eta})}dr. 
\end{equation*}
Let $\Vec{\eta}\in\mathbb{S}^2$ and $f=f_{\Vec{\eta}}\coloneqq (r\mapsto \Re(S)(r\Vec{\eta}))$ be the function which restricts $\Re (S)$ on $(r_0,\infty)\Vec{\eta}$.
We find an upper bound for $\int_{r_0}^\infty r^2 e^{\lambda f(r)}$.
By Lemma \ref{RS-concave}, since $\Re(S)$ is strictly concave and $f$ is its restriction on the convex set, $f$ is also strictly concave on $(r_0,\infty)$ (even on $[0,\infty)$).

The function which represents the slope $N:[r_0,\infty)\rightarrow \mathbb{R}$ is defined as $N(r)\coloneqq \frac{f(r)-f(r_0)}{r-r_0}$ for $r>r_0$, $N(r_0):=f^\prime(r_0)$.

The function $N$ is of class $C^1$ and satisfies $N^\prime(r):=\frac{f^\prime(r)-N(r)}{r-r_0}$ for $r>r_0$.
Since $f$ is strictly concave, for any $r\in(r_0,\infty)$, 
$f^\prime(r)<N(r)$, and $N$ decreases on this interval.
Therefore,
\begin{equation*}
\int_{r_0}^\infty r^2 e^{\lambda f(r)}dr=e^{\lambda f(r_0)}\int_{r_0}^\infty r^2e^{\lambda N(r)(r-r_0)}dr\le e^{\lambda f(r_0)}\int_{r_0}^\infty r^2 e^{\lambda N(r_0)(r-r_0)}dr.
\end{equation*}
Note that $N(r_0)=f^\prime(r_0)<0$ by Lemma \ref{RS-concave}, \ref{strict-max-of-ReS}. 

By repeating the integration by parts, we obtain 
\begin{equation*}
\int_{r_0}^\infty r^2 e^{\lambda N(r_0)(r-r_0)}dr=\frac{1}{(\lambda N(r_0))^3}\sum_{k=0}^{2}(-1)^{1-k}\frac{2!}{k!}(\lambda N(r_0))^kr_0^k.
\end{equation*}

Furthermore, since $N(r_0)=f^\prime(r_0)=\langle(\nabla \Re(S)(r_0\Vec{\eta}));\Vec{\eta}\rangle$ and $S$ is holomorphic, $(\Vec{\eta}\mapsto N(r_0)=f^\prime_{\Vec{\eta}}(r_0))$ is a continuous map from $\mathbb{S}^2$ to $\mathbb{R}_{<0}$. Thus, there exist some constants $m_1, m_2>0$ such that  
for all $\Vec{\eta}\in\mathbb{S}^2$, $0<m_1\le |N(r_0)|\le m_2$.

Therefore, the following inequality holds for all $\lambda > \frac{1}{m_1r_0}$.
\begin{eqnarray*}
\int_{r_0}^\infty r^{2}e^{\lambda f(r)}dr &\le& e^{\lambda f(r_0)}\frac{1}{(\lambda N(r_0))^3}\sum_{k=0}^{2}(-1)^{1-k}\frac{2!}{k!}(\lambda N(r_0))^kr_0^k\\
&\le & e^{\lambda f(r_0)}\left|\frac{1}{(\lambda N(r_0))^3}\sum_{k=0}^{2}(-1)^{1-k}\frac{2!}{k!}(\lambda N(r_0))^kr_0^k\right|\\
&\le & e^{\lambda f(r_0)}\frac{1}{|\lambda N(r_0)|^3}\sum_{k=0}^{2}2!|\lambda N(r_0)r_0|^k\\
&\le & e^{\lambda f(r_0)}\frac{3!|\lambda N(r_0)r_0|^3}{|\lambda N(r_0)|^3}=3!r_0^3e^{\lambda f(r_0)}.
\end{eqnarray*}
For all $\Vec{\eta}\in\mathbb{S}^2$, there is a constant $C>0$ independent of $\lambda$ and $\Vec{\eta}$ such that for all $\lambda>\frac{1}{m_1r_0}$, $\int_{r_0}^\infty r^2 e^{\lambda f_{\Vec{\eta}}(r)}dr\le Ce^{\lambda f_{\Vec{\eta}}(r_0)}$. Thus, let $M\coloneqq \max_{\partial\gamma}\Re S$, then for all $\lambda >\frac{1}{m_1r_0}$,
\begin{equation*}
\left|\int_{\mathscr{Y}^0\backslash \gamma}d\mathbf{y}e^{\lambda S(\mathbf(y)}\right|\le 4\pi \sup_{\Vec{\eta}\in\mathbb{S}^2}\int_{r_ 0}^\infty r^2e^{\lambda \Re(S)(r\Vec{\eta})}dr\le 4\pi Ce^{\lambda M}
\end{equation*}
holds.
Thus, by setting $A:=\frac{1}{m_1r_0}$ and $B:=4\pi C$, the lemma is proven. 
\end{proof}
The following is proven in a similar way to the proof of Proposition 7.11 in \cite{MR3945172}.
\begin{prop}
For $\rho\in\mathbb{C}^\ast$, which is identical to the one in Proposition \ref{rho},
if $\lambda\rightarrow\infty$, 
\begin{equation*}
\int_{\mathscr{Y}^0}d\mathbf{y} e^{\lambda S(\mathbf{y})}=\rho\lambda^{-\frac{3}{2}}\exp{(\lambda S(\mathbf{y}^0))}(1+o_{\lambda\rightarrow\infty}(1)).
\end{equation*}
In particular, 
\begin{equation*}
\frac{1}{\lambda}\log{\left|\int_{\mathscr{Y}^0}d\mathbf{y} e^{\lambda S(\mathbf{y})}\right|}\underset{\lambda\rightarrow\infty}{\longrightarrow}\Re S(\mathbf{y}^0)=-\mathrm{Vol}(S^3\backslash 7_3).
\end{equation*}
\end{prop}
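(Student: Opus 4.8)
The plan is to split the integral over the unbounded contour $\mathscr{Y}^0$ into the contribution from the compact ball $\gamma$, which contains the saddle point $\mathbf{y}^0$, and the remainder over $\mathscr{Y}^0\setminus\gamma$, and then to show that the remainder is negligible compared with the main term. The contribution of $\gamma$ is already controlled by Proposition \ref{rho}, which gives
\[
\int_{\gamma} d\mathbf{y}\, e^{\lambda S(\mathbf{y})}=\rho\,\lambda^{-\frac{3}{2}}\exp(\lambda S(\mathbf{y}^0))\bigl(1+o_{\lambda\to\infty}(1)\bigr),
\]
so the whole task reduces to estimating the tail over $\mathscr{Y}^0\setminus\gamma$.

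First I would record the strict inequality $M<\Re S(\mathbf{y}^0)$, where $M=\max_{\partial\gamma}\Re S$. Since $\mathbf{y}^0$ lies in the interior of $\gamma$ by construction and, by Lemma \ref{strict-max-of-ReS}, the function $\Re S:\mathscr{Y}^0\to\mathbb{R}$ attains a strict global maximum at $\mathbf{y}^0$, every point of the sphere $\partial\gamma$ has a strictly smaller value of $\Re S$; compactness of $\partial\gamma$ then yields $M<\Re S(\mathbf{y}^0)$.

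Next I would combine this with the tail bound of Lemma \ref{upper-bound}: for all $\lambda>A$,
\[
\left|\int_{\mathscr{Y}^0\setminus\gamma} d\mathbf{y}\, e^{\lambda S(\mathbf{y})}\right|\le B\,e^{\lambda M}.
\]
Dividing by the modulus of the main term $|\rho|\,\lambda^{-3/2}e^{\lambda\Re S(\mathbf{y}^0)}$ gives a quantity bounded by $\tfrac{B}{|\rho|}\lambda^{3/2}e^{\lambda(M-\Re S(\mathbf{y}^0))}$, and since $M-\Re S(\mathbf{y}^0)<0$ the exponential decay dominates the polynomial factor, so this ratio is $o_{\lambda\to\infty}(1)$. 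Writing $\int_{\mathscr{Y}^0}=\int_{\gamma}+\int_{\mathscr{Y}^0\setminus\gamma}$ and absorbing the tail into the error term of Proposition \ref{rho} then yields the claimed asymptotic formula.

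Finally, for the logarithmic statement I would take $\tfrac{1}{\lambda}\log|\cdot|$ of the asymptotic formula: the contributions of $\log|\rho|$, of $-\tfrac{3}{2}\log\lambda$, and of $\log|1+o(1)|$ all vanish after division by $\lambda$ as $\lambda\to\infty$, leaving $\Re S(\mathbf{y}^0)$, which equals $-\mathrm{Vol}(S^3\backslash 7_3)$ by Lemma \ref{relation-between-S-and-Vol}. This last part is entirely routine; the only genuinely substantive ingredients, namely the saddle point expansion on $\gamma$, the strict maximality at $\mathbf{y}^0$, and the tail estimate, are already supplied by the preceding results, so there is no real obstacle beyond carefully verifying the strict inequality $M<\Re S(\mathbf{y}^0)$ that makes the tail subdominant.
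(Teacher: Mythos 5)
Your proposal is correct and follows essentially the same route as the paper: split $\int_{\mathscr{Y}^0}=\int_{\gamma}+\int_{\mathscr{Y}^0\backslash\gamma}$, use Proposition \ref{rho} for the ball, Lemma \ref{upper-bound} together with the strict inequality $M<\Re S(\mathbf{y}^0)$ (from Lemma \ref{strict-max-of-ReS}) to show the tail is $o\bigl(\lambda^{-3/2}e^{\lambda S(\mathbf{y}^0)}\bigr)$, and then deduce the logarithmic limit via Lemma \ref{relation-between-S-and-Vol}. Your added details (compactness of $\partial\gamma$ for the strict inequality, and the explicit ratio estimate) only make explicit what the paper leaves implicit.
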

\begin{proof}
As in the proof of Proposition \ref{rho}, the second statement follows by the first.

We prove the first statement.

By Lemma \ref{upper-bound}, for all $\lambda > A$,
$\left|\int_{\mathscr{Y}^0\backslash\gamma}d\mathbf{y}e^{\lambda S(\mathbf{y})}\right|\le Be^{\lambda M}$
holds. 

By Lemma \ref{RS-concave}, \ref{strict-max-of-ReS}, since $M<\Re (S)(\mathbf{y}^0)$,
\begin{equation*}
\int_{\mathscr{Y}^0\backslash \gamma}d\mathbf{y} e^{\lambda S(\mathbf{y})}=o_{\lambda\rightarrow\infty}(\lambda^{-\frac{3}{2}}\exp{(\lambda S(\mathbf{y}^0))}).   
\end{equation*}

Therefore, the first statement follows by Proposition \ref{rho} and the following equation.
\begin{equation*}
\int_{\mathscr{Y}^0} d\mathbf{y}e^{\lambda S(\mathbf{y})}=\int_{\gamma} d\mathbf{y}e^{\lambda S(\mathbf{y})}+\int_{\mathscr{Y}^0\backslash \gamma}d\mathbf{y}e^{\lambda S(\mathbf{y})}.
\end{equation*}
\end{proof}
\subsection{Extension of the asymptotic expansion to the quantum dilogarithm}
For $\mathsf{b}>0$, a new potential function $S_{\mathsf{b}}:\mathscr{U}\rightarrow\mathbb{C}$, which is a holomorphic function of three complex variables, is defined by
\begin{equation*}
S_{\sf b}(\mathbf{y}):=i\mathbf{y}^{\rm T}Q\mathbf{y}+\mathbf{y}^{\rm T}\mathscr{W}+2\pi\mathsf{b}^2\mathrm{Log}\left(\frac{\Phi_{\mathsf{b}}(\frac{y_2}{2\pi\mathsf{b}})}{\Phi_{\sf b}(\frac{y_1}{2\pi\mathsf{b}})\Phi_{\mathsf{b}}(\frac{y_3}{2\pi\mathsf{b}})^3}\right).
\end{equation*}

\begin{lemma}[ \cite{MR3945172}, Lemma 7.12 ]
For all $\mathsf{b}\in(0,1)$, for all $y\in\mathbb{R}+i(0,\pi)$,
\begin{equation*}
\Re\left(\mathrm{Log}\left(\Phi_{\sf b}\left(\frac{-\overline{y}}{2\pi\mathsf{b}}\right)\right)-\left(\frac{-i}{2\pi\mathsf{b}^2}\mathrm{Li}_2(-e^{-\overline{y}})\right)\right)
=\Re\left(\mathrm{Log}\left(\Phi_{\sf b}\left(\frac{y}{2\pi\mathsf{b}}\right)\right)-\left(\frac{-i}{2\pi\mathsf{b}^2}\mathrm{Li}_2(-e^y)\right)\right).
\end{equation*}
\end{lemma}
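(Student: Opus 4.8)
The plan is to reduce both sides to one common expression, built from $\mathrm{Log}\,\Phi_{\mathsf{b}}$ and $\mathrm{Li}_2$ evaluated at the single argument $y$, by combining the two functional identities for $\Phi_{\mathsf{b}}$ in Proposition \ref{property-of-Phi_b} with the inversion relation for $\mathrm{Li}_2$ in Theorem \ref{classical-dilog}. Throughout I write $w\coloneqq\frac{y}{2\pi\mathsf{b}}$, so that $\frac{-\overline{y}}{2\pi\mathsf{b}}=-\overline{w}$. Since $\Im y\in(0,\pi)$ and $\mathsf{b}>0$, each of $w,-w,\overline{w},-\overline{w}$ lies in the strip $\mathbb{R}+i\left(\frac{-1}{2\sqrt{\hbar}},\frac{1}{2\sqrt{\hbar}}\right)$ on which Proposition \ref{property-of-Phi_b} (1), (2) are valid (because $\Im w<\frac{1}{2\mathsf{b}}\le\frac{\mathsf{b}+\mathsf{b}^{-1}}{2}=\frac{1}{2\sqrt{\hbar}}$), and both $-e^{\pm y}$ together with their complex conjugates avoid the cut $[1,\infty)$ of $\mathrm{Li}_2$, since $\Im(-e^{\pm y})=\mp e^{\pm\Re y}\sin(\Im y)\neq 0$.

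First I would treat the $\Phi_{\mathsf{b}}$-term. Applying Proposition \ref{property-of-Phi_b}(2) to $-w$ gives $\Phi_{\mathsf{b}}(-\overline{w})=1/\overline{\Phi_{\mathsf{b}}(-w)}$, and Proposition \ref{property-of-Phi_b}(1) expresses $\Phi_{\mathsf{b}}(-w)$ through $\Phi_{\mathsf{b}}(w)$ up to the factor $e^{i\frac{\pi}{12}(\mathsf{b}^2+\mathsf{b}^{-2})}e^{i\pi w^2}$. Taking $\mathrm{Log}$ and then the real part, the purely imaginary constant $i\frac{\pi}{12}(\mathsf{b}^2+\mathsf{b}^{-2})$ and every $2\pi i$ branch ambiguity drop out, so that I expect to obtain
\[
\Re\,\mathrm{Log}\,\Phi_{\mathsf{b}}(-\overline{w})=\Re\,\mathrm{Log}\,\Phi_{\mathsf{b}}(w)+\pi\Im(w^2).
\]
The decisive simplification is that conjugation and the $\mathrm{Log}$ branch cut affect only imaginary parts, so after applying $\Re$ only $\log|\cdot|$ survives and no branch needs to be tracked.

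Next I would treat the $\mathrm{Li}_2$-term. Schwarz reflection $\mathrm{Li}_2(\overline{z})=\overline{\mathrm{Li}_2(z)}$ (valid off $[1,\infty)$) applied to $z=-e^{-y}$ converts $\mathrm{Li}_2(-e^{-\overline{y}})$ into $\overline{\mathrm{Li}_2(-e^{-y})}$, and the inversion relation of Theorem \ref{classical-dilog}(1) with $z=-e^{y}$, for which $1/z=-e^{-y}$ and $\mathrm{Log}(-z)=\mathrm{Log}(e^{y})=y$ because $\Im y\in(0,\pi)\subset(-\pi,\pi]$, yields
\[
\mathrm{Li}_2(-e^{-y})=-\mathrm{Li}_2(-e^{y})-\frac{\pi^2}{6}-\frac{y^2}{2}.
\]
Using $\Re(-i\overline{\zeta})=-\Im\zeta$ and $\Re(-i\zeta)=\Im\zeta$, this shows that the real parts of the (subtracted) dilogarithm contributions to the two sides differ by exactly $\frac{1}{4\pi\mathsf{b}^2}\Im(y^2)$.

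Finally I would assemble the two computations. Because $w=\frac{y}{2\pi\mathsf{b}}$ gives $\pi\Im(w^2)=\frac{1}{4\pi\mathsf{b}^2}\Im(y^2)$, the quadratic surplus $\pi\Im(w^2)$ produced by the $\Phi_{\mathsf{b}}$-term is cancelled precisely by the quadratic surplus $\frac{1}{4\pi\mathsf{b}^2}\Im(y^2)$ produced by the inversion relation, leaving both sides equal to $\Re\,\mathrm{Log}\,\Phi_{\mathsf{b}}(w)-\frac{1}{2\pi\mathsf{b}^2}\Im\bigl(\mathrm{Li}_2(-e^{y})\bigr)$. The genuine content of the lemma is exactly this cancellation of the $\Im(y^2)$ terms; the only place where an error could creep in is the bookkeeping of signs and of real and imaginary parts under conjugation, rather than any analytic subtlety, since taking $\Re$ throughout disposes of all branch-cut and additive-constant ambiguities at once. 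The argument parallels the proof of Lemma 7.12 in \cite{MR3945172}.
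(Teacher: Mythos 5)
Your proof is correct: the relation $\Re\,\mathrm{Log}\,\Phi_{\mathsf{b}}(-\overline{w})=\Re\,\mathrm{Log}\,\Phi_{\mathsf{b}}(w)+\pi\Im(w^2)$ does follow from Proposition \ref{property-of-Phi_b} (1)--(2) since taking real parts kills all unimodular factors and branch ambiguities, and with $w=\frac{y}{2\pi\mathsf{b}}$ the surplus $\pi\Im(w^2)=\frac{1}{4\pi\mathsf{b}^2}\Im(y^2)$ exactly cancels the $-\frac{1}{4\pi\mathsf{b}^2}\Im(y^2)$ coming from the $\mathrm{Li}_2$ inversion relation together with Schwarz reflection, so both sides reduce to $\Re\,\mathrm{Log}\,\Phi_{\mathsf{b}}(w)-\frac{1}{2\pi\mathsf{b}^2}\Im\bigl(\mathrm{Li}_2(-e^{y})\bigr)$. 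Note that this paper states the lemma without proof, importing it as Lemma 7.12 of \cite{MR3945172}; your functional-equation argument is essentially the one given in that reference, so there is nothing to compare beyond noting that your write-up is self-contained where the paper's is not.
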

\begin{lemma}[ \cite{MR3945172}, Lemma 7.13 ]\label{upperbound-of-dif}
For all $\delta>0$, there exists a constant $B_{\delta}>0$ such that the following holds.
For all $\mathsf{b}\in(0,1)$, for all $y\in\mathbb{R}+i[\delta,\pi-\delta]$,
\begin{equation*}
\left|\Re\left(\mathrm{Log}\left(\Phi_{\mathsf{b}}\left(\frac{y}{2\pi\mathsf{b}}\right)\right)-\left(\frac{-i}{2\pi\mathsf{b}^2}\mathrm{Li}_2(-e^y)\right)\right)\right|\le B_{\delta}\mathsf{b}^2.
\end{equation*}

Furthermore, $B_{\delta}$ is expressed in the form $B_\delta=\frac{C}{\delta}+C^\prime$ for $C$, $C^\prime>0$.
\end{lemma}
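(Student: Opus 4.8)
The plan is to compare the two terms at the level of their integral representations and then to bound the resulting single integral directly. Faddeev's quantum dilogarithm is given by $\mathrm{Log}\,\Phi_{\mathsf{b}}(z)=\int_{C}\frac{e^{-2izw}}{4\sinh(w\mathsf{b})\sinh(w/\mathsf{b})\,w}\,dw$, and by Theorem \ref{classical-dilog}(2) the classical dilogarithm has the parallel representation $\frac{-i}{2\pi}\mathrm{Li}_2(-e^y)=\int_{\mathbb{R}+i0^+}\frac{e^{-iyv/\pi}}{4v^2\sinh v}\,dv$. First I would set $z=\frac{y}{2\pi\mathsf{b}}$ in the first integral and rescale the contour variable by $w=\mathsf{b}u$; the exponent becomes $e^{-iyu/\pi}$ and the integral turns into $\int_{C'}\frac{e^{-iyu/\pi}}{4u\sinh u\,\sinh(\mathsf{b}^2u)}\,du$, whose leading part as $\mathsf{b}\to 0$ (replacing $\sinh(\mathsf{b}^2u)$ by $\mathsf{b}^2u$) is exactly $\frac{1}{\mathsf{b}^2}\cdot\frac{-i}{2\pi}\mathrm{Li}_2(-e^y)$. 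Two preliminary observations make this rigorous: the representation of $\Phi_{\mathsf{b}}$ is valid here because $\Im z=\frac{\eta}{2\pi\mathsf{b}}$ with $\eta=\Im y\in[\delta,\pi-\delta]$ satisfies $\frac{\eta}{\pi}<1+\mathsf{b}^2=\mathsf{b}(\mathsf{b}+\mathsf{b}^{-1})$; and since the statement concerns only real parts, the branch ambiguity of $\mathrm{Log}\,\Phi_{\mathsf{b}}$ is irrelevant, as $\Re\,\mathrm{Log}\,\Phi_{\mathsf{b}}=\log|\Phi_{\mathsf{b}}|$ coincides with the real part of the integral.

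After checking that both contours may be taken to be $\mathbb{R}$ deformed into the upper half-plane near the origin (the extra poles of $1/\sinh(\mathsf{b}^2u)$ lie at $u=ik\pi/\mathsf{b}^2$, far from the real axis), I would write the difference as the single integral
\begin{equation*}
\mathrm{Log}\,\Phi_{\mathsf{b}}\!\left(\tfrac{y}{2\pi\mathsf{b}}\right)-\frac{-i}{2\pi\mathsf{b}^2}\mathrm{Li}_2(-e^y)=\int_{C'}\frac{e^{-iyu/\pi}}{4u\sinh u}\left(\frac{1}{\sinh(\mathsf{b}^2u)}-\frac{1}{\mathsf{b}^2u}\right)du .
\end{equation*}
The key elementary estimate is a uniform bound of the form $\left|\frac{1}{\sinh s}-\frac{1}{s}\right|\le K\frac{|s|}{1+s^2}\le K|s|$ valid for all real $s$ (near $0$ the quotient tends to $\tfrac16$, at infinity it tends to $1$, and it is continuous in between). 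At $s=\mathsf{b}^2u$ this gives $\left|\frac{1}{\sinh(\mathsf{b}^2u)}-\frac{1}{\mathsf{b}^2u}\right|\le K\mathsf{b}^2|u|$, which is where the factor $\mathsf{b}^2$ is extracted. Substituting this bound and cancelling the $|u|$ against the $1/|u|$ in the integrand leaves
\begin{equation*}
\left|\,\text{difference}\,\right|\le\frac{K\mathsf{b}^2}{4}\int_{C'}\frac{|e^{-iyu/\pi}|}{|\sinh u|}\,|du| .
\end{equation*}

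It then remains to bound the last integral uniformly in $\mathsf{b}$ and to exhibit its dependence on $\delta$. Writing $y=x+i\eta$ gives $|e^{-iyu/\pi}|=e^{\eta u/\pi}$ on the real part of the contour, so the integrand decays like $e^{(\eta/\pi-1)u}\le e^{-\delta u/\pi}$ as $u\to+\infty$ (using $\eta\le\pi-\delta$) and like $e^{(\eta/\pi+1)u}$ as $u\to-\infty$ (using $\eta\ge\delta$), while the small deviation around the origin keeps $1/|\sinh u|$ integrable. The slowly decaying tail at $+\infty$ contributes $\int_{1}^{\infty}e^{-\delta u/\pi}\,du=\frac{\pi}{\delta}e^{-\delta/\pi}$, producing the $\frac{C}{\delta}$ term, while the remaining bounded pieces give the additive constant $C'$; together this yields $B_\delta=\frac{C}{\delta}+C'$ independent of $\mathsf{b}\in(0,1)$. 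The main obstacle is precisely this last step: one must keep the estimate uniform in $\mathsf{b}$ over the entire contour — in particular controlling the region where $\mathsf{b}^2u$ is large, which occurs only for $|u|\gtrsim\mathsf{b}^{-2}$, where the exponential decay of $1/|\sinh u|$ already dominates — while simultaneously tracking the explicit $1/\delta$ blow-up coming from the tail. The symmetry $E(-\overline{y})=E(y)$ of the error term furnished by the preceding lemma may be used to reduce the analysis to $\Re y\ge 0$, but it is not essential to the argument.
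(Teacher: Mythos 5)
The paper itself contains no proof of this lemma: it is imported verbatim, as a citation, from \cite{MR3945172} (Lemma 7.13 there), so the only meaningful comparison is with that source — and your proposal reconstructs essentially that same argument: subtract the two integral representations after the rescaling $w=\mathsf{b}u$, extract the factor $\mathsf{b}^2$ through the elementary bound on $\frac{1}{\sinh s}-\frac{1}{s}$, and integrate the remaining $e^{\eta u/\pi}/\lvert\sinh u\rvert$ to obtain $B_\delta=\frac{C}{\delta}+C^\prime$, with the $\frac{C}{\delta}$ coming from the slowly decaying tail at $+\infty$. The proposal is correct, including the two preliminary checks (validity of the integral representation since $\frac{\eta}{\pi}<1<1+\mathsf{b}^2$, and irrelevance of the branch of $\mathrm{Log}\,\Phi_{\mathsf{b}}$ for real parts). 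One detail to tighten: the integrand of the difference still has a simple pole at $u=0$ (its leading behaviour is $-\frac{\mathsf{b}^2}{24u}$), so the contour must keep its detour around the origin, and there the bound $\bigl\lvert\frac{1}{\sinh s}-\frac{1}{s}\bigr\rvert\le K\lvert s\rvert$ is needed for \emph{complex} $s=\mathsf{b}^2u$ of small modulus, not just real $s$ as you state it; this follows at once from analyticity of $\frac{1}{\sinh s}-\frac{1}{s}$ near $s=0$ (where it equals $-\frac{s}{6}+O(s^3)$), and the detour then contributes $O(\mathsf{b}^2)$, consistent with the claimed estimate.
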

\begin{lemma}[ \cite{MR3945172}, Lemma 7.14 ]\label{upperbound-of-dif-negative}
For all $\delta > 0$, there exists a constant $B_\delta>0$ (which is identical to the one in Lemma \ref{upperbound-of-dif}) such that the following hods.

For all $\mathsf{b}\in(0,1)$, for all $y\in\mathbb{R}-i[\delta,\pi-\delta]$, 
\begin{equation*}
\left|\Re\left(\mathrm{Log}\left(\Phi_{\mathsf{b}}\left(\frac{y}{2\pi\mathsf{b}}\right)\right)-\left(\frac{-i}{2\pi\mathsf{b}^2}\mathrm{Li}_2(-e^y)\right)\right)\right|\le B_\delta \mathsf{b}^2.
\end{equation*}
\end{lemma}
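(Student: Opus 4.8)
The plan is to reduce the estimate over the lower half-strip $\mathbb{R}-i[\delta,\pi-\delta]$ to the estimate over the upper half-strip $\mathbb{R}+i[\delta,\pi-\delta]$, which is exactly the content of Lemma \ref{upperbound-of-dif}, by exploiting the behaviour of the integrand under complex conjugation $y\mapsto\overline{y}$. Since this map sends the lower strip bijectively onto the upper strip while preserving the width parameter $\delta$, it will allow us to inherit the very same constant $B_{\delta}$. Accordingly I abbreviate the quantity to be controlled by
\[
g_{\mathsf{b}}(y)\coloneqq\mathrm{Log}\left(\Phi_{\mathsf{b}}\left(\frac{y}{2\pi\mathsf{b}}\right)\right)-\frac{-i}{2\pi\mathsf{b}^2}\mathrm{Li}_2(-e^{y}),
\]
so that the assertion is $|\Re(g_{\mathsf{b}}(y))|\le B_{\delta}\mathsf{b}^2$ for $y\in\mathbb{R}-i[\delta,\pi-\delta]$, and Lemma \ref{upperbound-of-dif} is the same bound for $y$ in the upper strip.

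The central step is the conjugation antisymmetry $\overline{g_{\mathsf{b}}(\overline{y})}=-g_{\mathsf{b}}(y)$. First I would verify it for the quantum-dilogarithm term: writing $z=\frac{\overline{y}}{2\pi\mathsf{b}}$, one has $\overline{z}=\frac{y}{2\pi\mathsf{b}}$ because $\mathsf{b}>0$, and Proposition \ref{property-of-Phi_b}(2) gives $\overline{\Phi_{\mathsf{b}}(z)}=1/\Phi_{\mathsf{b}}(\overline{z})$, whence $\overline{\mathrm{Log}(\Phi_{\mathsf{b}}(\tfrac{\overline{y}}{2\pi\mathsf{b}}))}=-\mathrm{Log}(\Phi_{\mathsf{b}}(\tfrac{y}{2\pi\mathsf{b}}))$. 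Here one checks that the hypothesis of Proposition \ref{property-of-Phi_b}(2) holds, since for $\mathsf{b}\in(0,1)$ the imaginary part $\tfrac{-\Im(y)}{2\pi\mathsf{b}}$ lies below $\tfrac{1}{2\mathsf{b}}<\tfrac{\mathsf{b}+\mathsf{b}^{-1}}{2}=\tfrac{1}{2\sqrt{\hbar}}$. For the dilogarithm term I would use the Schwarz reflection property $\overline{\mathrm{Li}_2(w)}=\mathrm{Li}_2(\overline{w})$, valid because $\mathrm{Li}_2$ has real Taylor coefficients on $\mathbb{C}\setminus[1,\infty)$; applied with $w=-e^{\overline{y}}$ (which for $y$ in the strip lies off $[1,\infty)$) this yields $\overline{\mathrm{Li}_2(-e^{\overline{y}})}=\mathrm{Li}_2(-e^{y})$, so the conjugate of the dilogarithm term flips sign as well. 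Combining the two computations gives $\overline{g_{\mathsf{b}}(\overline{y})}=-g_{\mathsf{b}}(y)$.

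Taking real parts then produces $\Re(g_{\mathsf{b}}(y))=-\Re(g_{\mathsf{b}}(\overline{y}))$, hence $|\Re(g_{\mathsf{b}}(y))|=|\Re(g_{\mathsf{b}}(\overline{y}))|$. For $y\in\mathbb{R}-i[\delta,\pi-\delta]$ the reflected point satisfies $\overline{y}\in\mathbb{R}+i[\delta,\pi-\delta]$, so Lemma \ref{upperbound-of-dif} applies to $\overline{y}$ and bounds $|\Re(g_{\mathsf{b}}(\overline{y}))|$ by $B_{\delta}\mathsf{b}^2$ with the same $B_{\delta}=\tfrac{C}{\delta}+C'$, completing the argument.

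I expect the only delicate point to be the bookkeeping of branch cuts in the two reflection identities. The equalities $\overline{\mathrm{Log}(w)}=\mathrm{Log}(\overline{w})$ and $\overline{\mathrm{Li}_2(w)}=\mathrm{Li}_2(\overline{w})$ are only valid when no branch cut is crossed, so the main obstacle is to confirm that, uniformly for $y$ in the strip and $\mathsf{b}\in(0,1)$, the argument $\Phi_{\mathsf{b}}(\tfrac{y}{2\pi\mathsf{b}})$ never meets the negative real axis and that both $-e^{y}$ and $-e^{\overline{y}}$ stay off $[1,\infty)$. The latter is immediate since $\Im(y)\in[-(\pi-\delta),-\delta]$ forces $e^{y}$ into the open lower half-plane and $e^{\overline{y}}$ into the open upper half-plane; the former follows from the analytic, zero-free behaviour of $\Phi_{\mathsf{b}}$ on the relevant strip, and should be disposed of exactly as in the proof of Lemma \ref{upperbound-of-dif}, so that no stray factors of $2\pi i$ appear in the conjugation identities.
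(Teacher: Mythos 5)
Your argument is correct. Note that this paper does not actually prove the lemma: it is imported verbatim from \cite{MR3945172} (Lemma 7.14 there), so the real question is whether your reconstruction is sound, and it is. The reflection $y\mapsto\overline{y}$ carries $\mathbb{R}-i[\delta,\pi-\delta]$ bijectively onto $\mathbb{R}+i[\delta,\pi-\delta]$; Proposition \ref{property-of-Phi_b}(2) applies because $|\Im(y)|/(2\pi\mathsf{b})\le(\pi-\delta)/(2\pi\mathsf{b})<\tfrac{1}{2\mathsf{b}}<\tfrac{1}{2\sqrt{\hbar}}$; and Schwarz reflection for $\mathrm{Li}_2$ is legitimate since $-e^{y}$ and $-e^{\overline{y}}$ avoid $[1,\infty)$. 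One simplification would render your cautionary last paragraph unnecessary: you never need the full identity $\overline{g_{\mathsf{b}}(\overline{y})}=-g_{\mathsf{b}}(y)$, only its real part, and $\Re\,\mathrm{Log}(w)=\log|w|$ is branch-independent. Indeed, Proposition \ref{property-of-Phi_b}(2) gives $\bigl|\Phi_{\mathsf{b}}\bigl(\tfrac{\overline{y}}{2\pi\mathsf{b}}\bigr)\bigr|=\bigl|\Phi_{\mathsf{b}}\bigl(\tfrac{y}{2\pi\mathsf{b}}\bigr)\bigr|^{-1}$, hence $\Re\,\mathrm{Log}\,\Phi_{\mathsf{b}}\bigl(\tfrac{\overline{y}}{2\pi\mathsf{b}}\bigr)=-\Re\,\mathrm{Log}\,\Phi_{\mathsf{b}}\bigl(\tfrac{y}{2\pi\mathsf{b}}\bigr)$ outright, while $\Im\,\mathrm{Li}_2(-e^{\overline{y}})=-\Im\,\mathrm{Li}_2(-e^{y})$ handles the dilogarithm term; any stray multiples of $2\pi i$ are annihilated upon taking real parts, so no branch-cut bookkeeping is required at all. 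For comparison, the same reduction to Lemma \ref{upperbound-of-dif} can be run with the inversion $y\mapsto-y$ (which also exchanges the two strips) instead of conjugation, using Proposition \ref{property-of-Phi_b}(1) together with Theorem \ref{classical-dilog}(1): one finds $g_{\mathsf{b}}(y)=-g_{\mathsf{b}}(-y)+\tfrac{i\pi\mathsf{b}^2}{12}$ modulo $2\pi i$, whose real part yields the same conclusion with the same constant $B_\delta$. Both routes are equally short; yours has the advantage of resting on the conjugation symmetry alone, which is the same symmetry underlying the quoted Lemma 7.12 of \cite{MR3945172}.
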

The following holds in a similar way to the proof of Proposition 7.15 in \cite{MR3945172}.
\begin{prop}\label{limit-Sb}
There exists a constant $\rho^\prime\in\mathbb{C}^\ast$ such that the following holds.
If $\mathsf{b}\rightarrow 0^+$,
\begin{eqnarray*}
\int_{\mathscr{Y}^0}d\mathbf{y}e^{\frac{1}{2\pi\mathsf{b}^2}S_{\mathsf{b}}(\mathbf{y})}&=&\int_{\mathscr{Y}^0}d\mathbf{y}e^{\frac{i\mathbf{y}^{\rm T}Q\mathbf{y}+\mathbf{y}^{\rm T}\mathscr{W}}{2\pi\mathsf{b}^2}}\frac{\Phi_{\mathsf{b}}\left(\frac{y_2}{2\pi\mathsf{b}}\right)}{\Phi_{\mathsf{b}}\left(\frac{y_1}{2\pi\mathsf{b}}\right)\Phi_{\mathsf{b}}\left(\frac{y_3}{2\pi\mathsf{b}}\right)^3}\\
&=& e^{\frac{1}{2\pi\mathsf{b}^2}S(\mathbf{y}^0)}\left(\rho^\prime\mathsf{b}^3\left(1+o_{\mathsf{b}\rightarrow 0^+}(1)\right)+O_{\mathsf{b}\rightarrow 0^{+}}(1)\right).
\end{eqnarray*}
In particular, 
\begin{equation*}
2\pi\mathsf{b}^2\log\left|\int_{\mathscr{Y}^0}d\mathbf{y}e^{\frac{1}{2\pi\mathsf{b}^2}S_{\mathsf{b}}(\mathbf{y})}\right|\underset{\mathsf{b}\rightarrow 0^+}{\longrightarrow}\Re S(\mathbf{y}^0)=-\mathrm{Vol}(S^3\backslash 7_3).
\end{equation*}
\end{prop}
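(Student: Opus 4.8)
The plan is to treat $\lambda:=\frac{1}{2\pi\mathsf{b}^2}$ as the large parameter, so that $\mathsf{b}\to 0^+$ is equivalent to $\lambda\to\infty$, and to observe that by the very definition of $S_{\mathsf{b}}$ the integrand equals $e^{\lambda S_{\mathsf{b}}(\mathbf{y})}$. Since the preceding results (Proposition \ref{rho} and its extension to the whole contour $\mathscr{Y}^0$) already give $\int_{\mathscr{Y}^0}e^{\lambda S(\mathbf{y})}\,d\mathbf{y}=\rho\,\lambda^{-3/2}e^{\lambda S(\mathbf{y}^0)}(1+o(1))$ with $\lambda^{-3/2}=(2\pi)^{3/2}\mathsf{b}^3$, it is natural to set $\rho':=(2\pi)^{3/2}\rho$ and to show that replacing the classical potential $S$ by the quantum potential $S_{\mathsf{b}}$ perturbs this asymptotic only by factors that are harmless after taking $2\pi\mathsf{b}^2\log|\cdot|$. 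Once the stated asymptotic is established, the final limit follows immediately, because $2\pi\mathsf{b}^2\log\big|e^{\lambda S(\mathbf{y}^0)}(\rho'\mathsf{b}^3(1+o(1))+O(1))\big|=\Re S(\mathbf{y}^0)+2\pi\mathsf{b}^2\log|\rho'\mathsf{b}^3(1+o(1))+O(1)|\to\Re S(\mathbf{y}^0)$, and $\Re S(\mathbf{y}^0)=-\mathrm{Vol}(S^3\backslash 7_3)$ by Lemma \ref{relation-between-S-and-Vol}.

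First I would fix $\delta>0$ small enough that the three constant imaginary parts $d_1^0,d_2^0,d_3^0$ of the affine contour $\mathscr{Y}^0$ lie in $-[\delta,\pi-\delta]$, $[\delta,\pi-\delta]$ and $-[\delta,\pi-\delta]$ respectively. Then Lemmas \ref{upperbound-of-dif} and \ref{upperbound-of-dif-negative}, applied to each of the three quantum dilogarithm factors, bound $\big|\Re\big(\lambda(S_{\mathsf{b}}-S)\big)\big|$ uniformly on $\mathscr{Y}^0$ by a constant multiple of $B_\delta\mathsf{b}^2$, which tends to $0$. Consequently the modulus of the quantum integrand satisfies $e^{\lambda\Re S_{\mathsf{b}}}\le e^{cB_\delta\mathsf{b}^2}e^{\lambda\Re S}$ uniformly; that is, up to a factor tending to $1$ the quantum and classical integrands have the same absolute value.

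Next I would split $\mathscr{Y}^0=\gamma\cup(\mathscr{Y}^0\setminus\gamma)$ with $\gamma$ the ball of Lemma \ref{upper-bound}. For the tail, combining the uniform modulus comparison above with the concavity argument of Lemma \ref{upper-bound} (now applied to $\Re S_{\mathsf{b}}=\Re S+O(\mathsf{b}^2)$, whose Hessian stays negative definite by the $C^2$-closeness to $\Re S$ and Lemma \ref{RS-concave}) yields $\big|\int_{\mathscr{Y}^0\setminus\gamma}e^{\lambda S_{\mathsf{b}}}\big|\le B'e^{\lambda M'}$ with $M'<\Re S(\mathbf{y}^0)=\max_{\mathscr{Y}^0}\Re S$ by Lemma \ref{strict-max-of-ReS}; relative to $e^{\lambda S(\mathbf{y}^0)}$ this is $O(e^{-c/\mathsf{b}^2})$ and hence negligible. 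The same modulus comparison together with the classical Laplace asymptotic already delivers the easy half $\limsup_{\mathsf{b}\to0^+}2\pi\mathsf{b}^2\log\big|\int_{\mathscr{Y}^0}e^{\lambda S_{\mathsf{b}}}\big|\le\Re S(\mathbf{y}^0)$.

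The main work, and the main obstacle, is the contribution of the ball $\gamma$, which produces the leading $\rho'\mathsf{b}^3(1+o(1))$ term and, crucially, the matching lower bound: the real-part Lemmas \ref{upperbound-of-dif} and \ref{upperbound-of-dif-negative} control only $|e^{\lambda(S_{\mathsf{b}}-S)}|$ and so cannot by themselves rule out destructive interference of the phase $\Im(\lambda(S_{\mathsf{b}}-S))$. To close this gap I would apply the saddle point theorem (Theorem \ref{Fedoryuk}) directly to the holomorphic function $S_{\mathsf{b}}$ on $\gamma$: using the semiclassical expansion of Faddeev's quantum dilogarithm, $S_{\mathsf{b}}\to S$ uniformly on $\gamma$ together with its first two derivatives, so that the nondegenerate critical point $\mathbf{y}^0$ of $S$ (Lemma \ref{completeness-bijection}, with $\det\mathrm{Hess}\,S(\mathbf{y}^0)\neq0$ by Lemma \ref{hessian}) persists as a nearby nondegenerate critical point $\mathbf{y}^0_{\mathsf{b}}\to\mathbf{y}^0$ of $S_{\mathsf{b}}$ by the implicit function theorem, at which $\Re S_{\mathsf{b}}$ attains its strict maximum on $\gamma$. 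Theorem \ref{Fedoryuk} then yields $\int_\gamma e^{\lambda S_{\mathsf{b}}}=\rho'\mathsf{b}^3 e^{\lambda S(\mathbf{y}^0)}(1+o(1))$, and adding the negligible tail gives the stated equality; taking $2\pi\mathsf{b}^2\log|\cdot|$ and invoking Lemma \ref{relation-between-S-and-Vol} completes the argument. The delicate point throughout is precisely the upgrade from the modulus estimates of Lemmas \ref{upperbound-of-dif} and \ref{upperbound-of-dif-negative} to control of the full complex perturbation $S_{\mathsf{b}}-S$ near the saddle, which is what legitimizes transporting the classical saddle point analysis to the quantum integral.
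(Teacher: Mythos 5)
Your overall skeleton agrees with the paper's proof: set $\lambda=\frac{1}{2\pi\mathsf{b}^2}$, take $\rho'=(2\pi)^{3/2}\rho$ from Proposition \ref{rho}, split $\mathscr{Y}^0=\gamma\cup(\mathscr{Y}^0\setminus\gamma)$, and use Lemmas \ref{upperbound-of-dif} and \ref{upperbound-of-dif-negative} to get the uniform bound $\left|\Re\left(\lambda(S_{\mathsf{b}}-S)\right)\right|\le 5B_\delta\mathsf{b}^2$ on $\mathscr{Y}^0$. The divergence is at the decisive step, the contribution of the ball $\gamma$, and there your route has genuine gaps. The paper never performs a saddle point analysis of $S_{\mathsf{b}}$: it writes $\int_\gamma e^{\lambda S_{\mathsf{b}}}=\int_\gamma e^{\lambda S}+\int_\gamma e^{\lambda S}\left(e^{\lambda(S_{\mathsf{b}}-S)}-1\right)$, evaluates the first term by Proposition \ref{rho}, and bounds the second \emph{in modulus} by $O(1)\,e^{\lambda\Re S(\mathbf{y}^0)}$ using only the real-part bound, the compactness of $\gamma$, and Lemma \ref{strict-max-of-ReS}; this is exactly the origin of the extra $+O_{\mathsf{b}\to 0^+}(1)$ in the statement. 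You instead apply Theorem \ref{Fedoryuk} directly to $S_{\mathsf{b}}$, aiming at the clean asymptotic $\rho'\mathsf{b}^3e^{\lambda S(\mathbf{y}^0)}(1+o(1))$ with no $O(1)$ term. Two concrete obstructions: (i) Theorem \ref{Fedoryuk} is stated for a \emph{fixed} phase function as $\lambda\to\infty$, whereas your phase $S_{\mathsf{b}}$ varies with the large parameter; you would need a saddle point theorem with error terms uniform over the family $\{S_{\mathsf{b}}\}$, which you do not supply. (ii) The hypothesis of Theorem \ref{Fedoryuk} is that $\max_{\gamma}\Re S_{\mathsf{b}}$ is attained at an \emph{interior point of the contour} that is a saddle point of $S_{\mathsf{b}}$. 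The contour $\gamma\subset\mathscr{Y}^0$ was built through the critical point of $S$; the perturbed critical point $\mathbf{y}^0_{\mathsf{b}}$ from your implicit-function argument is a point of $\mathbb{C}^3$ that in general does \emph{not} lie on $\mathscr{Y}^0$, so the phrase ``$\Re S_{\mathsf{b}}$ attains its strict maximum on $\gamma$ at $\mathbf{y}^0_{\mathsf{b}}$'' does not make sense without first deforming the contour — an additional step you neither carry out nor justify.

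The deeper missing ingredient is your claimed input that ``$S_{\mathsf{b}}\to S$ uniformly on $\gamma$ together with its first two derivatives.'' The only estimates available in the paper (Lemmas \ref{upperbound-of-dif}, \ref{upperbound-of-dif-negative}) control the \emph{real part} of $\mathrm{Log}\,\Phi_{\mathsf{b}}-\frac{-i}{2\pi\mathsf{b}^2}\mathrm{Li}_2$ in $C^0$ only; nothing established controls $\Im(S_{\mathsf{b}}-S)$ or any derivative. You correctly identify this as the crux (``destructive interference of the phase''), but you bridge it by citing an unproved complex semiclassical expansion of Faddeev's quantum dilogarithm — that is, the missing idea stays missing. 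The paper's argument is engineered precisely so that no phase control is ever needed, at the price of the weaker conclusion containing $O(1)$; your stronger conclusion, if one could prove it, would in fact make the final log-limit deduction cleaner (with the paper's form, the lower bound on $\left|\rho'\mathsf{b}^3(1+o(1))+O(1)\right|$ needed for the limit is delicate, since a priori the $O(1)$ term could interfere), but as written your proof of it does not go through. Finally, a minor point: your tail estimate invokes concavity of $\Re S_{\mathsf{b}}$ via ``$C^2$-closeness to $\Re S$,'' which again is unavailable from the cited lemmas; this one is harmless, since your own modulus comparison combined with Lemma \ref{upper-bound} applied to $\Re S$ (as in the paper) already gives $\left|\int_{\mathscr{Y}^0\setminus\gamma}e^{\lambda S_{\mathsf{b}}}d\mathbf{y}\right|\le Be^{5B_\delta\mathsf{b}^2}e^{\lambda M}$, which suffices.
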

\begin{proof}
The second statement follows by the first statement and the fact that 
\begin{equation*}
\rho^\prime \mathsf{b}^3(1+o_{\mathsf{b}\rightarrow 0^+}(1))+O_{\mathsf{b}\rightarrow 0^+}(1)
\end{equation*}
is a polynomial in $\mathsf{b}$ if $\mathsf{b}\rightarrow 0^+$.

We prove the first statement.
The integral over $\mathscr{Y}^0$ is divided into the integral over the compact domain $\gamma$ and the integral over the non-bounded domain $\mathscr{Y}^0\backslash\gamma$.

Note that there exists a constant $\delta$ such that for all $\mathbf{y}=(y_1,y_2,y_3)\in\mathscr{Y}^0$, $\Im(y_1),\Im(y_3)\in[-(\pi-\delta),-\delta]$, $\Im(y_2)\in[\delta,\pi-\delta]$.
Let $(\zeta_1,\zeta_2,\zeta_3)\coloneqq (-1,1,-3)$. By Lemma \ref{upperbound-of-dif}, \ref{upperbound-of-dif-negative},
\begin{eqnarray}
& &\left|\Re\left(\frac{1}{2\pi\mathsf{b}^2}S_{\mathsf{b}}(\mathbf{y})-\frac{1}{2\pi\mathsf{b}^2}S(\mathbf{y})\right)\right|\nonumber\\
&=&\left|\Re\left(
\sum_{j=1}^3\zeta_j\left(\mathrm{Log}\left(\Phi_{\mathsf{ b}}\left(\frac{y_j}{2\pi\mathsf{b}}\right)\right)-\left(\frac{-i}{2\pi\mathsf{b}^2}\mathrm{Li}_2(-e^{y_j})\right)\right)\right)\right|\nonumber\\
&\le&\sum_{j=1}^3|\zeta_j|\left|\Re\left(\mathrm{Log}\left(\Phi_{\mathsf{b}}\left(\frac{y_j}{2\pi\mathsf{b}}\right)\right)-\left(\frac{-i}{2\pi\mathsf{b}^2}\mathrm{Li}_2(-e^{y_j})\right)\right)\right|\nonumber\\
&\le&5B_\delta \mathsf{b}^2.\label{upperbound-5Bb^2}
\end{eqnarray}
Focusing on the compact domain $\gamma$, we prove 
\begin{equation*}
\int_{\gamma} d\mathbf{y}e^{\frac{1}{2\pi\mathsf{b}^2}S_{\mathsf{b}}(\mathbf{y})}=e^{\frac{1}{2\pi\mathsf{b}^2}S(\mathbf{y}^0)}\left(\rho^\prime\mathsf{b}^3(1+o_{\mathsf{b}\rightarrow 0^+}(1))+O_{\mathsf{b}\rightarrow 0^+}(1)\right).
\end{equation*}
By Proposition \ref{rho}, setting $\lambda=\frac{1}{2\pi\mathsf{b}^2}$, $\rho^\prime\coloneqq\rho(2\pi)^{\frac{3}{2}}$, it is sufficient to prove 
\begin{equation*}
\int_\gamma d\mathbf{y}e^{\frac{1}{2\pi\mathsf{b}^2}S(\mathbf{y})}\left(e^{\frac{1}{2\pi\mathsf{b}^2}(S_{\mathsf{b}}(\mathbf{y})-S(\mathbf{y}))}-1\right)=e^{\frac{1}{2\pi\mathsf{b}^2}S(\mathbf{y}^0)}O_{\mathsf{b}\rightarrow 0^+}(1).
\end{equation*}
This equality follows by the upper bound $5B_\delta\mathsf{b}^2$ of the inequality \eqref{upperbound-5Bb^2}, the compactness of $\gamma$, and Lemma \ref{strict-max-of-ReS}.

Finally, for the integral on the unbounded domain $\mathscr{Y}^0\backslash\gamma$, we prove
\begin{equation*}
\int_{\mathscr{Y}^0\backslash\gamma}d\mathbf{y} e^{\frac{1}{2\pi\mathsf{b}^2}S_{\mathsf{b}}(\mathbf{y})}=e^{\frac{1}{2\pi\mathsf{b}^2}S(\mathbf{y}^0)}O_{\mathsf{b}\rightarrow 0^+}(1).
\end{equation*}

By the proof of Lemma \ref{upper-bound}, for all $\mathsf{b}<(2\pi A)^{-\frac{1}{2}}$,
\begin{equation*}
\int_{\mathscr{Y}^0\backslash\gamma}d\mathbf{y}e^{\frac{1}{2\pi\mathsf{b}^2}\Re(S)(\mathbf{y})}\le Be^{\frac{1}{2\pi\mathsf{b}^2}M}.
\end{equation*}

Furthermore, for all $\mathsf{b}\in(0,1)$ and $\mathbf{y}\in\mathscr{Y}^0\backslash\gamma$
\begin{equation*}
e^{\frac{1}{2\pi\mathsf{b}^2}\Re(S_{\mathsf{b}}(\mathbf{y})-S(\mathbf{y}))}\le e^{5B_\delta\mathsf{b}^2}.
\end{equation*}

Let $v\coloneqq\frac{\Re(S)(\mathbf{y}^0)-M}{2}$. Then for all $\mathsf{b}>0$ smaller than both $(2\pi A)^{-\frac{1}{2}}$ and $\left(\frac{v}{10\pi B_\delta}\right)^{\frac{1}{4}}$,
\begin{eqnarray*}
\left|\int_{\mathscr{Y}^0\backslash\gamma}d\mathbf{y}e^{\frac{1}{2\pi\mathsf{b}^2}S_{\mathsf{b}}(\mathbf{y})}\right|&=&\left|\int_{\mathscr{Y}^0\backslash\gamma}
d\mathbf{y}e^{\frac{1}{2\pi\mathsf{b}^2}S(\mathbf{y})}e^{\frac{1}{2\pi\mathsf{b}^2}(S_{\mathsf{b}}(\mathbf{y})-S(\mathbf{y}))}\right|  \\
&\le & \int_{\mathscr{Y}^0\backslash\gamma}d\mathbf{y}e^{\frac{1}{2\pi\mathsf{b}^2}\Re(S)(\mathbf{y})}e^{\frac{1}{2\pi\mathsf{b}^2}\Re(S_{\mathsf{b}}(\mathbf{y})-S(\mathbf{y}))}\\
&\le & Be^{\frac{M}{2\pi\mathsf{b}^2}}e^{5B_\delta\mathsf{b}^2}\le Be^{\frac{1}{2\pi\mathsf{b}^2}(M+v)}\\
&=&e^{\frac{1}{2\pi\mathsf{b}^2}S(\mathbf{y}^0)}O_{\mathsf{b}\rightarrow0^+}(1).
\end{eqnarray*}
The above proves the first statement.
\end{proof}
\subsection{From $\mathsf{b}$ to $\hbar$}
For all $\mathsf{b}>0$, $\hbar:=\mathsf{b}^2(1+\mathsf{b}^2)^{-2}>0$ 
was defined.

For $\mathsf{b}>0$, we define a new potential function $S_{\mathsf{b}}^\prime:\mathscr{U}\rightarrow\mathbb{C}$, which is a holomorphic function of three complex variables as follows.
\begin{equation*}
S_{\mathsf{b}}^\prime (\mathbf{y})\coloneqq i\mathbf{y}^{\rm T}Q\mathbf{y}+\mathbf{y}^{\rm T}\mathscr{W}+2\pi\hbar\mathrm{Log}\left(\frac{\Phi_{\mathsf{b}}(\frac{y_2}{2\pi\sqrt{\hbar}})}{\Phi_{\mathsf{b}}(\frac{y_1}{2\pi\sqrt{\hbar}})\Phi_{\mathsf{b}}(\frac{y_3}{2\pi\sqrt{\hbar}})^3}\right).
\end{equation*}
Note that
\begin{equation*}
\left|\mathfrak{J}_X(\hbar,0)\right|=\left|\left(\frac{1}{2\pi\sqrt{\hbar}}\right)^4 \int_{\mathscr{Y}^0}d\mathbf{y} e^{\frac{1}{2\pi\hbar}S_{\mathsf{b}}^\prime(\mathbf{y})}\right|.
\end{equation*}

Let $c_\delta\coloneqq \sqrt{\frac{\delta}{2(\pi-\delta)}}$, then the following holds by Lemma 7.17 of \cite{MR3945172} and its proof.

\begin{lemma}[ \cite{MR3945172}, Lemma 7.17 ]\label{upperbound-of-dif-b}
For all $\delta\in(0,\frac{\pi}{2})$, there exists a constant $C_\delta$ such that the following holds. For all $\mathsf{b}\in(0,c_\delta)$, and for all $y\in\mathbb{R}+i([-(\pi-\delta),-\delta]\cup[\delta,\pi-\delta])$,
\begin{equation*}
\left|\Re\left(\left(\frac{-i}{2\pi\mathsf{b}^2}\mathrm{Li}_2\left(-e^{y(1+\mathsf{b}^2)}\right)\right)-\left(\frac{-i}{2\pi\mathsf{b}^2}(1+\mathsf{b}^2)^2\mathrm{Li}_2(-e^y)\right)\right)\right|\le C_\delta.
\end{equation*}
\end{lemma}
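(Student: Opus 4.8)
The plan is to reduce the estimate to a uniform bound on a difference quotient in $\mathsf{b}^2$ of the dilogarithm. Since $\Re(-iw)=\Im(w)$ for any $w\in\mathbb{C}$, writing $w\coloneqq\mathrm{Li}_2(-e^{y(1+\mathsf{b}^2)})-(1+\mathsf{b}^2)^2\mathrm{Li}_2(-e^y)$, the quantity to be controlled is $\frac{1}{2\pi\mathsf{b}^2}\lvert\Im(w)\rvert$. Fixing $y$ and setting $h(t)\coloneqq\mathrm{Li}_2(-e^{y(1+t)})-(1+t)^2\mathrm{Li}_2(-e^y)$, one has $h(0)=0$, hence $w=h(\mathsf{b}^2)=\int_0^{\mathsf{b}^2}h'(s)\,ds$ and $\lvert\Im(w)\rvert\le\mathsf{b}^2\sup_{s\in[0,\mathsf{b}^2]}\lvert h'(s)\rvert$. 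Thus it suffices to bound $\sup\lvert h'\rvert$ uniformly over $s$ and over the strip, which then furnishes $C_\delta=\tfrac{1}{2\pi}\sup\lvert h'\rvert$. Using the derivative formula $\partial_y\mathrm{Li}_2(-e^y)=-\mathrm{Log}(1+e^y)$ already invoked in the proof of Lemma \ref{hessian}, I compute $h'(s)=-y\,\mathrm{Log}(1+e^{y(1+s)})-2(1+s)\mathrm{Li}_2(-e^y)$.

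Next I would check that every branch stays principal, which is the role of the constant $c_\delta=\sqrt{\delta/(2(\pi-\delta))}$. For $y\in\mathbb{R}+i[\delta,\pi-\delta]$ and $s\le\mathsf{b}^2<c_\delta^2$ one has $\Im\bigl(y(1+s)\bigr)\le(\pi-\delta)(1+\mathsf{b}^2)=\pi-\tfrac{\delta}{2}<\pi$, so $y(1+s)$ remains in the open strip $\mathbb{R}+i(0,\pi)$, a uniform distance from the line $\Im=\pi$ on which $1+e^{y(1+s)}$ vanishes; the reflected strip $\mathbb{R}+i[-(\pi-\delta),-\delta]$ is handled symmetrically. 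Hence $h'$ is holomorphic and the formula above is valid throughout the region.

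The main work---and the step I expect to be the real obstacle---is the uniform bound of $\lvert h'(s)\rvert$ over the \emph{unbounded} strip, because $\mathrm{Li}_2(-e^y)\sim-\tfrac{y^2}{2}$ as $\Re y\to+\infty$ and both terms of $h'$ then grow quadratically. The resolution is a leading-order cancellation made rigorous through the inversion relation of Theorem \ref{classical-dilog}(1): applying it to $\mathrm{Li}_2(-e^{y(1+t)})$ and to $\mathrm{Li}_2(-e^y)$ rewrites $h(t)=\tfrac{\pi^2}{6}\bigl((1+t)^2-1\bigr)-\mathrm{Li}_2(-e^{-(1+t)y})+(1+t)^2\mathrm{Li}_2(-e^{-y})$, in which the quadratic terms $-\tfrac{(1+t)^2y^2}{2}$ cancel identically; as $\Re y\to+\infty$ the surviving dilogarithms decay to $0$, so $h$ and $h'$ remain bounded there. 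For $\Re y\to-\infty$ the original expression already decays, since $\mathrm{Li}_2(-e^{y(1+s)})$ and $\mathrm{Li}_2(-e^y)$ tend to $0$ and $-y\,\mathrm{Log}(1+e^{y(1+s)})$ is a linearly growing factor times an exponentially small one. On the complementary region where $\Re y$ is bounded, $h'$ is continuous on a compact set after the branch estimate, hence bounded. Combining the three regimes produces a finite $\sup\lvert h'\rvert$ depending only on $\delta$, which is precisely the content transferred verbatim from \cite[Lemma 7.17]{MR3945172}; the delicate point throughout is keeping the error terms at $\Re y\to\pm\infty$ under uniform control.
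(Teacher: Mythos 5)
Your proof is correct: the reduction $\Re(-iw)=\Im(w)$, the bound $|h(\mathsf{b}^2)|\le \mathsf{b}^2\sup|h'|$ via the fundamental theorem of calculus, the branch control $\Im\bigl(y(1+s)\bigr)\le \pi-\tfrac{\delta}{2}$ coming from the definition of $c_\delta$, and the use of the inversion relation of Theorem \ref{classical-dilog}(1) to cancel the quadratic growth as $\Re y\to+\infty$ (with exponential decay as $\Re y\to-\infty$ and compactness in between) all check out and yield a uniform constant $C_\delta$. The paper itself gives no proof of this lemma --- it imports it verbatim from \cite{MR3945172}, Lemma 7.17 --- and your argument is essentially the same as the one given there, which likewise rests on an integral bound in the dilation parameter combined with the dilogarithm inversion relation.
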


The following holds in a similar way to the proof of Proposition 7.18 in \cite{MR3945172}.
\begin{prop}\label{limit-Sb-prime}
For $\rho^\prime\in\mathbb{C}^\ast$ defined by Proposition \ref{limit-Sb}, the following equation holds if $\hbar\rightarrow 0^+$.
\begin{eqnarray*}
\int_{\mathscr{Y}^0}d\mathbf{y} e^{\frac{1}{2\pi\hbar}S_{\mathsf{b}}^\prime(\mathbf{y})}&=&\int_{\mathscr{Y}^0}d\mathbf{y} e^{\frac{i\mathbf{y}^{\rm T}Q\mathbf{y}+\mathbf{y}^{\rm T}\mathscr{W}}{2\pi\hbar}}\frac{\Phi_{\mathsf{b}}(\frac{y_2}{2\pi\sqrt{\hbar}})}{\Phi_{\mathsf{b}}(\frac{y_1}{2\pi\sqrt{\hbar}})
\Phi_{\mathsf{b}}(\frac{y_3}{2\pi\sqrt{\hbar}})^3}\\
&=& e^{\frac{1}{2\pi\hbar}S(\mathbf{y}^0)}\left(\rho^\prime\hbar^{\frac{3}{2}}(1+o_{\hbar\rightarrow 0^+}(1))+O_{\hbar\rightarrow 0^+}(1)\right).
\end{eqnarray*}
In particular,
\begin{equation*}
(2\pi\hbar)\log\left|\int_{\mathscr{Y}^0}d\mathbf{y} e^{\frac{1}{2\pi\hbar}S_{\mathsf{b}}^\prime(\mathbf{y})}\right|\underset{\hbar\rightarrow 0^+}{\longrightarrow}
\Re S(\mathbf{y}^0)=-\mathrm{Vol}(S^3\backslash 7_3).
\end{equation*}
\end{prop}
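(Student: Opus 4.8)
The plan is to deduce Proposition \ref{limit-Sb-prime} from the classical saddle-point analysis of $S$ (Proposition \ref{rho} and Lemmas \ref{upper-bound}, \ref{strict-max-of-ReS}) by showing that passing from the exponent $\frac{1}{2\pi\hbar}S$ to $\frac{1}{2\pi\hbar}S_{\mathsf{b}}^\prime$ alters the integrand only by a factor whose modulus stays bounded as $\hbar\to 0^+$. The starting point is the scaling identity $\sqrt{\hbar}=\mathsf{b}/(1+\mathsf{b}^2)$, which gives $\frac{y_j}{2\pi\sqrt{\hbar}}=\frac{(1+\mathsf{b}^2)y_j}{2\pi\mathsf{b}}$ and $\frac{(1+\mathsf{b}^2)^2}{\mathsf{b}^2}=\frac{1}{\hbar}$. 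Writing $(\zeta_1,\zeta_2,\zeta_3)=(-1,1,-3)$ and $\epsilon_j\coloneqq\mathrm{Log}\,\Phi_{\mathsf{b}}\!\left(\frac{y_j}{2\pi\sqrt{\hbar}}\right)-\frac{-i}{2\pi\hbar}\mathrm{Li}_2(-e^{y_j})$, a direct computation from the definitions of $S$ and $S_{\mathsf{b}}^\prime$ yields $\frac{1}{2\pi\hbar}\bigl(S_{\mathsf{b}}^\prime(\mathbf{y})-S(\mathbf{y})\bigr)=\sum_{j=1}^3\zeta_j\epsilon_j$.

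The first step is to bound $\Re(\epsilon_j)$ uniformly over $\mathscr{Y}^0$ for small $\mathsf{b}$. Every $\mathbf{y}\in\mathscr{Y}^0$ satisfies $\Im(y_1),\Im(y_3)\in[-(\pi-\delta),-\delta]$ and $\Im(y_2)\in[\delta,\pi-\delta]$ for a fixed $\delta>0$; I would apply Lemma \ref{upperbound-of-dif} (respectively Lemma \ref{upperbound-of-dif-negative}) at the rescaled point $(1+\mathsf{b}^2)y_j$ to replace $\mathrm{Log}\,\Phi_{\mathsf{b}}$ by $\frac{-i}{2\pi\mathsf{b}^2}\mathrm{Li}_2(-e^{(1+\mathsf{b}^2)y_j})$ up to $O(\mathsf{b}^2)$, and then Lemma \ref{upperbound-of-dif-b} to replace $\frac{-i}{2\pi\mathsf{b}^2}\mathrm{Li}_2(-e^{(1+\mathsf{b}^2)y_j})$ by $\frac{-i}{2\pi\mathsf{b}^2}(1+\mathsf{b}^2)^2\mathrm{Li}_2(-e^{y_j})=\frac{-i}{2\pi\hbar}\mathrm{Li}_2(-e^{y_j})$ up to $C_\delta$. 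The threshold $\mathsf{b}<c_\delta=\sqrt{\delta/(2(\pi-\delta))}$ is exactly what keeps the rescaled imaginary part $(1+\mathsf{b}^2)\Im(y_j)$ inside a strip $\mathbb{R}\pm i[\delta/2,\pi-\delta/2]$ on which both comparison lemmas apply. Combining these gives $|\Re(\epsilon_j)|\le B_{\delta/2}\mathsf{b}^2+C_\delta$, hence $\bigl|\Re\bigl(\frac{1}{2\pi\hbar}(S_{\mathsf{b}}^\prime-S)\bigr)\bigr|\le 5\,(B_{\delta/2}\mathsf{b}^2+C_\delta)$, which is bounded uniformly as $\mathsf{b}\to 0^+$.

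With this perturbation under control, I would split $\int_{\mathscr{Y}^0}=\int_\gamma+\int_{\mathscr{Y}^0\backslash\gamma}$ as in the proof of Proposition \ref{limit-Sb}. On the compact ball $\gamma$, Proposition \ref{rho} with $\lambda=\frac{1}{2\pi\hbar}$ and $\rho^\prime=\rho(2\pi)^{3/2}$ produces the leading term $\rho^\prime\hbar^{3/2}(1+o_{\hbar\to0^+}(1))e^{\frac{1}{2\pi\hbar}S(\mathbf{y}^0)}$, while the correction $\int_\gamma e^{\frac{1}{2\pi\hbar}S}\bigl(e^{\frac{1}{2\pi\hbar}(S_{\mathsf{b}}^\prime-S)}-1\bigr)$ is $e^{\frac{1}{2\pi\hbar}S(\mathbf{y}^0)}O_{\hbar\to0^+}(1)$ because the factor $e^{\frac{1}{2\pi\hbar}(S_{\mathsf{b}}^\prime-S)}-1$ is bounded, $\gamma$ is compact, and $\Re S\le\Re S(\mathbf{y}^0)$ on $\gamma$ by Lemma \ref{strict-max-of-ReS}. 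On the unbounded part, I would combine the tail estimate $\int_{\mathscr{Y}^0\backslash\gamma}e^{\frac{1}{2\pi\hbar}\Re S}\le Be^{\frac{1}{2\pi\hbar}M}$ of Lemma \ref{upper-bound}, where $M=\max_{\partial\gamma}\Re S<\Re S(\mathbf{y}^0)$, with the uniform bound $e^{\frac{1}{2\pi\hbar}\Re(S_{\mathsf{b}}^\prime-S)}\le e^{5(B_{\delta/2}\mathsf{b}^2+C_\delta)}=O_{\hbar\to0^+}(1)$; setting $v=(\Re S(\mathbf{y}^0)-M)/2>0$, the tail is bounded by $e^{\frac{1}{2\pi\hbar}S(\mathbf{y}^0)}O_{\hbar\to0^+}(1)$ (in fact exponentially negligible) once $\mathsf{b}$ is small. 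Adding the two contributions gives the first displayed identity, and the second limit follows by taking $2\pi\hbar\log|\cdot|$ and substituting $\Re S(\mathbf{y}^0)=-\mathrm{Vol}(S^3\backslash 7_3)$ from Lemma \ref{relation-between-S-and-Vol}.

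The main obstacle I expect is the uniform control of $\Re(\epsilon_j)$ along the non-compact contour $\mathscr{Y}^0$: the argument of the quantum dilogarithm is dilated by $(1+\mathsf{b}^2)$, which pushes its imaginary part toward the boundary of the domain where $\Phi_{\mathsf{b}}$ is regular and where the semiclassical comparison of $\mathrm{Log}\,\Phi_{\mathsf{b}}$ with $\mathrm{Li}_2$ is valid, so the whole estimate hinges on the threshold $\mathsf{b}<c_\delta$. Once this is secured, the remainder is a transcription of the twist-knot argument of \cite{MR3945172} with $\mathsf{b}^2$ replaced throughout by $\hbar$.
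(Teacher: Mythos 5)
Your proposal follows essentially the same route as the paper's own proof: the same decomposition $\frac{1}{2\pi\hbar}\bigl(S_{\mathsf{b}}^\prime-S\bigr)=\sum_{j}\zeta_j\epsilon_j$, the same two-step comparison via Lemmas \ref{upperbound-of-dif} and \ref{upperbound-of-dif-negative} applied at the rescaled points $(1+\mathsf{b}^2)y_j$ (yielding the constant $B_{\delta/2}$) together with Lemma \ref{upperbound-of-dif-b} under the threshold $\mathsf{b}<c_\delta$, the same uniform bound $5\bigl(B_{\delta/2}\mathsf{b}^2+C_\delta\bigr)$, and the same splitting into $\gamma$ and $\mathscr{Y}^0\backslash\gamma$ handled by Proposition \ref{rho}, Lemma \ref{strict-max-of-ReS}, Lemma \ref{upper-bound} and the constant $v=(\Re S(\mathbf{y}^0)-M)/2$, with the final limit obtained from Lemma \ref{relation-between-S-and-Vol}. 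The proposal is correct in the same sense and to the same extent as the paper's argument, so nothing further is needed.
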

\begin{proof}
The second statement follows from the first statement and Lemma \ref{relation-between-S-and-Vol}.

We prove the first statement.

Take $\delta>0$ so that the absolute values of the imaginary part of the coordinates of any $\mathbf{y}\in\mathscr{Y}^0$ are in $[\delta,\pi-\delta]$.
Let $(\zeta_1,\zeta_2,\zeta_3)\coloneqq (-1,1,-3)$.

Then for all $\mathbf{y}\in\mathscr{Y}^0$ and all $\mathsf{b}\in(0,c_\delta)$, by Lemma \ref{upperbound-of-dif},
\ref{upperbound-of-dif-negative}, and \ref{upperbound-of-dif-b},
\begin{eqnarray*}
& &\left|\Re\left(\frac{1}{2\pi\hbar}S_{\mathsf{b}}^\prime(\mathbf{y})-\frac{1}{2\pi\hbar}S(\mathbf{y})\right)\right|\\
&=&\left|\Re\left(
\sum_{j=1}^3\zeta_j\left(\mathrm{Log}\left(\Phi_{\mathsf{b}}\left(\frac{y_j}{2\pi\sqrt{\hbar}}\right)\right)-\left(\frac{-i}{2\pi\hbar}\mathrm{Li}_2(-e^{y_j})\right)\right)\right)\right|\\
&\le& \sum_{j=1}^3|\zeta_j|\left|\Re\left(\mathrm{Log}\left(\Phi_{\mathsf{b}}\left(\frac{y_j(\mathsf{b}^2+1)}{2\pi\mathsf{b}}\right)\right)-\left(\frac{-i}{2\pi\mathsf{b}^2}\mathrm{Li}_2\left(-e^{y_j(1+\mathsf{b}^2)}\right)\right)\right)\right|\\
& &+ \sum_{j=1}^3|\zeta_j|\left|\Re\left(\left(\frac{-i}{2\pi\mathsf{b}^2}\mathrm{Li}_2\left(-e^{y_j(1+\mathsf{b}^2)}\right)\right)-\left(\frac{-i}{2\pi\mathsf{b}^2}
(1+\mathsf{b}^2)^2\mathrm{Li}_2(-e^{y_j})\right)\right)\right|\\
&\le& 5B_{\frac{\delta}{2}}\mathsf{b}^2+5C_\delta\le 5(B_{\frac{\delta}{2}}+C_\delta)
\end{eqnarray*}
holds.

Let $\lambda=\frac{1}{2\pi\hbar}$, and $\hbar$ is taken to be small enough such that 
\begin{equation*}
0<\mathsf{b}<\min \left\{c_\delta, (2\pi A)^{-\frac{1}{2}}, \left(\frac{v}{10\pi(B_{\frac{\delta}{2}}+C_\delta)}\right)^{\frac{1}{2}}\right\},
\end{equation*}
then the remainder of the proof proceeds in a similar way to the proof of Proposition \ref{limit-Sb}.
\end{proof}
By Proposition \ref{limit-Sb-prime}, since $\lim_{\hbar\rightarrow 0^+}2\pi\hbar\log|\mathfrak{J}_X(\hbar, 0)|=-\mathrm{Vol}(S^3\backslash 7_3)$, Theorem \ref{uemura} (3) is proven.

\appendix
\section{The approximate evaluation regarding Theorem\ref{uemura} (3)}
\label{volume-conjecture}
In this appendix, we verify that Theorem \ref{uemura} (3) holds by numerical analysis based on the saddle point method. Therefore, this is not a rigorous proof.
\begin{equation*}
\begin{split}
J_X(\hbar,0)&=
\int_{\mathscr{Y}^\prime} d\mathbf{y}^\prime e^{2i\pi({\mathbf{y}^\prime}^{\rm T}Q\mathbf{y}^\prime)}e^{\frac{1}{\sqrt{\hbar}}{\mathbf{y}^\prime}^{\rm T}\mathscr{W}}\frac{\Phi_{\mathsf{b}}(Z^\prime)}{\Phi_{\mathsf{b}}(Y^\prime)\Phi_{\mathsf{b}}^3(W^\prime)}\\
&=\int_{\mathscr{Y}^\prime} d\mathbf{y}^\prime e^{2i\pi({Y^\prime}^2-Y^\prime Z^\prime +Z^\prime W^\prime +\frac{1}{2}{W^\prime}^2)}e^{\frac{\pi}{\sqrt{\hbar}}(-Y^\prime +W^\prime)}\frac{\Phi_{\mathsf{b}}(Z^\prime)}{\Phi_{\mathsf{b}}(Y^\prime)\Phi_{\mathsf{b}}^3(W^\prime)},
\end{split}
\end{equation*}
where
\begin{eqnarray*}
\mathscr{Y}^\prime
&=&\left(\mathbb{R}-\frac{i}{2\sqrt{\hbar}}(1-2a_1)\right)\times\left(\mathbb{R}+\frac{i}{2\sqrt{\hbar}}(1-2a_2)\right)\times\left(\mathbb{R}-\frac{i}{2\sqrt{\hbar}}(1-2a_3)\right)
\end{eqnarray*}
and 
\begin{equation*}
\mathbf{y}^\prime=\begin{bmatrix}
Y^\prime\\
Z^\prime\\
W^\prime
\end{bmatrix},\quad
\mathscr{W}=\begin{bmatrix}
-\pi\\
0\\
\pi
\end{bmatrix},\quad
Q=\begin{bmatrix}
1 & -\frac{1}{2} & 0\\
-\frac{1}{2} & 0 & \frac{1}{2}\\
0 & \frac{1}{2} &\frac{1}{2}
\end{bmatrix}.
\end{equation*}

By Sections 13.1, 13.2 in \cite{MR3227503},
\begin{eqnarray*}
\int_{\mathbb{R}+\frac{i}{2\sqrt{\hbar}}(1-2a_2)} dZ^\prime 
e^{2\pi iZ^\prime (-Y^\prime +W^\prime)}\Phi_{\mathsf{b}}(Z^\prime)
&=&\frac{e^{-\frac{i}{12}\pi(1-4c_{\mathsf{b}}^2)}e^{2\pi ic_{\mathsf{b}}(-Y^\prime+W^\prime)}}{\Phi_{\mathsf{b}}(Y^\prime -W^\prime -c_{\mathsf{b}})}\\
&=& \frac{e^{-\frac{i}{12}\pi(1+\frac{1}{\hbar})}e^{-\frac{\pi}{\sqrt{\hbar}}(-Y^\prime+W^\prime)}}{\Phi_{\mathsf{b}}(Y^\prime -W^\prime -c_{\mathsf{b}})}.
\end{eqnarray*}

Therefore, in order to show that Theorem \ref{uemura} (3) holds, it is sufficient to prove
\begin{equation}
\lim_{\hbar\rightarrow 0^+}2\pi\hbar\log\left|\int_{\mathscr{Y}}dxdy\frac{e^{i\pi(2x^2+y^2)}}{\Phi_{\mathsf{b}}(x)\Phi_{\mathsf{b}}(x-y-c_{\mathsf{b}})\Phi_{\mathsf{b}}^3(y)}\right|=-\rm{Vol}(S^3\backslash 7_3),
\label{volume-limit}
\end{equation}

where
\begin{eqnarray*}
\mathscr{Y}
&\coloneqq &\left(\mathbb{R}-\frac{i}{2\sqrt{\hbar}}(1-2a_1)\right)
\times\left(\mathbb{R}-\frac{i}{2\sqrt{\hbar}}(1-2a_3)\right).
\end{eqnarray*}

We show the above equation by the approximate evaluation based on the saddle point method. 

Transformation of variables such that $x=\frac{x^\prime}{2\pi\mathsf{b}},y=\frac{y^\prime}{2\pi\mathsf{b}}$ leads to 
\begin{equation}
\begin{split}
&\int_{\mathscr{Y}}dxdy\frac{e^{i\pi(2x^2+y^2)}}{\Phi_{\mathsf{b}}(x)\Phi_{\mathsf{b}}(x-y-c_{\mathsf{b}})\Phi_{\mathsf{b}}^3(y)}\\
&=\frac{1}{(2\pi\mathsf{b})^2}\int_{\mathscr{Y}^\prime}dx^\prime dy^\prime\frac{e^{i\pi \left(2\frac{{x^\prime}^2}{(2\pi\mathsf{b})^2}+\frac{{y^\prime}^2}{(2\pi\mathsf{b})^2}\right)}}{\Phi_{\mathsf{b}}(\frac{x^\prime}{2\pi\mathsf{b}})\Phi_{\mathsf{b}}(\frac{x^\prime-y^\prime}{2\pi\mathsf{b}}-c_{\mathsf{b}})\Phi_{\mathsf{b}}^3(\frac{y^\prime}{2\pi\mathsf{b}})},
\end{split}
\label{2-variable-volume-integral}
\end{equation}
where 
\begin{eqnarray*}
\mathscr{Y}^\prime
&\coloneqq &\left(\mathbb{R}-i\pi (1+\mathsf{b}^2)(1-2a_1)\right)
\times\left(\mathbb{R}-i\pi (1+\mathsf{b}^2)(1-2a_3)\right).
\end{eqnarray*}

$c_{\mathsf{b}}=\frac{i(\mathsf{b}+\mathsf{b}^{-1})}{2}=\frac{i\pi(\mathsf{b}^2+1)}{2\pi\mathsf{b}}$ and 
\begin{equation}
\begin{split}
\int_{\mathscr{Y}}dxdy\frac{e^{i\pi(2x^2+y^2)}}{\Phi_{\mathsf{b}}(x)\Phi_{\mathsf{b}}(x-y-c_{\mathsf{b}})\Phi_{\mathsf{b}}^3(y)}&=\frac{1}{(2\pi\mathsf{b})^2}\int_{\mathscr{Y}^\prime}dxdy\frac{e^{i\pi \left(2\frac{x^2}{(2\pi\mathsf{b})^2}+\frac{y^2}{(2\pi\mathsf{b})^2}\right)}}{\Phi_{\mathsf{b}}(\frac{x}{2\pi\mathsf{b}})\Phi_{\mathsf{b}}(\frac{x-y-i\pi(\mathsf{b}^2+1)}{2\pi\mathsf{b}})\Phi_{\mathsf{b}}^3(\frac{y}{2\pi\mathsf{b}})}
\end{split}.
\label{volume-function1}
\end{equation}
By the equation (A.21) in \cite{MR3700060}, since 
\begin{equation*}
\Phi_{\mathsf{b}}\left(\frac{x}{2\pi{\mathsf{b}}}\right)=\exp\left(\frac{1}{2\pi i\mathsf{b}^2}{\rm{Li}}_2\left(-e^x\right)\right)\left(1+O({\mathsf{b}}^2)\right),
\end{equation*} 
\begin{align}
&\eqref{volume-function1}\notag\\
&=\frac{1}{(2\pi\mathsf{b})^2}\int_{\mathscr{Y}^\prime}dxdy\frac{e^{i\pi \left(2\frac{x^2}{(2\pi{\mathsf{b}})^2}+\frac{y^2}{(2\pi\mathsf{b})^2}\right)}}{\exp{\left(\frac{1}{2\pi i\mathsf{b}^2}{\rm{Li}}_2(-e^x)\right)}\exp{\left(\frac{1}{2\pi i\mathsf{b}^2}({\rm{Li}}_2(e^{x-y})+O(\mathsf{b}^2))\right)}\exp\left(\frac{3}{2\pi i{\mathsf{b}}^2}{\rm Li}_2(-e^y)\right)(1+O({\mathsf{b}}^2))}\notag\\
&=\frac{1}{(2\pi{\mathsf{b}})^2}\int_{\mathscr{Y}^\prime}dxdy e^{\frac{1}{2\pi i{\mathsf{b}}^2}\left(-{\rm{Li}}_2(-e^x)-{\rm{Li}}_2(e^{x-y})-3{\rm Li}_2(-e^y)-x^2-\frac{y^2}{2}+O({\mathsf{b}}^2)\right)}(1+O({\mathsf{b}}^2)).\label{volume-function2}
\end{align}
Let
\begin{equation*}
V(x,y)=-{\rm{Li}}_2(-e^x)-{\rm Li}_2(e^{x-y})-3{\rm Li}_2(-e^y)-x^2-\frac{y^2}{2}.
\end{equation*}
Since 
\begin{equation*}
\frac{\partial}{\partial x}{\rm Li}_2(-e^x)=-\log(1+e^x),
\end{equation*}
\begin{equation*}
\begin{split}
\frac{\partial}{\partial x}V(x,y)&=\log(1+e^x)+\log(1-e^{x-y})-2x\\
&=\log{\frac{(1+e^2)(1-e^{x-y})}{e^{2x}}},
\end{split}
\end{equation*}
and 
\begin{equation*}
\begin{split}
\frac{\partial}{\partial y}V(x,y)&=-\log{(1-e^{x-y})}+3\log{(1+e^y)}-y\\
&=\log{\frac{(1+e^y)^3}{(1-e^{x-y})e^y}}.
\end{split}
\end{equation*}

The stationary points such that
\begin{equation*}
\frac{\partial}{\partial x}V(x,y)=\frac{\partial}{\partial y}V(x,y)=0
\end{equation*}
satisfy
\begin{align}
e^{2x}=(1+e^x)(1-e^{x-y}),\label{saddle-point-cond1}\\
(1-e^{x-y})e^{y}=(1+e^y)^3.\label{saddle-point-cond2}
\end{align}

By the equation $\eqref{saddle-point-cond1}$, 
\begin{equation*}
e^y=-\frac{1+e^x}{e^x-e^{-x}-1}.
\end{equation*}

Substituting this into the equation $\eqref{saddle-point-cond2}$,
\begin{equation*}
(e^{2x}-e^x-1)^2=\frac{(1+2e^x)^3}{e^{3x}}.    
\end{equation*}

Let $e^x=t$, then  
\begin{equation*}
t^3(t^2-t-1)^2=(1+2t)^3.
\end{equation*}

Using Mathematica, we solve the equation numerically with respect to $t$,
\begin{equation*}
\begin{split}
t&=-1,\quad -0.566231,\quad 2.712568,\quad -0.446038-0.121232i,\\
&\quad-0.446083+0.121232i,\quad 0.872869-1.511780i,\quad0.872869+1.511780i.
\end{split}
\end{equation*}

However, if $t=-1$, $e^y=0$, and there is no such complex solution $y$. 

Denoting the value of $V(x,y)$ at the stationary point using $t$, we obtain
\begin{equation*}
\begin{split}
f(t):=&-\left\{{\rm Log}(t)\right\}^2-\frac{1}{2}\left\{{\rm Log}\left(\frac{-1-t}{-1-\frac{1}{t}+t}\right)\right\}^2-{\rm Li}_2(-t)\\
&\quad\quad -{\rm Li}_2\left(\frac{1+t-t^2}{1+t}\right)-3{\rm Li}_2\left(\frac{1+t}{-1-\frac{1}{t}+t}\right).
\end{split}
\end{equation*}
\begin{equation*}
\frac{\partial^2}{\partial x^2}V(x,y)=\frac{e^x}{1+e^x}-\frac{e^{x-y}}{1-e^{x-y}}-2,
\end{equation*}
\begin{equation*}
\frac{\partial^2}{\partial y^2}V(x,y)=-\frac{e^{x-y}}{1-e^{x-y}}+3\frac{e^y}{1+e^y}-1,
\end{equation*}
\begin{equation*}
\frac{\partial^2}{\partial y\partial x}V(x,y)=\frac{\partial^2}{\partial x\partial y}V(x,y)=\frac{e^{x-y}}{1-e^{x-y}}.
\end{equation*}

Let $h(t),i(t),j(t)$ be the value of $\frac{\partial^2}{\partial x^2}V(x,y),\frac{\partial^2}{\partial y^2}V(x,y),\frac{\partial^2}{\partial y\partial x}V(x,y)$ at the stationary point, respectively.
In the neighborhood of the stationary point $(x_0,y_0)$, Taylor expansion gives 

\begin{equation*}
V(x,y)=V(x_0,y_0)+\frac{1}{2}\left\{h(t)(x-x_0)^2+2j(t)(x-x_0)(y-y_0)+i(t)(y-y_0)^2\right\}\cdots.
\end{equation*}

In general, suppose that $A$ is a complex symmetric quadratic matrix and $B$ is a positive definite matrix if $A=B+iC$\quad($B,C$ are real matrices). Then 
\begin{equation*}
\int_{\mathbb{R}^2}\exp(-\frac{1}{2}x^tAx) dx=\frac{2\pi}{\sqrt{\det A}}
\end{equation*}
and the integral on the left-hand side converges.

Therefore, if the integral contour is changed to the contour $x=x_0+\mathbb{R},y=y_0+\mathbb{R}$ which passes through the stationary point $(x_0,y_0)$ and is parallel to the real axes and 
both eigenvalues of the matrix
\begin{equation*}
S(t)=
\begin{pmatrix}
\Im h(t) & \Im j(t) \\
\Im j(t) & \Im i(t) \\
\end{pmatrix}
\end{equation*}
are negative,
let
\begin{equation*}
A(t)=\begin{pmatrix}
 h(t) &  j(t) \\
 j(t) &  i(t) \\
\end{pmatrix},
\end{equation*}
then
\begin{equation*}
\int_{(x_0+\mathbb{R})\times(y_0+\mathbb{R})}dxdye^{\frac{1}{2\pi i{\rm{b}}^2}V(x,y)}=e^{\frac{1}{2\pi i{\mathsf{b}}^2}f(t)}\mathsf{b}^2\left(\frac{-4\pi^2 i}{\sqrt{\det A(t)}}+O({\mathsf{b}})\right).
\end{equation*}

In this case, the left-hand side of the equation $\eqref{volume-limit}$ is equal to 
\begin{equation*}
\Im f(t).
\end{equation*}

By the property of Proposition \ref{property-of-Phi_b} (3), if $0<a_1,a_3,a_1-a_3< \frac{1}{2}$, the integrand of the equation \eqref{2-variable-volume-integral} decreases rapidly at infinity, so by the holomorphicity of the integrand and the Bochner-Martinelli formula, the value of the equation \eqref{2-variable-volume-integral} is invariant if we change the contour with $a_1,a_3$ satisfy the above condition. 
In particular, if $\alpha_X\in\mathscr{A}_X$, by the equations \eqref{it4'} and \eqref{it2''}, $a_1-a_3=a_1-(b_1+b_2)=a_1-b_1-(\frac{1}{2}-a_2-c_2)=a_1-b_1-\frac{1}{2}+2b_1+c_1+c_2=-\frac{1}{2}+(a_1+b_1+c_1)+c_2=c_2$, so, certainly, this condition is satisfied.

In particular, if the imaginary parts of $e^{x_0},e^{y_0}$ are negative and the imaginary part of $e^{x_0-y_0}$ is positive, this condition is satisfied. So, we check the existence of such stationary points $(x_0,y_0)$.

Let $t_3=-0.446038-0.121232i$, then 
\begin{align*}
h(t_3)=-3.67157+1.42489i,\\
i(t_3)=-3.0171-0.961697i,\\
j(t_3)=0.948895-1.80189i
\end{align*}
and the eigenvalues of $S(t_3)$ are 
\begin{equation*}
2.39278,-1.9296.
\end{equation*}

Let $t_5=0.872869-1.511780i$. 
Since $e^y=-\frac{t^2+t}{t^2-t-1}$, if 
$t=t_5$, $e^y=-0.537981 - 1.04357 i$.
Since $e^{x-y}=-\frac{t^2-t-1}{t+1}$, 
if $t=t_5$, $e^{x-y}=0.803839 + 1.25082i$.
\begin{align*}
h(t_5)=-0.445662-1.04125i,\\
i(t_5)=1.81348-3.1839i,\\
j(t_5)=-0.87763+0.780285i,
\end{align*}
and the eigenvalues of  $S(t_5)$ are 
\begin{equation*}
-0.787211,-3.43793.
\end{equation*}

Therefore, the stationary point $(x_0,y_0)$ such that the eigenvalues of $S(t)$ are both negative, the imaginary parts of $e^{x_0},e^{y_0}$ are negative, and the imaginary part of $e^{x_0-y_0}$ is positive exists if $t=t_5$. In this case, 
\begin{equation*}
f(t_5)=2.884158080-4.592125697i
\end{equation*}
and  
\begin{equation*}
\Im f(t_5)=-4.592125697. 
\end{equation*} 

On the other hand, the hyperbolic volume of the complementary space of the knot $7_3$ in $S^3$ is 4.592125697$\cdots$, which numerically confirms the equation $\eqref{volume-limit}$.
\bibliographystyle{alpha} 
\bibliography{reference1}
\end{document}